\newtheorem{proposition}{Proposition}[section]
\newtheorem{lemma}[proposition]{Lemma}
\newtheorem{theorem}[proposition]{Theorem}
\newtheorem{corollary}[proposition]{Corollary}
\DeclareMathOperator{\Projectives}{\mathsf{P}} 
\DeclareMathOperator{\proj}{\mathsf{proj}} 
\DeclareMathOperator{\Ker}{\mathsf{Ker}} 
\DeclareMathOperator{\Coker}{\mathsf{Coker}} 
\DeclareMathOperator{\Image}{\mathsf{Im}} 
\DeclareMathOperator{\Hom}{\mathsf{Hom}} 
\DeclareMathOperator{\End}{\mathsf{End}} 
\DeclareMathOperator{\add}{\mathsf{add}} 
\DeclareMathOperator{\gldim}{\mathsf{gl.dim}} 
\DeclareMathOperator{\domdim}{\mathsf{dom.dim}} 
\DeclareMathOperator{\rdomdim}{\mathsf{r.dom.dim}} 
\DeclareMathOperator{\ldomdim}{\mathsf{l.dom.dim}} 
\DeclareMathOperator{\Ext}{\mathsf{Ext}} 
\DeclareMathOperator{\Tor}{\mathsf{Tor}} 
\DeclareMathOperator{\op}{\mathsf{op}} 
\DeclareMathOperator{\Ab}{\mathsf{Ab}} 
\DeclareMathOperator{\pdim}{\mathsf{pd}} 
\DeclareMathOperator{\grade}{\mathsf{grade}} 
\DeclareMathOperator{\Tr}{\mathsf{Tr}} 
\DeclareMathOperator{\mr}{\mathsf{m}} 
\DeclareMathOperator{\ml}{\mathsf{m}^{\op}} 
\DeclareMathOperator{\SplitEpi}{\mathsf{Split \text{ } Epi}} 
\DeclareMathOperator{\SplitMono}{\mathsf{Split \text{ } Mono}} 
\DeclareMathOperator{\E}{\mathsf{E}} 
\DeclareMathOperator{\Yoneda}{\mathsf{Y}} 
\let\mod\relax
\DeclareMathOperator{\mod}{\mathsf{mod}} 
\DeclareMathOperator{\Mod}{\mathsf{Mod}} 
\DeclareMathOperator{\Mor}{\mathsf{Mor}} 
\DeclareMathOperator{\undermod}{\underline{\mathsf{mod}}} 
\DeclareMathOperator{\underHom}{\underline{\mathsf{Hom}}} 
\newcommand{\eval}{\mathtt{e}} 
\newcommand{\prov}{\mathtt{p}} 
\newcommand{\A}{\mathcal{A}}
\newcommand{\B}{\mathcal{B}}
\newcommand{\C}{\mathcal{C}}
\newcommand{\D}{\mathcal{D}}
\newcommand{\I}{\mathcal{I}}
\begin{document}

\title{A functorial approach to $n$-abelian categories}
\author{Vitor Gulisz}
\address{Mathematics Department, Northeastern University, Boston, MA 02115, USA}
\email{gulisz.v@northeastern.edu}
\date{August 13, 2025}

\begin{abstract}
We develop a functorial approach to the study of $n$-abelian categories by reformulating their axioms in terms of their categories of finitely presented functors. Such an approach allows the use of classical homological algebra and representation theory techniques to understand higher homological algebra. As an application, we present two possible generalizations of the axioms ``every monomorphism is a kernel'' and ``every epimorphism is a cokernel'' of an abelian category to $n$-abelian categories. We also specialize our results to modules over rings, thereby describing when the category of finitely generated projective modules over a ring is $n$-abelian. Moreover, we establish a correspondence for $n$-abelian categories with additive generators, which extends the higher Auslander correspondence.
\end{abstract}

\maketitle

\tableofcontents

\section{Introduction}

Higher homological algebra has its origin in \cite{MR2298819}, where Iyama started the study of $n$-cluster tilting subcategories of abelian categories. In order to capture the intrinsic properties of such subcategories, Jasso introduced the notion of an $n$-abelian category in \cite{MR3519980}, which was a breakthrough and crucial to settle higher homological algebra as a subject in its own right. In fact, Jasso proved in \cite[Theorem 3.16]{MR3519980} that an $n$-cluster tilting subcategory of an abelian category is an $n$-abelian category, while the converse was proved, independently, by Kvamme in \cite[Corollary 1.3]{MR4301013} and by Ebrahimi and Nasr-Isfahani in \cite[Theorem A]{MR4514466}. Thus, these concepts coincide, and one can use them interchangeably. There is, however, an advantage of considering $n$-abelian categories, which is the conceptual clarity they provide, given their analogy with abelian categories. Indeed, $n$-abelian categories can be regarded as ``longer'' or ``higher'' versions of abelian categories, with the latter being recovered from the former when $n = 1$. Hence the name ``higher homological algebra'', which concerns not only $n$-abelian categories, but also $n$-exact categories, defined by Jasso in \cite{MR3519980}, $(n+2)$-angulated categories, introduced by Geiss, Keller and Oppermann in \cite{MR3021448}, and $n$-exangulated categories, defined by Herschend, Liu and Nakaoka in \cite{MR4188310}.

In this paper, we propose to advance the conceptual clarity provided by $n$-abelian categories by investigating them through a functorial approach. We follow the philosophy that functors over a category are its ``representations'', and can be useful to understand its properties. This idea was made precise by Mitchell in \cite{MR0294454}, where it was supported that additive functors over a preadditive category are analogues of modules over a ring. The use of functor categories to understand a category itself goes back to the works \cite{MR0209333} of Freyd and \cite{MR0212070} of Auslander. In fact, much of the present paper was influenced by the functorial approach to representation theory employed by Auslander in \cite{MR0212070}, \cite{auslander1971representation}, \cite{MR349747}, \cite{artin.algebras.ii} and several other papers.

The use of functor categories to the study of $n$-abelian categories allows the use of classical homological algebra and representation theory techniques to investigate higher homological algebra. Indeed, throughout this paper, we describe properties of $n$-abelian categories in terms of their categories of finitely presented functors, which are abelian, thereby obtaining classical homological algebra descriptions (with a representation theory flavor) for higher homological algebra phenomena. Once this is done, one can work with functors and then obtain new insights on $n$-abelian categories by ``translating'' results for functors to the language of higher homological algebra. Similar approaches have also been considered in several papers, as in \cite{MR2027559}, \cite{MR3406183}, \cite{EbrahimiNasr-Isfahani}, \cite{MR3659323}, \cite{MR3836680}, \cite{MR4392222}, \cite{MR3638352}, \cite{Klapproth}, \cite{MR4301013}, \cite{MR2471947} and \cite{MR4093080}, where functor categories were used to study classical and higher homological algebra aspects of additive categories.

Let us outline the sections and main results of this paper.

Section \ref{section.2} provides the reader with the conventions we follow, and concise backgrounds on functor categories, ideals of categories, projectively stable categories and higher homological algebra in the context of $n$-abelian categories.

In Section \ref{section.3}, we explore the idea that $n$-kernels and $n$-cokernels are projective resolutions in functor categories. Then we reformulate the axioms of an $n$-abelian category in terms of its categories of finitely presented functors. These reformulations are summarized in Theorem \ref{theorem.2}, the main result of this paper.

Section \ref{section.4} is devoted to the study of von Neumann regular categories. Through the functorial approach, we give an alternative proof of a result of Jasso, which characterizes von Neumann regular categories as the categories that are $n$-abelian for more than one (equivalently, for every) positive integer $n$. This is Proposition \ref{proposition.6}.

In Section \ref{section.5}, we consider the double dual sequence of a finitely presented functor, and some of its properties, which are used to prove further results in this paper. As an application, we show in Corollary \ref{corollary.5} how the cases $n = 1$ and $n \geqslant 2$ for $n$-abelian categories can be distinguished in terms of the existence of finitely presented functors that are reflexive but not projective.

In Section \ref{section.6}, we use the functorial approach to obtain two possible generalizations of the axioms ``every monomorphism is a kernel'' and ``every epimorphism is a cokernel'' of an abelian category to $n$-abelian categories, which are registered in Theorems \ref{theorem.8} and \ref{theorem.11}. These generalizations are stated in terms of $m$-segments, $m$-cosegments, pre-$m$-segments and pre-$m$-cosegments, which are certain sequences consisting of $m$ morphisms, that recover the notions of monomorphisms and epimorphisms when $m = 1$.

Section \ref{section.7} contains results that aim to characterize $n$-abelian categories with enough injectives or enough projectives in terms of the global and dominant dimensions of their categories of finitely presented functors. Its main result is Theorem \ref{theorem.12}.

In Section \ref{section.8}, we specialize previous results to rings and modules over rings. By doing so, we describe in Theorem \ref{theorem.4} when the category of finitely generated projective modules over a ring is $n$-abelian. Such a description leads to a correspondence for $n$-abelian categories with additive generators, which extends the higher Auslander correspondence to rings that are not necessarily Artin algebras. This is Theorem \ref{theorem.6}.

In Appendix \ref{section.9}, we introduce the tensor product of finitely presented functors, and we prove in Theorem \ref{theorem.5} that the global dimensions of the categories of finitely presented contravariant and covariant functors over a coherent category coincide.

In Appendix \ref{section.10}, we state alternative ways of expressing part of the axioms given in Theorem \ref{theorem.2}, which are related to the grade of finitely presented functors. Consequently, we obtain another characterization of $n$-abelian categories in Theorem \ref{theorem.14}.

Appendix \ref{section.11} covers a few known results on $n$-abelian categories from the functorial perspective, as a further exposition of the functorial approach.

\section{Preliminaries}\label{section.2}

\subsection{Conventions}

Throughout this paper, $\C$ will be an additive and idempotent complete category (see the definitions below) and $n$ will be a positive integer. All subcategories are assumed to be full.

Let $\A$ be a category. Given two objects $X,Y \in \A$, we denote the collection of morphisms from $X$ to $Y$ in $\A$ by $\A(X,Y)$. We write $gf \in \A(X,Z)$ for the composition of two morphisms $f \in \A(X,Y)$ and $g \in \A(Y,Z)$ in $\A$, and we denote the identity morphism of an object $X \in \A$ by $1_{X}$ or $1$.

An object $P \in \A$ is said to be \textit{projective} if for every epimorphism $g \in \A(X,Y)$ and every morphism $f \in \A(P,Y)$, there is a morphism $h \in \A(P,X)$ for which $f = gh$. We say that $\A$ \textit{has enough projectives} if for every $X \in \A$, there is an epimorphism $P \to X$ in $\A$ with $P$ projective. Dually, an object $I \in \A$ is \textit{injective} if for every monomorphism $g \in \A(X,Y)$ and every morphism $f \in \A(X,I)$, there is a morphism $h \in \A(Y,I)$ such that $f = hg$. We say that $\A$ \textit{has enough injectives} if for every $X \in \A$, there is a monomorphism $X \to I$ in $\A$ with $I$ injective.

We say that a category $\A$ is \textit{preadditive} if $\A(X,Y)$ is an abelian group for every $X,Y \in \A$ and the composition of morphisms in $\A$ is bilinear. If $\A$ is a preadditive category that has a zero object and every finite collection of objects in $\A$ admits a direct sum, then $\A$ is called \textit{additive}. We say that $\A$ is \textit{idempotent complete} if every idempotent morphism $f$ in $\A$ (that is, $f^{2} = f$) can be written as $f = gh$, where $g$ and $h$ are morphisms in $\A$ such that $hg = 1$.

Let $\A$ be an additive and idempotent complete category. Given a subcategory $\B$ of $\A$, we denote by $\add \B$ the subcategory of $\A$ consisting of the direct summands of finite direct sums of objects in $\B$. We remark that $\add \B$ is an additive and idempotent complete category. Moreover, observe that $\add \B = \B$ if and only if $\B$ is closed under finite direct sums and direct summands in $\A$. If $\B$ has only one object, say $X$, then we denote $\add \B$ by $\add X$. We say that $\A$ \textit{has an additive generator} if there is some $X \in \A$ for which $\A = \add X$, and in this case, $X$ is called an \textit{additive generator} of $\A$. We will denote the endomorphism ring of $X \in \A$ by $\End(X)$.

Let $\A$ be an abelian category. For each $X \in \A$, we denote the \textit{projective dimension} of $X$ by $\pdim X$. We also denote the \textit{global dimension} of $\A$ by $\gldim (\A)$. Moreover, $\Ab$ will stand for the category of abelian groups.

Finally, by a \textit{ring}, we mean an associative ring with identity. For a ring $\Lambda$, a right $\Lambda$-module will be referred to as \textit{$\Lambda$-module}, and a left $\Lambda$-module will be regarded as a $\Lambda^{\op}$-module. We denote the category of $\Lambda$-modules by $\Mod \Lambda$, while the category of finitely presented $\Lambda$-modules will be denoted by $\mod \Lambda$. We also let $\proj \Lambda$ be the category of finitely generated projective $\Lambda$-modules.

\subsection{Functor categories}

In this subsection, we review some basic definitions and results concerning functor categories. The reader may find more details in \cite{MR0212070}, \cite{auslander1971representation}, \cite{MR349747}, \cite{MR166240}, \cite{MR0209333}, \cite{MR1487973}, \cite{MR4327095} and \cite{MR0294454}.

We call a contravariant additive functor from $\C$ to $\Ab$ a \textit{right $\C$-module}. We denote by $\Mod \C$ the category whose objects are the right $\C$-modules and whose morphisms are natural transformations. That is, for $F, G \in \Mod \C$, the collection of morphisms from $F$ to $G$ is given by the natural transformations from $F$ to $G$, and we denote it by $\Hom(F,G)$. In general, $\Hom(F,G)$ might not be a set, although it is indeed a set if $\C$ is skeletally small, in which case $\Mod \C$ is a locally small category. A \textit{left $\C$-module} is a covariant additive functor from $\C$ to $\Ab$, that is, a right $\C^{\op}$-module. We give preference to work with right $\C$-modules, with the case of left $\C$-modules being recovered by taking $\C^{\op}$ in place of $\C$. We will call a right $\C$-module a \textit{$\C$-module}, and a left $\C$-module will be referred to as a $\C^{\op}$-module.

A sequence \[ \begin{tikzcd}
F \arrow[r, "\alpha"] & G \arrow[r, "\beta"] & H
\end{tikzcd} \] in $\Mod \C$ is called \textit{exact} when \[ \begin{tikzcd}
F(X) \arrow[r, "\alpha_{X}"] &[0.2em] G(X) \arrow[r, "\beta_{X}"] &[0.2em] H(X)
\end{tikzcd} \] is exact in $\Ab$ for every $X \in \C$. In words, a sequence in $\Mod \C$ is exact if and only if each of its components is an exact sequence of abelian groups.

When $\C$ is skeletally small, the category $\Mod \C$ is abelian, and a sequence in $\Mod \C$ is exact in the above sense if and only if it is exact as a sequence in an abelian category. We remark that, in this case, kernels, cokernels and direct sums in $\Mod \C$ are computed componentwise. If $\C$ is not skeletally small, then $\Mod \C$ might not be locally small, which prevents us from saying that $\Mod \C$ is a preadditive category. However, $\Mod \C$ still has kernels, cokernels and direct sums, which are computed componentwise, and basic results that hold for abelian categories also hold for $\Mod \C$.

It follows from the Yoneda lemma that, for every $X \in \C$, the representable functor $\C(-,X)$ is a projective object in $\Mod \C$. We say that a $\C$-module $F$ is \textit{finitely generated} if there is an epimorphism $\C(-,X) \to F$ in $\Mod \C$ for some $X \in \C$. We denote by $\proj \C$ the subcategory of $\Mod \C$ consisting of the finitely generated projective $\C$-modules. We recall that the Yoneda embedding $\C \to \Mod \C$ induces an equivalence of categories $\C \to \proj \C$ since $\C$ is additive and idempotent complete. Therefore, every object in $\proj \C$ is isomorphic to $\C(-,X)$ for some $X \in \C$, and every morphism in $\proj \C$ is isomorphic to $\C(-,f)$ for some morphism $f$ in $\C$.

A $\C$-module $F$ is \textit{finitely presented} if there is an exact sequence  \[ \begin{tikzcd}
{\C(-,X)} \arrow[r, "{\C(-,f)}"] &[1.2em] {\C(-,Y)} \arrow[r] & F \arrow[r] & 0
\end{tikzcd} \] in $\Mod \C$ for some $f \in \C(X,Y)$. Such an exact sequence is called a \textit{projective presentation} of $F$. We denote by $\mod \C$ the subcategory of $\Mod \C$ consisting of the finitely presented $\C$-modules. The category $\mod \C$ is always locally small, regardless if $\C$ is skeletally small or not. We also remark that the category $\mod \C$ is additive, it has cokernels, its projective objects are given by $\proj \C$ and it has enough projectives.

The next result is folklore, see, for example, \cite[page 18]{MR0294454} and \cite[Proposition 2.3]{MR1487973}. It tells us, in particular, that the category $\mod \C$ determines $\C$ up to equivalence of categories. In fact, $\C$ is recovered from $\mod \C$ by taking its projective objects.

\begin{proposition}\label{proposition.16}
Let $\B$ and $\C$ be additive and idempotent complete categories. The following are equivalent:
\begin{enumerate}
    \item[(a)] The categories $\B$ and $\C$ are equivalent.
    \item[(b)] The categories $\mod \B$ and $\mod \C$ are equivalent.
\end{enumerate}
If $\B$ and $\C$ are skeletally small, then the above items are also equivalent to:
\begin{enumerate}
    \item[(c)] The categories $\Mod \B$ and $\Mod \C$ are equivalent.
\end{enumerate}
\end{proposition}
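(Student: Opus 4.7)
The proof goes by establishing (a) $\Rightarrow$ (b) $\Rightarrow$ (a) and (a) $\Rightarrow$ (c) $\Rightarrow$ (a). The forward implications come from transporting an equivalence $F: \B \to \C$ to functor categories via precomposition, while the backward implications recover $\B$ and $\C$ from their functor categories by isolating a canonical ``projective core''.

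For (a) $\Rightarrow$ (b) and (a) $\Rightarrow$ (c), the precomposition functor $-\circ F^{\op}: \Mod \C \to \Mod \B$ is an equivalence. It sends each representable $\C(-, F(X))$ to a functor naturally isomorphic to $\B(-, X)$, so it restricts to an equivalence $\proj \C \simeq \proj \B$ and, because it preserves cokernels, to an equivalence $\mod \C \simeq \mod \B$.

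The converse arguments rest on the claim that $\proj \C \simeq \C$ can be identified intrinsically inside $\mod \C$ or $\Mod \C$. For $\mod \C$: every projective object $P$ of $\mod \C$ admits a projective presentation $\C(-, X) \to \C(-, Y) \to P \to 0$ that splits by projectivity, exhibiting $P$ as a direct summand of $\C(-, Y)$; idempotent completeness of $\C$ together with Yoneda then gives $P \in \proj \C$. Hence $\proj \C$ is precisely the intrinsically defined subcategory of projective objects of $\mod \C$, and any equivalence $\mod \B \simeq \mod \C$ restricts to $\proj \B \simeq \proj \C$, whence $\B \simeq \C$. For $\Mod \C$ in the skeletally small case, the analogous intrinsic class is that of \emph{compact} projective objects: each $\C(-, X)$ is compact because $\Hom(\C(-, X), -)$ is evaluation at $X$ and filtered colimits in $\Mod \C$ are computed componentwise; conversely, any projective object of $\Mod \C$ is a direct summand of a coproduct of representables, and compactness forces that summand to sit inside some finite subcoproduct, hence in $\proj \C$. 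The same transfer principle then gives (c) $\Rightarrow$ (a).

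The main obstacle is ensuring that the subcategory we distinguish inside the functor category is genuinely categorical, so that it is preserved by abstract equivalences. This is precisely where idempotent completeness of $\C$ enters: without it, the projective objects of $\mod \C$ would see the idempotent completion of $\C$ rather than $\C$ itself, breaking the identification $\proj \C \simeq \C$ and so the whole recovery argument.
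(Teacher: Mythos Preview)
Your proof is correct and follows essentially the same strategy as the paper: recover $\B$ and $\C$ from their functor categories by identifying $\proj\C$ intrinsically (as the projective objects of $\mod\C$, and as the compact projectives of $\Mod\C$), then use the Yoneda equivalence $\C\simeq\proj\C$. The only difference is that for (c) $\Rightarrow$ (a) the paper simply cites an external reference for the fact that an equivalence $\Mod\B\simeq\Mod\C$ restricts to $\proj\B\simeq\proj\C$, whereas you supply the standard compact-projective argument explicitly; this is exactly the content behind that citation, so the two routes coincide.
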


\begin{proof}
It is easy to see that if $\B$ and $\C$ are equivalent categories, then so are $\mod \B$ and $\mod \C$. Conversely, if there is an equivalence of categories $\mod \B \approx \mod \C$, then it induces an equivalence on the subcategories of projective objects, hence there is an equivalence $\proj \B \approx \proj \C$. Since the Yoneda embedding induces equivalences $\B \approx \proj \B$ and $\C \approx \proj \C$, we deduce that there is an equivalence $\B \approx \C$. Therefore, items (a) and (b) are equivalent.

Next, assume that $\B$ and $\C$ are skeletally small. Again, it is easy to see that if $\B$ and $\C$ are equivalent categories, then so are $\Mod \B$ and $\Mod \C$. Conversely, if there is an equivalence of categories $\Mod \B \approx \Mod \C$, then it induces an equivalence $\proj \B \approx \proj \C$, see \cite[Chapter 5, Exercise G]{MR166240} or \cite[Theorem 2.7.2]{VitorGulisz}. Hence, as above, we conclude that there is an equivalence $\B \approx \C$. Thus, items (a) and (c) are equivalent.
\end{proof}

As a final observation, we do not assume $\C$ to be skeletally small in this paper. This will not cause any issues, as we mainly work with the category $\mod \C$, and only componentwise constructions and arguments are used when dealing with $\Mod \C$.

\subsection{Ideals}\label{subsection.ideals}

As in \cite{MR116045}, a \textit{two-sided ideal} of $\C$, or for short, an \textit{ideal} of $\C$, is a collection $\I$ of morphisms in $\C$ with the property that $\I \cap \C(X,Y)$ is a subgroup of $\C(X,Y)$ for all $X,Y \in \C$, and such that $fgh \in \I$ for all morphisms $f,h \in \C$ and $g \in \I$, whenever the composition $fgh$ makes sense. For $X,Y \in \C$, we denote $\I \cap \C(X,Y) = \I(X,Y)$. If $\I$ is an ideal of $\C$, then we define the category $\C / \I$ as follows: the objects of $\C / \I$ are the same as the objects of $\C$, but given $X,Y \in \C$, we set \[ (\C / \I)(X,Y) = \C(X,Y) / \I(X,Y), \] where the above expression is a quotient of abelian groups.

For the rest of this subsection, let $\B$ be a subcategory of $\C$ such that $\add \B = \B$.

We say that a morphism $f \in \C(X,Y)$ \textit{factors through} $\B$ if there are $g \in \C(B,Y)$ and $h \in \C(X,B)$ with $B \in \B$ such that $f = gh$. We define the \textit{ideal generated by $\B$}, which we denote by $\langle \B \rangle$, to be the ideal of $\C$ consisting of the morphisms in $\C$ that factor through $\B$. The following proposition is well known. It says that taking the quotient of $\C$ by $\langle \B \rangle$ ``annihilates'' precisely the objects in $\B$.

\begin{proposition}\label{proposition.23}
The following are equivalent for an object $X \in \C$: \begin{enumerate}
    \item[(a)] $X \simeq 0$ in $\C / \langle \B \rangle$.
    \item[(b)] $1_{X} \in \langle \B \rangle$.
    \item[(c)] $X \in \B$.
\end{enumerate}
\end{proposition}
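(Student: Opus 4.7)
The plan is to establish the chain of implications (a) $\Leftrightarrow$ (b), then (c) $\Rightarrow$ (b), and finally (b) $\Rightarrow$ (c). The only nontrivial step is the last one, which is where the hypothesis $\add \B = \B$ (in particular, closure under direct summands) is used.

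For (a) $\Leftrightarrow$ (b), I would observe that $\C$ is additive, so it has a zero object, and this zero object descends to a zero object of the quotient $\C/\langle \B \rangle$ since the quotient construction keeps the same objects and only collapses hom-groups. In any preadditive category with a zero object, an object $X$ is isomorphic to $0$ if and only if $1_X = 0$. By the definition of the quotient, this equality in $\C/\langle \B \rangle$ is precisely the statement that $1_X$ lies in $\langle \B \rangle(X,X)$, i.e.\ $1_X \in \langle \B \rangle$.

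The implication (c) $\Rightarrow$ (b) is immediate: if $X \in \B$, then the factorization $1_X = 1_X \circ 1_X$ exhibits $1_X$ as factoring through the object $X \in \B$, so $1_X \in \langle \B \rangle$ by definition.

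For (b) $\Rightarrow$ (c), which I expect to be the main (though still short) obstacle, I would unpack the hypothesis: $1_X \in \langle \B \rangle$ means there exist $B \in \B$, together with morphisms $h \in \C(X,B)$ and $g \in \C(B,X)$, such that $gh = 1_X$. This exhibits $X$ as a direct summand of $B$ (via the split mono $h$ and the retraction $g$). Since $B \in \B$ and $\add \B = \B$, the subcategory $\B$ is closed under direct summands of its objects, so $X \in \B$. This closes the cycle and completes the proof.
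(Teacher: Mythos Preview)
Your proof is correct. The paper leaves this proposition to the reader, and your argument is exactly the standard one: the only point requiring any care is (b) $\Rightarrow$ (c), where you correctly use that $\add \B = \B$ forces $\B$ to be closed under direct summands.
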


\begin{proof}
Left to the reader.
\end{proof}

The next result, due to Heller, tells us that isomorphic objects in $\C / \langle \B \rangle$ differ in $\C$ up to direct sums of objects in $\B$.

\begin{theorem}\label{theorem.1}
Given two objects $X,Y \in \C$, we have $X \simeq Y$ in $\C / \langle \B \rangle$ if and only if there are $A,B \in \B$ such that $X \oplus A \simeq Y \oplus B$ in $\C$.
\end{theorem}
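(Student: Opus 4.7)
The plan is to treat the two implications separately. The sufficiency ($\Leftarrow$) will follow almost immediately from Proposition \ref{proposition.23}, whereas the necessity ($\Rightarrow$) will require a concrete construction exploiting the idempotent completeness of $\C$.

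For the sufficiency, if $X \oplus A \simeq Y \oplus B$ in $\C$ with $A, B \in \B$, then this isomorphism descends to $\C/\langle\B\rangle$. In the quotient, $A \simeq 0 \simeq B$ by Proposition \ref{proposition.23}, and the quotient is still additive, so $X \simeq X \oplus A \simeq Y \oplus B \simeq Y$ in $\C/\langle\B\rangle$.

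For the necessity, I would start from an isomorphism in $\C/\langle\B\rangle$ and lift it to morphisms $f \in \C(X,Y)$, $g \in \C(Y,X)$ with $1_X - gf$ and $1_Y - fg$ in $\langle\B\rangle$. Choose factorizations $1_X - gf = \alpha\beta$ and $1_Y - fg = \gamma\delta$ with $\alpha \in \C(A,X)$, $\beta \in \C(X,A)$, $\gamma \in \C(B,Y)$, $\delta \in \C(Y,B)$, and $A, B \in \B$. The key observation is that $\pi = \begin{pmatrix} f & \gamma \end{pmatrix} : X \oplus B \to Y$ admits the section $s = \begin{pmatrix} g \\ \delta \end{pmatrix}$, because $\pi s = fg + \gamma \delta = 1_Y$. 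Since $\C$ is idempotent complete, the complementary idempotent $e = 1_{X \oplus B} - s\pi$ splits through some object $K \in \C$, yielding a decomposition $X \oplus B \simeq Y \oplus K$ in $\C$.

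The main obstacle is then to show that $K \in \B$, which I would establish by computing $e$ entrywise. A direct calculation gives
\[ e = \begin{pmatrix} 1_X - gf & -g\gamma \\ -\delta f & 1_B - \delta\gamma \end{pmatrix} = \begin{pmatrix} \alpha\beta & -g\gamma \\ -\delta f & 1_B - \delta\gamma \end{pmatrix}, \]
and each entry lies in $\langle\B\rangle$: the $(1,1)$ entry factors through $A \in \B$, while the other three have $B \in \B$ either as source or as target, so they factor trivially through $B$. Because $\langle\B\rangle$ is a two-sided ideal, closed under sums and under composition with the canonical inclusions and projections of $X \oplus B$, the whole matrix $e$ itself belongs to $\langle\B\rangle$, and therefore factors as $e = h_2 h_1$ through some $Z \in \B$. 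Writing the idempotent splitting as $e = \iota \rho$ with $\rho \iota = 1_K$, the computation $(\rho h_2)(h_1 \iota) = \rho e \iota = (\rho \iota)(\rho \iota) = 1_K$ exhibits $K$ as a direct summand of $Z$, and since $\B = \add \B$ is closed under summands, $K \in \B$, completing the proof.
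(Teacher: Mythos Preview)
Your proof is correct. The sufficiency is handled cleanly via Proposition~\ref{proposition.23}, and for the necessity your splitting argument is sound: the computation $\pi s = fg + \gamma\delta = 1_Y$ is right, the idempotent $e = 1 - s\pi$ splits through some $K$ by idempotent completeness giving $X \oplus B \simeq Y \oplus K$, and your matrix calculation shows $e \in \langle\B\rangle$ (using that $\add\B = \B$ so the ideal is closed under the sums needed to assemble the matrix from its entries), whence $K$ is a summand of an object of $\B$ and thus lies in $\B$.

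The paper itself does not give a proof: it simply cites Heller's original result \cite[Theorem~2.2]{MR116045} and remarks that idempotent completeness supplies the ``cancellation'' hypothesis needed there. Your argument is therefore a genuine addition --- a self-contained proof rather than a citation. What you have written is essentially the standard modern proof of Heller's theorem, and it has the advantage of making explicit exactly where idempotent completeness and the assumption $\add\B = \B$ are used, which the paper's one-line reference leaves opaque.
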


\begin{proof}
See \cite[Theorem 2.2]{MR116045}. Observe that, because $\C$ is idempotent complete, $\C$ is a category ``with cancellation'', which is assumed in \cite[Theorem 2.2]{MR116045}.
\end{proof}

In view of Theorem \ref{theorem.1}, we adopt the following notation (whenever it is convenient): we denote an object $X \in \C$ by $X \oplus \B$ when it is considered to be in the category $\C / \langle \B \rangle$. In this notation, Theorem \ref{theorem.1} says that, for $X,Y \in \C$, we have $X \oplus \B \simeq Y \oplus \B$ if and only if there are $A,B \in \B$ with $X \oplus A \simeq Y \oplus B$.

\subsection{The projectively stable
category}

Let $\A$ be an additive and idempotent complete category, and let $\Projectives (\A)$ be the subcategory of projective objects of $\A$. The \textit{projectively stable
category} of $\A$ is the category $\A / \langle \Projectives (\A) \rangle$, which we denote by $\underline{\A}$. Note that, for $X,Y \in \A$, the collection of morphisms from $X$ to $Y$ in $\underline{\A}$ is given by the quotient of abelian groups \[ \underline{\A}(X,Y) = \A(X,Y) / \langle \Projectives (\A) \rangle (X,Y). \] When $\A = \mod \C$, we have $\Projectives (\A) = \proj \C$ and we write $\underline{\A} = \undermod \C$. Moreover, in this case, for $F, G \in \mod \C$, we denote $\underline{\A}(F,G)$ by $\underHom (F,G)$.

Now, suppose that $\A$ is an abelian category with enough projectives, and take some $X \in \A$. If \[ \begin{tikzcd}
0 \arrow[r] & K \arrow[r] & P \arrow[r] & X \arrow[r] & 0
\end{tikzcd} \] is an exact sequence in $\A$ with $P \in \Projectives (\A)$, then $K$ is called a \textit{syzygy} of $X$ and we denote it by $\Omega X$. Despite this notation, observe that syzygies are not unique. However, if \[ \begin{tikzcd}
0 \arrow[r] & J \arrow[r] & Q \arrow[r] & X \arrow[r] & 0
\end{tikzcd} \] is another exact sequence in $\A$ with $Q \in \Projectives (\A)$, then it follows from Schanuel's lemma that $K \oplus Q \simeq J \oplus P$. Therefore, we conclude from Theorem \ref{theorem.1} that $K$ and $J$ are isomorphic in $\underline{\A}$. In symbols, $K \oplus \Projectives (\A) \simeq J \oplus \Projectives (\A)$. Hence a syzygy of $X$ becomes unique up to isomorphism in the projectively stable category $\underline{\A}$. Given a positive integer $j$, we define a \textit{$j$\textsuperscript{th} syzygy} of $X$ recursively by letting it be a syzygy of a $(j-1)$\textsuperscript{th} syzygy of $X$, agreeing that a $0$\textsuperscript{th} syzygy of $X$ is $X$. We denote it by $\Omega^{j} X$, which is unique up to isomorphism in the category $\underline{\A}$. We remark that the syzygy defines a functor $\Omega : \underline{\A} \to \underline{\A}$ in the obvious way. Similarly, given a positive integer $j$, the $j$\textsuperscript{th} syzygy defines a functor $\Omega^{j} : \underline{\A} \to \underline{\A}$, which coincides with the composition of $j$ copies of $\Omega$. For more details, see \cite[Section 3]{MR116045} or \cite[Proposition 2.7]{MR0269685}.

\subsection{Higher homological algebra}

In this subsection, we recall some basic definitions from \cite{MR3519980}.

Let $f \in \C(X,Y)$ be a morphism in $\C$. An \textit{$n$-kernel} of $f$ is a sequence \[ \begin{tikzcd}
X_{n} \arrow[r, "f_{n}"] &[0.05em] \cdots \arrow[r, "f_{2}"] & X_{1} \arrow[r, "f_{1}"] & X
\end{tikzcd} \] of morphisms in $\C$ with the property that \[ \begin{tikzcd}
0 \arrow[r] & {\C(-,X_{n})} \arrow[r, "{\C(-,f_{n})}"] &[1.45em] \cdots \arrow[r, "{\C(-,f_{2})}"] &[1.4em] {\C(-,X_{1})} \arrow[r, "{\C(-,f_{1})}"] &[1.4em] {\C(-,X)} \arrow[r, "{\C(-,f)}"] &[1.2em] {\C(-,Y)}
\end{tikzcd} \] is exact in $\Mod \C$. Dually, an \textit{$n$-cokernel} of $f$ is a sequence \[ \begin{tikzcd}
Y \arrow[r, "g_{1}"] & Y_{1} \arrow[r, "g_{2}"] & \cdots \arrow[r, "g_{n}"] &[0.05em] Y_{n}
\end{tikzcd} \] of morphisms in $\C$ for which \[ \begin{tikzcd}
0 \arrow[r] & {\C(Y_{n},-)} \arrow[r, "{\C(g_{n},-)}"] &[1.45em] \cdots \arrow[r, "{\C(g_{2},-)}"] &[1.4em] {\C(Y_{1},-)} \arrow[r, "{\C(g_{1},-)}"] &[1.4em] {\C(Y,-)} \arrow[r, "{\C(f,-)}"] &[1.2em] {\C(X,-)}
\end{tikzcd} \] is exact in $\Mod \C^{\op}$.

We say that $\C$ \textit{has $n$-kernels} if every morphism in $\C$ has an $n$-kernel, and that $\C$ \textit{has $n$-cokernels} if every morphism in $\C$ has an $n$-cokernel.

An additive and idempotent complete category $\C$ is called \textit{$n$-abelian} if it satisfies the following axioms:

\begin{enumerate}
    \item[(A1)] $\C$ has $n$-kernels.
    \item[(A1$^{\op}$)] $\C$ has $n$-cokernels.
    \item[(A2)] For every monomorphism $f \in \C(X,Y)$ in $\C$ and for every $n$-cokernel \[ \begin{tikzcd}
Y \arrow[r, "g_{1}"] & Y_{1} \arrow[r, "g_{2}"] & \cdots \arrow[r, "g_{n}"] &[0.05em] Y_{n}
\end{tikzcd} \] of $f$, the sequence \[ \begin{tikzcd}
X \arrow[r, "f"] & Y \arrow[r, "g_{1}"] & \cdots \arrow[r, "g_{n-1}"] &[0.7em] Y_{n-1}
\end{tikzcd} \] is an $n$-kernel of $g_{n}$.
    \item[(A2$^{\op}$)] For every epimorphism $f \in \C(X,Y)$ in $\C$ and for every $n$-kernel \[ \begin{tikzcd}
X_{n} \arrow[r, "f_{n}"] &[0.05em] \cdots \arrow[r, "f_{2}"] & X_{1} \arrow[r, "f_{1}"] & X
\end{tikzcd} \] of $f$, the sequence \[ \begin{tikzcd}
X_{n-1} \arrow[r, "f_{n-1}"] &[0.7em] \cdots \arrow[r, "f_{1}"] & X \arrow[r, "f"] & Y
\end{tikzcd} \] is an $n$-cokernel of $f_{n}$.
\end{enumerate}

Rigorously, we should label the above axioms with the positive integer $n$. However, $n$ is fixed throughout this text, so we feel comfortable not doing so.

Note that $1$-kernels, $1$-cokernels and $1$-abelian categories coincide with kernels, cokernels and abelian categories, respectively. Hence $n = 1$ recovers the classical case.

Finally, recall that an \textit{$n$-exact sequence} in $\C$ is a sequence \[ \begin{tikzcd}
Z_{n+1} \arrow[r, "h_{n+1}"] &[0.7em] Z_{n} \arrow[r, "h_{n}"] &[0.05em] \cdots \arrow[r, "h_{2}"] & Z_{1} \arrow[r, "h_{1}"] & Z_{0}
\end{tikzcd} \] of morphisms in $\C$ with the property that {\relsize{-0.5} \[ \begin{tikzcd}
0 \arrow[r] & {\C(-,Z_{n+1})} \arrow[r, "{\C(-,h_{n+1})}"] &[2.7em] {\C(-,Z_{n})} \arrow[r, "{\C(-,h_{n})}"] &[1.85em] \cdots \arrow[r, "{\C(-,h_{2})}"] &[1.8em] {\C(-,Z_{1})} \arrow[r, "{\C(-,h_{1})}"] &[1.8em] {\C(-,Z_{0})}
\end{tikzcd} \]} \hspace{-0.667em} and {\relsize{-0.5} \[ \begin{tikzcd}
0 \arrow[r] & {\C(Z_{0},-)} \arrow[r, "{\C(h_{1},-)}"] &[1.8em] {\C(Z_{1},-)} \arrow[r, "{\C(h_{2},-)}"] &[1.8em] \cdots \arrow[r, "{\C(h_{n},-)}"] &[1.85em] {\C(Z_{n},-)} \arrow[r, "{\C(h_{n+1},-)}"] &[2.7em] {\C(Z_{n+1},-)}
\end{tikzcd} \]} \hspace{-0.88em} are exact sequences in $\Mod \C$ and in $\Mod \C^{\op}$, respectively. Observe that $1$-exact sequences are the same as kernel-cokernel pairs (which are also known as short exact sequences). Thus, we might think of $n$-exact sequences as ``$n$-kernel-$n$-cokernel pairs''.

\section{The functorial approach}\label{section.3}

The idea behind the functorial approach is that ``representations'' of a category $\C$, which are modules over $\C$, can be used to understand $\C$. Following this perspective, we might also choose to restrict our attention to modules that are finitely presented, and then aim to understand $\C$ by investigating the categories $\mod \C$ and $\mod \C^{\op}$. The goal of this paper is to develop this idea for the case when $\C$ is an $n$-abelian category. In this section, we take the first steps into this direction, and we reformulate the axioms for a category $\C$ to be $n$-abelian in terms of $\mod \C$ and $\mod \C^{\op}$.

We begin by giving equivalent statements for the axioms (A1) and (A1$^{\op}$) of an $n$-abelian category. Later on, we will do the same for the axioms (A2) and (A2$^{\op}$).

Let $f \in \C(X,Y)$ and $g \in \C(Y,Z)$. Recall that $f$ is a \textit{weak kernel} of $g$ if \[ \begin{tikzcd}
{\C(-,X)} \arrow[r, "{\C(-,f)}"] &[1.2em] {\C(-,Y)} \arrow[r, "{\C(-,g)}"] &[1.2em] {\C(-,Z)}
\end{tikzcd} \] is exact in $\Mod \C$. Dually, $g$ is a \textit{weak cokernel} of $f$ if \[ \begin{tikzcd}
{\C(Z,-)} \arrow[r, "{\C(g,-)}"] &[1.2em] {\C(Y,-)} \arrow[r, "{\C(f,-)}"] &[1.2em] {\C(X,-)}
\end{tikzcd} \] is exact in $\Mod \C^{\op}$.

We say that $\C$ \textit{has weak kernels} if every morphism in $\C$ has a weak kernel, and that $\C$ \textit{has weak cokernels} if every morphism in $\C$ has a weak cokernel.

Note that if $\C$ has $n$-kernels, then $\C$ has weak kernels. Dually, if $\C$ has $n$-cokernels, then $\C$ has weak cokernels. It is, therefore, of our interest to consider categories having weak kernels and weak cokernels. Regarding these properties, we have the following well known result:

\begin{proposition}\label{proposition.1}
An additive and idempotent complete category $\C$ has weak kernels if and only if $\mod \C$ is an abelian category.
\end{proposition}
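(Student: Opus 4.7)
The plan is to prove both directions using the Yoneda lemma together with projectivity of representables in $\Mod \C$.

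For $(b) \Rightarrow (a)$: If $\mod \C$ is abelian, then for any $f \in \C(X,Y)$ the kernel $K$ of $\C(-,f)$ in $\mod \C$ is finitely generated, so there is an epimorphism $\C(-,W) \to K$ for some $W \in \C$. Composing with the inclusion $K \to \C(-,X)$ gives a map $\C(-,W) \to \C(-,X)$ whose image is $K$; by the Yoneda lemma this map equals $\C(-,g)$ for a unique $g \in \C(W,X)$. By construction the sequence $\C(-,W) \to \C(-,X) \to \C(-,Y)$ is exact in $\Mod \C$, so $g$ is a weak kernel of $f$.

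For $(a) \Rightarrow (b)$: Assume $\C$ has weak kernels. Section~2 already records that $\mod \C$ is locally small, additive, has cokernels, and has enough projectives, so what remains is to produce kernels and verify the two remaining abelian axioms. Given $\alpha : F \to G$ in $\mod \C$, fix projective presentations $\C(-,X_1) \xrightarrow{\C(-,f_1)} \C(-,X_0) \xrightarrow{\pi_F} F \to 0$ and $\C(-,Y_1) \xrightarrow{\C(-,g_1)} \C(-,Y_0) \xrightarrow{\pi_G} G \to 0$, and use projectivity of $\C(-,X_0)$ to lift $\alpha \pi_F$ to $\C(-,h_0)$ for some $h_0 \in \C(X_0,Y_0)$.

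The key step is then a double application of weak kernels in $\C$. First, let $(a,b) : W \to X_0 \oplus Y_1$ be a weak kernel of the assembled morphism $[h_0,-g_1] : X_0 \oplus Y_1 \to Y_0$; a direct Yoneda check shows that $\pi_F \circ \C(-,a) : \C(-,W) \to F$ has image exactly $\Ker \alpha$, computed componentwise in $\Mod \C$ (for an element $\pi_F(u)$ of $F$, vanishing under $\alpha$ is equivalent to $h_0 u$ factoring through $g_1$, which is the weak-kernel condition on $(u,v)$). Second, let $(w_1,w_2) : W' \to W \oplus X_1$ be a weak kernel of $[a,-f_1] : W \oplus X_1 \to X_0$; then $\C(-,W') \xrightarrow{\C(-,w_1)} \C(-,W) \to \Ker \alpha \to 0$ is a projective presentation, so $\Ker \alpha \in \mod \C$. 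The main obstacle is precisely this construction: one must guess the correct auxiliary morphism $[h_0,-g_1]$ whose weak kernel produces a generator of $\Ker \alpha$, and then patiently verify the image identification via Yoneda.

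To conclude, the kernels (and cokernels) of $\mod \C$ constructed this way coincide with those computed componentwise in $\Mod \C$, so the inclusion $\mod \C \hookrightarrow \Mod \C$ preserves and reflects both constructions. Consequently, monomorphisms and epimorphisms in $\mod \C$ are exactly those in $\Mod \C$ (being characterized by vanishing of kernels and cokernels), and the two remaining abelian axioms---every monomorphism is the kernel of its cokernel and every epimorphism is the cokernel of its kernel---transfer from $\Mod \C$, which satisfies them componentwise, to $\mod \C$.
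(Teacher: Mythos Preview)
Your proof is correct. The paper itself does not prove this proposition but simply cites Auslander's \emph{Coherent functors}, his \emph{Representation dimension} notes, and Freyd's paper; your explicit construction---building the kernel of a morphism in $\mod\C$ by two successive weak kernels in $\C$, then transferring the abelian axioms from $\Mod\C$---is precisely the classical argument found in those references.

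One small remark on the $(b)\Rightarrow(a)$ direction: you assert that the sequence $\C(-,W)\to\C(-,X)\to\C(-,Y)$ is exact in $\Mod\C$, but what you have directly established is exactness in $\mod\C$ (the image of $\C(-,g)$ equals the kernel $K$ there). The passage to $\Mod\C$ is immediate once you note that exactness in $\Mod\C$ only needs to be tested against the representables $\C(-,Z)$, and these are projective objects of $\mod\C$, so $\Hom(\C(-,Z),-)$ preserves the exactness you already have. This is implicit in your argument but worth one extra sentence.
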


\begin{proof}
See \cite[Proposition 2.1]{MR0212070} and \cite[page 41]{auslander1971representation}, or \cite[Theorem 1.4]{MR0209333} or \cite[Lemma 2.1.6]{MR4327095}.
\end{proof}

Observe that, by taking $\C^{\op}$ in place of $\C$ in Proposition \ref{proposition.1}, we conclude that $\C$ has weak cokernels if and only if $\mod \C^{\op}$ is an abelian category.

Before we continue, let us introduce some nomenclature. We say that $\C$ is \textit{right coherent} if the category $\mod \C$ is abelian. Dually, $\C$ is called \textit{left coherent} if the category $\mod \C^{\op}$ is abelian. If $\C$ is both right and left coherent, then we say that $\C$ is \textit{coherent}.

It follows from the Yoneda lemma that if $\C$ is right coherent, then a sequence in $\mod \C$ is exact in $\mod \C$ if and only if it is exact in $\Mod \C$, see \cite[page 41]{auslander1971representation}. Similarly, if $\C$ is left coherent, then a sequence in $\mod \C^{\op}$ is exact in $\mod \C^{\op}$ if and only if it is exact in $\Mod \C^{\op}$.

We can now describe the axioms (A1) and (A1$^{\op}$) in terms of the categories $\mod \C$ and $\mod \C^{\op}$, respectively. In fact, consider the following axioms:

\begin{enumerate}
    \item[(F1)] $\C$ is right coherent and $\gldim (\mod \C) \leqslant n+1$.
    \item[(F1$^{\op}$)] $\C$ is left coherent and $\gldim (\mod \C^{\op}) \leqslant n+1$.
\end{enumerate}

\begin{proposition}\label{proposition.2}
The axioms \textup{(A1)} and \textup{(A1$^{\op}$)} are equivalent to \textup{(F1)} and \textup{(F1$^{\op}$)}, respectively.
\end{proposition}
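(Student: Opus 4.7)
By duality, it suffices to establish the equivalence (A1) $\Leftrightarrow$ (F1); the equivalence (A1$^{\op}$) $\Leftrightarrow$ (F1$^{\op}$) then follows by applying the same argument to $\C^{\op}$ in place of $\C$. The underlying idea is that an $n$-kernel of a morphism $f \in \C(X,Y)$ is precisely the data needed to extend the projective presentation $\C(-,X) \to \C(-,Y) \to \Coker \C(-,f) \to 0$ in $\Mod \C$ to a projective resolution of length at most $n+1$.

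For the direction (A1) $\Rightarrow$ (F1), I would first observe that if $f$ admits an $n$-kernel $X_{n} \to \cdots \to X_{1} \to X$, then by truncation the morphism $X_{1} \to X$ is already a weak kernel of $f$. Hence $\C$ has weak kernels, so by Proposition \ref{proposition.1} the category $\mod \C$ is abelian, i.e.\ $\C$ is right coherent. To bound the global dimension, take $F \in \mod \C$ with projective presentation $\C(-,X) \xrightarrow{\C(-,f)} \C(-,Y) \to F \to 0$, and an $n$-kernel of $f$; concatenating, we obtain an exact sequence
\[
\begin{tikzcd}[column sep=1.1em]
0 \arrow[r] & \C(-,X_{n}) \arrow[r] & \cdots \arrow[r] & \C(-,X_{1}) \arrow[r] & \C(-,X) \arrow[r] & \C(-,Y) \arrow[r] & F \arrow[r] & 0
\end{tikzcd}
\]
in $\Mod \C$, which is a projective resolution of $F$ of length at most $n+1$. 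Thus $\pdim F \leqslant n+1$ for every $F \in \mod \C$, so $\gldim(\mod \C) \leqslant n+1$.

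For the converse (F1) $\Rightarrow$ (A1), I would start with an arbitrary $f \in \C(X,Y)$ and form the cokernel $F \in \mod \C$ of $\C(-,f)$. Since $\mod \C$ is abelian (so syzygies of finitely presented modules remain finitely presented) and $\pdim F \leqslant n+1$, we can iteratively cover the syzygies by finitely generated projectives to extend the given presentation to a projective resolution of length at most $n+1$. Appealing to the Yoneda equivalence $\C \approx \proj \C$, every projective in this resolution has the form $\C(-,X_{i})$ and every connecting map has the form $\C(-,f_{i})$ for some $f_{i}$ in $\C$. The resulting morphisms $X_{n} \to \cdots \to X_{1} \to X$ (padding with zeros if $\pdim F < n+1$) then form an $n$-kernel of $f$ by construction.

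There is no serious obstacle; the proof is essentially a dictionary entry. The only point requiring a moment of care is that when $\C$ is not skeletally small, $\Mod \C$ need not be locally small, so exactness must be interpreted componentwise — but since right coherence makes $\mod \C$ abelian, and a sequence in $\mod \C$ is exact there iff it is exact componentwise in $\Mod \C$, this causes no difficulty in translating between projective resolutions in $\mod \C$ and the defining exactness condition for $n$-kernels.
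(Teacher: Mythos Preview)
Your proposal is correct and follows essentially the same approach as the paper's proof: reduce by duality to (A1) $\Leftrightarrow$ (F1), use Proposition \ref{proposition.1} to get right coherence from weak kernels, and then translate between $n$-kernels of $f$ and projective resolutions of $\Coker \C(-,f)$ of length at most $n+1$ via the Yoneda embedding. Your additional remark about the set-theoretic subtlety when $\C$ is not skeletally small is a nice touch that the paper handles only implicitly.
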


\begin{proof}
We only prove that (A1) and (F1) are equivalent. Then, by taking $\C^{\op}$ in place of $\C$, we can deduce that (A1$^{\op}$) and (F1$^{\op}$) are equivalent.

Suppose that $\C$ satisfies the axiom (A1). Then $\C$ has weak kernels, and we conclude that $\C$ is right coherent, by Proposition \ref{proposition.1}. Moreover, let $F \in \mod \C$ be arbitrary, and take a projective presentation \[ \begin{tikzcd}
{\C(-,X)} \arrow[r, "{\C(-,f)}"] &[1.2em] {\C(-,Y)} \arrow[r] & F \arrow[r] & 0
\end{tikzcd} \] of $F$ in $\mod \C$ with $f \in \C(X,Y)$. Then, by taking an $n$-kernel of $f$, we get a projective resolution \[ \begin{tikzcd}
0 \arrow[r] & {\C(-,X_{n})} \arrow[r] & \cdots \arrow[r] & {\C(-,X_{1})} \arrow[r] & {\C(-,X)} \arrow[r] & {\C(-,Y)} \arrow[r] & F \arrow[r] & 0
\end{tikzcd} \] of $F$ in $\mod \C$, so that $\pdim F \leqslant n + 1$. Therefore, $\gldim (\mod \C) \leqslant n + 1$.

Conversely, assume that $\C$ satisfies the axiom (F1). Given a morphism $f \in \C(X,Y)$, let $F \in \mod \C$ be the cokernel of $\C(-,f)$, and consider the projective presentation \[ \begin{tikzcd}
{\C(-,X)} \arrow[r, "{\C(-,f)}"] &[1.2em] {\C(-,Y)} \arrow[r] & F \arrow[r] & 0
\end{tikzcd} \] of $F$ in $\mod \C$ induced by $f$. Because $\gldim (\mod \C) \leqslant n + 1$, we can extend this projective presentation to a projective resolution of $F$ in $\mod \C$ as the one in the previous paragraph, which gives an $n$-kernel of $f$, by the Yoneda lemma.
\end{proof}

We define a \textit{pre-$n$-abelian} category to be an additive and idempotent complete category that satisfies the axioms (A1) and (A1$^{\op}$). By Proposition \ref{proposition.2}, an additive and idempotent complete category is pre-$n$-abelian if and only if it satisfies the axioms (F1) and (F1$^{\op}$).

At this point, an intriguing phenomenon is brought to light. In fact, note that, by Theorem \ref{theorem.5}, if $\C$ is coherent, then the global dimensions of $\mod \C$ and $\mod \C^{\op}$ coincide. Thus, if $\C$ is coherent, then $\C$ satisfies (F1) if and only if $\C$ satisfies (F1)$^{\op}$. By Propositions \ref{proposition.1} and \ref{proposition.2}, this means that if $\C$ has weak kernels and weak cokernels, then $\C$ satisfies (A1) if and only if $\C$ satisfies (A1)$^{\op}$. There is, therefore, a redundancy in the definition of a pre-$n$-abelian category (and also in the definition of an $n$-abelian category).

\begin{proposition}
Let $\C$ be an additive and idempotent complete category. If $\C$ has weak kernels and weak cokernels, then $\C$ has $n$-kernels if and only if $\C$ has $n$-cokernels.
\end{proposition}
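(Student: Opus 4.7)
The plan is to invoke the equivalences already established in Proposition \ref{proposition.2} together with the global-dimension symmetry asserted in Theorem \ref{theorem.5} (which is proved later in Appendix \ref{section.9}, and which we are allowed to cite since it appears earlier in the excerpt).

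First I would observe that the hypotheses make $\C$ coherent. Indeed, because $\C$ has weak kernels, Proposition \ref{proposition.1} gives that $\mod \C$ is abelian, i.e., $\C$ is right coherent. Dually, because $\C$ has weak cokernels, applying Proposition \ref{proposition.1} to $\C^{\op}$ shows that $\mod \C^{\op}$ is abelian, i.e., $\C$ is left coherent. Thus $\C$ is coherent, and so Theorem \ref{theorem.5} applies: \[ \gldim(\mod \C) = \gldim(\mod \C^{\op}). \]

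Next, I would chain the equivalences from Proposition \ref{proposition.2}. Since $\C$ is right coherent, the axiom (F1) reduces to the single condition $\gldim(\mod \C) \leqslant n+1$, and this is equivalent to (A1), i.e., to $\C$ having $n$-kernels. Similarly, since $\C$ is left coherent, (F1$^{\op}$) reduces to $\gldim(\mod \C^{\op}) \leqslant n+1$, which is equivalent to (A1$^{\op}$), i.e., to $\C$ having $n$-cokernels. Combining these two equivalences with the equality of global dimensions above yields \[ \C \text{ has } n\text{-kernels} \iff \gldim(\mod \C) \leqslant n+1 \iff \gldim(\mod \C^{\op}) \leqslant n+1 \iff \C \text{ has } n\text{-cokernels}, \] which completes the argument.

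There is essentially no obstacle in the present section: all the substantive content is in Proposition \ref{proposition.2} and in Theorem \ref{theorem.5}. The only subtlety is a bookkeeping one, namely that the hypothesis ``$\C$ has weak kernels and weak cokernels'' is precisely what is needed to upgrade Proposition \ref{proposition.2} so that both (F1) and (F1$^{\op}$) become conditions purely about global dimensions of abelian functor categories, at which point Theorem \ref{theorem.5} bridges the two sides.
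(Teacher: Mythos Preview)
Your proof is correct and follows essentially the same route as the paper's: both invoke Proposition \ref{proposition.1} to pass from weak (co)kernels to coherence, then use Theorem \ref{theorem.5} to equate $\gldim(\mod \C)$ with $\gldim(\mod \C^{\op})$, and finally apply Proposition \ref{proposition.2} to translate the global-dimension bounds back to (A1) and (A1$^{\op}$).
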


\begin{proof}
Follows from the previous discussion.
\end{proof}

Before moving on to the axioms (A2) and (A2$^{\op}$), let us pause to ponder on how $n$-kernels and $n$-cokernels were considered in the proof of Proposition \ref{proposition.2}.

Given a morphism $f$ in $\C$, let us define $\mr(f) = \Coker \C(-,f)$, which belongs to $\mod \C$, and $\ml(f) = \Coker \C(f,-)$, which lies in $\mod \C^{\op}$. The proof of Proposition \ref{proposition.2} shows that if $\C$ has $n$-kernels, then the $n$-kernels of $f$ are obtained by taking projective resolutions of $\mr(f)$ in $\mod \C$ that extend the projective presentation of $\mr(f)$ induced by $f$. Dually, if $\C$ has $n$-cokernels, then the $n$-cokernels of $f$ are obtained by taking projective resolutions of $\ml(f)$ in $\mod \C^{\op}$ that extend the projective presentation of $\ml(f)$ induced by $f$.\footnote{With this perspective, we see that $n$-kernels and $n$-cokernels are not unique if $n \geqslant 2$, as it was observed in \cite[Remark 2.3]{MR3519980}. Clearly, this is the case because projective resolutions are not unique. One way of achieving uniqueness for $n$-kernels and $n$-cokernels would be by considering minimal projective resolutions of objects in $\mod \C$ and in $\mod \C^{\op}$, respectively. We do not explore this subject in this paper, but we refer the reader to \cite[Section 3.2]{VitorGulisz} for some insights on it.}

Observe how in the above paragraph we insisted that $n$-kernels of $f$ do not come from arbitrary projective resolutions of $\mr(f)$, but by those extending the projective presentation of $\mr(f)$ induced by $f$. Dually, for $n$-cokernels of $f$. This is because $\mr(f)$ and $\ml(f)$ also have projective presentations induced by morphisms other than $f$. Motivated by \cite[Chapter III]{auslander1971representation} and \cite[Section 3]{MR2027559}, let us elaborate on this comment.

Let $\Mor \C$ be the category of morphisms in $\C$. Given $f \in \C(X,Y)$, $g \in \C(Z,W)$ and a morphism $(x,y) : f \to g$ in $\Mor \C$, that is, a commutative square \[ \begin{tikzcd}
X \arrow[r, "f"] \arrow[d, "x"'] & Y \arrow[d, "y"] \\
Z \arrow[r, "g"']                & W               
\end{tikzcd} \] in $\C$, we define $\mr(x,y)$ to be the unique morphism $\mr(f) \to \mr(g)$ in $\mod \C$ which makes the diagram with exact rows below commute. \[ \begin{tikzcd}
{\C(-,X)} \arrow[r, "{\C(-,f)}"] \arrow[d, "{\C(-,x)}"'] &[1.2em] {\C(-,Y)} \arrow[d, "{\C(-,y)}"] \arrow[r] & \mr(f) \arrow[r] \arrow[d, "{\mr(x,y)}"] & 0 \\
{\C(-,Z)} \arrow[r, "{\C(-,g)}"']                        & {\C(-,W)} \arrow[r]                        & \mr(g) \arrow[r]                                & 0
\end{tikzcd} \] It is easy to see that this defines an additive functor $\mr : \Mor \C \to \mod \C$, which is dense and full. However, the functor $\mr$ is not an equivalence of categories, because it is not faithful. In fact, we can check that the following are equivalent for a morphism $(x,y)$ in $\Mor \C$ as above: \begin{enumerate}
    \item[(a)] $\mr(x,y) = 0$.
    \item[(b)] There is some $r \in \C(Y,Z)$ with $y = gr$.
    \item[(c)] $(x,y)$ factors in $\Mor \C$ through a split epimorphism in $\C$. 
\end{enumerate} Nonetheless, if we let $\SplitEpi$ be the subcategory of $\Mor \C$ consisting of the split epimorphisms in $\C$, then, by the above discussion, the functor $\mr$ induces an equivalence of categories $\Mor \C / \langle \SplitEpi \rangle \to \mod \C$, which we also denote by $\mr$. In the references that we have previously mentioned, this equivalence is given in \cite[page 38]{auslander1971representation} and \cite[Corollary 3.9]{MR2027559}.

Dually, there is also an additive (but contravariant) functor $\ml : \Mor \C \to \mod \C^{\op}$, which is dense and full. Under the previous settings, we can verify that the following are equivalent:
\begin{enumerate}
    \item[(a)] $\ml(x,y) = 0$.
    \item[(b)] There is some $r \in \C(Y,Z)$ with $x = rf$.
    \item[(c)] $(x,y)$ factors in $\Mor \C$ through a split monomorphism in $\C$. 
\end{enumerate} Consequently, by letting $\SplitMono$ be the subcategory of $\Mor \C$ consisting of the split monomorphisms in $\C$, we conclude that $\ml$ induces an anti-equivalence of categories $\Mor \C / \langle \SplitMono \rangle \to \mod \C^{\op}$, which we also denote by $\ml$.

We can now compare how different the projective presentations of objects in $\mod \C$ and in $\mod \C^{\op}$ are. Let $F \in \mod \C$ and suppose that $f,g \in \Mor \C$ are such that $\mr(f) \simeq F$ and $\mr(g) \simeq F$. Then\footnote{Recall the notation introduced in Subsection \ref{subsection.ideals}.} $\mr(f \oplus \SplitEpi) \simeq \mr(g \oplus \SplitEpi)$, which implies that $f \oplus \SplitEpi \simeq g \oplus \SplitEpi$. Hence it follows from Theorem \ref{theorem.1} that there are $s,t \in \SplitEpi$ such that $f \oplus s \simeq g \oplus t$ in $\Mor \C$. Therefore, $f$ and $g$ differ up to direct sums of split epimorphisms in $\C$. Dually, if $F \in \mod \C^{\op}$ and $f,g \in \Mor \C$ satisfy $\ml(f) \simeq F$ and $\ml(g) \simeq F$, then there are $s,t \in \SplitMono$ with $f \oplus s \simeq g \oplus t$ in $\Mor \C$. In this case, $f$ and $g$ differ up to direct sums of split monomorphisms in $\C$

To summarize our pondering, we can say that $n$-kernels and $n$-cokernels in $\C$ are given by well-chosen projective resolutions of objects in $\mod \C$ and in $\mod \C^{\op}$, respectively. But there seems to be an issue with this approach to $n$-kernels and $n$-cokernels: it relies on two distinct categories, namely, $\mod \C$ and $\mod \C^{\op}$. In view of this apparent issue, it would be interesting to have a tool which could enable us to transit between $\mod \C$ and $\mod \C^{\op}$ in such a way that for a morphism $f$ in $\C$, the morphism $\C(-,f)$ in $\mod \C$ would be sent to $\C(f,-)$ in $\mod \C^{\op}$, and vice versa. Let us explain how we can obtain such a tool.

Following \cite[page 336]{MR342505}, given $F \in \mod \C$, let $F^{\ast}$ be the $\C^{\op}$-module defined by $F^{\ast}(X) = \Hom(F,\C(-,X))$ for each $X \in \C$ and $F^{\ast}(f) = \Hom(F,\C(-,f))$ for each morphism $f$ in $\C$. We call $F^{\ast}$ the \textit{dual} of $F$. If $\alpha \in \Hom(F,G)$ for $F,G \in \mod \C$, then we let $\alpha^{\ast} : G^{\ast} \to F^{\ast}$ be the morphism of $\C^{\op}$-modules given by $(\alpha^{\ast})_{X} = \Hom(\alpha, \C(-,X))$ for each $X \in \C$. This defines a contravariant functor $(-)^{\ast} : \mod \C \to \Mod \C^{\op}$. It follows from the Yoneda lemma that $\C(-,X)^{\ast} \simeq \C(X,-)$ for each $X \in \C$, and also that, for each $f \in \C(X,Y)$, there is a commutative diagram \[ \begin{tikzcd}
{\C(-,Y)^{\ast}} \arrow[r, "{\C(-,f)^{\ast}}"] \arrow[d, "\simeq"'] &[1.7em] {\C(-,X)^{\ast}} \arrow[d, "\simeq"] \\
{\C(Y,-)} \arrow[r, "{\C(f,-)}"']                                   & {\C(X,-)}                           
\end{tikzcd} \] in $\Mod \C^{\op}$ whose vertical arrows are the isomorphisms given by the Yoneda lemma. Thus, $(-)^{\ast}$ sends $\C(-,X)$ to $\C(X,-)$, and $\C(-,f)$ to $\C(f,-)$, up to isomorphisms. Moreover, observe that if \[ \begin{tikzcd}
H \arrow[r, "\beta"] & G \arrow[r, "\alpha"] & F \arrow[r] & 0
\end{tikzcd} \] is a sequence in $\mod \C$ which is exact in $\Mod \C$, then \[ \begin{tikzcd}
0 \arrow[r] & F^{\ast} \arrow[r, "\alpha^{\ast}"] & G^{\ast} \arrow[r, "\beta^{\ast}"] & H^{\ast}
\end{tikzcd} \] is exact in $\Mod \C^{\op}$. Therefore, if $F \in \mod \C$ and if \[ \begin{tikzcd}
{\C(-,X)} \arrow[r, "{\C(-,f)}"] &[1.3em] {\C(-,Y)} \arrow[r] & F \arrow[r] & 0
\end{tikzcd} \] is an exact sequence in $\Mod \C$ with $f \in \C(X,Y)$, then, from the previous observations, we conclude that there is an exact sequence \[ \begin{tikzcd}
0 \arrow[r] & F^{\ast} \arrow[r] & {\C(Y,-)} \arrow[r, "{\C(f,-)}"] &[1.3em] {\C(X,-)}
\end{tikzcd} \] in $\Mod \C^{\op}$. Consequently, if $\C$ is left coherent, then $F^{\ast} \in \mod \C^{\op}$. Thus, when $\C$ is left coherent, we can consider $(-)^{\ast}$ as a functor from $\mod \C$ to $\mod \C^{\op}$.

By taking $\C^{\op}$ in place of $\C$ in the above discussion, we also obtain a contravariant functor $(-)^{\ast} : \mod \C^{\op} \to \Mod \C$. In this case, it follows from the Yoneda lemma that, for each $X \in \C$ and each morphism $f$ in $\C$, the functor $(-)^{\ast}$ sends $\C(X,-)$ to $\C(-,X)$, and $\C(f,-)$ to $\C(-,f)$, up to isomorphisms. Moreover, when $\C$ is right coherent, we can consider $(-)^{\ast}$ as a functor from $\mod \C^{\op}$ to $\mod \C$.

To conclude, the tool that we have mentioned before is the contravariant functor $(-)^{\ast}$. It gives dualities $(-)^{\ast} : \proj \C \leftrightarrow \proj \C^{\op}$ and, when $\C$ is coherent, it gives contravariant additive functors $(-)^{\ast} : \mod \C \leftrightarrow \mod \C^{\op}$.

Next, we move towards the axioms (A2) and (A2$^{\op}$). In order to understand these axioms in terms of the categories $\mod \C$ and $\mod \C^{\op}$, first we need to define a ``transpose'' of a module.

Following \cite[page 337]{MR342505} and \cite[Definition 2.5]{MR0269685}, given $F \in \mod \C$, if $f$ is a morphism in $\C$ for which $\mr(f) \simeq F$, then $\ml(f)$ is called a \textit{transpose} of $F$. Note that transposes are not unique. Indeed, if $f$ is a morphism in $\C$ with $\mr(f) \simeq F$ and if $s$ is a split epimorphism in $\C$, then $\mr(f \oplus s) \simeq F$, but $\ml(f \oplus s) \simeq \ml(f) \oplus \ml(s)$ and $\ml(s) \neq 0$ if $s$ is not an isomorphism. However, if $f$ and $g$ are morphisms in $\C$ such that $\mr(f) \simeq F$ and $\mr(g) \simeq F$, then we know from previous discussions that there are $s,t \in \SplitEpi$ with $f \oplus s \simeq g \oplus t$ in $\Mor \C$. Hence $\ml(f) \oplus \ml(s) \simeq \ml(g) \oplus \ml(t)$, and we can check that $\ml(r) \in \proj \C^{\op}$ for all $r \in \SplitEpi$. Consequently, by Theorem \ref{theorem.1}, there is an isomorphism $\ml(f) \oplus \proj \C^{\op} \simeq \ml(g) \oplus \proj \C^{\op}$ in the projectively stable category $\undermod \C^{\op}$. Thus, a transpose of $F$ is unique up to isomorphism in $\undermod \C^{\op}$, see also \cite[pages 50 and 51]{MR0269685}.

For $F \in \mod \C$, we use the notation $\Tr F$ to denote a transpose of $F$. By the above paragraph, $\Tr F \oplus \proj \C^{\op}$ is unique up to isomorphism. We also remark that, from the previous discussion concerning the functor $(-)^{\ast}$, for a morphism $f \in \C(X,Y)$, there is an exact sequence \[ \begin{tikzcd}
0 \arrow[r] & \mr(f)^{\ast} \arrow[r] & {\C(Y,-)} \arrow[r, "{\C(f,-)}"] &[1.3em] {\C(X,-)} \arrow[r] & \ml(f) \arrow[r] & 0
\end{tikzcd} \] in $\Mod \C^{\op}$. If $\mr(f) \simeq F$, then we might use the notation $\Tr F$ for $\ml(f)$, and the above exact sequence becomes \[ \begin{tikzcd}
0 \arrow[r] & F^{\ast} \arrow[r] & {\C(Y,-)} \arrow[r, "{\C(f,-)}"] &[1.3em] {\C(X,-)} \arrow[r] & \Tr F \arrow[r] & 0
\end{tikzcd}. \]

By interchanging the roles of $\C$ and $\C^{\op}$, we see that if $F \in \mod \C^{\op}$, then a transpose of $F$ is given by $\mr(f)$, where $f$ is a morphism in $\C$ for which $\ml(f) \simeq F$. In this case, $\Tr F \oplus \proj \C$ is unique up to isomorphism. Furthermore, for a morphism $f \in \C(X,Y)$, there is an exact sequence \[ \begin{tikzcd}
0 \arrow[r] & \ml(f)^{\ast} \arrow[r] & {\C(-,X)} \arrow[r, "{\C(-,f)}"] &[1.3em] {\C(-,Y)} \arrow[r] & \mr(f) \arrow[r] & 0
\end{tikzcd} \] in $\Mod \C$. If $\ml(f) \simeq F$, then we might write the above exact sequence as \[ \begin{tikzcd}
0 \arrow[r] & F^{\ast} \arrow[r] & {\C(-,X)} \arrow[r, "{\C(-,f)}"] &[1.3em] {\C(-,Y)} \arrow[r] & \Tr F \arrow[r] & 0
\end{tikzcd}. \]

We remark that the transpose defines contravariant functors $\Tr : \undermod \C \to \undermod \C^{\op}$ and $\Tr : \undermod \C^{\op} \to \undermod \C$ in the obvious ways. Moreover, it is not difficult to check that $\Tr^{2} = \Tr \circ \Tr \simeq 1$, hence $\Tr$ is a duality between the projectively stable
categories $\undermod \C$ and $\undermod \C^{\op}$. We will not use these facts in this paper, though. The interested reader may find more details in \cite[pages 337 and 338]{MR342505}, \cite[Proposition 2.6]{MR0269685} and \cite[page 13]{MR480688}.

Now, if $F \in \mod \C$ and if $\C$ is left coherent, then, for each positive integer $i$, define $\E^{i}(F)$ to be the $\C$-module given by $\E^{i}(F)(X) = \Ext^{i}(\Tr F, \C(X,-))$ for $X \in \C$ and $\E^{i}(F)(f) = \Ext^{i}(\Tr F, \C(f,-))$ for a morphism $f$ in $\C$. Here, $\Ext^{i}$ is taken in the abelian category $\mod \C^{\op}$. Observe that, even though $\Tr F$ is not uniquely determined by $F$, we know that transposes of $F$ differ only up to direct sums with projectives, hence $\E^{i}(F)$ is well defined (up to isomorphism). As in \cite[Definition 2.15]{MR0269685}, for a positive integer $k$, we say that $F$ is \textit{$k$-torsion free} if $\E^{i}(F) = 0$ for all $1 \leqslant i \leqslant k$. We remark that if $F \in \proj \C$, then $F$ is $k$-torsion free for every positive integer $k$.

By taking $\C^{\op}$ in place of $\C$, note that if $F \in \mod \C^{\op}$ and if $\C$ is right coherent, then $\E^{i}(F)$ is the $\C^{\op}$-module given by $\E^{i}(F)(X) = \Ext^{i}(\Tr F, \C(-,X))$ for $X \in \C$ and $\E^{i}(F)(f) = \Ext^{i}(\Tr F, \C(-,f))$ for a morphism $f$ in $\C$. In this case, $F$ is $k$-torsion free when $\E^{i}(F) = 0$ for all $1 \leqslant i \leqslant k$, and if $F \in \proj \C^{\op}$, then $F$ is $k$-torsion free for every positive integer $k$.

In the next result, Proposition \ref{proposition.14}, we describe how we can determine when a finitely presented $\C$-module $F$ is $k$-torsion free by making use of a morphism $f$ in $\C$ for which $\mr(f) \simeq F$. Of course, a similar result also holds for the case of finitely presented $\C^{\op}$-modules. The reader might want to compare Proposition \ref{proposition.14} with \cite[Theorem 2.17]{MR0269685} and \cite[Proposition 1.1.1]{MR2298819}.

\begin{proposition}\label{proposition.14}
Assume that $\C$ is coherent, let $f \in \C(X,Y)$ be a morphism in $\C$, and let $k$ be a positive integer. The following are equivalent:
\begin{enumerate}
        \item[(a)] $\mr(f)$ is $k$-torsion free.
        \item[(b)] For every exact sequence \[ \begin{tikzcd}
{\C(Y_{k},-)} \arrow[r, "{\C(g_{k},-)}"] &[1.45em] \cdots \arrow[r, "{\C(g_{2},-)}"] &[1.4em] {\C(Y_{1},-)} \arrow[r, "{\C(g_{1},-)}"] &[1.4em] {\C(Y,-)} \arrow[r, "{\C(f,-)}"] &[1.2em] {\C(X,-)}
\end{tikzcd} \] in $\mod \C^{\op}$, the sequence \[ \begin{tikzcd}
{\C(-,X)} \arrow[r, "{\C(-,f)}"] &[1.2em] {\C(-,Y)} \arrow[r, "{\C(-,g_{1})}"] &[1.4em] {\C(-,Y_{1})} \arrow[r, "{\C(-,g_{2})}"] &[1.4em] \cdots \arrow[r, "{\C(-,g_{k})}"] &[1.45em] {\C(-,Y_{k})}
\end{tikzcd} \] is exact in $\mod \C$.
        \item[(c)] There is an exact sequence \[ \begin{tikzcd}
{\C(Y_{k},-)} \arrow[r, "{\C(g_{k},-)}"] &[1.45em] \cdots \arrow[r, "{\C(g_{2},-)}"] &[1.4em] {\C(Y_{1},-)} \arrow[r, "{\C(g_{1},-)}"] &[1.4em] {\C(Y,-)} \arrow[r, "{\C(f,-)}"] &[1.2em] {\C(X,-)}
\end{tikzcd} \] in $\mod \C^{\op}$ for which \[ \begin{tikzcd}
{\C(-,X)} \arrow[r, "{\C(-,f)}"] &[1.2em] {\C(-,Y)} \arrow[r, "{\C(-,g_{1})}"] &[1.4em] {\C(-,Y_{1})} \arrow[r, "{\C(-,g_{2})}"] &[1.4em] \cdots \arrow[r, "{\C(-,g_{k})}"] &[1.45em] {\C(-,Y_{k})}
\end{tikzcd} \] is exact in $\mod \C$.
    \end{enumerate}
Furthermore, if $\mr(f)$ is $k$-torsion free and $\pdim \ml(f) \leqslant k + 1$, then the morphism $g_{k}$ in item (c) can be chosen so that $\C(g_{k},-)$ is a monomorphism.\footnote{That is, $g_{k}$ can be chosen to be an epimorphism in $\C$.}
\end{proposition}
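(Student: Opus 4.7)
The plan is to regard the sequence in (b) or (c), together with its continuation $\C(X,-) \to \ml(f) \to 0$ on the right, as the beginning of a projective resolution of $\ml(f)$ in the abelian category $\mod \C^{\op}$. Applying $\Hom_{\mod \C^{\op}}(-, \C(Z,-))$ for $Z \in \C$ then yields, by the Yoneda lemma
\[ \Hom_{\mod \C^{\op}}(\C(W,-), \C(Z,-)) \simeq \C(Z,W) = \C(-,W)(Z), \]
precisely the sequence of (b) or (c) evaluated at $Z$. Under this identification, $\C(g,-)$ is sent to $\C(-,g)$, so the cohomology at position $i$ (for $1 \leqslant i \leqslant k$) of the dual cochain complex computes $\Ext^{i}(\ml(f), \C(Z,-)) = \E^{i}(\mr(f))(Z)$. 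Thus exactness of the dual sequence at $\C(-,Y_{i-1})$ (with $Y_{0} := Y$) is equivalent to $\E^{i}(\mr(f))(Z) = 0$ at that $Z$.

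Given this setup, the three implications are immediate. For (a) $\Rightarrow$ (b): if $\mr(f)$ is $k$-torsion free, then $\E^{i}(\mr(f)) = 0$ for $1 \leqslant i \leqslant k$; hence, for \emph{any} partial projective resolution of $\ml(f)$ of the prescribed form, the associated dual complex is exact at positions $\C(-,Y), \ldots, \C(-,Y_{k-1})$. For (b) $\Rightarrow$ (c): such a sequence exists since $\mod \C^{\op}$ has enough projectives. For (c) $\Rightarrow$ (a): the vanishing of the cohomology of the dual of \emph{some} partial projective resolution forces $\E^{i}(\mr(f))(Z) = 0$ for all $Z \in \C$ and $1 \leqslant i \leqslant k$, regardless of the resolution chosen.

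For the ``Furthermore'' assertion, assume $\mr(f)$ is $k$-torsion free and $\pdim \ml(f) \leqslant k+1$. I would first extend the projective presentation of $\ml(f)$ induced by $f$ to a partial projective resolution
\[ \C(Y_{k-1},-) \to \cdots \to \C(Y_{1},-) \to \C(Y,-) \to \C(X,-) \to \ml(f) \to 0 \]
of length $k$, using that $\mod \C^{\op}$ has enough projectives. Let $K$ be the kernel of $\C(Y_{k-1},-) \to \C(Y_{k-2},-)$ (or of $\C(Y,-) \to \C(X,-)$ when $k = 1$); this is the $(k{+}1)$st syzygy of $\ml(f)$. Coherence of $\C$ guarantees $K \in \mod \C^{\op}$, and the bound $\pdim \ml(f) \leqslant k+1$ together with dimension shifting gives $\Ext^{i}(K, -) \simeq \Ext^{i+k+1}(\ml(f), -) = 0$ for all $i \geqslant 1$. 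Hence $K$ is projective, so $K \in \proj \C^{\op}$ and $K \simeq \C(Y_{k},-)$ for some $Y_{k} \in \C$. The inclusion $K \hookrightarrow \C(Y_{k-1},-)$ then takes the form $\C(g_{k},-)$ for a morphism $g_{k} : Y_{k-1} \to Y_{k}$ in $\C$, and this being a monomorphism of representables translates, by the Yoneda lemma, to $g_{k}$ being an epimorphism in $\C$.

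The main obstacle is really just bookkeeping: keeping the directions of $\C(f,-)$ versus $\C(-,f)$ straight under the duality, and verifying in the last paragraph that the $(k{+}1)$st syzygy is finitely presented (via coherence) and projective (via the $\pdim$ bound), so that it is actually representable. Apart from this, every step reduces to a standard homological-algebra computation in $\mod \C^{\op}$.
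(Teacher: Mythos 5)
Your proof is correct and follows essentially the same route as the paper: both view the sequence as the start of a projective resolution of $\ml(f)$ in $\mod \C^{\op}$, apply $\Hom(-,\C(Z,-))$, and identify the cohomology with $\Ext^{i}(\ml(f),\C(Z,-)) = \E^{i}(\mr(f))(Z)$ via the Yoneda lemma. Your treatment of the ``Furthermore'' clause, via projectivity of the $(k{+}1)$st syzygy, is just a more explicit version of the paper's instruction to take a projective resolution of $\ml(f)$ of minimal length.
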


\begin{proof}
Let \[ \begin{tikzcd}
{\C(Y_{k},-)} \arrow[r, "{\C(g_{k},-)}"] &[1.45em] \cdots \arrow[r, "{\C(g_{2},-)}"] &[1.4em] {\C(Y_{1},-)} \arrow[r, "{\C(g_{1},-)}"] &[1.4em] {\C(Y,-)} \arrow[r, "{\C(f,-)}"] &[1.2em] {\C(X,-)}
\end{tikzcd} \] be an exact sequence in $\mod \C^{\op}$. Such a sequence can be considered as the beginning of a projective resolution of $\ml(f)$ in $\mod \C^{\op}$. If $Z \in \C$, then, by applying $\Hom(-,\C(Z,-))$ to the above sequence\footnote{Observe that this is the same as applying the functor $(-)^{\ast}$ and then evaluating at $Z$.}, we get a complex in $\Ab$ which is, by the Yoneda lemma, isomorphic to \[ \begin{tikzcd}
{\C(Z,X)} \arrow[r, "{\C(Z,f)}"] &[1.2em] {\C(Z,Y)} \arrow[r, "{\C(Z,g_{1})}"] &[1.4em] {\C(Z,Y_{1})} \arrow[r, "{\C(Z,g_{2})}"] &[1.4em] \cdots \arrow[r, "{\C(Z,g_{k})}"] &[1.45em] {\C(Z,Y_{k})}
\end{tikzcd}. \] This complex is an exact sequence in $\Ab$ if and only if $\Ext^{i}(\ml(f),\C(Z,-)) = 0$ for all $1 \leqslant i \leqslant k$. Therefore, the sequence \[ \begin{tikzcd}
{\C(-,X)} \arrow[r, "{\C(-,f)}"] &[1.2em] {\C(-,Y)} \arrow[r, "{\C(-,g_{1})}"] &[1.4em] {\C(-,Y_{1})} \arrow[r, "{\C(-,g_{2})}"] &[1.4em] \cdots \arrow[r, "{\C(-,g_{k})}"] &[1.45em] {\C(-,Y_{k})}
\end{tikzcd} \] is exact in $\mod \C$ if and only if $\mr(f)$ is $k$-torsion free.

From the above paragraph, we deduce that (a) implies (b), and that (c) implies (a). Next, we show that (b) implies (c).

Suppose that item (b) holds. Since $\mod \C^{\op}$ is an abelian category with enough projectives, there is an exact sequence \[ \begin{tikzcd}
{\C(Y_{k},-)} \arrow[r, "{\C(g_{k},-)}"] &[1.45em] \cdots \arrow[r, "{\C(g_{2},-)}"] &[1.4em] {\C(Y_{1},-)} \arrow[r, "{\C(g_{1},-)}"] &[1.4em] {\C(Y,-)} \arrow[r, "{\C(f,-)}"] &[1.2em] {\C(X,-)}
\end{tikzcd} \] in $\mod \C^{\op}$. In this case, it follows from item (b) that \[ \begin{tikzcd}
{\C(-,X)} \arrow[r, "{\C(-,f)}"] &[1.2em] {\C(-,Y)} \arrow[r, "{\C(-,g_{1})}"] &[1.4em] {\C(-,Y_{1})} \arrow[r, "{\C(-,g_{2})}"] &[1.4em] \cdots \arrow[r, "{\C(-,g_{k})}"] &[1.45em] {\C(-,Y_{k})}
\end{tikzcd} \] is exact in $\mod \C$.

Finally, observe that if $\pdim \ml(f) \leqslant k + 1$, then the above sequence in $\mod \C^{\op}$ can be chosen so that $\C(g_{k},-)$ is a monomorphism. In fact, it suffices to take a projective resolution of $\ml(f)$ in $\mod \C^{\op}$ with the smallest length as possible.
\end{proof}

We are ready to describe the axioms (A2) and (A2$^{\op}$) of an $n$-abelian category in terms of $\mod \C$ and $\mod \C^{\op}$, respectively. For a coherent category $\C$, consider the following axioms:

\begin{enumerate}
    \item[(F2)] Every $F \in \mod \C$ with $\pdim F \leqslant 1$ is $n$-torsion free.
    \item[(F2$^{\op}$)] Every $F \in \mod \C^{\op}$ with $\pdim F \leqslant 1$ is $n$-torsion free.
\end{enumerate}

\begin{proposition}\label{proposition.3} If $\C$ is a pre-$n$-abelian category, then the axioms \textup{(A2)} and \textup{(A2$^{\op}$)} are equivalent to \textup{(F2)} and \textup{(F2$^{\op}$)}, respectively.
\end{proposition}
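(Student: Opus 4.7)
The plan is to establish (A2) $\Leftrightarrow$ (F2); the equivalence (A2$^{\op}$) $\Leftrightarrow$ (F2$^{\op}$) will then follow by interchanging the roles of $\C$ and $\C^{\op}$. Since $\C$ is pre-$n$-abelian, it is coherent with $\gldim(\mod \C) \leqslant n+1$ and $\gldim(\mod \C^{\op}) \leqslant n+1$ by Proposition \ref{proposition.2}, so all constructions appearing in Proposition \ref{proposition.14} are available. The central dictionary is this: via $f \mapsto \mr(f)$, monomorphisms in $\C$ correspond to finitely presented $\C$-modules of projective dimension at most $1$. Indeed, $f \in \C(X,Y)$ is a monomorphism precisely when $\C(-,f)$ is a monomorphism in $\mod \C$, so its projective presentation becomes a short exact sequence $0 \to \C(-,X) \to \C(-,Y) \to \mr(f) \to 0$; conversely, every $F \in \mod \C$ with $\pdim F \leqslant 1$ admits a projective resolution of this form, exhibiting $F \simeq \mr(f)$ for some monomorphism $f$.

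For the direction (F2) $\Rightarrow$ (A2), I would fix a monomorphism $f \in \C(X,Y)$ together with an $n$-cokernel $Y \xrightarrow{g_1} Y_1 \to \cdots \xrightarrow{g_n} Y_n$ of $f$. The $n$-cokernel provides, by definition, an exact sequence $0 \to \C(Y_n,-) \to \cdots \to \C(Y,-) \xrightarrow{\C(f,-)} \C(X,-)$ in $\mod \C^{\op}$, which is exactly the input required by Proposition \ref{proposition.14}(b). Since $\pdim \mr(f) \leqslant 1$, axiom (F2) tells us that $\mr(f)$ is $n$-torsion free, and the implication (a) $\Rightarrow$ (b) of Proposition \ref{proposition.14} with $k = n$ then yields exactness of $\C(-,X) \xrightarrow{\C(-,f)} \C(-,Y) \to \cdots \xrightarrow{\C(-,g_n)} \C(-,Y_n)$ in $\mod \C$. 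Combined with the injectivity of $\C(-,f)$ (coming from $f$ being a monomorphism), this is precisely the assertion that $X \xrightarrow{f} Y \to \cdots \to Y_{n-1}$ is an $n$-kernel of $g_n$, which is axiom (A2). For the converse (A2) $\Rightarrow$ (F2), I would take an arbitrary $F \in \mod \C$ with $\pdim F \leqslant 1$, pick a monomorphism $f$ with $F \simeq \mr(f)$ from a projective resolution, form an $n$-cokernel of $f$ using (A1$^{\op}$), and invoke (A2) to turn its first $n-1$ terms into an $n$-kernel of $g_n$. The pair of exact sequences one obtains in $\mod \C$ and $\mod \C^{\op}$ is precisely the data of item (c) of Proposition \ref{proposition.14}, whose implication (c) $\Rightarrow$ (a) delivers the $n$-torsion freeness of $F$.

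The hard part is not the logical chain itself, which is streamlined by Proposition \ref{proposition.14}; the delicate point is bookkeeping, namely to recognize that the exactness sequences defining $n$-kernels (which begin with $0 \to$) and the $k$-torsion free criterion of Proposition \ref{proposition.14}(b) (which does not include this leading zero) differ by exactly the monomorphism hypothesis on $f$, and dually for the $n$-cokernel side. Once this matching is in place, both directions fall out mechanically from Proposition \ref{proposition.14}, with the projective dimension bound $\pdim \mr(f) \leqslant 1$ playing the precise role of encoding ``$f$ is a monomorphism'' in the functorial language.
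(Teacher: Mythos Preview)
Your proposal is correct and follows essentially the same approach as the paper's own proof: both directions are reduced to Proposition \ref{proposition.14} via the dictionary between monomorphisms $f$ and modules $\mr(f)$ of projective dimension at most $1$, with the $n$-cokernel supplying the exact sequence in $\mod \C^{\op}$ and (A2) or (F2) supplying the matching exactness in $\mod \C$. The only cosmetic difference is that the paper treats (A2) $\Rightarrow$ (F2) first and constructs the $n$-cokernel explicitly from a projective resolution of $\ml(f)$, whereas you invoke (A1$^{\op}$) directly; the content is identical.
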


\begin{proof}
Assume that $\C$ is a pre-$n$-abelian category, so that, by Proposition \ref{proposition.2}, both $\mod \C$ and $\mod \C^{\op}$ are abelian categories of global dimension at most $n+1$. Below, we only prove that (A2) and (F2) are equivalent. By duality, we obtain that (A2)$^{\op}$ and (F2)$^{\op}$ are also equivalent.

Suppose that $\C$ satisfies the axiom (A2). Let $F \in \mod \C$ be such that $\pdim F \leqslant 1$. Then there is a projective resolution \[ \begin{tikzcd}
0 \arrow[r] & {\C(-,X)} \arrow[r, "{\C(-,f)}"] &[1.2em] {\C(-,Y)} \arrow[r] & F \arrow[r] & 0
\end{tikzcd} \] of $F$ in $\mod \C$ with $f \in \C(X,Y)$, so that $\mr(f) \simeq F$. Note that $f$ is a monomorphism in $\C$. Since $\gldim (\mod \C^{\op}) \leqslant n + 1$, we can extend the projective presentation \[ \begin{tikzcd}
{\C(Y,-)} \arrow[r, "{\C(f,-)}"] &[1.2em] {\C(X,-)} \arrow[r] & \ml(f) \arrow[r] & 0
\end{tikzcd} \] of $\ml(f)$ in $\mod \C^{\op}$ to a projective resolution {\relsize{-1} \[ \begin{tikzcd}
0 \arrow[r] &[-0.18em] {\C(Y_{n},-)} \arrow[r, "{\C(g_{n},-)}"] &[1.7em] \cdots \arrow[r, "{\C(g_{2},-)}"] &[1.6em] {\C(Y_{1},-)} \arrow[r, "{\C(g_{1},-)}"] &[1.6em] {\C(Y,-)} \arrow[r, "{\C(f,-)}"] &[1.2em] {\C(X,-)} \arrow[r] &[-0.18em] {\ml(f)} \arrow[r] &[-0.18em] 0
\end{tikzcd} \]} \hspace{-0.667em} of $\ml(f)$ in $\mod \C^{\op}$. In this case, \[ \begin{tikzcd}
Y \arrow[r, "g_{1}"] & Y_{1} \arrow[r, "g_{2}"] & \cdots \arrow[r, "g_{n}"] &[0.05em] Y_{n}
\end{tikzcd} \] is an $n$-cokernel of $f$. From the axiom (A2), we conclude that \[ \begin{tikzcd}
X \arrow[r, "f"] & Y \arrow[r, "g_{1}"] & \cdots \arrow[r, "g_{n-1}"] &[0.7em] Y_{n-1}
\end{tikzcd} \] is an $n$-kernel of $g_{n}$. Therefore, the sequence \[ \begin{tikzcd}
0 \arrow[r] & {\C(-,X)} \arrow[r, "{\C(-,f)}"] &[1.2em] {\C(-,Y)} \arrow[r, "{\C(-,g_{1})}"] &[1.4em] {\C(-,Y_{1})} \arrow[r, "{\C(-,g_{2})}"] &[1.4em] \cdots \arrow[r, "{\C(-,g_{n})}"] &[1.45em] {\C(-,Y_{n})}
\end{tikzcd} \] is exact in $\mod \C$. Thus, it follows from Proposition \ref{proposition.14} that $\mr(f)$ is $n$-torsion free, hence so is $F$.

Conversely, suppose that $\C$ satisfies the axiom (F2). If $f \in \C(X,Y)$ is a monomorphism in $\C$, then the sequence \[ \begin{tikzcd}
0 \arrow[r] & {\C(-,X)} \arrow[r, "{\C(-,f)}"] &[1.2em] {\C(-,Y)} \arrow[r] & {\mr(f)} \arrow[r] & 0
\end{tikzcd} \] is exact in $\mod \C$. Hence $\pdim \mr(f) \leqslant 1$, which implies that $\mr(f)$ is $n$-torsion free. If \[ \begin{tikzcd}
Y \arrow[r, "g_{1}"] & Y_{1} \arrow[r, "g_{2}"] & \cdots \arrow[r, "g_{n}"] &[0.05em] Y_{n}
\end{tikzcd} \] is an $n$-cokernel of $f$, then \[ \begin{tikzcd}
0 \arrow[r] & {\C(Y_{n},-)} \arrow[r, "{\C(g_{n},-)}"] &[1.45em] \cdots \arrow[r, "{\C(g_{2},-)}"] &[1.4em] {\C(Y_{1},-)} \arrow[r, "{\C(g_{1},-)}"] &[1.4em] {\C(Y,-)} \arrow[r, "{\C(f,-)}"] &[1.2em] {\C(X,-)}
\end{tikzcd} \] is an exact sequence in $\mod \C^{\op}$. By Proposition \ref{proposition.14}, the sequence \[ \begin{tikzcd}
0 \arrow[r] & {\C(-,X)} \arrow[r, "{\C(-,f)}"] &[1.2em] {\C(-,Y)} \arrow[r, "{\C(-,g_{1})}"] &[1.4em] {\C(-,Y_{1})} \arrow[r, "{\C(-,g_{2})}"] &[1.4em] \cdots \arrow[r, "{\C(-,g_{n})}"] &[1.45em] {\C(-,Y_{n})}
\end{tikzcd} \] is exact in $\mod \C$. Consequently, \[ \begin{tikzcd}
X \arrow[r, "f"] & Y \arrow[r, "g_{1}"] & \cdots \arrow[r, "g_{n-1}"] &[0.7em] Y_{n-1}
\end{tikzcd} \] is an $n$-kernel of $g_{n}$.
\end{proof}

We can now state the main result of this paper.

\begin{theorem}\label{theorem.2}
An additive and idempotent complete category $\C$ is $n$-abelian if and only if $\C$ satisfies the following axioms: \begin{enumerate}
    \item[(F1)] $\C$ is right coherent and $\gldim (\mod \C) \leqslant n+1$.
    \item[(F1$^{\op}$)] $\C$ is left coherent and $\gldim (\mod \C^{\op}) \leqslant n+1$.
    \item[(F2)] Every $F \in \mod \C$ with $\pdim F \leqslant 1$ is $n$-torsion free.
    \item[(F2$^{\op}$)] Every $F \in \mod \C^{\op}$ with $\pdim F \leqslant 1$ is $n$-torsion free.
\end{enumerate}
\end{theorem}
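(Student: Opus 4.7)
The plan is to assemble the theorem directly from Propositions \ref{proposition.2} and \ref{proposition.3}, since together they already cover all four axioms of an $n$-abelian category. There is essentially no new content to prove; the only care needed is to handle the logical dependencies between the axioms, specifically that (F2) and (F2$^{\op}$) are phrased for coherent $\C$, so they implicitly require (F1) and (F1$^{\op}$) to make sense.

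First I would observe that by definition $\C$ is $n$-abelian if and only if it satisfies (A1), (A1$^{\op}$), (A2), and (A2$^{\op}$). By Proposition \ref{proposition.2}, (A1) is equivalent to (F1) and (A1$^{\op}$) is equivalent to (F1$^{\op}$), so the conjunction (A1) and (A1$^{\op}$) is equivalent to the conjunction (F1) and (F1$^{\op}$); in other words, $\C$ is pre-$n$-abelian exactly when $\C$ is coherent with $\gldim (\mod \C) \leqslant n+1$ and $\gldim(\mod \C^{\op}) \leqslant n+1$. In particular, whenever any of (A1), (A1$^{\op}$), (F1), (F1$^{\op}$) is assumed together with the other three, the category $\C$ is automatically coherent, so (F2) and (F2$^{\op}$) are well-defined.

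Next I would invoke Proposition \ref{proposition.3}: under the standing assumption that $\C$ is pre-$n$-abelian, (A2) is equivalent to (F2) and (A2$^{\op}$) is equivalent to (F2$^{\op}$). Combining this with the previous step, the four axioms (A1), (A1$^{\op}$), (A2), (A2$^{\op}$) hold simultaneously if and only if the four axioms (F1), (F1$^{\op}$), (F2), (F2$^{\op}$) hold simultaneously, which is the content of the theorem. Since both directions of the argument are handled by the already-established propositions, there is no obstacle here; the proof amounts to a short paragraph citing Propositions \ref{proposition.2} and \ref{proposition.3} and noting that (F1) and (F1$^{\op}$) ensure coherence of $\C$, so that (F2) and (F2$^{\op}$) are meaningful in the stated list of axioms.
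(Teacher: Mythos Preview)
Your proposal is correct and matches the paper's own proof, which simply reads ``Follows from Propositions \ref{proposition.2} and \ref{proposition.3}.'' Your additional remark that (F1) and (F1$^{\op}$) guarantee coherence so that (F2) and (F2$^{\op}$) are meaningful is a helpful clarification, but the approach is identical.
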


\begin{proof}
Follows from Propositions \ref{proposition.2} and \ref{proposition.3}.
\end{proof}

From now on, we focus on the axioms presented in Theorem \ref{theorem.2} to study $n$-abelian categories. We call them the \textit{functorial axioms} of an $n$-abelian category. Two remarks about these axioms are worth mentioning. First, note that we could replace the condition $\pdim F \leqslant 1$ by $\pdim F = 1$ in the axioms (F2) and (F2$^{\op}$) since projective modules are always $n$-torsion free. Second, although the axioms (F1) and (F1$^{\op}$) state that the global dimensions of $\mod \C$ and $\mod \C^{\op}$ are at most $n+1$, we prove in Corollary \ref{corollary.1} that, except for a trivial case, if $\C$ is an $n$-abelian category, then these global dimensions are actually equal to $n+1$. This trivial case is when the category is ``von Neumann regular'', and we devote the next section to analyze it.

\section{The von Neumann regular case}\label{section.4}

When investigating whether a category is $n$-abelian or not, it can happen that it is $n$-abelian for every positive integer $n$. This is the case precisely when the category is ``von Neumann regular''. In this section, we prove this statement, and we give several characterizations of these categories.

An additive and idempotent complete category $\C$ is called \textit{von Neumann regular} if every morphism $f$ in $\C$ can be written as $f = jp$ in $\C$, where $p$ is a split epimorphism and $j$ is a split monomorphism. Although such categories are usually called ``semisimple'', as in \cite{MR3519980}, for example, we believe that our choice of nomenclature is more appropriate since these categories can be thought of as generalizations of von Neumann regular rings. Indeed, $\C$ is von Neumann regular if and only if every morphism $f$ in $\C$ can be written as $f = fgf$ for some morphism $g$ in $\C$, see \cite[Proposition 3.4]{MR2116320}. Another fact that supports our choice of nomenclature is that $\C$ is von Neumann regular if and only if every finitely presented $\C$-module is projective, which we prove in Proposition \ref{proposition.4}. This is analogous to the fact that a ring $\Lambda$ is von Neumann regular if and only if every finitely presented $\Lambda$-module is projective, see \cite[Section 4]{MR1653294}. Also, note that, due to \cite[Proposition 1.1]{MR306265}, the equivalence between $\C$ being von Neumann regular and the conditions (b) and (d) of Proposition \ref{proposition.4} is analogous to the fact that a ring is von Neumann regular if and only if it has weak dimension zero, see \cite[Theorem 4.21]{MR1653294}.

\begin{proposition}\label{proposition.4}
Let $\C$ be an additive and idempotent complete category. The following are equivalent:
\begin{enumerate}
    \item[(a)] $\C$ is von Neumann regular.
    \item[(b)] $\C$ is right coherent and $\gldim (\mod \C) = 0$.
    \item[(c)] $\mod \C = \proj \C$.
    \item[(d)] $\C$ is left coherent and $\gldim (\mod \C^{\op}) = 0$.
    \item[(e)] $\mod \C^{\op} = \proj \C^{\op}$. 
\end{enumerate}
\end{proposition}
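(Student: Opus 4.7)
The plan is to prove the chain (b) $\Leftrightarrow$ (c) $\Leftrightarrow$ (a) for an arbitrary additive and idempotent complete category, and then to obtain (a) $\Leftrightarrow$ (e) $\Leftrightarrow$ (d) by replacing $\C$ with $\C^{\op}$. This is legitimate because the defining condition of von Neumann regular is visibly self-dual: a factorization $f = jp$ in $\C$ with $j$ split mono and $p$ split epi corresponds, in $\C^{\op}$, to a factorization of $f^{\op}$ again as a split mono followed by a split epi.

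The equivalence (b) $\Leftrightarrow$ (c) is almost formal. If $\C$ is right coherent with $\gldim(\mod \C) = 0$, then every object of $\mod \C$ is projective, so $\mod \C = \proj \C$. Conversely, if $\mod \C = \proj \C$, then the kernel in $\Mod \C$ of any morphism between representables is representable and therefore finitely generated, so $\C$ has weak kernels; Proposition \ref{proposition.1} then makes $\mod \C$ abelian, and $\gldim(\mod \C) = 0$ is automatic since every finitely presented $\C$-module is already projective.

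For (a) $\Rightarrow$ (c), every $F \in \mod \C$ is isomorphic to $\mr(f)$ for some $f \in \C(X,Y)$; factoring $f = jp$ with $j : B \to Y$ split mono and $p : X \to B$ split epi, and using idempotent completeness to split $Y \simeq B \oplus K$ so that $j$ becomes the canonical inclusion, a short computation of $\Coker \C(-,f)$ yields $\mr(f) \simeq \C(-,K) \in \proj \C$. For (c) $\Rightarrow$ (a), given $f \in \C(X,Y)$ the module $\mr(f)$ is projective, hence representable, say $\mr(f) \simeq \C(-,K)$, and the canonical surjection $\C(-,Y) \to \C(-,K)$ splits; by Yoneda it is induced by a split epimorphism $k : Y \to K$ in $\C$. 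A second appeal to idempotent completeness produces $Y \simeq K \oplus B$ with inclusion $j : B \to Y$. Since $kf = 0$ (from the exactness of the cokernel presentation), $f$ factors as $jp$ for a unique $p : X \to B$; the same exact sequence combined with $\C(-,B) = \Ker \C(-,k)$ forces $\C(-,p)$ to be an epimorphism onto the projective object $\C(-,B)$, so it splits, and $p$ is a split epimorphism in $\C$ by Yoneda.

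The main obstacle will be the bookkeeping in (c) $\Rightarrow$ (a): extracting the splitting properties of both $j$ and $p$ simultaneously from the purely functorial statement that $\mr(f)$ is projective. Everything else reduces to Yoneda, Proposition \ref{proposition.1}, and repeated applications of idempotent completeness to split off summands corresponding to idempotent endomorphisms of $Y$.
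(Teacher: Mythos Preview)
Your proof is correct and follows essentially the same strategy as the paper, with one variation and one imprecision worth flagging.

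The paper proves (a) $\Rightarrow$ (b) $\Rightarrow$ (c) $\Rightarrow$ (a), whereas you establish (b) $\Leftrightarrow$ (c) and (a) $\Leftrightarrow$ (c) directly. For the forward direction out of (a), you compute $\mr(f)$ straight from the factorization $f = jp$ and read off that it is representable; the paper instead argues that (a) forces $\C$ to have kernels and makes every monomorphism split, whence $\gldim(\mod \C) = 0$. Your route is a bit more concrete. For (c) $\Rightarrow$ (a), your argument and the paper's are the same: represent $\mr(f)$ by $\C(-,K)$, split $K$ off $Y$ to obtain the split monomorphism $j : B \to Y$, and then use projectivity of $\C(-,B)$ to split $p$.

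The imprecision is in your (c) $\Rightarrow$ (b). You assert that under (c) the kernel in $\Mod \C$ of a morphism between representables is representable, but (c) only speaks of finitely \emph{presented} modules, and that kernel is not a priori finitely presented, so (c) does not apply to it directly. The claim is still true, but it needs the two-step argument already present in your (c) $\Rightarrow$ (a): first $\mr(f)$ is projective, so the image of $\C(-,f)$ is a summand of $\C(-,Y)$ and hence representable; then the surjection from $\C(-,X)$ onto this projective image splits, making the kernel a summand of $\C(-,X)$. Alternatively, once you have (c) $\Rightarrow$ (a), von Neumann regularity visibly produces kernels in $\C$ (the kernel of $f = jp$ is the kernel of the split epimorphism $p$), and (b) follows via Proposition~\ref{proposition.1}.
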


\begin{proof}
We only prove the equivalences between (a), (b) and (c). Then, by duality, we can deduce that (a), (d) and (e) are equivalent, since $\C$ is von Neumann regular if and only if $\C^{\op}$ is von Neumann regular.\footnote{It is also possible to prove directly that item (b) implies item (d), and vice versa, by making use of the functor $(-)^{\ast}$.}

Assume that $\C$ is von Neumann regular. Then we can verify that $\C$ has kernels, so that it follows from Proposition \ref{proposition.2} that $\C$ is right coherent and $\gldim (\mod \C) \leqslant 2$. Furthermore, it is easy to see that every monomorphism in $\C$ is a split monomorphism. Consequently, every monomorphism in $\mod \C$ whose domain and codomain are in $\proj \C$ is a split monomorphism. Therefore, as the projective objects of $\mod \C$ are given by $\proj \C$, and $\mod \C$ is an abelian category with enough projectives and finite global dimension, it is straightforward to conclude that $\gldim (\mod \C) = 0$. Hence (a) implies (b). Also, it is clear that (b) implies (c), and we prove below that (c) implies (a).

Suppose that $\mod \C = \proj \C$. Let $f \in \C(X,Y)$ be an arbitrary morphism in $\C$, and consider the exact sequence \[ \begin{tikzcd}
{\C(-,X)} \arrow[r, "{\C(-,f)}"] &[1.3em] {\C(-,Y)} \arrow[r] & \mr(f) \arrow[r] & 0
\end{tikzcd} \] in $\Mod \C$. Since $\mr(f)$ is projective, $\C(-,Y) \to \mr(f)$ is a split epimorphism, which implies that its kernel is a split monomorphism. Consequently, the image (object) of $\C(-,f)$ is isomorphic to $\C(-,Z)$ for some $Z \in \C$, and we conclude that there are morphisms $p \in \C(X,Z)$ and $j \in \C(Z,Y)$ such that $\C(-,f) = \C(-,j) \C(-,p)$, where $\C(-,p)$ is an epimorphism in $\Mod \C$ and $\C(-,j)$ is a split monomorphism. However, $\C(-,p)$ being an epimorphism to a projective implies that it is a split epimorphism. Thus, by the Yoneda lemma, we get that $f = jp$, where $p$ is a split epimorphism and $j$ is a split monomorphism.
\end{proof}

While Proposition \ref{proposition.4} characterizes von Neumann regular categories in terms of their categories of finitely presented modules, we mention next a few other characterizations of these categories, which are given in terms of their intrinsic properties.

\begin{proposition}\label{proposition.7}
Let $\C$ be an additive and idempotent complete category. The following are equivalent:
\begin{enumerate}
    \item[(a)] $\C$ is von Neumann regular.
    \item[(b)] $\C$ has kernels and every monomorphism in $\C$ is a split monomorphism.
    \item[(c)] $\C$ has cokernels and every epimorphism in $\C$ is a split epimorphism.
    \item[(d)] $\C$ is an abelian category and every object in $\C$ is projective and injective.
\end{enumerate}
\end{proposition}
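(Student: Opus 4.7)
The plan is to establish (a)$\Leftrightarrow$(b), then (a)$\Leftrightarrow$(c) by duality (since $\C$ is von Neumann regular if and only if $\C^{\op}$ is), and finally (a)$\Leftrightarrow$(d) as a separate short argument.

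For (a)$\Rightarrow$(b), given $f \in \C(X,Y)$ with decomposition $f = jp$, where $p \in \C(X,W)$ is split epi with section $s$ and $j$ is split mono, I would split the idempotent $1_X - sp$ as $ki$ with $ik = 1_K$ using idempotent completeness. A short computation gives $pk = 0$ (from $p(1-sp) = 0$ and $ik = 1_K$), hence $fk = 0$; the universal property of $k$ as the kernel of $f$ then follows from $j$ being mono together with the identity $(1-sp)g = g$ whenever $pg = 0$. If $f$ itself is a monomorphism, then so is $p$, but a monic split epimorphism is an isomorphism, so $f = jp$ is split mono. For (b)$\Rightarrow$(a), take $k: K \to X$ to be the kernel of $f$; by hypothesis $k$ is split mono with retraction $r$, and the idempotent $1_X - kr$ splits as $ji$ with $ij = 1_W$. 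One verifies $ik = 0$ (from $(1-kr)k = 0$ together with $ij = 1_W$), whence $f = (fj)i$. The morphism $g := fj$ is a monomorphism: if $gh = 0$ then $jh = kt$ for some $t$, and applying $i$ gives $ijh = ikt = 0$, forcing $h = 0$. By (b) the mono $g$ is split, so $f = gi$ is the required factorization.

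For (a)$\Rightarrow$(d), items (b) and (c) give kernels and cokernels. Every split monomorphism $j$ with retraction $r$ is the kernel of the split epimorphism arising from the splitting of the idempotent $1 - jr$; dually, split epimorphisms are cokernels. Since by (b) and (c) every mono (respectively epi) is split, $\C$ is abelian. Every object is projective because every epi is split, and dually injective. Conversely, given (d), the epi-mono factorization $f = jp$ in the abelian category $\C$ has $j$ split mono (as its domain is injective, apply the injectivity of the image to $1_I$) and $p$ split epi (as its codomain is projective, lift $1_I$), yielding (a).

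The main obstacle will be the careful bookkeeping in the idempotent-splitting arguments of (a)$\Leftrightarrow$(b), particularly the verifications $pk = 0$ and $ik = 0$ from which the kernel construction and the factorization in (b)$\Rightarrow$(a) follow; once these identities are established, the remaining implications amount to unravelling definitions.
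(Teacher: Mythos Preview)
Your proof is correct and takes a genuinely different route from the paper's. The paper offloads most of the work onto Proposition~\ref{proposition.4}: for (b)$\Rightarrow$(a) it observes that ``$\C$ has kernels and every monomorphism splits'' forces $\gldim(\mod\C)=0$ (via the argument in the proof of Proposition~\ref{proposition.4}), hence $\C$ is von Neumann regular; for (a)$\Rightarrow$(d) it uses $\mod\C=\proj\C$ together with the Yoneda equivalence $\C\approx\proj\C$ to conclude that $\C$ is abelian with every object projective. In contrast, your argument is entirely intrinsic to $\C$: you construct kernels and the split-mono/split-epi factorization by hand via idempotent splitting, and you verify the abelian axioms directly. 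Your approach has the advantage of being self-contained and not relying on any functor-category machinery; the paper's approach is shorter given that Proposition~\ref{proposition.4} is already in place, and it illustrates the functorial philosophy that pervades the article.
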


\begin{proof}
We leave it to the reader to verify that (a) implies both (b) and (c). From the proof of Proposition \ref{proposition.4}, we see that (b) implies (a). By duality, (c) implies (a). Therefore, (a), (b) and (c) are equivalent. Next, observe that (d) implies both (b) and (c). Furthermore, we can conclude from Proposition \ref{proposition.4} that (a) implies (d), as we show below.

Assume that $\C$ is von Neumann regular. Then, by Proposition \ref{proposition.4}, $\C$ is right coherent and $\mod \C = \proj \C$. But recall that the Yoneda embedding induces an equivalence of categories $\C \approx \proj \C$. Consequently, $\C$ is an abelian category with the property that all of its objects are projective, which implies that its objects are also injective.
\end{proof}

It follows from Proposition \ref{proposition.4} and  Theorem \ref{theorem.2} that if a category is von Neumann regular, then it is $n$-abelian for every positive integer $n$. Our goal now is to show that von Neumann regular categories are the only categories with this property. In fact, we prove in Proposition \ref{proposition.6} that if a category is $n$-abelian for more than one positive integer $n$, then it must be von Neumann regular.

In order to prove Proposition \ref{proposition.6}, we need to state a few results first. We start with the following lemma, which is well known. For the convenience of the reader, we include a proof.

\begin{lemma}\label{lemma.1}
Let $\A$ be an abelian category with enough projectives. Given $X \in \A$, let $d = \pdim X$. If $1 \leqslant d < \infty$, then there is a projective object $P \in \A$ with $\Ext_{\A}^{d}(X,P) \neq 0$.
\end{lemma}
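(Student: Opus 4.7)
The plan is to exhibit the projective object $P$ explicitly as one of the terms appearing in a projective resolution of $X$, using a dimension-shifting argument together with the extremality of $d$.

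First, since $\A$ has enough projectives and $\pdim X = d < \infty$, I would take a projective resolution of $X$ of length exactly $d$:
\[ 0 \to P_{d} \to P_{d-1} \to \cdots \to P_{1} \to P_{0} \to X \to 0. \]
Set $K = \Image(P_{d-1} \to P_{d-2})$, so $K$ is a $(d-1)$\textsuperscript{th} syzygy of $X$ (here is where I use $d \geqslant 1$, so that this makes sense; for $d=1$, take $K = X$ directly). Then the short exact sequence
\[ 0 \to P_{d} \to P_{d-1} \to K \to 0 \tag{$\ast$} \]
represents a distinguished class $\xi \in \Ext_{\A}^{1}(K, P_{d})$.

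Next, I would use the standard dimension shift: since each $P_{i}$ is projective, a repeated application of the long exact sequence of $\Ext$ to the short exact sequences extracted from the resolution yields a natural isomorphism $\Ext_{\A}^{d}(X, -) \simeq \Ext_{\A}^{1}(K, -)$. In particular, $\Ext_{\A}^{d}(X, P_{d}) \simeq \Ext_{\A}^{1}(K, P_{d})$, and under this isomorphism the class corresponding to the sequence $(\ast)$ is sent (up to sign) to the class representing the full $d$-fold extension built from the resolution.

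Finally, I would argue that $\xi \neq 0$. If $\xi = 0$, then $(\ast)$ splits, so $K$ is a direct summand of $P_{d-1}$, hence $K$ is projective. But then the truncated sequence
\[ 0 \to K \to P_{d-2} \to \cdots \to P_{0} \to X \to 0 \]
is a projective resolution of $X$ of length $d-1$, contradicting $\pdim X = d$. Therefore $\Ext_{\A}^{1}(K, P_{d}) \neq 0$, and so $\Ext_{\A}^{d}(X, P_{d}) \neq 0$, and we may take $P = P_{d}$. The only mildly delicate point is handling $d = 1$ cleanly (where no iteration of the syzygy is needed), but in that case $(\ast)$ is already the resolution itself and the nonsplitting argument is immediate.
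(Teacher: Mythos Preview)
Your proof is correct and follows essentially the same approach as the paper: both take a projective resolution of length $d$, set $P = P_d$, and argue that $\Ext_{\A}^{d}(X,P_d) \neq 0$ because otherwise the monomorphism $P_d \to P_{d-1}$ would split, forcing $\pdim X \leqslant d-1$. Your version simply makes explicit the dimension-shifting isomorphism $\Ext_{\A}^{d}(X,-) \simeq \Ext_{\A}^{1}(K,-)$ that the paper leaves implicit.
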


\begin{proof}
Assume that $1 \leqslant d < \infty$ and let \[ \begin{tikzcd}
0 \arrow[r] & P_{d} \arrow[r] & P_{d-1} \arrow[r] & \cdots \arrow[r] & P_{1} \arrow[r] & P_{0} \arrow[r] & X \arrow[r] & 0
\end{tikzcd} \] be a projective resolution of $X$ in $\A$. We claim that $\Ext_{\A}^{d}(X,P_{d}) \neq 0$. In fact, if this was not the case, then, when computing $\Ext_{\A}^{d}(X,P_{d})$ by applying $\A(-,P_{d})$ in the above projective resolution and taking homology, we would get that $\A(P_{d-1},P_{d}) \to \A(P_{d},P_{d})$ is an epimorphism. But this would imply that $P_{d} \to P_{d-1}$ is a split monomorphism, so that its cokernel would be projective, which would give that $\pdim X \leqslant d - 1$, a contradiction.
\end{proof}


The next lemma is also well known. It follows from Proposition \ref{proposition.23} and the fact that transposes define dualities $\Tr : \undermod \C \leftrightarrow \undermod \C^{\op}$, as remarked in Section \ref{section.3}. However, in an effort to make this paper as self-contained as possible, we provide a direct proof.

\begin{lemma}\label{lemma.2}
Let $\C$ be an additive and idempotent complete category. If $F \in \mod \C$, then $F \in \proj \C$ if and only if $\Tr F \in \proj \C^{\op}$.
\end{lemma}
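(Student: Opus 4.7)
The plan is to realize $\Tr F$ as $\ml(f)$ for a morphism $f \in \C(X,Y)$ with $\mr(f) \simeq F$, exploiting the fact that $\Tr F$ is well-defined up to direct summands of projective $\C^{\op}$-modules, so the property of lying in $\proj \C^{\op}$ does not depend on the choice of transpose.

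For the forward direction, if $F \in \proj \C$, then $F \simeq \C(-,Z)$ for some $Z \in \C$ by the Yoneda equivalence. I would choose $f$ to be the zero morphism $0_{0,Z} : 0 \to Z$; then $\mr(f) \simeq \C(-,Z) \simeq F$, and a direct computation yields $\ml(f) = \Coker(\C(Z,-) \to 0) = 0 \in \proj \C^{\op}$. The uniqueness of transposes up to projective summands (ultimately coming from Theorem \ref{theorem.1}) then finishes this direction.

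The real work is the reverse direction. Assume $\Tr F \in \proj \C^{\op}$ and choose any $f \in \C(X,Y)$ with $\mr(f) \simeq F$. Since $\proj \C^{\op}$ is closed under direct summands (as $\C^{\op}$ is idempotent complete), the transpose $\ml(f)$ is itself projective, so the exact sequence $\C(Y,-) \xrightarrow{\C(f,-)} \C(X,-) \to \ml(f) \to 0$ in $\Mod \C^{\op}$ splits at $\ml(f)$. This produces a decomposition $\C(X,-) \simeq \C(X',-) \oplus \ml(f)$ with $\Image \C(f,-)$ equal to the first summand and $\ml(f) \simeq \C(Z,-)$ for some $X', Z \in \C$. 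By the Yoneda equivalence, this lifts to a decomposition $X \simeq X' \oplus Z$ in $\C$; evaluating the component of $\C(f,-)$ at $Y$ on $1_Y \in \C(Y,Y)$ shows that $f \in \Image \C(f,-)(Y) = \C(X',Y) \oplus 0$, so $f$ vanishes on $Z$, i.e., $f = f' \pi_{X'}$ for the split epi projection $\pi_{X'} \colon X \to X'$ and some $f' \in \C(X',Y)$.

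Two observations then conclude the argument. First, since $\pi_{X'}$ is a split epi, $\C(-, \pi_{X'})$ is a split epi, so $\Image \C(-,f) = \Image \C(-,f')$ and hence $F \simeq \mr(f) \simeq \mr(f')$. Second, the factorization $\C(f,-) = \C(\pi_{X'},-) \C(f',-)$ with $\C(\pi_{X'},-)$ a split monomorphism, combined with $\Image \C(f,-) = \C(X',-)$, forces $\C(f',-) \colon \C(Y,-) \to \C(X',-)$ to be an epimorphism; evaluating at $X'$ yields some $\alpha \in \C(Y,X')$ with $\alpha f' = 1_{X'}$, so $f'$ is a split monomorphism. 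Writing $Y \simeq X' \oplus Y'$ with $f' = \iota_{X'}$ then gives $F \simeq \mr(\iota_{X'}) \simeq \C(-,Y') \in \proj \C$. The main obstacle is the careful bookkeeping of how a splitting of the exact sequence in $\mod \C^{\op}$ descends via the Yoneda lemma to an explicit direct sum decomposition of the morphism $f$ in $\C$ itself.
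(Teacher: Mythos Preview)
Your proof is correct and shares the paper's core idea: use the projectivity of $\ml(f)$ to split it off as a summand of $\C(X,-)$, thereby replacing $f$ by a simpler morphism whose associated $\mr$ is still $F$. The difference is in execution. The paper applies the duality $(-)^{\ast}$ back and forth twice: the first pass produces a morphism $h \colon W \to Y$ (your $f' \colon X' \to Y$) with $\C(-,h)$ a monomorphism and $\mr(h) \simeq F$; the second pass is needed to show that $h$ is in fact a split monomorphism. You avoid the second pass entirely by keeping track of the equality $\Image \C(f,-) = \C(X',-)$ from the splitting, which immediately forces $\C(f',-)$ to be an epimorphism and hence $f'$ to be a split monomorphism via evaluation at $X'$. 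Your route is thus a bit more economical, trading the paper's clean functorial bookkeeping for an explicit direct-sum decomposition in $\C$ carried through the Yoneda embedding; both arrive at the same reduced presentation, and either method closes out the argument.
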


\begin{proof}
Let $F \in \mod \C$. If $F \in \proj \C$, then it is clear that $\Tr F \in \proj \C^{\op}$. So, assume that $\Tr F \in \proj \C^{\op}$. Take a morphism $f \in \C(X,Y)$ in $\C$ such that $\mr(f) \simeq F$, and consider the exact sequence \[ \begin{tikzcd}
{\C(-,X)} \arrow[r, "{\C(-,f)}"] &[1.3em] {\C(-,Y)} \arrow[r] & F \arrow[r] & 0
\end{tikzcd} \] in $\Mod \C$. By applying $(-)^{\ast}$ to it, we get an exact sequence \[ \begin{tikzcd}
{\C(Y,-)} \arrow[r, "{\C(f,-)}"] &[1.3em] {\C(X,-)} \arrow[r] & \ml(f) \arrow[r] & 0
\end{tikzcd} \] in $\Mod \C^{\op}$. Since $\ml(f)$ is a transpose of $F$, it is projective. Thus, as in the proof of Proposition \ref{proposition.4}, we can deduce that there is some $Z \in \C$ and morphisms $y \in \C(Z,Y)$ and $x \in \C(X,Z)$ such that $\C(f,-) = \C(x,-) \C(y,-)$, where $\C(y,-)$ is a split epimorphism and $\C(x,-)$ is a split monomorphism. Therefore, by applying $(-)^{\ast}$ to this decomposition of $\C(f,-)$, we obtain that  $\C(-,f) = \C(-,y) \C(-,x)$, where $\C(-,x)$ is a split epimorphism and $\C(-,y)$ is a split monomorphism. Consequently, $\C(-,Y) \to F$ is a split epimorphism since its kernel $\C(-,y)$ is a split monomorphism, so that $F \in \proj \C$.
\end{proof}

\begin{proposition}\label{proposition.5}
Let $\C$ be a pre-$n$-abelian category that satisfies the axiom \textup{(F2)}. If $F \in \mod \C$ is such that $\pdim F = 1$, then $\pdim \Tr F = n + 1$.
\end{proposition}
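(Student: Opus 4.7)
\medskip

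The plan is to sandwich $\pdim \Tr F$ between $n+1$ and $n+1$ by ruling out every smaller value. Since $\C$ is pre-$n$-abelian, axiom (F1$^{\op}$) gives $\gldim(\mod \C^{\op}) \leqslant n+1$, so immediately $\pdim \Tr F \leqslant n+1$. It remains to prove $\pdim \Tr F \geqslant n+1$.

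First I would rule out $\pdim \Tr F = 0$. If $\Tr F$ were projective in $\mod \C^{\op}$, then Lemma \ref{lemma.2} (applied with the roles of $\C$ and $\C^{\op}$ interchanged) would force $F$ itself to be projective, contradicting $\pdim F = 1$.

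Next I would rule out every intermediate value $1 \leqslant d \leqslant n$. Because $\pdim F = 1 \leqslant 1$ and $\C$ satisfies (F2), $F$ is $n$-torsion free, i.e.\ $\E^{i}(F) = 0$ for all $1 \leqslant i \leqslant n$. Unwinding the definition, this says $\Ext^{i}(\Tr F, \C(Z,-)) = 0$ for every $Z \in \C$ and every $1 \leqslant i \leqslant n$. Since every object of $\proj \C^{\op}$ is (up to isomorphism) of the form $\C(Z,-)$ by the Yoneda lemma, we obtain $\Ext^{i}(\Tr F, P) = 0$ for all $P \in \proj \C^{\op}$ and $1 \leqslant i \leqslant n$. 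Now suppose, for contradiction, that $\pdim \Tr F = d$ with $1 \leqslant d \leqslant n$. Since $d$ is finite and positive, Lemma \ref{lemma.1} applied to the abelian category $\mod \C^{\op}$ (which has enough projectives) produces some $P \in \proj \C^{\op}$ with $\Ext^{d}(\Tr F, P) \neq 0$, which is the desired contradiction.

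Combining these observations, $\pdim \Tr F \notin \{0, 1, \dots, n\}$ while $\pdim \Tr F \leqslant n+1$, so $\pdim \Tr F = n+1$. The only mildly delicate point is the invocation of Lemma \ref{lemma.1}: one must confirm that $\mod \C^{\op}$ is abelian with enough projectives (given by (F1$^{\op}$) and the standard description of $\proj \C^{\op}$), and that the transpose is well defined up to projective summands so that $\E^{i}(F)$ is unambiguous; both of these have already been set up earlier in the paper, so the argument is a direct assembly of Lemmas \ref{lemma.1} and \ref{lemma.2} with the definitions of $n$-torsion freeness and of the functorial axioms.
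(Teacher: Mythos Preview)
Your proof is correct and follows essentially the same approach as the paper: both arguments use Lemma~\ref{lemma.2} to rule out $\Tr F$ being projective, the bound $\gldim(\mod \C^{\op}) \leqslant n+1$ from (F1$^{\op}$) for the upper bound, and the $n$-torsion freeness of $F$ combined with Lemma~\ref{lemma.1} to exclude the intermediate values $1,\dots,n$. The paper's version is simply more compressed, but the ingredients and their assembly are identical.
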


\begin{proof}
Let $F \in \mod \C$ be such that $\pdim F = 1$. By Lemma \ref{lemma.2}, we have $\pdim \Tr F \geqslant 1$. Moreover, we know from Proposition \ref{proposition.2} that $\gldim (\mod \C^{\op}) \leqslant n + 1$, which implies that $\pdim \Tr F \leqslant n + 1$. Therefore, because $F$ is $n$-torsion free, it follows from Lemma \ref{lemma.1} that $\pdim \Tr F = n + 1$.
\end{proof}

\begin{corollary}\label{corollary.1}
Let $\C$ be an $n$-abelian category. If $\C$ is not von Neumann regular, then $\gldim (\mod \C) = \gldim (\mod \C^{\op}) = n + 1$.
\end{corollary}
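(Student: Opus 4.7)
The plan is to combine the upper bound on global dimension from the axiom (F1) with a lower bound obtained via Proposition \ref{proposition.5}, using the non-von-Neumann-regularity hypothesis to produce a module of projective dimension exactly one.

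First I would record that, since $\C$ is $n$-abelian, Proposition \ref{proposition.2} gives $\gldim(\mod \C) \leqslant n+1$ and $\gldim(\mod \C^{\op}) \leqslant n+1$. Next I would invoke Proposition \ref{proposition.4}: because $\C$ is not von Neumann regular, $\gldim(\mod \C) \geqslant 1$, so there exists some $H \in \mod \C$ that is not projective. Setting $d = \pdim H$, we have $1 \leqslant d \leqslant n+1$ (in particular, $d$ is finite), so I can take the $(d-1)$\textsuperscript{th} syzygy $F = \Omega^{d-1} H$ in $\mod \C$. Then $F$ is a finitely presented $\C$-module with $\pdim F = 1$.

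Having produced such an $F$, I would apply Proposition \ref{proposition.5} (which uses the axiom (F2), available since $\C$ is $n$-abelian) to conclude $\pdim \Tr F = n+1$. Since $\Tr F \in \mod \C^{\op}$, this immediately forces $\gldim(\mod \C^{\op}) \geqslant n+1$, and combined with the upper bound gives $\gldim(\mod \C^{\op}) = n+1$.

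Finally, to get the corresponding equality for $\mod \C$, I would either invoke Theorem \ref{theorem.5} directly (since $\C$ is coherent, the two global dimensions coincide), or, to keep the proof self-contained within the current section, run the symmetric argument: by Proposition \ref{proposition.4}, non-von-Neumann-regularity of $\C$ also gives $\gldim(\mod \C^{\op}) \geqslant 1$, so taking an appropriate syzygy in $\mod \C^{\op}$ yields $G \in \mod \C^{\op}$ with $\pdim G = 1$, and the dual of Proposition \ref{proposition.5} (available via axiom (F2$^{\op}$)) gives $\pdim \Tr G = n+1$ in $\mod \C$. There is no real obstacle here; the only step that requires any thought is producing a module of projective dimension exactly one from the mere failure of von Neumann regularity, and that is handled by passing to a syzygy within a finite-length projective resolution.
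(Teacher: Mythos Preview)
Your proposal is correct and follows essentially the same approach as the paper's proof. The only cosmetic differences are that you spell out explicitly how to obtain a module of projective dimension exactly one by taking a $(d-1)$\textsuperscript{th} syzygy of a non-projective module (whereas the paper simply asserts the existence of such an $F$), and you offer both the duality argument and the appeal to Theorem~\ref{theorem.5} for the $\mod \C$ equality, which is exactly what the paper does (the latter in a footnote).
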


\begin{proof}
By Theorem \ref{theorem.2}, $\C$ satisfies the axioms (F1), (F1$^{\op}$), (F2) and (F2$^{\op}$). In particular, the inequalities $\gldim (\mod \C) \leqslant n + 1$ and $\gldim (\mod \C^{\op}) \leqslant n + 1$ hold. Assume that $\C$ is not von Neumann regular. Then we obtain from Proposition \ref{proposition.4} that $\gldim (\mod \C) > 0$. Hence there is $F \in \mod \C$ with $\pdim F = 1$, so that $\pdim \Tr F = n + 1$, by Proposition \ref{proposition.5}. Thus, $\gldim (\mod \C^{\op}) = n + 1$. By duality, $\gldim (\mod \C) = n + 1$ also holds.\footnote{Of course, we could also use Theorem \ref{theorem.5} to conclude the proof.}
\end{proof}

It follows from Corollary \ref{corollary.1} that if a category is not von Neumann regular, then it cannot be $m$-abelian and $n$-abelian for two distinct positive integers $m$ and $n$. This argument allows us to characterize von Neumann regular categories as the categories that are $n$-abelian for more than one (or for every) positive integer $n$. Let us state this characterization, which was first proved by Jasso in \cite[Corollary 3.10]{MR3519980}.

\begin{proposition}\label{proposition.6}
Let $\C$ be an additive and idempotent complete category. The following are equivalent:
\begin{enumerate}
    \item[(a)] $\C$ is von Neumann regular.
    \item[(b)] $\C$ is $n$-abelian for every positive integer $n$.
    \item[(c)] There are two distinct positive integers $m$ and $n$ for which $\C$ is $m$-abelian and $n$-abelian.
\end{enumerate}
\end{proposition}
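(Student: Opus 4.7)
The plan is to deduce this proposition as a direct consequence of results already proved, namely Proposition \ref{proposition.4}, Theorem \ref{theorem.2}, and especially Corollary \ref{corollary.1}. The implication (b) $\Rightarrow$ (c) is immediate, so only (a) $\Rightarrow$ (b) and (c) $\Rightarrow$ (a) require an argument.

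For (a) $\Rightarrow$ (b), I would invoke Proposition \ref{proposition.4} to obtain that $\C$ is both right and left coherent with $\mod \C = \proj \C$ and $\mod \C^{\op} = \proj \C^{\op}$. The axioms (F1) and (F1$^{\op}$) of Theorem \ref{theorem.2} then hold trivially for any $n \geqslant 1$, since the global dimensions in question are $0 \leqslant n+1$. The axioms (F2) and (F2$^{\op}$) hold vacuously, because there is no finitely presented $\C$-module $F$ with $\pdim F = 1$ (every such module is projective), and similarly for $\mod \C^{\op}$. Hence Theorem \ref{theorem.2} yields that $\C$ is $n$-abelian for every positive integer $n$.

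For (c) $\Rightarrow$ (a), which is the substantive direction, I would argue by contradiction. Suppose $\C$ is $m$-abelian and $n$-abelian with $m \neq n$, and assume towards a contradiction that $\C$ is not von Neumann regular. Applying Corollary \ref{corollary.1} to each of these hypotheses, one gets $\gldim(\mod \C) = m+1$ and $\gldim(\mod \C) = n+1$ simultaneously, forcing $m = n$. This contradiction shows that $\C$ must be von Neumann regular.

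There is essentially no obstacle here, since Corollary \ref{corollary.1} already encapsulates the real content: the global dimension of $\mod \C$ is rigidly pinned to $n+1$ whenever $\C$ is a non-von-Neumann-regular $n$-abelian category. The only care required is to verify the (a) $\Rightarrow$ (b) direction cleanly through Theorem \ref{theorem.2}, making sure the vacuous verification of (F2) and (F2$^{\op}$) is explicitly noted.
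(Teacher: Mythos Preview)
Your proposal is correct and follows essentially the same route as the paper: the paper likewise deduces (a) $\Rightarrow$ (b) from Proposition \ref{proposition.4} and Theorem \ref{theorem.2}, notes that (b) $\Rightarrow$ (c) is trivial, and obtains (c) $\Rightarrow$ (a) from Corollary \ref{corollary.1}. One small remark: your ``vacuous'' verification of (F2) and (F2$^{\op}$) is really using that the condition $\pdim F \leqslant 1$ may be replaced by $\pdim F = 1$ (since projective modules are always $n$-torsion free), which the paper explicitly notes right after Theorem \ref{theorem.2}.
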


\begin{proof}
It follows from Proposition \ref{proposition.4} and Theorem \ref{theorem.2} that (a) implies (b). Trivially, (b) implies (c), and by Corollary \ref{corollary.1}, (c) implies (a).
\end{proof}

As we see from the characterizations of von Neumann regular categories given in this section, such categories can be considered to be trivial from the point of view of (classical and higher) homological algebra. In Appendix \ref{section.11}, we give more evidence for this claim, and we also present another proof of Proposition \ref{proposition.6}.

\section{The double dual sequence}\label{section.5}

In this section, we prove the existence of the ``double dual sequence'' of a finitely presented module, and we explore some of its properties and consequences, which will be important for the development of the rest of this paper.

Recall from Section \ref{section.3} that if $F$ is a finitely presented $\C$-module or $\C^{\op}$-module, then we can consider its dual $F^{\ast}$. As we have already observed, when $\C$ is coherent, the dual of a module defines contravariant additive functors $(-)^{\ast} : \mod \C \leftrightarrow \mod \C^{\op}$. If $\C$ is coherent, we define the \textit{double dual} of $F$ to be $(F^{\ast})^{\ast}$, which we denote by $F^{\ast \ast}$. Clearly, there is a canonical morphism $F \to F^{\ast \ast}$, see \cite[page 336]{MR342505}, and $F$ is called \textit{reflexive} when this morphism is an isomorphism.

The next result, which was proved by Auslander in \cite{MR0212070}, offers a way to understand when a finitely presented module $F$ is reflexive. Following \cite{MR3537819}, we call the exact sequence in the proposition below the \textit{double dual sequence} of $F$.\footnote{Some authors call it the \textit{Auslander--Bridger sequence} of $F$, due to its appearance in \cite{MR0269685}.}

\begin{proposition}\label{proposition.10}
Assume that $\C$ is coherent. For each $F \in \mod \C$, there is an exact sequence \[ \begin{tikzcd}
0 \arrow[r] & \E^{1}(F) \arrow[r] & F \arrow[r] & F^{\ast \ast} \arrow[r] & \E^{2}(F) \arrow[r] & 0
\end{tikzcd} \] in $\mod \C$, where $F \to F^{\ast \ast}$ is the canonical morphism. Moreover, the morphisms in this sequence are natural in $F$.
\end{proposition}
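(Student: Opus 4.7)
My plan is to extract the sequence from a projective presentation of $F$ by applying the dual functor $(-)^{\ast}$ twice and splicing the resulting long exact sequences. Start from a projective presentation
\[
\begin{tikzcd}
P_{1} \arrow[r, "d"] & P_{0} \arrow[r] & F \arrow[r] & 0
\end{tikzcd}
\]
in $\mod \C$ with $P_{1} = \C(-,X)$, $P_{0} = \C(-,Y)$ and $d = \C(-,f)$ for some $f \in \C(X,Y)$. Applying $(-)^{\ast}$ yields (by the very definitions of $F^{\ast}$ and $\Tr F$, together with the Yoneda identifications $P_{i}^{\ast} \simeq \C(?,\cdot)$) the exact sequence
\[
\begin{tikzcd}
0 \arrow[r] & F^{\ast} \arrow[r] & P_{0}^{\ast} \arrow[r, "d^{\ast}"] & P_{1}^{\ast} \arrow[r] & \Tr F \arrow[r] & 0
\end{tikzcd}
\]
in $\mod \C^{\op}$. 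Set $K = \Image d^{\ast} \in \mod \C^{\op}$ (using that $\C$ is left coherent), and split the above into the short exact sequences $0 \to F^{\ast} \to P_{0}^{\ast} \to K \to 0$ (call this (A)) and $0 \to K \to P_{1}^{\ast} \to \Tr F \to 0$ (call this (B)).

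Now apply $(-)^{\ast}$ again, viewed as a contravariant functor $\mod \C^{\op} \to \mod \C$. Since $(-)^{\ast}$ is $\Hom_{\mod \C^{\op}}(-,\C(-,?))$, its right derived functors vanish on representable projectives and, applied to $\Tr F$, compute $\E^{i}(F)$. From (B) I obtain an exact sequence
\[
\begin{tikzcd}
0 \arrow[r] & (\Tr F)^{\ast} \arrow[r] & P_{1}^{\ast \ast} \arrow[r, "\alpha"] & K^{\ast} \arrow[r] & \E^{1}(F) \arrow[r] & 0
\end{tikzcd}
\]
and a connecting isomorphism $R^{1}(-)^{\ast}(K) \simeq \E^{2}(F)$; from (A) I obtain an exact sequence
\[
\begin{tikzcd}
0 \arrow[r] & K^{\ast} \arrow[r, "\beta"] & P_{0}^{\ast \ast} \arrow[r] & F^{\ast \ast} \arrow[r] & \E^{2}(F) \arrow[r] & 0
\end{tikzcd}
\]
after substituting that isomorphism. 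Crucially, the factorization $d^{\ast}: P_{0}^{\ast} \twoheadrightarrow K \hookrightarrow P_{1}^{\ast}$ dualizes to $d^{\ast \ast} = \beta \circ \alpha$, and under the canonical isomorphisms $P_{i} \simeq P_{i}^{\ast \ast}$ (which hold for representables by Yoneda) this $d^{\ast \ast}$ is identified with $d$.

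With these two long exact sequences in hand, the heart of the argument is a short diagram chase on
\[
\begin{tikzcd}
P_{1} \arrow[r, "d"] \arrow[d, "\simeq"'] & P_{0} \arrow[r] \arrow[d, "\simeq"'] & F \arrow[r] \arrow[d] & 0 \\
P_{1}^{\ast \ast} \arrow[r, "d^{\ast \ast}"'] & P_{0}^{\ast \ast} \arrow[r] & F^{\ast \ast} \arrow[r] & \E^{2}(F) \arrow[r] & 0
\end{tikzcd}
\]
whose right-hand vertical is the canonical morphism $F \to F^{\ast \ast}$. Exactness of (A)$^{\ast}$ at $F^{\ast \ast}$ shows $\Image(P_{0} \to F^{\ast \ast}) = \ker(F^{\ast \ast} \to \E^{2}(F))$, and since $P_{0} \to F$ is an epimorphism this gives exactness of the desired sequence at $F^{\ast \ast}$ and $\E^{2}(F)$. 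For the kernel of $F \to F^{\ast \ast}$, sequence (A)$^{\ast}$ says $\ker(P_{0} \to F^{\ast \ast}) = \beta(K^{\ast}) \subseteq P_{0}^{\ast \ast} \simeq P_{0}$; since $\Image d = \Image d^{\ast \ast} = \beta(\Image \alpha)$ is contained in $\beta(K^{\ast})$, the induced map on $F = P_{0}/\Image d$ has kernel $\beta(K^{\ast})/\beta(\Image \alpha) \simeq K^{\ast}/\Image \alpha \simeq \E^{1}(F)$, as required.

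Finally, naturality in $F$ follows by the standard argument: any morphism $F \to G$ lifts to a morphism of chosen projective presentations, and every construction above (duals, $\Image$, $K$, the derived-functor long exact sequences, the connecting homomorphisms) is functorial on such morphisms of presentations; choosing different lifts alters the lift by a homotopy, which induces the zero map on all the subquotients appearing in the four-term sequence. The step I expect to be the most delicate is the identification $d^{\ast \ast} = \beta \circ \alpha$ together with the precise bookkeeping that turns the two separate four-term sequences coming from (A) and (B) into a single four-term sequence with the canonical map $F \to F^{\ast \ast}$ in the middle; all other ingredients are standard homological algebra available since $\C$ is coherent.
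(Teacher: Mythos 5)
Your proposal is correct and follows essentially the same route as the paper's proof: dualize a projective presentation, splice the resulting four-term sequence in $\mod \C^{\op}$ at the image $K$ (the paper's $G$), dualize the two short exact sequences to obtain the same pair of four-term sequences in $\mod \C$, and reassemble them via the factorization $d^{\ast\ast} = \beta\alpha$ (which the paper packages as a $4\times 4$-lemma argument rather than a direct chase). The only minor divergence is naturality, which the paper deduces more economically from naturality of the canonical morphism $F \to F^{\ast\ast}$ once $\E^{1}(F)$ and $\E^{2}(F)$ are identified as its kernel and cokernel, instead of lifting along projective presentations.
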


\begin{proof}
Let \[ \begin{tikzcd}
{\C(-,X)} \arrow[r, "{\C(-,f)}"] &[1.2em] {\C(-,Y)} \arrow[r] & F \arrow[r] & 0
\end{tikzcd} \] be a projective presentation of $F$ in $\mod \C$ with $f \in \C(X,Y)$. By applying $(-)^{\ast}$ to this projective presentation, we get an exact sequence \[ \begin{tikzcd}
0 \arrow[r] & F^{\ast} \arrow[r] & {\C(Y,-)} \arrow[r, "{\C(f,-)}"] &[1.2em] {\C(X,-)} \arrow[r] & \ml(f) \arrow[r] & 0
\end{tikzcd} \] in $\mod \C^{\op}$. Write this sequence as the splice of two short exact sequences \[ \begin{tikzcd}
0 \arrow[r] & F^{\ast} \arrow[r] & {\C(Y,-)} \arrow[r] & G \arrow[r] & 0
\end{tikzcd} \] and \[ \begin{tikzcd}
0 \arrow[r] & G \arrow[r] & {\C(X,-)} \arrow[r] & \ml(f) \arrow[r] & 0
\end{tikzcd} \] in $\mod \C^{\op}$. By applying $(-)^{\ast}$ to these short exact sequences, we obtain two exact sequences \[ \begin{tikzcd}
0 \arrow[r] & G^{\ast} \arrow[r] & {\C(-,Y)} \arrow[r] & F^{\ast \ast} \arrow[r] & \E^{2}(F) \arrow[r] & 0
\end{tikzcd} \] and \[ \begin{tikzcd}
0 \arrow[r] & \ml(f)^{\ast} \arrow[r] & {\C(-,X)} \arrow[r] & G^{\ast} \arrow[r] & \E^{1}(F) \arrow[r] & 0
\end{tikzcd} \] in $\mod \C$. We can see that the cokernels of $\C(-,Y) \to F^{\ast \ast}$ and $\C(-,X) \to G^{\ast}$ are given by $\E^{2}(F)$ and $\E^{1}(F)$, respectively, by applying $\Hom(-,\C(Z,-))$ in the previous short exact sequences in $\mod \C^{\op}$ and then considering the induced long exact sequences, for each $Z \in \C$, and by using the fact that $\Ext^{1}(G,-) \simeq \Ext^{2}(\ml(f),-)$.

Next, we apply $(-)^{\ast}$ to \[ \begin{tikzcd}
F^{\ast} \arrow[r] & {\C(Y,-)} \arrow[r, "{\C(f,-)}"] &[1.2em] {\C(X,-)}
\end{tikzcd} \] and we get morphisms \[ \begin{tikzcd}
{\C(-,X)} \arrow[r, "{\C(-,f)}"] &[1.2em] {\C(-,Y)} \arrow[r] & F^{\ast \ast}
\end{tikzcd} \] in $\mod \C$ whose composition is zero, so that there is a unique morphism $F \to F^{\ast \ast}$ in $\mod \C$ which makes the diagram


\[ \begin{tikzcd}
{\C(-,X)} \arrow[r, "{\C(-,f)}"] &[1.2em] {\C(-,Y)} \arrow[r] \arrow[rd] & F \arrow[r] \arrow[d] & 0 \\
                                 &                                & F^{\ast \ast}         &  
\end{tikzcd} \] commute. In this case, $F \to F^{\ast \ast}$ is the canonical morphism from a module to its double dual. The details are left to the reader.

Finally, we put the previous exact sequences in $\mod \C$ into a commutative diagram \[ \begin{tikzcd}
            &                                                        &                                                    &[1.2em] 0 \arrow[d]                                            & 0 \arrow[d]                  &   \\
0 \arrow[r] & \ml(f)^{\ast} \arrow[d, equal] \arrow[r] & {\C(-,X)} \arrow[d, equal] \arrow[r] & G^{\ast} \arrow[r] \arrow[d]                           & \E^{1}(F) \arrow[r] \arrow[d] & 0 \\
0 \arrow[r] & \ml(f)^{\ast} \arrow[r]                                & {\C(-,X)} \arrow[r, "{\C(-,f)}"']                   & {\C(-,Y)} \arrow[r] \arrow[d]                          & F \arrow[r] \arrow[d]        & 0 \\
            &                                                        &                                                    & F^{\ast \ast} \arrow[r, equal] \arrow[d] & F^{\ast \ast} \arrow[d]      &   \\
            &                                                        &                                                    & \E^{2}(F) \arrow[d] \arrow[r, equal]      & \E^{2}(F) \arrow[d]           &   \\
            &                                                        &                                                    & 0                                                      & 0                            &  
\end{tikzcd} \] where $F \to F^{\ast \ast}$ is the canonical morphism. By filling this diagram with zeros, we conclude from the $4 \times 4$ lemma that its fourth column is exact in $\mod \C$, \mbox{see \cite[Lemma 2.6]{MR2909639}.}

Lastly, given that the canonical morphism $F \to F^{\ast \ast}$ is natural in $F$, so are the other morphisms in the double dual sequence of $F$.
\end{proof}

Observe that, by Proposition \ref{proposition.10}, if $\C$ is coherent and $F \in \mod \C$, then $F$ is reflexive if and only if $F$ is $2$-torsion free. Furthermore, we also conclude from Proposition \ref{proposition.10} that if $F \to G$ is a morphism in $\mod \C$, then it induces a commutative diagram \[ \begin{tikzcd}
\E^{1}(F) \arrow[r] \arrow[d] & F \arrow[d] \\
\E^{1}(G) \arrow[r]           & G          
\end{tikzcd} \] in $\mod \C$, where the horizontal arrows are monomorphisms. In particular, note that if $F \to G$ is a monomorphism, then so is $\E^{1}(F) \to \E^{1}(G)$. Therefore, if $F \to G$ is a monomorphism and $G$ is $1$-torsion free, then so is $F$. Consequently, because projectives are $1$-torsion free, if there is a monomorphism in $\mod \C$ from $F$ to a projective, then $F$ is $1$-torsion free. By Proposition \ref{proposition.14}, the converse also holds. Let us record these facts below, which are, of course, well known. For ease of reference, it is convenient to recall that an object $X$ in an abelian category $\A$ is called a \textit{syzygy} if it embeds into a projective, that is, if there is a monomorphism $X \to P$ in $\A$ with $P$ projective.

\begin{corollary}\label{corollary.3}
Assume that $\C$ is coherent, and let $F \in \mod \C$.
\begin{enumerate}
    \item[(a)] $F$ is $1$-torsion free if and only if $F$ is a syzygy.
    \item[(b)] $F$ is $2$-torsion free if and only if $F$ is reflexive.
\end{enumerate}
\end{corollary}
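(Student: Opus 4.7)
The plan is to read everything off the double dual sequence
\[ 0 \longrightarrow \E^{1}(F) \longrightarrow F \longrightarrow F^{\ast\ast} \longrightarrow \E^{2}(F) \longrightarrow 0 \]
from Proposition \ref{proposition.10}, together with the naturality remark that immediately follows it.

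For part (b), this is essentially immediate. If $F$ is $2$-torsion free, then $\E^{1}(F) = 0 = \E^{2}(F)$, so the sequence collapses to $0 \to F \to F^{\ast\ast} \to 0$, forcing the canonical morphism $F \to F^{\ast\ast}$ to be an isomorphism, i.e.\ $F$ is reflexive. Conversely, if $F \to F^{\ast\ast}$ is an isomorphism, then it is both mono and epi, so its kernel $\E^{1}(F)$ and its cokernel $\E^{2}(F)$ both vanish.

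For part (a), the $(\Rightarrow)$ direction rests on the observation that $F^{\ast\ast}$ is always a syzygy when $\C$ is coherent: starting from a projective presentation $\C(Z,-) \to \C(W,-) \to F^{\ast} \to 0$ of the finitely presented $\C^{\op}$-module $F^{\ast}$ and applying the (left exact) contravariant functor $(-)^{\ast}$, one obtains an exact sequence $0 \to F^{\ast\ast} \to \C(-,W) \to \C(-,Z)$, so $F^{\ast\ast}$ embeds into a projective. If moreover $\E^{1}(F) = 0$, then by the double dual sequence the canonical map $F \to F^{\ast\ast}$ is a monomorphism, and composing gives the desired embedding of $F$ into a projective.

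For the $(\Leftarrow)$ direction, suppose $F \hookrightarrow P$ with $P$ projective. The key point is already recorded right after Proposition \ref{proposition.10}: a monomorphism $F \to G$ induces a monomorphism $\E^{1}(F) \to \E^{1}(G)$, so a subobject of a $1$-torsion free module is $1$-torsion free. It therefore suffices to note that every finitely generated projective is $1$-torsion free, which is the case because $\C(-,X)$ is reflexive (it equals its own double dual via the Yoneda lemma), hence $2$-torsion free by part (b), hence in particular $1$-torsion free. No step here is a real obstacle; the only thing to be slightly careful about is invoking the naturality of the double dual sequence (rather than redoing the diagram chase) in order to conclude the converse direction of (a) cleanly.
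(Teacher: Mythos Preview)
Your proof is correct. Part (b) and the converse direction of (a) match the paper's argument exactly: the paper also invokes the double dual sequence (Proposition \ref{proposition.10}) for (b), and the naturality remark following it (that subobjects of $1$-torsion free modules are $1$-torsion free, combined with projectives being $1$-torsion free) for (a) $(\Leftarrow)$.

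The one genuine difference is in (a) $(\Rightarrow)$. The paper appeals to Proposition \ref{proposition.14} (with $k=1$) to produce the embedding into a projective, whereas you give a self-contained argument: $F^{\ast\ast}$ is always a syzygy because dualizing a projective presentation of $F^{\ast}$ yields $0 \to F^{\ast\ast} \to \C(-,W)$, and when $\E^{1}(F)=0$ the canonical map $F \to F^{\ast\ast}$ is a monomorphism, so composing gives the embedding. This is a perfectly valid alternative; it has the minor advantage of staying entirely inside the double-dual-sequence framework rather than reaching back to Proposition \ref{proposition.14}, at the cost of a couple of extra lines. The two arguments are closely related under the hood (both ultimately dualize a projective presentation on the $\C^{\op}$-side), so the difference is more one of packaging than of substance.
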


\begin{proof}
Follows from the above discussion.
\end{proof}

Because of Corollary \ref{corollary.3}, we can reformulate the functorial axioms (F2) and (F2$^{\op}$) of an $n$-abelian category for the cases $n = 1$ and $n = 2$ in terms of finitely presented modules of projective dimension at most $1$ being syzygies and reflexive, respectively. Moreover, as we show in Theorems \ref{theorem.7} and \ref{theorem.9}, the conditions of being syzygies and reflexive can be used to describe the axioms (F2) and (F2$^{\op}$) for every positive integer $n \geqslant 2$. This will be discussed in Section \ref{section.6}.

Observe that it also follows from Corollary \ref{corollary.3} that if $\C$ is coherent, then every projective object in $\mod \C$ is reflexive since projectives are $2$-torsion free.\footnote{Alternatively, we could prove directly that projectives are reflexive by using the Yoneda lemma.} Thus, it is natural to ask if there are reflexive objects in $\mod \C$ that are not projective. For the case when $\C$ is an $n$-abelian category which is not von Neumann regular, the answer to this question depends on $n$. Actually, it gives an interesting distinction between $1$-abelian categories and $n$-abelian categories with $n \geqslant 2$, as we show below.

\begin{corollary}\label{corollary.5}
Let $\C$ be an $n$-abelian category that is not von Neumann regular. The following are equivalent:
\begin{enumerate}
    \item[(a)] $n \geqslant 2$.
    \item[(b)] Every $F \in \mod \C$ with $\pdim F = 1$ is reflexive.
    \item[(c)] There is some $F \in \mod \C$ with $\pdim F = 1$ that is reflexive.
    \item[(d)] There is some $F \in \mod \C$ with $\pdim F \geqslant 1$ that is reflexive.
\end{enumerate}
\end{corollary}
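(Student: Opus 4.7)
The plan is to prove the cycle (a) $\Rightarrow$ (b) $\Rightarrow$ (c) $\Rightarrow$ (d) $\Rightarrow$ (a). The first three implications should be short using tools already developed. For (a) $\Rightarrow$ (b): if $F \in \mod \C$ has $\pdim F = 1$, then the axiom (F2) (available via Theorem \ref{theorem.2}) says $F$ is $n$-torsion free; since $n \geqslant 2$, $F$ is in particular $2$-torsion free, and Corollary \ref{corollary.3}(b) identifies this with reflexivity. For (b) $\Rightarrow$ (c), I just need to produce some $F$ with $\pdim F = 1$: since $\C$ is not von Neumann regular, Proposition \ref{proposition.4} gives $\gldim(\mod \C) > 0$, so some $G \in \mod \C$ has $\pdim G \geqslant 1$, and this projective dimension is finite by (F1); the syzygy $\Omega^{d-1} G$ for $d = \pdim G$ then has projective dimension exactly $1$. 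The implication (c) $\Rightarrow$ (d) is immediate.

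The main content is (d) $\Rightarrow$ (a), which I would prove by contrapositive: assuming $n = 1$, I claim every reflexive $F \in \mod \C$ is in fact projective (so $\pdim F = 0$, ruling out (d)). The plan is as follows. By Corollary \ref{corollary.3}(b), reflexivity of $F$ means $F$ is $2$-torsion free, i.e.\ $\E^{1}(F) = \E^{2}(F) = 0$. Unwinding the definition, this says $\Ext^{i}(\Tr F, \C(Z,-)) = 0$ for every $Z \in \C$ and every $i \in \{1,2\}$, equivalently $\Ext^{i}(\Tr F, P) = 0$ for every projective $P \in \mod \C^{\op}$ and $i \in \{1,2\}$. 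Now because $n=1$, the axiom (F1$^{\op}$) gives $\gldim(\mod \C^{\op}) \leqslant 2$, so $d := \pdim \Tr F \in \{0,1,2\}$. If $d \in \{1,2\}$, Lemma \ref{lemma.1} would produce a projective $P \in \mod \C^{\op}$ with $\Ext^{d}(\Tr F, P) \neq 0$, contradicting the vanishing above. Hence $d = 0$, so $\Tr F \in \proj \C^{\op}$, and Lemma \ref{lemma.2} then gives $F \in \proj \C$.

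I do not anticipate a real obstacle: the argument is a tight combination of Proposition \ref{proposition.10} (via Corollary \ref{corollary.3}(b)), Lemmas \ref{lemma.1} and \ref{lemma.2}, and the global dimension bound from the functorial axioms specialised to $n=1$. The only mildly delicate point is the existence step in (b) $\Rightarrow$ (c), which uses Proposition \ref{proposition.4} together with finiteness of $\gldim(\mod \C)$ to pass from ``some module has positive projective dimension'' to ``some module has projective dimension exactly $1$''.
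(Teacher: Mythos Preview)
Your proof is correct and follows essentially the same route as the paper's: the paper also uses Theorem~\ref{theorem.2} and Corollary~\ref{corollary.3}(b) for (a)$\Rightarrow$(b), invokes the nonzero global dimension (via Corollary~\ref{corollary.1}) for (b)$\Rightarrow$(c), and for (d)$\Rightarrow$(a) combines Corollary~\ref{corollary.3}(b), Lemma~\ref{lemma.1}, Lemma~\ref{lemma.2}, and the bound $\gldim(\mod\C^{\op})\leqslant n+1$---the only cosmetic difference is that the paper argues directly (showing $\pdim\Tr F\geqslant 3$, hence $n+1\geqslant 3$) where you argue by contrapositive.
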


\begin{proof}
It follows from Theorem \ref{theorem.2} and Corollary \ref{corollary.3} that (a) implies (b). Moreover, (b) implies (c) since Corollary \ref{corollary.1} says that $\gldim (\mod \C) = n + 1$, which implies the existence of some $F \in \mod \C$ with $\pdim F = 1$. Trivially, (c) implies (d), and we show below that (d) implies (a).

Suppose that there is some $F \in \mod \C$ with $\pdim F \geqslant 1$ which is reflexive. Then, by Lemma \ref{lemma.2} and Theorem \ref{theorem.2}, we have $1 \leqslant \pdim \Tr F \leqslant n + 1$. Furthermore, we know from Corollary \ref{corollary.3} that $F$ is $2$-torsion free, hence it follows from Lemma \ref{lemma.1} that $3 \leqslant \pdim \Tr F$. Consequently, $n \geqslant 2$.
\end{proof}

Motivated by \cite[Proposition 2.8]{MR3638352}, we end this section by recalling some well known properties of the double dual sequence, which will be used to prove Proposition \ref{proposition.20}.

\begin{proposition}\label{proposition.19}
Assume that $\C$ is coherent, and let $F \in \mod \C$. The double dual sequence \[ \begin{tikzcd}
0 \arrow[r] & \E^{1}(F) \arrow[r] & F \arrow[r] & F^{\ast \ast} \arrow[r] & \E^{2}(F) \arrow[r] & 0
\end{tikzcd} \] of $F$ has the following properties: \begin{enumerate}
    \item[(a)] The image of $F \to F^{\ast \ast}$ is isomorphic to $\Omega \Tr \Omega \Tr F$ for suitable choices of transposes and syzygies.\footnote{More precisely, we prove that if $f$ is a morphism in $\C$ and $g$ is a weak cokernel of $f$, then the image of the canonical morphism $\mr(f) \to \mr(f)^{\ast \ast}$ is isomorphic to the image of $\C(-,g)$.}
    \item[(b)] The dual of $\E^{1}(F) \to F$ is the zero morphism.\footnote{That is, when we apply $(-)^{\ast}$ to $\E^{1}(F) \to F$, we obtain the zero morphism.}
\end{enumerate}
\end{proposition}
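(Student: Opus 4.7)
The plan is to exploit the explicit construction of the double dual sequence from Proposition \ref{proposition.10}, tracking carefully the intermediate module $G = \Image \C(f,-) \subset \C(X,-)$ that arises there.

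Fix a projective presentation $\C(-,X) \xrightarrow{\C(-,f)} \C(-,Y) \to F \to 0$, so that $\Tr F \simeq \ml(f)$. Since $\C$ is coherent, it has weak cokernels, so I can take a weak cokernel $g \in \C(Y,Y')$ of $f$. By the weak cokernel property, $\C(Y',-) \xrightarrow{\C(g,-)} \C(Y,-) \xrightarrow{\C(f,-)} \C(X,-)$ is exact in $\Mod \C^{\op}$, and combining this with the short exact sequence $0 \to G \to \C(X,-) \to \ml(f) \to 0$ shows that $\C(Y',-) \xrightarrow{\C(g,-)} \C(Y,-) \to G \to 0$ is a projective presentation of $G$ in $\mod \C^{\op}$. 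Thus $G$ is a syzygy of $\Tr F$, so $G \simeq \Omega \Tr F$, and $\Tr G \simeq \mr(g)$; taking a syzygy of the latter via the projective presentation $\C(-,Y) \xrightarrow{\C(-,g)} \C(-,Y') \to \mr(g) \to 0$ gives $\Omega \Tr \Omega \Tr F \simeq \Image \C(-,g)$.

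To match this with $\Image(F \to F^{\ast \ast})$: since $\C(-,Y) \to F$ is epi and the canonical map $F \to F^{\ast \ast}$ is constructed in Proposition \ref{proposition.10} so that it factors through $\C(-,Y) \to F^{\ast \ast}$, it suffices to compute the image of $\C(-,Y) \to F^{\ast \ast}$. Applying $(-)^{\ast}$ to $0 \to F^\ast \to \C(Y,-) \to G \to 0$ gives $0 \to G^\ast \to \C(-,Y) \to F^{\ast \ast}$, while applying $(-)^{\ast}$ to the projective presentation of $G$ above identifies $G^\ast = \Ker \C(-,g)$. Hence $\Image(F \to F^{\ast \ast}) \simeq \C(-,Y)/G^\ast \simeq \Image \C(-,g) \simeq \Omega \Tr \Omega \Tr F$, settling (a).

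For (b), I will appeal to the triangle identity $(\eta_F)^\ast \circ \eta_{F^\ast} = 1_{F^\ast}$ for the canonical natural transformation $\eta \colon 1 \to (-)^{\ast \ast}$. This identity follows from a direct Yoneda-style evaluation, and it says that every morphism $\phi \colon F \to \C(-,Z)$ factors as $\phi = \eta_{F^\ast}(\phi) \circ \eta_F$. Since $\E^1(F) = \Ker \eta_F$ by the double dual sequence, the restriction $\phi|_{\E^1(F)}$ vanishes. Letting $Z$ range over $\C$ shows that the induced map $F^\ast \to \E^1(F)^\ast$, which is precisely the dual of the inclusion $\E^1(F) \hookrightarrow F$, is zero. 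The main obstacle I anticipate is (a): one must juggle several ``up to projective summands'' ambiguities intrinsic to $\Tr$ and $\Omega$ while verifying that the chain $\Omega \Tr \Omega \Tr F \simeq \Image \C(-,g) \simeq \Image(F \to F^{\ast \ast})$ produces an honest isomorphism in $\mod \C$. This should be absorbed by the freedom in the choice of presentation $f$ and weak cokernel $g$, since the statement only claims isomorphism (not naturality), but the bookkeeping requires some care.
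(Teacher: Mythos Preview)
Your proposal is correct and follows essentially the same route as the paper for part (a): both take a weak cokernel $g$ of $f$, identify $G = \Image \C(f,-)$ with $\Omega \Tr F$, and conclude that $\Image(F \to F^{\ast\ast}) \simeq \Image \C(-,g) \simeq \Omega \Tr \Omega \Tr F$. The only difference is in the final identification: the paper assembles a large commutative diagram and applies the $4 \times 4$ lemma to make the column $0 \to \E^{1}(F) \to F \to \C(-,Z) \to \mr(g) \to 0$ exact, whereas your computation $\Image(\C(-,Y) \to F^{\ast\ast}) = \C(-,Y)/G^{\ast} = \C(-,Y)/\Ker \C(-,g) = \Image \C(-,g)$ is more direct and avoids the diagram lemma entirely.

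For part (b), the paper argues concretely: any $\phi \colon F \to \C(-,W)$ lifts to $\C(-,h) \colon \C(-,Y) \to \C(-,W)$ with $hf = 0$, so $h$ factors through the weak cokernel $g$, hence $\phi$ factors through the map $F \to \C(-,Z)$ appearing in the exact column above, which kills $\E^{1}(F)$. Your argument via the triangle identity $(\eta_F)^{\ast} \circ \eta_{F^{\ast}} = 1_{F^{\ast}}$ is a cleaner, presentation-free repackaging of exactly this factorization (every $\phi \in F^{\ast}(Z)$ factors through $\eta_F$), and has the minor advantage of not referring back to the explicit construction of the double dual sequence. Both approaches are equally valid; yours is slightly more conceptual, the paper's slightly more self-contained.
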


\begin{proof}
(a) This is similar to the proof of Proposition \ref{proposition.10}, so we will be more concise.

Let \[ \begin{tikzcd}
{\C(-,X)} \arrow[r, "{\C(-,f)}"] &[1.2em] {\C(-,Y)} \arrow[r] & F \arrow[r] & 0
\end{tikzcd} \] be a projective presentation of $F$ in $\mod \C$ with $f \in \C(X,Y)$, and consider the exact sequence \[ \begin{tikzcd}
0 \arrow[r] & F^{\ast} \arrow[r] & {\C(Y,-)} \arrow[r, "{\C(f,-)}"] &[1.2em] {\C(X,-)} \arrow[r] & \ml(f) \arrow[r] & 0
\end{tikzcd} \] in $\mod \C^{\op}$. Since $\mod \C^{\op}$ has enough projectives, we can extend the above sequence to an exact sequence \[ \begin{tikzcd}
{\C(Z,-)} \arrow[r, "{\C(g,-)}"] &[1.2em] {\C(Y,-)} \arrow[r, "{\C(f,-)}"] &[1.2em] {\C(X,-)} \arrow[r] & \ml(f) \arrow[r] & 0
\end{tikzcd} \] in $\mod \C^{\op}$ with $g \in \C(Y,Z)$. Now, write it as the splice of two exact sequences \[ \begin{tikzcd}
{\C(Z,-)} \arrow[r, "{\C(g,-)}"] &[1.2em] {\C(Y,-)} \arrow[r] & G \arrow[r] & 0
\end{tikzcd} \] and \[ \begin{tikzcd}
0 \arrow[r] & G \arrow[r] & {\C(X,-)} \arrow[r] & \ml(f) \arrow[r] & 0
\end{tikzcd} \] in $\mod \C^{\op}$. By applying $(-)^{\ast}$ to the last two sequences, we get exact sequences \[ \begin{tikzcd}
0 \arrow[r] & G^{\ast} \arrow[r] & {\C(-,Y)} \arrow[r, "{\C(-,g)}"] &[1.2em] {\C(-,Z)} \arrow[r] & \mr(g) \arrow[r] & 0
\end{tikzcd} \] and \[ \begin{tikzcd}
0 \arrow[r] & \ml(f)^{\ast} \arrow[r] & {\C(-,X)} \arrow[r] & G^{\ast} \arrow[r] & \E^{1}(F) \arrow[r] & 0
\end{tikzcd} \] in $\mod \C$. 

Next, observe that the composition \[ \begin{tikzcd}
{\C(-,X)} \arrow[r, "{\C(-,f)}"] &[1.2em] {\C(-,Y)} \arrow[r, "{\C(-,g)}"] &[1.2em] {\C(-,Z)}
\end{tikzcd} \] is zero, hence there is a unique morphism $F \to \C(-,Z)$ in $\mod \C$ making the diagram \[ \begin{tikzcd}
{\C(-,X)} \arrow[r, "{\C(-,f)}"] &[1.2em] {\C(-,Y)} \arrow[r] \arrow[rd, "{\C(-,g)}"'] &[-0.4em] F \arrow[r] \arrow[d] &[-0.8em] 0 \\
                                 &                                             & {\C(-,Z)}             &  
\end{tikzcd} \] commute.

Finally, we put the previous exact sequences in $\mod \C$ into a commutative diagram \[ \begin{tikzcd}
            &                                          &                                      &[1.2em] 0 \arrow[d]                                & 0 \arrow[d]                   &   \\
0 \arrow[r] & \ml(f)^{\ast} \arrow[d, equal] \arrow[r] & {\C(-,X)} \arrow[d, equal] \arrow[r] & G^{\ast} \arrow[r] \arrow[d]               & \E^{1}(F) \arrow[r] \arrow[d] & 0 \\
0 \arrow[r] & \ml(f)^{\ast} \arrow[r]                  & {\C(-,X)} \arrow[r, "{\C(-,f)}"']    & {\C(-,Y)} \arrow[r] \arrow[d, "{\C(-,g)}"'] & F \arrow[r] \arrow[d]         & 0 \\[0.5em]
            &                                          &                                      & {\C(-,Z)} \arrow[r, equal] \arrow[d]       & {\C(-,Z)} \arrow[d]           &   \\
            &                                          &                                      & \mr(g) \arrow[d] \arrow[r, equal]          & \mr(g) \arrow[d]              &   \\
            &                                          &                                      & 0                                          & 0                             &  
\end{tikzcd} \] and by filling it with zeros, we conclude from the $4 \times 4$ lemma that its fourth column is exact in $\mod \C$, see \cite[Lemma 2.6]{MR2909639}. Therefore, the cokernel of $\E^{1}(F) \to F$ is of the form $\Omega \Tr \Omega \Tr F$ since it coincides with a syzygy of $\mr(g)$, and $\mr(g)$ is a transpose of $G$, which is a syzygy of a transpose of $F$. Thus, because the cokernel of $\E^{1}(F) \to F$ is isomorphic to the image of the canonical morphism $F \to F^{\ast \ast}$, we are done.

(b) In what follows, we make use of the morphisms described above, in the proof of item (a). By definition, $F^{\ast} \to \E^{1}(F)^{\ast}$ is zero if and only if for every $W \in \C$ and every morphism $F \to \C(-,W)$ in $\mod \C$, the composition $\E^{1}(F) \to F \to \C(-,W)$ is zero.

Well, take $W \in \C$, and consider a morphism $F \to \C(-,W)$ in $\mod \C$. By the Yoneda lemma, the composition $\C(-,Y) \to F \to \C(-,W)$ is given by $\C(-,h)$ for some morphism $h \in \C(Y,W)$. In this case, we have $\C(-,h)\C(-,f) = 0$, so that $hf = 0$. Then, because $g$ is a weak cokernel of $f$, we conclude that $h$ factors through $g$. Consequently, $\C(-,h)$ factors through $\C(-,g)$. Since $\C(-,g)$ coincides with the composition $\C(-,Y) \to F \to \C(-,Z)$ and $\C(-,Y) \to F$ is an epimorphism, we deduce that $F \to \C(-,W)$ factors through $F \to \C(-,Z)$. Therefore, the composition $\E^{1}(F) \to F \to \C(-,W)$ is zero.
\end{proof}

\section{The second axioms}\label{section.6}

The axioms (A2) and (A2$^{\op}$) of an $n$-abelian category were introduced by Jasso in \cite[Definition 3.1]{MR3519980} as generalizations of the axioms ``every monomorphism is the kernel of its cokernel'' and ``every epimorphism is the cokernel of its kernel'' of an abelian category. But, when defining an abelian category, it is also common to find these axioms in the form ``every monomorphism is a kernel'' and ``every epimorphism is a cokernel''. Therefore, it is natural to ask whether we can also generalize these latter axioms to the case of $n$-abelian categories. In this section, we present two such possible generalizations, which are achieved through the functorial approach.

Let us give an overview of the results of this section.

When contemplating the functorial axioms (F2) and (F2$^{\op}$) in Theorem \ref{theorem.2}, it is natural to ask if, for an $n$-abelian category $\C$, every finitely presented $\C$-module or $\C^{\op}$-module $F$ with $\pdim F \leqslant m$ is $(n+1-m)$-torsion free, whenever $m$ is a positive integer such that $1 \leqslant m \leqslant n$. It turns out that this is indeed the case, as it was essentially proved by Iyama and Jasso in \cite[Proposition 3.7]{MR3638352}. We present this result in Proposition \ref{proposition.20}. In addition, we also prove directly a similar statement in Proposition \ref{proposition.13}, by replacing the condition ``$\pdim F \leqslant m$'' by ``$F$ is $m$-spherical''.

The two results mentioned above lead to a few axioms equivalent to (F2) and (F2$^{\op}$), which are still expressed in terms of categories of finitely presented functors, see Theorems \ref{theorem.7} and \ref{theorem.9}. Once these axioms are obtained, we ``translate'' them to the language of higher homological algebra, thereby obtaining equivalent ways of stating the axioms (A2) and (A2$^{\op}$), see Theorems \ref{theorem.3} and \ref{theorem.10}. Among these, there are two possible generalizations of the axioms ``every monomorphism is a kernel'' and ``every epimorphism is a cokernel'', and we use them to give two alternative (but equivalent) definitions of an $n$-abelian category in Theorems \ref{theorem.8} and \ref{theorem.11}.

\subsection{The spherical case}\label{subsection.spherical}

In this subsection, we start to investigate the question of whether finitely presented modules of projective dimension at most $m$ over an $n$-abelian category are $(n+1-m)$-torsion free by first considering the ``$m$-spherical'' case.

Let $\A$ be an abelian category with enough projectives, and let $m$ be a positive integer. Motivated by \cite{MR0269685}, we say that an object $X \in \A$ is \textit{$m$-spherical} if $\pdim X \leqslant m$ and $\Ext_{\A}^{i}(X,P) = 0$ for every $1 \leqslant i \leqslant m - 1$ and every projective object $P \in \A$. Note that projective objects are $m$-spherical for every positive integer $m$. On the other hand, if $X$ is $m$-spherical and not projective, then $\pdim X = m$, by Lemma \ref{lemma.1}. We also remark that $X$ is $1$-spherical if and only if $\pdim X \leqslant 1$.

In particular, if $\C$ is right coherent, then $F \in \mod \C$ is $m$-spherical when $\pdim F \leqslant m$ and $\Ext^{i}(F,\C(-,X)) = 0$ for all $1 \leqslant i \leqslant m - 1$ and all $X \in \C$. Dually, if $\C$ is left coherent, then $F \in \mod \C^{\op}$ is $m$-spherical when $\pdim F \leqslant m$ and $\Ext^{i}(F,\C(X,-)) = 0$ for all $1 \leqslant i \leqslant m - 1$ and all $X \in \C$.

\begin{proposition}\label{proposition.13}
Assume that $\C$ is coherent, and let $m$ and $k$ be positive integers, where $k \geqslant 2$. If every $m$-spherical object in $\mod \C$ is $k$-torsion free, then every $(m+1)$-spherical object in $\mod \C$ is $(k-1)$-torsion free.
\end{proposition}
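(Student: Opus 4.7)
The plan is to reduce the statement for $(m+1)$-spherical objects to the case of $m$-spherical objects via a syzygy shift. The key will be to show that for any $(m+1)$-spherical $G$, one can produce an $m$-spherical $H$ with $\Tr G$ isomorphic to a first syzygy of $\Tr H$ in $\mod \C^{\op}$.

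Let $G \in \mod \C$ be $(m+1)$-spherical. Fix an epimorphism $P \to G$ with $P \in \proj \C$, and set $H$ to be its kernel, giving a short exact sequence $0 \to H \to P \to G \to 0$ in $\mod \C$. I would first verify that $H$ is $m$-spherical: applying $\Hom(-, \C(-,X))$ to this short exact sequence and using $\Ext^i(P, \C(-,X)) = 0$ for $i \geqslant 1$ yields $\Ext^i(H, \C(-,X)) \simeq \Ext^{i+1}(G, \C(-,X))$ for all $i \geqslant 1$. The hypothesis $\Ext^j(G, \C(-,X)) = 0$ for $1 \leqslant j \leqslant m$ then gives $\Ext^i(H, \C(-,X)) = 0$ for $1 \leqslant i \leqslant m - 1$; furthermore, $\pdim H \leqslant m$ by the same long exact sequence. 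Thus $H$ is $m$-spherical, and so $H$ is $k$-torsion free by hypothesis.

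The main step is to relate $\Tr G$ and $\Tr H$. I would pick a projective presentation $P_2 \xrightarrow{\phi_2} P_1 \to H \to 0$ of $H$, and compose $P_1 \to H \hookrightarrow P$ to get a morphism $\phi_1 \colon P_1 \to P$. Then $P_2 \xrightarrow{\phi_2} P_1 \xrightarrow{\phi_1} P \to G \to 0$ is the beginning of a projective resolution of $G$ in $\mod \C$. Applying $(-)^{\ast}$, we obtain a complex $P^{\ast} \to P_1^{\ast} \to P_2^{\ast}$ in $\mod \C^{\op}$ whose cohomology at $P_1^{\ast}$, evaluated at any $X \in \C$, computes $\Ext^1(G, \C(-,X))$. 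Since $G$ is $(m+1)$-spherical with $m \geqslant 1$, this group vanishes for all $X$, so the complex is exact at $P_1^{\ast}$. It follows that $\Tr G = \Coker(P^{\ast} \to P_1^{\ast})$ is isomorphic to the image of $P_1^{\ast} \to P_2^{\ast}$, which in turn coincides with the kernel of $P_2^{\ast} \to \Tr H$. This yields the short exact sequence $0 \to \Tr G \to P_2^{\ast} \to \Tr H \to 0$ in $\mod \C^{\op}$.

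To conclude, I would apply $\Hom(-, \C(X,-))$ to the above short exact sequence. Since $P_2^{\ast}$ is projective in $\mod \C^{\op}$, the long exact sequence of $\Ext$ collapses to isomorphisms $\E^i(G)(X) \simeq \E^{i+1}(H)(X)$ for all $i \geqslant 1$ and all $X \in \C$. Since $H$ is $k$-torsion free, $\E^{i+1}(H) = 0$ for $1 \leqslant i+1 \leqslant k$, hence $\E^i(G) = 0$ for $1 \leqslant i \leqslant k-1$, a range that is non-vacuous by the hypothesis $k \geqslant 2$. Therefore $G$ is $(k-1)$-torsion free. The main obstacle is recognizing and producing the short exact sequence $0 \to \Tr G \to P_2^{\ast} \to \Tr H \to 0$; once this dimension shift is in place, the rest is a routine long exact sequence argument.
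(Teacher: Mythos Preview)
Your argument is correct and is genuinely simpler than the paper's. The paper also starts from the syzygy short exact sequence $0 \to H \to P \to F \to 0$ (with $F$ the $(m+1)$-spherical object) and observes that $H$ is $m$-spherical, hence $k$-torsion free; but from there it takes a detour. Using Proposition~\ref{proposition.14} it produces an auxiliary object $G$ that is $(k-1)$-torsion free, then builds a pullback diagram yielding $0 \to \C(-,Y_{1}) \to G \oplus \C(-,X_{0}) \to F \to 0$, applies $(-)^{\ast}$ twice (invoking Lemma~\ref{lemma.5}), compares double dual sequences via the snake lemma to obtain $\E^{1}(G) \simeq \E^{1}(F)$ and $\E^{2}(G) \simeq \E^{2}(F)$, and finally splits into the cases $k \in \{2,3\}$ and $k \geqslant 4$ (in the latter case using reflexivity to conclude $G \oplus \C(-,X_{0}) \simeq F \oplus \C(-,Y_{1})$).

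Your route bypasses all of this by identifying the single short exact sequence $0 \to \Tr G \to P_{2}^{\ast} \to \Tr H \to 0$ in $\mod \C^{\op}$, which is exactly the observation that (for these choices of presentations) $\Tr G \simeq \Omega \Tr H$. This gives the clean dimension shift $\E^{i}(G) \simeq \E^{i+1}(H)$ for all $i \geqslant 1$ in one stroke, with no need for the double dual sequence, the pullback construction, or the case analysis. The only place the spherical hypothesis on $G$ (beyond what is needed to make $H$ $m$-spherical) enters is the vanishing of $\Ext^{1}(G,\C(-,X))$, which ensures exactness of $P^{\ast} \to P_{1}^{\ast} \to P_{2}^{\ast}$ at $P_{1}^{\ast}$; this is also the place where the paper uses Lemma~\ref{lemma.5}. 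In short, both proofs hinge on $\Ext^{1}(G,-)|_{\proj \C} = 0$, but your use of it is more direct.
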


\begin{proof}
Suppose that every $m$-spherical object in $\mod \C$ is $k$-torsion free. Let $F \in \mod \C$ be $(m+1)$-spherical, and consider the beginning of a projective resolution \[ \begin{tikzcd}
{\C(-,X_{2})} \arrow[r, "{\C(-,f_{2})}"] &[1.5em] {\C(-,X_{1})} \arrow[r, "{\C(-,f_{1})}"] &[1.5em] {\C(-,X_{0})} \arrow[r] & F \arrow[r] & 0
\end{tikzcd} \] of $F$ in $\mod \C$. Let $H = \mr(f_{2})$ be the cokernel (object) of $\C(-,f_{2})$. Observe that $H$ is $m$-spherical, hence it is $k$-torsion free, and it follows from Proposition \ref{proposition.14} that there is an exact sequence \[ \begin{tikzcd}
{\C(Y_{k},-)} \arrow[r, "{\C(g_{k},-)}"] &[1.65em] \cdots \arrow[r, "{\C(g_{2},-)}"] &[1.6em] {\C(Y_{1},-)} \arrow[r, "{\C(g_{1},-)}"] &[1.6em] {\C(X_{1},-)} \arrow[r, "{\C(f_{2},-)}"] &[1.5em] {\C(X_{2},-)}
\end{tikzcd} \] in $\mod \C^{\op}$ for which \[ \begin{tikzcd}
{\C(-,X_{2})} \arrow[r, "{\C(-,f_{2})}"] &[1.5em] {\C(-,X_{1})} \arrow[r, "{\C(-,g_{1})}"] &[1.6em] {\C(-,Y_{1})} \arrow[r, "{\C(-,g_{2})}"] &[1.6em] \cdots \arrow[r, "{\C(-,g_{k})}"] &[1.65em] {\C(-,Y_{k})}
\end{tikzcd} \] is exact in $\mod \C$. Let $G = \mr(g_{1})$ be the cokernel (object) of $\C(-,g_{1})$. Note that, by Proposition \ref{proposition.14}, $G$ is $(k-1)$-torsion free. Now, from the previous exact sequence in $\mod \C^{\op}$, we see that $g_{1}$ is a weak cokernel of $f_{2}$. Thus, because $f_{1}f_{2} = 0$, it follows that $f_{1}$ factors through $g_{1}$, so that $\C(-,f_{1})$ factors through $\C(-,g_{1})$. Hence we get a commutative diagram with exact rows \[ \begin{tikzcd}
{\C(-,X_{2})} \arrow[d, equal] \arrow[r, "{\C(-,f_{2})}"] &[1.5em] {\C(-,X_{1})} \arrow[d, equal] \arrow[r, "{\C(-,g_{1})}"] &[1.6em] {\C(-,Y_{1})} \arrow[r] \arrow[d] & G \arrow[r] \arrow[d] & 0 \\
{\C(-,X_{2})} \arrow[r, "{\C(-,f_{2})}"']                  & {\C(-,X_{1})} \arrow[r, "{\C(-,f_{1})}"']                   & {\C(-,X_{0})} \arrow[r]                   & F \arrow[r]                   & 0
\end{tikzcd} \] in $\mod \C$, which leads to a commutative diagram with exact rows \[ \begin{tikzcd}
0 \arrow[r] & H \arrow[d, equal] \arrow[r] & {\C(-,Y_{1})} \arrow[r] \arrow[d] & G \arrow[r] \arrow[d] & 0 \\
0 \arrow[r] & H \arrow[r]                    & {\C(-,X_{0})} \arrow[r]           & F \arrow[r]           & 0
\end{tikzcd} \] in $\mod \C$. Consequently, the rightmost square in the above diagram is a pullback diagram, so that there is a short exact sequence \[ \begin{tikzcd}
0 \arrow[r] & {\C(-,Y_{1})} \arrow[r] & {G \oplus \C(-,X_{0})} \arrow[r] & F \arrow[r] & 0
\end{tikzcd} \] in $\mod \C$. But since $F$ is $(m+1)$-spherical, we have $\Ext^{1}(F,\C(-,Y_{1})) = 0$, and the above short exact sequence splits. Thus, $G \oplus \C(-,X_{0}) \simeq F \oplus \C(-,Y_{1})$, which implies that $\E^{i}(G) \simeq \E^{i}(F)$ for all $i \geqslant 1$. Therefore, as $G$ is $(k-1)$-torsion free, so is $F$.
\end{proof}

It follows from Theorem \ref{theorem.2} and Proposition \ref{proposition.13} that if $\C$ is an $n$-abelian category, then every $m$-spherical object in $\mod \C$ is $(n+1-m)$-torsion free, for all $1 \leqslant m \leqslant n$. By replacing $\C$ by $\C^{\op}$, we deduce that this property also holds for $\C^{\op}$-modules when $\C$ is $n$-abelian.\footnote{We register these facts in Theorem \ref{theorem.7}.} Furthermore, we can use Proposition \ref{proposition.13} to state the axioms (F2) and (F2$^{\op}$) in a few different ways, as we show in Theorem \ref{theorem.7}. But before we do that, let us describe $m$-spherical modules in the same spirit that $k$-torsion free modules were described in Proposition \ref{proposition.14}.

\begin{proposition}\label{proposition.11}
Assume that $\C$ is coherent, let $f \in \C(X,Y)$ be a morphism in $\C$, and let $m$ be a positive integer. The following are equivalent:
\begin{enumerate}
    \item[(a)] $\mr(f)$ is $m$-spherical.
    \item[(b)] $\pdim \mr(f) \leqslant m$ and for every exact sequence \[ \begin{tikzcd}
{\C(-,X_{m})} \arrow[r, "{\C(-,f_{m})}"] &[1.65em] \cdots \arrow[r, "{\C(-,f_{3})}"] &[1.4em] {\C(-,X_{2})} \arrow[r, "{\C(-,f_{2})}"] &[1.4em] {\C(-,X)} \arrow[r, "{\C(-,f)}"] &[1.2em] {\C(-,Y)}
\end{tikzcd} \] in $\mod \C$, the sequence \[ \begin{tikzcd}
{\C(Y,-)} \arrow[r, "{\C(f,-)}"] &[1.2em] {\C(X,-)} \arrow[r, "{\C(f_{2},-)}"] &[1.4em] {\C(X_{2},-)} \arrow[r, "{\C(f_{3},-)}"] &[1.4em] \cdots \arrow[r, "{\C(f_{m},-)}"] &[1.65em] {\C(X_{m},-)}
\end{tikzcd} \] is exact in $\mod \C^{\op}$. We agree that $X_{1} = X$, $X_{0} = Y$ and $f_{1} = f$.
    \item[(c)] There is an exact sequence \[ \begin{tikzcd}
0 \arrow[r] & {\C(-,X_{m})} \arrow[r, "{\C(-,f_{m})}"] &[1.65em] \cdots \arrow[r, "{\C(-,f_{3})}"] &[1.4em] {\C(-,X_{2})} \arrow[r, "{\C(-,f_{2})}"] &[1.4em] {\C(-,X)} \arrow[r, "{\C(-,f)}"] &[1.2em] {\C(-,Y)}
\end{tikzcd} \] in $\mod \C$ for which \[ \begin{tikzcd}
{\C(Y,-)} \arrow[r, "{\C(f,-)}"] &[1.2em] {\C(X,-)} \arrow[r, "{\C(f_{2},-)}"] &[1.4em] {\C(X_{2},-)} \arrow[r, "{\C(f_{3},-)}"] &[1.4em] \cdots \arrow[r, "{\C(f_{m},-)}"] &[1.65em] {\C(X_{m},-)}
\end{tikzcd} \] is exact in $\mod \C^{\op}$. We agree that $X_{1} = X$, $X_{0} = Y$ and $f_{1} = f$.
\end{enumerate}
\end{proposition}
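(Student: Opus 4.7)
The plan is to mirror the proof of Proposition \ref{proposition.14}: the key computation is that applying $\Hom(-, \C(-,Z))$ (equivalently, applying $(-)^{\ast}$ and evaluating at $Z$) to an exact sequence as in (b) or (c) converts it, via the Yoneda lemma, into a complex whose internal cohomology groups are precisely the $\Ext$-groups that control $m$-sphericity.

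Concretely, I would first observe that any exact sequence
\[ \begin{tikzcd}
{\C(-,X_m)} \arrow[r, "{\C(-,f_m)}"] &[1.65em] \cdots \arrow[r] & {\C(-,X_2)} \arrow[r, "{\C(-,f_2)}"] &[1.4em] {\C(-,X)} \arrow[r, "{\C(-,f)}"] &[1.2em] {\C(-,Y)}
\end{tikzcd} \]
in $\mod \C$ extends the projective presentation of $\mr(f) = \Coker \C(-,f)$ to the beginning of a projective resolution of $\mr(f)$. Applying $\Hom(-,\C(-,Z))$ to this together with the augmentation $\C(-,Y) \to \mr(f) \to 0$ yields, by the Yoneda lemma, a complex in $\Ab$ isomorphic to
\[ \begin{tikzcd}
{\C(Y,Z)} \arrow[r, "{\C(f,Z)}"] &[1.2em] {\C(X,Z)} \arrow[r, "{\C(f_2,Z)}"] &[1.4em] {\C(X_2,Z)} \arrow[r] & \cdots \arrow[r, "{\C(f_m,Z)}"] &[1.65em] {\C(X_m,Z)}
\end{tikzcd} \]
whose cohomology at position $\C(X_i,Z)$ (for $1 \leqslant i \leqslant m-1$) is $\Ext^i(\mr(f), \C(-,Z))$. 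Since exactness in $\mod \C^{\op}$ is componentwise, the dualized sequence is exact in $\mod \C^{\op}$ if and only if $\Ext^i(\mr(f), \C(-,Z)) = 0$ for all $1 \leqslant i \leqslant m-1$ and all $Z \in \C$.

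With this dictionary in hand, the three implications follow quickly. For (a)$\Rightarrow$(b), the $m$-sphericity of $\mr(f)$ gives $\pdim \mr(f) \leqslant m$ together with the required $\Ext$-vanishing, so the previous computation yields the desired exactness for any exact sequence of the indicated form. For (b)$\Rightarrow$(c), I would use $\pdim \mr(f) \leqslant m$ to choose a projective resolution of length at most $m$, which produces an exact sequence as in (c) (including the leading $0$); then (b) gives the exactness of the dualized sequence. For (c)$\Rightarrow$(a), the given sequence is a projective resolution of $\mr(f)$ of length at most $m$, so $\pdim \mr(f) \leqslant m$, and exactness of the dualized sequence at the internal positions $X, X_2, \ldots, X_{m-1}$ yields the vanishing of the relevant $\Ext$-groups via the cohomology computation, proving $m$-sphericity.

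The main obstacle is minor bookkeeping — matching the internal positions of the dualized sequence with the correct $\Ext$-indices, and verifying that the cohomology identification holds even though the sequence in (b) need not start with a zero (it still carries enough information to compute $\Ext^i$ for $i \leqslant m-1$ since exactness at $X, X_2, \ldots, X_{m-1}$ suffices). Once this dictionary is established through Yoneda, the equivalences are routine and closely parallel the argument for Proposition \ref{proposition.14}.
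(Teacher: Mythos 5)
Your proposal is correct and follows essentially the same route as the paper's own proof: apply $\Hom(-,\C(-,Z))$ to the sequence viewed as the start of a projective resolution of $\mr(f)$, identify the resulting complex via Yoneda, and read off the vanishing of $\Ext^{i}(\mr(f),\C(-,Z))$ for $1 \leqslant i \leqslant m-1$ from exactness of the dualized sequence. The organization of the implications (the dictionary gives (a)$\Rightarrow$(b) and (c)$\Rightarrow$(a), while (b)$\Rightarrow$(c) uses $\pdim \mr(f) \leqslant m$ to produce the resolution) matches the paper exactly.
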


\begin{proof}
Let \[ \begin{tikzcd}
{\C(-,X_{m})} \arrow[r, "{\C(-,f_{m})}"] &[1.65em] \cdots \arrow[r, "{\C(-,f_{3})}"] &[1.4em] {\C(-,X_{2})} \arrow[r, "{\C(-,f_{2})}"] &[1.4em] {\C(-,X)} \arrow[r, "{\C(-,f)}"] &[1.2em] {\C(-,Y)}
\end{tikzcd} \] be an exact sequence in $\mod \C$, which can be regarded as the beginning of a projective resolution of $\mr(f)$ in $\mod \C$. Given $Z \in \C$, by applying $\Hom(-,\C(-,Z))$ to the above sequence, we get a complex in $\Ab$ which is isomorphic to \[ \begin{tikzcd}
{\C(Y,Z)} \arrow[r, "{\C(f,Z)}"] &[1.2em] {\C(X,Z)} \arrow[r, "{\C(f_{2},Z)}"] &[1.4em] {\C(X_{2},Z)} \arrow[r, "{\C(f_{3},Z)}"] &[1.4em] \cdots \arrow[r, "{\C(f_{m},Z)}"] &[1.65em] {\C(X_{m},Z)}
\end{tikzcd}, \] by the Yoneda lemma. This complex is an exact sequence in $\Ab$ if and only if $\Ext^{i}(\mr(f),\C(-,Z)) = 0$ for all $1 \leqslant i \leqslant m - 1$. Consequently, the sequence \[ \begin{tikzcd}
{\C(Y,-)} \arrow[r, "{\C(f,-)}"] &[1.2em] {\C(X,-)} \arrow[r, "{\C(f_{2},-)}"] &[1.4em] {\C(X_{2},-)} \arrow[r, "{\C(f_{3},-)}"] &[1.4em] \cdots \arrow[r, "{\C(f_{m},-)}"] &[1.65em] {\C(X_{m},-)}
\end{tikzcd} \] is exact in $\mod \C^{\op}$ if and only if $\Ext^{i}(\mr(f),\C(-,Z)) = 0$ for all $1 \leqslant i \leqslant m - 1$ and all $Z \in \C$.

From the above paragraph, it follows that (a) implies (b), and that (c) implies (a). Below, we show that (b) implies (c).

Suppose that item (b) holds. Since $\pdim \mr(f) \leqslant m$, there is an exact sequence \[ \begin{tikzcd}
0 \arrow[r] & {\C(-,X_{m})} \arrow[r, "{\C(-,f_{m})}"] &[1.65em] \cdots \arrow[r, "{\C(-,f_{3})}"] &[1.4em] {\C(-,X_{2})} \arrow[r, "{\C(-,f_{2})}"] &[1.4em] {\C(-,X)} \arrow[r, "{\C(-,f)}"] &[1.2em] {\C(-,Y)}
\end{tikzcd} \] in $\mod \C$, which is obtained from a projective resolution of $\mr(f)$ in $\mod \C$. Then it follows from item (b) that \[ \begin{tikzcd}
{\C(Y,-)} \arrow[r, "{\C(f,-)}"] &[1.2em] {\C(X,-)} \arrow[r, "{\C(f_{2},-)}"] &[1.4em] {\C(X_{2},-)} \arrow[r, "{\C(f_{3},-)}"] &[1.4em] \cdots \arrow[r, "{\C(f_{m},-)}"] &[1.65em] {\C(X_{m},-)}
\end{tikzcd} \] is exact in $\mod \C^{\op}$.
\end{proof}

We are now ready to show a few different ways of stating the axioms (F2) and (F2$^{\op}$). Given positive integers $n$ and $k$ with $1 \leqslant k \leqslant n$, and a coherent category $\C$, consider the following axioms:

\begin{enumerate}
    \item[(F2$_{a}$)] Every $1$-spherical object in $\mod \C$ is $n$-torsion free.
    \item[(F2$_{b}$)] Every $m$-spherical object in $\mod \C$ is $(n+1-m)$-torsion free, for all $1 \leqslant m \leqslant n$.
    \item[(F2$_{c}$)] Every $m$-spherical object in $\mod \C$ is a syzygy, for all $1 \leqslant m \leqslant n$.
    \item[(F2$_{d_{k}}$)] Every $m$-spherical object in $\mod \C$ is $k$-torsion free, for all $1 \leqslant m \leqslant n+1-k$.
\end{enumerate}

For the sake of completeness, let us also state the duals of the above axioms.

\begin{enumerate}
    \item[(F2$_{a}^{\op}$)] Every $1$-spherical object in $\mod \C^{\op}$ is $n$-torsion free.
    \item[(F2$_{b}^{\op}$)] Every $m$-spherical object in $\mod \C^{\op}$ is $(n+1-m)$-torsion free, for \mbox{all $1 \leqslant m \leqslant n$.}
    \item[(F2$_{c}^{\op}$)] Every $m$-spherical object in $\mod \C^{\op}$ is a syzygy, for all $1 \leqslant m \leqslant n$.
    \item[(F2$_{d_{k}}^{\op}$)] Every $m$-spherical object in $\mod \C^{\op}$ is $k$-torsion free, for all $1 \leqslant m \leqslant n+1-k$.
\end{enumerate}

\begin{theorem}\label{theorem.7}
Assume that $\C$ is coherent. The following statement holds for $\square \in \{a,b,c,d_{k}\}$:
\begin{enumerate}
    \item[] \hspace{-2em} The axioms \textup{(F2)} and \textup{(F2$^{\op}$)} are equivalent to \textup{(F2$_{\square}$)} and \textup{(F2$_{\square}^{\op}$)}, respectively.
\end{enumerate}
\end{theorem}

\begin{proof}
It suffices to prove that the axioms (F2), (F2$_{a}$), (F2$_{b}$), (F2$_{c}$) and (F2$_{d_{k}}$) are all equivalent to each other. By duality, we deduce that their dual axioms are also equivalent to each other.

Trivially, (F2) is equivalent to (F2$_{a}$), and Proposition \ref{proposition.13} shows that (F2$_{a}$) implies (F2$_{b}$). Now, fix a positive integer $k$ such that $1 \leqslant k \leqslant n$. Clearly, (F2$_{b}$) implies (F2$_{d_{k}}$), and we prove below that (F2$_{d_{k}}$) implies (F2$_{a}$).

Suppose that every $m$-spherical object in $\mod \C$ is $k$-torsion free, for all $1 \leqslant m \leqslant n+1-k$. Let $F \in \mod \C$ be $1$-spherical, and take a projective resolution \[ \begin{tikzcd}
0 \arrow[r] & {\C(-,X)} \arrow[r, "{\C(-,f)}"] &[1.2em] {\C(-,Y)} \arrow[r] & F \arrow[r] & 0
\end{tikzcd} \] of $F$ in $\mod \C$. Then the morphism $f \in \C(X,Y)$ is such that $\mr(f) \simeq F$. Consequently, $\mr(f)$ is $k$-torsion free, so that it is $1$-torsion free. By Proposition \ref{proposition.14}, there is an exact sequence \[ \begin{tikzcd}
{\C(Y_{1},-)} \arrow[r, "{\C(g_{1},-)}"] &[1.4em] {\C(Y,-)} \arrow[r, "{\C(f,-)}"] &[1.2em] {\C(X,-)}
\end{tikzcd} \] in $\mod \C^{\op}$ for which \[ \begin{tikzcd}
0 \arrow[r] & {\C(-,X)} \arrow[r, "{\C(-,f)}"] &[1.2em] {\C(-,Y)} \arrow[r, "{\C(-,g_{1})}"] &[1.4em] {\C(-,Y_{1})}
\end{tikzcd} \] is exact in $\mod \C$. In this case, it follows from Proposition \ref{proposition.11} that $\mr(g_{1})$ is $2$-spherical. By repeating this argument consecutively, we obtain an exact sequence \[ \begin{tikzcd}
{\C(Y_{n-k},-)} \arrow[r, "{\C(g_{n-k},-)}"] &[2.3em] \cdots \arrow[r, "{\C(g_{2},-)}"] &[1.4em] {\C(Y_{1},-)} \arrow[r, "{\C(g_{1},-)}"] &[1.4em] {\C(Y,-)} \arrow[r, "{\C(f,-)}"] &[1.2em] {\C(X,-)}
\end{tikzcd} \] in $\mod \C^{\op}$ for which \[ \begin{tikzcd}
0 \arrow[r] &[-0.1em] {\C(-,X)} \arrow[r, "{\C(-,f)}"] &[1.2em] {\C(-,Y)} \arrow[r, "{\C(-,g_{1})}"] &[1.4em] {\C(-,Y_{1})} \arrow[r, "{\C(-,g_{2})}"] &[1.4em] \cdots \arrow[r, "{\C(-,g_{n-k})}"] &[2.3em] {\C(-,Y_{n-k})}
\end{tikzcd} \] is exact in $\mod \C$, so that $\mr(g_{n-k})$ is $(n+1-k)$-spherical. Therefore, as $\mr(g_{n-k})$ is $k$-torsion free, from Proposition \ref{proposition.14}, we deduce the existence of an exact sequence \[ \begin{tikzcd}
{\C(Y_{n},-)} \arrow[r, "{\C(g_{n},-)}"] &[1.45em] \cdots \arrow[r, "{\C(g_{2},-)}"] &[1.4em] {\C(Y_{1},-)} \arrow[r, "{\C(g_{1},-)}"] &[1.4em] {\C(Y,-)} \arrow[r, "{\C(f,-)}"] &[1.2em] {\C(X,-)}
\end{tikzcd} \] in $\mod \C^{\op}$ for which \[ \begin{tikzcd}
0 \arrow[r] & {\C(-,X)} \arrow[r, "{\C(-,f)}"] &[1.2em] {\C(-,Y)} \arrow[r, "{\C(-,g_{1})}"] &[1.4em] {\C(-,Y_{1})} \arrow[r, "{\C(-,g_{2})}"] &[1.4em] \cdots \arrow[r, "{\C(-,g_{n})}"] &[1.45em] {\C(-,Y_{n})}
\end{tikzcd} \] is exact in $\mod \C$. By Proposition \ref{proposition.14}, $\mr(f)$ is $n$-torsion free, hence so is $F$.

We have proved that, for a fixed positive integer $k$ with $1 \leqslant k \leqslant n$, the axioms (F2), (F2$_{a}$), (F2$_{b}$) and (F2$_{d_{k}}$) are all equivalent. Since we know from Corollary \ref{corollary.3} that (F2$_{c}$) corresponds to (F2$_{d_{k}}$) when $k = 1$, we are done.
\end{proof}

Observe that the axioms (F2$_{c}$) and (F2$_{c}^{\op}$) are stated only in terms of $\mod \C$ and $\mod \C^{\op}$, respectively, and do not depend on the transpose or on the dual of a module. Furthermore, note that, by Corollary \ref{corollary.3}, when $n \geqslant 2$ and $k = 2$, the axioms (F2$_{d_{k}}$) and (F2$_{d_{k}}^{\op}$) can be rephrased as ``$m$-spherical objects are reflexive, for all $1 \leqslant m \leqslant n - 1$''.

\subsection{Segments and cosegments}

Recall that, in Section \ref{section.3}, we reformulated the axioms (A1), (A1$^{\op}$), (A2) and (A2$^{\op}$) of an $n$-abelian category in terms of its categories of finitely presented functors, thereby obtaining the functorial axioms (F1), (F1$^{\op}$), (F2) and (F2$^{\op}$). Now, going in the opposite direction, we will show how to ``translate'' the axioms (F2$_{a}$), (F2$_{b}$), (F2$_{c}$), (F2$_{d_{k}}$), (F2$_{a}^{\op}$), (F2$_{b}^{\op}$), (F2$_{c}^{\op}$) and (F2$_{d_{k}}^{\op}$) to the language of higher homological algebra. But first, we need to define new terms in this language.

Let $m$ be a positive integer. Motivated by Proposition \ref{proposition.11}, we define an \textit{$m$-segment} in $\C$ to be a sequence \[ \begin{tikzcd}
X_{m} \arrow[r, "f_{m}"] &[0.2em] \cdots \arrow[r, "f_{2}"] & X_{1} \arrow[r, "f_{1}"] & X_{0}
\end{tikzcd} \] of morphisms in $\C$ for which \[ \begin{tikzcd}
0 \arrow[r] & {\C(-,X_{m})} \arrow[r, "{\C(-,f_{m})}"] &[1.65em] \cdots \arrow[r, "{\C(-,f_{2})}"] &[1.4em] {\C(-,X_{1})} \arrow[r, "{\C(-,f_{1})}"] &[1.4em] {\C(-,X_{0})}
\end{tikzcd} \] and \[ \begin{tikzcd}
{\C(X_{0},-)} \arrow[r, "{\C(f_{1},-)}"] &[1.4em] {\C(X_{1},-)} \arrow[r, "{\C(f_{2},-)}"] &[1.4em] \cdots \arrow[r, "{\C(f_{m},-)}"] &[1.65em] {\C(X_{m},-)}
\end{tikzcd} \] are exact sequences in $\Mod \C$ and in $\Mod \C^{\op}$, respectively. Dually, we define an \textit{$m$-cosegment} in $\C$ to be a sequence \[ \begin{tikzcd}
Y_{0} \arrow[r, "g_{1}"] & Y_{1} \arrow[r, "g_{2}"] & \cdots \arrow[r, "g_{m}"] &[0.2em] Y_{m}
\end{tikzcd} \] in $\C$ such that \[ \begin{tikzcd}
0 \arrow[r] & {\C(Y_{m},-)} \arrow[r, "{\C(g_{m},-)}"] &[1.65em] \cdots \arrow[r, "{\C(g_{2},-)}"] &[1.4em] {\C(Y_{1},-)} \arrow[r, "{\C(g_{1},-)}"] &[1.4em] {\C(Y_{0},-)}
\end{tikzcd} \] and \[ \begin{tikzcd}
{\C(-,Y_{0})} \arrow[r, "{\C(-,g_{1})}"] &[1.4em] {\C(-,Y_{1})} \arrow[r, "{\C(-,g_{2})}"] &[1.4em] \cdots \arrow[r, "{\C(-,g_{m})}"] &[1.65em] {\C(-,Y_{m})}
\end{tikzcd} \] are exact sequences in $\Mod \C^{\op}$ and in $\Mod \C$, respectively. Observe that a $1$-segment is the same as a monomorphism, and a $1$-cosegment is the same as an epimorphism.

Given two sequences of morphisms \[ \begin{tikzcd}
X_{m} \arrow[r, "f_{m}"] &[0.2em] \cdots \arrow[r, "f_{2}"] & X_{1} \arrow[r, "f_{1}"] & X_{0}
\end{tikzcd} \] and \[ \begin{tikzcd}
Y_{0} \arrow[r, "g_{1}"] & Y_{1} \arrow[r, "g_{2}"] & \cdots \arrow[r, "g_{k}"] & Y_{k}
\end{tikzcd} \] in $\C$ with $X_{0} = Y_{0}$, we define their \textit{concatenation} to be the sequence \[ \begin{tikzcd}
X_{m} \arrow[r, "f_{m}"] &[0.2em] \cdots \arrow[r, "f_{2}"] & X_{1} \arrow[r, "f_{1}"] & X_{0} \arrow[r, "g_{1}"] & Y_{1} \arrow[r, "g_{2}"] & \cdots \arrow[r, "g_{k}"] & Y_{k}
\end{tikzcd}. \] Note that every $n$-exact sequence in $\C$ is the concatenation of an $m$-segment in $\C$ with an $(n+1-m)$-cosegment in $\C$, for each $1 \leqslant m \leqslant n$.

For a positive integer $m$ with $1 \leqslant m \leqslant n$, we say that an $m$-segment \[ \begin{tikzcd}
X_{m} \arrow[r, "f_{m}"] &[0.2em] \cdots \arrow[r, "f_{2}"] & X_{1} \arrow[r, "f_{1}"] & X_{0}
\end{tikzcd} \] in $\C$ \textit{fits into an $n$-exact sequence} in $\C$ if there is an $(n+1-m)$-cosegment \[ \begin{tikzcd}
Y_{0} \arrow[r, "g_{1}"] & Y_{1} \arrow[r, "g_{2}"] & \cdots \arrow[r, "g_{n+1-m}"] &[1.8em] Y_{n+1-m}
\end{tikzcd} \] in $\C$ with $X_{0} = Y_{0}$ for which the concatenation \[ \begin{tikzcd}
X_{m} \arrow[r, "f_{m}"] &[0.2em] \cdots \arrow[r, "f_{2}"] & X_{1} \arrow[r, "f_{1}"] & X_{0} \arrow[r, "g_{1}"] & Y_{1} \arrow[r, "g_{2}"] & \cdots \arrow[r, "g_{n+1-m}"] &[1.8em] Y_{n+1-m}
\end{tikzcd} \] is an $n$-exact sequence in $\C$. Dually, we say that an $m$-cosegment \[ \begin{tikzcd}
Y_{0} \arrow[r, "g_{1}"] & Y_{1} \arrow[r, "g_{2}"] & \cdots \arrow[r, "g_{m}"] &[0.2em] Y_{m}
\end{tikzcd} \] in $\C$ \textit{fits into an $n$-exact sequence} in $\C$ if there is an $(n+1-m)$-segment \[ \begin{tikzcd}
X_{n+1-m} \arrow[r, "f_{n+1-m}"] &[1.8em] \cdots \arrow[r, "f_{2}"] & X_{1} \arrow[r, "f_{1}"] & X_{0}
\end{tikzcd} \] in $\C$ with $Y_{0} = X_{0}$ for which the concatenation \[ \begin{tikzcd}
X_{n+1-m} \arrow[r, "f_{n+1-m}"] &[1.8em] \cdots \arrow[r, "f_{2}"] & X_{1} \arrow[r, "f_{1}"] & Y_{0} \arrow[r, "g_{1}"] & Y_{1} \arrow[r, "g_{2}"] & \cdots \arrow[r, "g_{m}"] &[0.2em] Y_{m}
\end{tikzcd} \] is an $n$-exact sequence in $\C$.

\begin{proposition}\label{proposition.9}
Assume that $\C$ is coherent, and let $m$ and $k$ be positive integers. The following are equivalent:
\begin{enumerate}
    \item[(a)] Every $m$-spherical object in $\mod \C$ is $k$-torsion free.
    \item[(b)] Every $m$-segment in $\C$ can be extended to an $(m+k)$-segment in $\C$.
\end{enumerate}
Furthermore, if $\C$ is pre-$n$-abelian and $m + k = n + 1$, then the above items are also equivalent to:
\begin{enumerate}
    \item[(c)] Every $m$-segment in $\C$ fits into an $n$-exact sequence in $\C$.
\end{enumerate}
\end{proposition}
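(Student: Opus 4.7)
My plan is to first establish (a) $\Leftrightarrow$ (b) without the pre-$n$-abelian hypothesis, by combining Propositions~\ref{proposition.11} and~\ref{proposition.14}: an $m$-segment in $\C$ is precisely a length-$m$ projective resolution of an $m$-spherical module $F = \mr(f_1)$ equipped with the dual exactness appearing in Proposition~\ref{proposition.11}(c), while extending such a segment to an $(m+k)$-segment is precisely the data witnessing that $F$ is $k$-torsion free, as in Proposition~\ref{proposition.14}(c). Concretely, for (a) $\Rightarrow$ (b), given an $m$-segment $X_m \to \cdots \to X_0$, one extracts $F := \mr(f_1)$, which is $m$-spherical by Proposition~\ref{proposition.11}, applies (a) to get $k$-torsion freeness, and then uses Proposition~\ref{proposition.14}(a) $\Rightarrow$ (c) to append morphisms $X_0 \to Y_1 \to \cdots \to Y_k$ producing the desired $(m+k)$-segment. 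For (b) $\Rightarrow$ (a), given $F \in \mod \C$ that is $m$-spherical, one produces an $m$-segment with $\mr(f_1) \simeq F$ via Proposition~\ref{proposition.11}, extends it using (b), and reads off the truncated exact sequences required by Proposition~\ref{proposition.14}(c) to conclude $F$ is $k$-torsion free.

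Under the extra hypotheses ($\C$ pre-$n$-abelian and $m + k = n + 1$), the implication (c) $\Rightarrow$ (b) is immediate since an $n$-exact sequence of the form $X_m \to \cdots \to X_0 \to Y_1 \to \cdots \to Y_k$ is in particular an $(n+1)$-segment. For the converse direction, I plan to refine the construction of (a) $\Rightarrow$ (b) so that the last added morphism $g_k : Y_{k-1} \to Y_k$ is an epimorphism in $\C$; this is exactly the condition needed to promote the $(m+k)$-segment to an $n$-exact sequence, for it forces $\C(g_k,-)$ to be a monomorphism and hence the $\mod \C^{\op}$ sequence to begin with $0$. This can be arranged via the ``Furthermore'' clause of Proposition~\ref{proposition.14}, provided one knows that $\pdim \ml(f_1) \leqslant k + 1$.

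Establishing this projective-dimension bound is the main obstacle. To handle it, I will apply the dual functor $(-)^{\ast}$ to the length-$m$ projective resolution $0 \to \C(-,X_m) \to \cdots \to \C(-,X_0) \to F \to 0$ supplied by the $m$-segment, and use the $m$-sphericalness of $F$ (which guarantees the vanishing of $\Ext^{i}(F,\C(-,Z))$ for $1 \leqslant i \leqslant m-1$ and every $Z \in \C$) to obtain the exact sequence
\[ 0 \to F^{\ast} \to \C(X_0,-) \to \C(X_1,-) \to \cdots \to \C(X_m,-) \to \ml(f_m) \to 0 \]
in $\mod \C^{\op}$. Since $\C$ is pre-$n$-abelian, $\pdim \ml(f_m) \leqslant n + 1$, and $F^{\ast}$, which is an $(m+1)$-th syzygy of $\ml(f_m)$ in the above resolution, therefore satisfies $\pdim F^{\ast} \leqslant (n+1) - (m+1) = k - 1$. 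Splicing the truncation $0 \to F^{\ast} \to \C(X_0,-) \to \C(X_1,-) \to \ml(f_1) \to 0$ of the same sequence then gives $\pdim \ml(f_1) \leqslant \pdim F^{\ast} + 2 \leqslant k + 1$, completing the verification of the hypothesis of the ``Furthermore'' clause, and hence the proof.
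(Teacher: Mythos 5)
Your proposal is correct and follows essentially the same route as the paper: Propositions~\ref{proposition.11} and~\ref{proposition.14} give the equivalence of (a) and (b), item (c) trivially implies (b), and (a) implies (c) via the ``Furthermore'' clause of Proposition~\ref{proposition.14} once one knows $\pdim \ml(f_1) \leqslant k+1$. The only difference is that you supply an explicit (and valid) derivation of that bound through $F^{\ast}$ and $\ml(f_m)$, whereas the paper simply asserts $\pdim \ml(f_1) \leqslant n+2-m$; a slightly more direct route to the same bound is to read off from the covariant exactness of the $m$-segment that $\ml(f_1) \simeq \Omega^{m-1}\ml(f_m)$.
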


\begin{proof}
Suppose that every $m$-spherical object in $\mod \C$ is $k$-torsion free. If \[ \begin{tikzcd}
X_{m} \arrow[r, "f_{m}"] &[0.2em] \cdots \arrow[r, "f_{2}"] & X_{1} \arrow[r, "f_{1}"] & X_{0}
\end{tikzcd} \] is an $m$-segment in $\C$, then it follows from Proposition \ref{proposition.11} that $\mr(f_{1})$ is $m$-spherical. Therefore, $\mr(f_{1})$ is $k$-torsion free, and it follows from Proposition \ref{proposition.14} that there is a sequence \[ \begin{tikzcd}
X_{0} \arrow[r, "g_{1}"] & Y_{1} \arrow[r, "g_{2}"] & \cdots \arrow[r, "g_{k}"] & Y_{k}
\end{tikzcd} \] of morphisms in $\C$ for which \[ \begin{tikzcd}
X_{m} \arrow[r, "f_{m}"] &[0.2em] \cdots \arrow[r, "f_{2}"] & X_{1} \arrow[r, "f_{1}"] & X_{0} \arrow[r, "g_{1}"] & Y_{1} \arrow[r, "g_{2}"] & \cdots \arrow[r, "g_{k}"] & Y_{k}
\end{tikzcd} \] is an $(m+k)$-segment in $\C$.

Conversely, suppose that every $m$-segment in $\C$ can be extended to an $(m+k)$-segment in $\C$. If $F \in \mod \C$ is $m$-spherical, then it has a projective resolution \[ \begin{tikzcd}
0 \arrow[r] & {\C(-,X_{m})} \arrow[r, "{\C(-,f_{m})}"] &[1.65em] \cdots \arrow[r, "{\C(-,f_{2})}"] &[1.4em] {\C(-,X_{1})} \arrow[r, "{\C(-,f_{1})}"] &[1.4em] {\C(-,X_{0})} \arrow[r] & F \arrow[r] & 0
\end{tikzcd} \] in $\mod \C$, so that $\mr(f_{1}) \simeq F$. By Proposition \ref{proposition.11}, the sequence \[ \begin{tikzcd}
{\C(X_{0},-)} \arrow[r, "{\C(f_{1},-)}"] &[1.4em] {\C(X_{1},-)} \arrow[r, "{\C(f_{2},-)}"] &[1.4em] \cdots \arrow[r, "{\C(f_{m},-)}"] &[1.65em] {\C(X_{m},-)}
\end{tikzcd} \] is exact in $\mod \C^{\op}$. Therefore, \[ \begin{tikzcd}
X_{m} \arrow[r, "f_{m}"] &[0.2em] \cdots \arrow[r, "f_{2}"] & X_{1} \arrow[r, "f_{1}"] & X_{0}
\end{tikzcd} \] is an $m$-segment in $\C$, and we can extend it to an $(m + k)$-segment \[ \begin{tikzcd}
X_{m} \arrow[r, "f_{m}"] &[0.2em] \cdots \arrow[r, "f_{2}"] & X_{1} \arrow[r, "f_{1}"] & X_{0} \arrow[r, "g_{1}"] & Y_{1} \arrow[r, "g_{2}"] & \cdots \arrow[r, "g_{k}"] & Y_{k}
\end{tikzcd} \] in $\C$. Then, by Proposition \ref{proposition.14}, $\mr(f_{1})$ is $k$-torsion free, hence so is $F$.

We have proved that the items (a) and (b) are equivalent. Now, assume that $\C$ is pre-$n$-abelian and that $m + k = n + 1$. Clearly, (c) implies (b). Below, we show that (a) implies (c).

Suppose that every $m$-spherical object in $\mod \C$ is $k$-torsion free. If \[ \begin{tikzcd}
X_{m} \arrow[r, "f_{m}"] &[0.2em] \cdots \arrow[r, "f_{2}"] & X_{1} \arrow[r, "f_{1}"] & X_{0}
\end{tikzcd} \] is an $m$-segment in $\C$, then we know from Proposition \ref{proposition.11} that $\mr(f_{1})$ is $m$-spherical, hence $\mr(f_{1})$ is $k$-torsion free. Moreover, observe that $\pdim \ml(f_{1}) \leqslant n + 2 - m$, that is, $\pdim \ml(f_{1}) \leqslant k + 1$. Thus, it follows from Proposition \ref{proposition.14} that there is a $k$-cosegment \[ \begin{tikzcd}
X_{0} \arrow[r, "g_{1}"] & Y_{1} \arrow[r, "g_{2}"] & \cdots \arrow[r, "g_{k}"] & Y_{k}
\end{tikzcd} \] in $\C$ for which \[ \begin{tikzcd}
X_{m} \arrow[r, "f_{m}"] &[0.2em] \cdots \arrow[r, "f_{2}"] & X_{1} \arrow[r, "f_{1}"] & X_{0} \arrow[r, "g_{1}"] & Y_{1} \arrow[r, "g_{2}"] & \cdots \arrow[r, "g_{k}"] & Y_{k}
\end{tikzcd} \] is an $n$-exact sequence in $\C$.
\end{proof}

By using Proposition \ref{proposition.9}, we can now easily ``translate'' the axioms (F2$_{a}$), (F2$_{b}$), (F2$_{c}$), (F2$_{d_{k}}$), (F2$_{a}^{\op}$), (F2$_{b}^{\op}$), (F2$_{c}^{\op}$) and (F2$_{d_{k}}^{\op}$) to the language of higher homological algebra. Indeed, for an additive and idempotent complete category $\C$, and positive integers $n$ and $k$ with $1 \leqslant k \leqslant n$, consider the following axioms:

\begin{enumerate}
    \item[(A2$_{a}$)] Every monomorphism in $\C$ fits into an $n$-exact sequence in $\C$.
    \item[(A2$_{b}$)] Every $m$-segment in $\C$ fits into an $n$-exact sequence in $\C$, for all $1 \leqslant m \leqslant n$.
    \item[(A2$_{c}$)] Every $m$-segment in $\C$ is an $m$-kernel in $\C$, for all $1 \leqslant m \leqslant n$.
    \item[(A2$_{d_{k}}$)] Every $m$-segment in $\C$ can be extended to an $(m+k)$-segment in $\C$, for all $1 \leqslant m \leqslant n+1-k$.
\end{enumerate}

And their duals:

\begin{enumerate}
    \item[(A2$_{a}^{\op}$)] Every epimorphism in $\C$ fits into an $n$-exact sequence in $\C$.
    \item[(A2$_{b}^{\op}$)] Every $m$-cosegment in $\C$ fits into an $n$-exact sequence in $\C$, for all $1 \leqslant m \leqslant n$.
    \item[(A2$_{c}^{\op}$)] Every $m$-cosegment in $\C$ is an $m$-cokernel in $\C$, for all $1 \leqslant m \leqslant n$.
    \item[(A2$_{d_{k}}^{\op}$)] Every $m$-cosegment in $\C$ can be extended to an $(m+k)$-cosegment in $\C$, for all $1 \leqslant m \leqslant n+1-k$.
\end{enumerate}

\begin{proposition}\label{proposition.12}
Assume that $\C$ is coherent, and consider the following statement:
\begin{enumerate}
    \item[] \hspace{-2.5em} The axioms \textup{(F2$_{\square}$)} and \textup{(F2$_{\square}^{\op}$)} are equivalent to \textup{(A2$_{\square}$)} and \textup{(A2$_{\square}^{\op}$)}, respectively.
\end{enumerate}
The above statement holds for $\square \in \{c,d_{k}\}$. Furthermore, if $\C$ is pre-$n$-abelian, then the statement also holds for $\square \in \{a,b\}$.
\end{proposition}

\begin{proof}
The assertions concerning $\square \in \{a,b,d_{k}\}$ follow from Proposition \ref{proposition.9}, while the claim concerning $\square = c$ is easily verified with the assistance of Proposition \ref{proposition.11}.
\end{proof}

We can now conclude that the previous axioms are  equivalent ways of stating the axioms (A2) and (A2$^{\op}$) for $\C$, assuming that $\C$ is pre-$n$-abelian. We remark that the equivalences between the axioms (A2) and (A2$_{a}$) and between (A2$^{\op}$) and (A2$_{a}^{\op}$) were already known, see \cite[Remark 3.2]{MR3519980}.

\begin{theorem}\label{theorem.3}
Assume that $\C$ is pre-$n$-abelian. The following statement holds for $\square \in \{a,b,c,d_{k}\}$:
\begin{enumerate}
    \item[] \hspace{-2em} The axioms \textup{(A2)} and \textup{(A2$^{\op}$)} are equivalent to \textup{(A2$_{\square}$)} and \textup{(A2$_{\square}^{\op}$)}, respectively.
\end{enumerate}
\end{theorem}

\begin{proof}
Follows from Theorem \ref{theorem.7} and Propositions \ref{proposition.3} and \ref{proposition.12}.
\end{proof}

Finally, observe that, when $n = 1$, the axioms (A2$_{c}$) and (A2$_{c}^{\op}$) can be rephrased as ``every monomorphism is a kernel'' and ``every epimorphism is a cokernel'', respectively. Because of the fundamental importance of these axioms for the case $n = 1$, we present the following alternative definition of an $n$-abelian category:

\begin{theorem}\label{theorem.8}
An additive and idempotent complete category $\C$ is $n$-abelian if and only if $\C$ satisfies the following axioms: \begin{enumerate}
    \item[(A1)] $\C$ has $n$-kernels.
    \item[(A1$^{\op}$)] $\C$ has $n$-cokernels.
    \item[(A2$_{c}$)] Every $m$-segment in $\C$ is an $m$-kernel in $\C$, for all $1 \leqslant m \leqslant n$.
    \item[(A2$_{c}^{\op}$)] Every $m$-cosegment in $\C$ is an $m$-cokernel in $\C$, for all $1 \leqslant m \leqslant n$.
\end{enumerate}
\end{theorem}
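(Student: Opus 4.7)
The statement is essentially a packaging result: its content is already contained in Theorem \ref{theorem.3}, which was stated under the assumption that $\C$ is pre-$n$-abelian. Recall that by definition, $\C$ is $n$-abelian precisely when $\C$ is additive, idempotent complete, and satisfies (A1), (A1$^{\op}$), (A2), and (A2$^{\op}$). The axioms (A1) and (A1$^{\op}$) together say that $\C$ is pre-$n$-abelian, so the whole claim reduces to showing that, in the presence of (A1) and (A1$^{\op}$), the pair of axioms (A2) and (A2$^{\op}$) is equivalent to the pair (A2$_c$) and (A2$_c^{\op}$).

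The plan, therefore, is simply to invoke Theorem \ref{theorem.3} twice. First, assume $\C$ is $n$-abelian. Then $\C$ is pre-$n$-abelian, so Theorem \ref{theorem.3} applies and gives that (A2) implies (A2$_c$) and that (A2$^{\op}$) implies (A2$_c^{\op}$); thus $\C$ satisfies the four axioms listed in the statement. Conversely, suppose $\C$ is an additive and idempotent complete category satisfying (A1), (A1$^{\op}$), (A2$_c$), and (A2$_c^{\op}$). Then $\C$ is again pre-$n$-abelian by the first two axioms, so Theorem \ref{theorem.3} applies and yields (A2) from (A2$_c$), and (A2$^{\op}$) from (A2$_c^{\op}$). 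Hence $\C$ is $n$-abelian.

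There is essentially no obstacle: the work was already done in establishing Theorem \ref{theorem.3} (through the functorial axioms (F2), (F2$^{\op}$) and their reformulations (F2$_c$), (F2$_c^{\op}$) via Theorem \ref{theorem.7} and Proposition \ref{proposition.12}). The only subtlety worth flagging is that one cannot drop the hypothesis of pre-$n$-abelianness when passing between (A2) and (A2$_c$): the equivalence in Theorem \ref{theorem.3} relies on having $n$-kernels and $n$-cokernels available (through Proposition \ref{proposition.9}), which is why (A1) and (A1$^{\op}$) must be retained as explicit axioms in the statement. The proof can thus be given in a single line referencing Theorem \ref{theorem.3}.

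\begin{proof}
By definition, $\C$ is $n$-abelian if and only if it satisfies (A1), (A1$^{\op}$), (A2), and (A2$^{\op}$); the first two of these precisely say that $\C$ is pre-$n$-abelian. Under this pre-$n$-abelian assumption, Theorem \ref{theorem.3} gives that (A2) is equivalent to (A2$_c$) and that (A2$^{\op}$) is equivalent to (A2$_c^{\op}$). The result follows.
\end{proof}
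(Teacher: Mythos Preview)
Your proposal is correct and takes essentially the same approach as the paper, whose proof is simply the one-line ``Follows from Theorem~\ref{theorem.3}.'' Your added explanation of why the pre-$n$-abelian hypothesis is available in both directions is accurate and matches the intended logic.
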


\begin{proof}
Follows from Theorem \ref{theorem.3}.
\end{proof}

\subsection{The general case}

In this subsection, we prove that finitely presented modules of projective dimension at most $m$ over an $n$-abelian category are $(n+1-m)$-torsion free, even if the modules are not $m$-spherical. Then, as in Subsection \ref{subsection.spherical}, we present a few equivalent statements for the axioms (F2) and (F2$^{\op}$).

\begin{lemma}\label{lemma.4}
Assume that $\C$ is left coherent. Let $F \in \mod \C$ and consider $\Tr F \in \mod \C^{\op}$, a transpose of $F$. If $F^{\ast} = 0$, then $\pdim \Tr F \leqslant 1$.
\end{lemma}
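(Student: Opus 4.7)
The plan is to unwind the definitions via a projective presentation of $F$ and the functor $(-)^{\ast}$, and then observe that the hypothesis $F^{\ast} = 0$ collapses the standard $4$-term exact sequence into a length-one projective resolution of $\Tr F$.

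More concretely, I would choose a morphism $f \in \C(X,Y)$ such that $\mr(f) \simeq F$, so that by definition $\Tr F = \ml(f)$. Applying the contravariant functor $(-)^{\ast}$ to the projective presentation
\[ \begin{tikzcd}
{\C(-,X)} \arrow[r, "{\C(-,f)}"] &[1.2em] {\C(-,Y)} \arrow[r] & F \arrow[r] & 0
\end{tikzcd} \]
in $\Mod \C$ produces, as recalled just before the statement of the lemma, the exact sequence
\[ \begin{tikzcd}
0 \arrow[r] & F^{\ast} \arrow[r] & {\C(Y,-)} \arrow[r, "{\C(f,-)}"] &[1.2em] {\C(X,-)} \arrow[r] & \Tr F \arrow[r] & 0
\end{tikzcd} \]
in $\Mod \C^{\op}$.

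Now I would use the hypothesis $F^{\ast} = 0$ to conclude that $\C(f,-)$ is a monomorphism, yielding a short exact sequence
\[ \begin{tikzcd}
0 \arrow[r] & {\C(Y,-)} \arrow[r, "{\C(f,-)}"] &[1.2em] {\C(X,-)} \arrow[r] & \Tr F \arrow[r] & 0
\end{tikzcd} \]
in $\Mod \C^{\op}$. Since $\C$ is left coherent, $\mod \C^{\op}$ is an abelian subcategory of $\Mod \C^{\op}$ whose inclusion detects exactness, so the displayed sequence is in fact a projective resolution of $\Tr F$ of length one in $\mod \C^{\op}$. Hence $\pdim \Tr F \leqslant 1$.

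There is no real obstacle here: the entire argument is a direct reading of the $4$-term exact sequence attached to $F$ and its transpose. The only point deserving a line of care is justifying that the resulting length-one resolution lives in $\mod \C^{\op}$ (which is automatic from left coherence and the fact that $\C(X,-)$ and $\C(Y,-)$ are already finitely generated projectives).
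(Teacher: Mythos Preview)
Your argument is correct and is precisely the straightforward unwinding the paper has in mind; indeed the paper's own proof of this lemma is simply ``This is straightforward and left to the reader.'' The only minor remark is that left coherence is not even needed to witness $\pdim \Tr F \leqslant 1$, since the short exact sequence you obtain already lives in $\Mod \C^{\op}$ with $\C(Y,-)$ and $\C(X,-)$ in $\proj \C^{\op}$, but this changes nothing substantive.
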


\begin{proof}
This is straightforward and left to the reader.
\end{proof}

The next result (and its proof) is essentially from \cite[Proposition 3.7]{MR3638352}.

\begin{proposition}\label{proposition.20}
Assume that $\C$ is coherent, and let $m$ and $k$ be positive integers, where $k \geqslant 2$. If every object of projective dimension at most $m$ in $\mod \C$ is $k$-torsion free, then every object of projective dimension at most $m + 1$ in $\mod \C$ is $(k-1)$-torsion free.
\end{proposition}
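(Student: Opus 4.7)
The plan is to take a short exact sequence $0 \to K \to P \to F \to 0$ in $\mod \C$ with $P$ projective, so that $\pdim K \leq m$; then $K$ is $k$-torsion free by hypothesis, and since $k \geq 2$, Corollary \ref{corollary.3} gives that $K$ is reflexive and $\E^i(K) = 0$ for $1 \leq i \leq k$. The strategy is to propagate this vanishing to $F$ with a loss of one degree, by mimicking the pullback-and-snake argument from the proof of Proposition \ref{proposition.13}.

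Fix a projective resolution of $F$ with morphisms $X_2 \xrightarrow{f_2} X_1 \xrightarrow{f_1} X_0$ in $\C$, so that $K \simeq \mr(f_2)$ and $F \simeq \mr(f_1)$. Applying Proposition \ref{proposition.14} to $K$ yields an exact sequence $\C(-, X_2) \to \C(-, X_1) \to \C(-, Y_1) \to \cdots \to \C(-, Y_k)$ in $\mod \C$. Set $G := \mr(g_1)$, where $g_1 : X_1 \to Y_1$ corresponds to the morphism $\C(-, X_1) \to \C(-, Y_1)$ above. Truncating this sequence and its $\mod \C^{\op}$-counterpart appropriately, a second application of Proposition \ref{proposition.14} now to $g_1$ shows that $G$ is $(k-1)$-torsion free. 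Because $g_1$ is a weak cokernel of $f_2$ and $f_1 f_2 = 0$, the morphism $f_1$ factors through $g_1$, and chasing the resulting commutative square produces a short exact sequence
\[
0 \to \C(-, Y_1) \to G \oplus \C(-, X_0) \to F \to 0
\]
in $\mod \C$ relating $F$ to the module $G$ whose $\E^i$-vanishing range we control.

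The concluding step is to deduce $\E^i(F) = 0$ for $1 \leq i \leq k-1$ from the above sequence and the $(k-1)$-torsion-freeness of $G$, and this is the main obstacle, being precisely where the argument must diverge from the proof of Proposition \ref{proposition.13}. There, the spherical hypothesis on $F$ allowed Lemma \ref{lemma.5} to turn the $(-)^{\ast}$-dualized sequence into a short exact sequence, which then split and set up a direct snake-lemma identification $\E^i(G) \simeq \E^i(F)$ for $i = 1, 2$. Without that hypothesis, the dualized sequence need not be right exact, and its cokernel contributes higher $\Ext$ terms. The plan to handle this is to work with the full functorial double dual sequence of Proposition \ref{proposition.10}: assembling the double dual sequences of $\C(-, Y_1)$, $G \oplus \C(-, X_0)$, and $F$ into a commutative diagram whose vertical maps are induced by the short exact sequence above, and combining the vanishing $\E^i(G) = 0$ for $1 \leq i \leq k-1$ with the reflexivity of $\C(-, Y_1)$ and of $\C(-, X_0)$, a careful diagram chase forces the induced maps $\E^i(G) \to \E^i(F)$ to be isomorphisms in the relevant range, yielding $\E^i(F) = 0$ as required.
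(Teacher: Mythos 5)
Your reduction to the syzygy $K = \Omega F$ and the construction of the short exact sequence $0 \to \C(-,Y_1) \to G \oplus \C(-,X_0) \to F \to 0$ with $G = \mr(g_1)$ being $(k-1)$-torsion free are correct: that part of the argument from Proposition \ref{proposition.13} uses only the weak-cokernel property of $g_1$ and carries over verbatim. The gap is exactly where you locate it, but your proposed repair does not close it. First, the double dual sequence of Proposition \ref{proposition.10} only involves $\E^1$ and $\E^2$; no diagram assembled from double dual sequences can say anything about $\E^i(F)$ for $3 \leqslant i \leqslant k-1$, which you need whenever $k \geqslant 4$. Second, even for $i = 1, 2$ the naturality of the double dual sequence gives maps $\E^i(G \oplus \C(-,X_0)) \to \E^i(F)$ induced by the epimorphism $G \oplus \C(-,X_0) \to F$, and vanishing of the source only helps if these maps are epimorphisms; the paper's remark after Proposition \ref{proposition.10} gives monomorphy of $\E^1(-)$ along monomorphisms, not epimorphy along epimorphisms. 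In Proposition \ref{proposition.13} the identification $\E^i(G) \simeq \E^i(F)$ for $i = 1,2$ came from the snake lemma applied to the map of short exact sequences onto $0 \to \C(-,Y_1) \to G^{\ast\ast} \oplus \C(-,X_0) \to F^{\ast\ast} \to 0$, and the exactness of that bottom row was produced by Lemma \ref{lemma.5} from the hypothesis $\Ext^1(F,\C(-,X)) = 0$ --- precisely the spherical condition you no longer have. Without it, $(-)^{\ast}$ applied to your short exact sequence is only left exact, the bottom row fails to be exact on the right, and the snake lemma (or any diagram chase substituting for it) does not produce the required surjectivity.

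The paper's actual proof takes a different route that you should compare with. It applies Proposition \ref{proposition.19}(a) to $\Tr F$ to split its double dual sequence as $0 \to \E^1(\Tr F) \to \Tr F \to \Omega \Tr \Omega F \to 0$, then applies $\Hom(-,\C(X,-))$ and uses $\Ext^{i}(\Omega \Tr \Omega F,-) \simeq \Ext^{i+1}(\Tr \Omega F,-)$ to obtain a long exact sequence linking $\E^{i+1}(\Omega F)$, $\E^i(F)$ and $\E^i(\Tr \E^1(\Tr F))$. The hypothesis applied to $\Omega F$ (which has $\pdim \leqslant m$) kills the first of these for $i \leqslant k-1$; Proposition \ref{proposition.19}(b) forces $(\E^1(\Tr F))^{\ast} = 0$, so Lemma \ref{lemma.4} gives $\pdim \Tr \E^1(\Tr F) \leqslant 1$, and the hypothesis applies again to kill the third. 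The auxiliary module $\Tr \E^1(\Tr F)$ and the vanishing of $(\E^1(\Tr F))^{\ast}$ are the missing ideas: they are what replaces the splitting argument that is unavailable without the spherical hypothesis.
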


\begin{proof}
Suppose that every object of projective dimension at most $m$ in $\mod \C$ is $k$-torsion free. Let $F \in \mod \C$ be such that $\pdim F \leqslant m + 1$. It follows from \mbox{Proposition \ref{proposition.19}} that, for certain choices of transposes and syzygies, there is a transpose $\Tr F$ of $F$ for which its double dual sequence induces a short exact sequence \[ \begin{tikzcd}
0 \arrow[r] & \E^{1}(\Tr F) \arrow[r] & \Tr F \arrow[r] & \Omega \Tr \Omega F \arrow[r] & 0
\end{tikzcd} \] in $\mod \C^{\op}$. By applying the functor $\Hom(-,\C(X,-))$ in the above sequence and considering the induced long exact sequence, for each $X \in \C$, then using the fact that $\Ext^{i}(\Omega \Tr \Omega F, -) \simeq \Ext^{i+1}(\Tr \Omega F, -)$, we deduce that there is a long exact sequence \[ \begin{tikzcd}
0 \arrow[r] & (\Omega \Tr \Omega F)^{\ast} \arrow[r] & (\Tr F)^{\ast} \arrow[r] & (\E^{1}(\Tr F))^{\ast} \arrow[out=0, in=180, looseness=2, overlay, lld]    \\[0.4em]
            & \E^{2}(\Omega F) \arrow[r]             & \E^{1}(F) \arrow[r]      & \E^{1}(\Tr \E^{1}(\Tr F)) \arrow[out=0, in=180, looseness=2, overlay, lld] \\[0.4em]
            & \E^{3}(\Omega F) \arrow[r]                      & \E^{2}(F) \arrow[r]               & \E^{2}(\Tr \E^{1}(\Tr F)) \arrow[out=0, in=180, looseness=2, overlay, lld, shorten >= 1.08em,shorten <= 0em] \\[0.4em]
            & \cdots                                 &                          &                                             
\end{tikzcd} \] in $\mod \C$. Since the projective dimension of $\Omega F$ is at most $m$, it is $k$-torsion free. Hence $\E^{i}(\Omega F) = 0$ for all $1 \leqslant i \leqslant k$, so that there is an exact sequence \[ \begin{tikzcd}
0 \arrow[r] & \E^{i}(F) \arrow[r] & \E^{i}(\Tr \E^{1}(\Tr F))
\end{tikzcd} \] in $\mod \C$ for each $1 \leqslant i \leqslant k - 1$. Now, because $\E^{2}(\Omega F) = 0$, the previous long exact sequence in $\mod \C$ leads to a short exact sequence \[ \begin{tikzcd}
0 \arrow[r] & (\Omega \Tr \Omega F)^{\ast} \arrow[r] & (\Tr F)^{\ast} \arrow[r] & (\E^{1}(\Tr F))^{\ast} \arrow[r] & 0
\end{tikzcd} \] in $\mod \C$. Moreover, by Proposition \ref{proposition.19}, $(\Tr F)^{\ast} \to (\E^{1}(\Tr F))^{\ast}$ is the zero morphism. Consequently, $(\E^{1}(\Tr F))^{\ast} = 0$, and it follows from Lemma \ref{lemma.4} that $\pdim \Tr \E^{1}(\Tr F) \leqslant 1$. Hence $\Tr \E^{1}(\Tr F)$ is $k$-torsion free, that is, $\E^{i}(\Tr \E^{1}(\Tr F)) = 0$ for all $1 \leqslant i \leqslant k$. Therefore, we deduce from above that $\E^{i}(F) = 0$ for all $1 \leqslant i \leqslant k - 1$, that is, $F$ is $(k-1)$-torsion free.
\end{proof}

We conclude from Theorem \ref{theorem.2} and Proposition \ref{proposition.20} that if $\C$ is an $n$-abelian category, then finitely presented $\C$-modules and $\C^{\op}$-modules of projective dimension at most $m$ are $(n+1-m)$-torsion free, for all $1 \leqslant m \leqslant n$. Moreover, Proposition \ref{proposition.20} makes it possible to describe the axioms (F2) and (F2$^{\op}$) in a few different ways. In fact, given positive integers $n$ and $k$ with $1 \leqslant k \leqslant n$, and a coherent category $\C$, consider the following axioms:

\begin{enumerate}
    \item[(F2$_{e}$)] Every $F \in \mod \C$ with $\pdim F \leqslant m$ is $(n+1-m)$-torsion free, for all $1 \leqslant m \leqslant n$.
    \item[(F2$_{f}$)] Every $F \in \mod \C$ with $\pdim F \leqslant n$ is a syzygy.
    \item[(F2$_{g_{k}}$)] Every $F \in \mod \C$ with $\pdim F \leqslant n+1-k$ is $k$-torsion free.
\end{enumerate}

And their duals:

\begin{enumerate}
    \item[(F2$_{e}^{\op}$)] Every $F \in \mod \C^{\op}$ with $\pdim F \leqslant m$ is $(n+1-m)$-torsion free, for \mbox{all $1 \leqslant m \leqslant n$.}
    \item[(F2$_{f}^{\op}$)] Every $F \in \mod \C^{\op}$ with $\pdim F \leqslant n$ is a syzygy.
    \item[(F2$_{g_{k}}^{\op}$)] Every $F \in \mod \C^{\op}$ with $\pdim F \leqslant n+1-k$ is $k$-torsion free.
\end{enumerate}

\begin{theorem}\label{theorem.9}
Assume that $\C$ is coherent. The following statement holds for $\square \in \{e,f,g_{k}\}$:
\begin{enumerate}
    \item[] \hspace{-2em} The axioms \textup{(F2)} and \textup{(F2$^{\op}$)} are equivalent to \textup{(F2$_{\square}$)} and \textup{(F2$_{\square}^{\op}$)}, respectively.
\end{enumerate}
\end{theorem}

\begin{proof}
This is similar to the proof of Theorem \ref{theorem.7}, with the difference that it relies on Proposition \ref{proposition.20} instead of Proposition \ref{proposition.13}. The details are left to the reader.
\end{proof}

We remark that, as it was the case for the axioms (F2$_{c}$) and (F2$_{c}^{\op}$), the axioms (F2$_{f}$) and (F2$_{f}^{\op}$) do not depend on the transpose or on the dual of a module. Moreover, by Corollary \ref{corollary.3}, when $n \geqslant 2$ and $k = 2$, the axioms (F2$_{g_{k}}$) and (F2$_{g_{k}}^{\op}$) can be stated as ``finitely presented modules of projective dimension at most $n - 1$ are reflexive''.

\subsection{Pre-segments and pre-cosegments}

As we saw in Proposition \ref{proposition.11}, when $\C$ is coherent, $m$-spherical objects in $\mod \C$ and in $\mod \C^{\op}$ correspond to $m$-segments in $\C$ and $m$-cosegments in $\C$, respectively. This idea allowed us to describe the axioms (F2$_{a}$), (F2$_{b}$), (F2$_{c}$), (F2$_{d_{k}}$) and their duals in terms of $m$-segments and $m$-cosegments. Now, we could aim to proceed similarly with the axioms (F2$_{e}$), (F2$_{f}$), (F2$_{g_{k}}$) and their duals. To achieve this goal, we rely on the idea that objects of projective dimension at most $m$ in $\mod \C$ and in $\mod \C^{\op}$ correspond to ``pre-$m$-segments'' in $\C$ and ``pre-$m$-cosegments'' in $\C$, respectively, which are concepts that we now define.

Let $m$ be a positive integer. We define a \textit{pre-$m$-segment} in $\C$ to be a sequence \[ \begin{tikzcd}
X_{m} \arrow[r, "f_{m}"] &[0.2em] \cdots \arrow[r, "f_{2}"] & X_{1} \arrow[r, "f_{1}"] & X_{0}
\end{tikzcd} \] of morphisms in $\C$ for which \[ \begin{tikzcd}
0 \arrow[r] & {\C(-,X_{m})} \arrow[r, "{\C(-,f_{m})}"] &[1.65em] \cdots \arrow[r, "{\C(-,f_{2})}"] &[1.4em] {\C(-,X_{1})} \arrow[r, "{\C(-,f_{1})}"] &[1.4em] {\C(-,X_{0})}
\end{tikzcd} \] is an exact sequence in $\Mod \C$. Dually, we define a \textit{pre-$m$-cosegment} in $\C$ to be a sequence \[ \begin{tikzcd}
Y_{0} \arrow[r, "g_{1}"] & Y_{1} \arrow[r, "g_{2}"] & \cdots \arrow[r, "g_{m}"] &[0.2em] Y_{m}
\end{tikzcd} \] in $\C$ such that \[ \begin{tikzcd}
0 \arrow[r] & {\C(Y_{m},-)} \arrow[r, "{\C(g_{m},-)}"] &[1.65em] \cdots \arrow[r, "{\C(g_{2},-)}"] &[1.4em] {\C(Y_{1},-)} \arrow[r, "{\C(g_{1},-)}"] &[1.4em] {\C(Y_{0},-)}
\end{tikzcd} \] is an exact sequence in $\Mod \C^{\op}$. Note that every $m$-segment is a pre-$m$-segment as well as every $m$-cosegment is a pre-$m$-cosegment. Moreover, a pre-$1$-segment is the same as a monomorphism, and a pre-$1$-cosegment is the same as an epimorphism.

It is worth remarking that, when $m \leqslant n$, every pre-$m$-segment can be regarded as a pre-$n$-segment and every pre-$m$-cosegment can be regarded as a pre-$n$-cosegment, by adding zeros to their left and right, respectively. Furthermore, observe that every $n$-kernel is a pre-$n$-segment and every $n$-cokernel is a pre-$n$-cosegment. We can now understand the axioms (F2$_{f}$) and (F2$_{f}^{\op}$) as the converses of the these statements. In fact, consider the following axioms:

\begin{enumerate}
    \item[(A2$_{f}$)] Every pre-$n$-segment in $\C$ is an $n$-kernel in $\C$.
    \item[(A2$_{f}^{\op}$)] Every pre-$n$-cosegment in $\C$ is an $n$-cokernel in $\C$.
\end{enumerate}

\begin{proposition}\label{proposition.21}
If $\C$ is coherent, then the axioms \textup{(F2$_{f}$)} and \textup{(F2$_{f}^{\op}$)} are equivalent to \textup{(A2$_{f}$)} and \textup{(A2$_{f}^{\op}$)}, respectively.
\end{proposition}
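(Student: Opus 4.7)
The plan is to translate between pre-$n$-segments in $\C$ and finitely presented modules of projective dimension at most $n$ via the Yoneda embedding, and then identify ``being an $n$-kernel'' with ``being a syzygy''. I only handle \textup{(F2$_{f}$)} $\Leftrightarrow$ \textup{(A2$_{f}$)}; the dual statement follows by replacing $\C$ with $\C^{\op}$.

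For \textup{(F2$_{f}$)} $\Rightarrow$ \textup{(A2$_{f}$)}: given a pre-$n$-segment $X_{n} \xrightarrow{f_{n}} \cdots \xrightarrow{f_{1}} X_{0}$ in $\C$, set $F = \Coker \C(-,f_{1})$ in $\mod \C$, which makes sense because $\C$ is right coherent. The defining exact sequence of the pre-$n$-segment then extends to an exact sequence $0 \to \C(-,X_{n}) \to \cdots \to \C(-,X_{0}) \to F \to 0$ in $\mod \C$, exhibiting a projective resolution of $F$ of length at most $n$. Hence $\pdim F \leqslant n$, and \textup{(F2$_{f}$)} provides a monomorphism $\iota : F \to P$ in $\mod \C$ with $P \in \proj \C$. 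Writing $P \simeq \C(-,Y)$ for some $Y \in \C$, the composition of the projection $\C(-,X_{0}) \to F$ with $\iota$ is, by the Yoneda lemma, of the form $\C(-,f)$ for some $f \in \C(X_{0},Y)$. Since $\iota$ is a monomorphism, the kernel of $\C(-,f)$ equals the image of $\C(-,f_{1})$, so the sequence $0 \to \C(-,X_{n}) \to \cdots \to \C(-,X_{0}) \to \C(-,Y)$ is exact, showing that the pre-$n$-segment is an $n$-kernel of $f$.

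For \textup{(A2$_{f}$)} $\Rightarrow$ \textup{(F2$_{f}$)}: let $F \in \mod \C$ with $\pdim F \leqslant n$ and take a projective resolution $0 \to P_{n} \to \cdots \to P_{0} \to F \to 0$ in $\mod \C$ (allowing $P_{i} = 0$ if $\pdim F < n$). Using the equivalence $\C \approx \proj \C$ induced by the Yoneda embedding, write $P_{i} \simeq \C(-,X_{i})$ and the differentials as $\C(-,f_{i})$. Stripping off $F$ gives a sequence $X_{n} \xrightarrow{f_{n}} \cdots \xrightarrow{f_{1}} X_{0}$ in $\C$ that is a pre-$n$-segment. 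By \textup{(A2$_{f}$)}, it is an $n$-kernel of some morphism $f : X_{0} \to Y$ in $\C$. Thus $F \simeq \Coker \C(-,f_{1})$ embeds into $\C(-,Y)$ via $\C(-,f)$, and $F$ is a syzygy.

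There is no real obstacle here beyond bookkeeping: the key point is that pre-$n$-segments are precisely the truncations of projective resolutions of length at most $n$ (under the Yoneda identification $\C \approx \proj \C$), and being an $n$-kernel corresponds to extending such a resolution one more step into $\proj \C$, which is exactly the condition for the associated cokernel module to embed into a projective.
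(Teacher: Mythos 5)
Your proof is correct and is precisely the ``straightforward'' argument that the paper leaves to the reader: under coherence, exactness in $\mod \C$ and $\Mod \C$ agree, pre-$n$-segments correspond to projective resolutions of length at most $n$ of their cokernel module, and being an $n$-kernel of some $f$ is exactly the condition that this cokernel embeds into a representable, i.e.\ is a syzygy. Both directions are handled cleanly, including the minor points (padding a shorter resolution with zeros, and identifying $\Ker \C(-,f)$ with $\Image \C(-,f_1)$ from the monomorphism $F \to \C(-,Y)$), so nothing is missing.
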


\begin{proof}
This is straightforward and left to the reader.
\end{proof}

We can also describe the axioms (F2$_{e}$), (F2$_{g_{k}}$) and their duals in terms of pre-$m$-segments and pre-$m$-cosegments, but we leave this task to the reader. Note that their descriptions would not necessarily involve $n$-exact sequences.

Let us conclude that the axioms (A2$_{f}$) and (A2$_{f}^{\op}$) for $\C$ are equivalent formulations of (A2) and (A2$^{\op}$), assuming that $\C$ is pre-$n$-abelian.

\begin{theorem}\label{theorem.10}
Assume that $\C$ is pre-$n$-abelian. The axioms \textup{(A2)} and \textup{(A2$^{\op}$)} are equivalent to \textup{(A2$_{f}$)} and \textup{(A2$_{f}^{\op}$)}, respectively.
\end{theorem}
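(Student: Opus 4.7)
The plan is to simply chain together three equivalences that have already been established in the preceding material. Since $\C$ is pre-$n$-abelian, it is in particular coherent, so all the previously proved equivalences that require coherence are available.

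First, I would invoke Proposition \ref{proposition.3}, which gives (A2) $\Leftrightarrow$ (F2) and (A2$^{\op}$) $\Leftrightarrow$ (F2$^{\op}$) for pre-$n$-abelian categories. Next, I would apply Theorem \ref{theorem.9} to pass from (F2) to the equivalent axiom (F2$_{f}$) (every finitely presented $\C$-module of projective dimension at most $n$ is a syzygy), and similarly from (F2$^{\op}$) to (F2$_{f}^{\op}$). Finally, Proposition \ref{proposition.21} translates (F2$_{f}$) into (A2$_{f}$) and (F2$_{f}^{\op}$) into (A2$_{f}^{\op}$). Composing these three equivalences yields the claimed statement.

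There is no real obstacle here: the machinery of the functorial axioms, built up throughout Sections \ref{section.3} and \ref{section.6}, has already absorbed all the work. The entire proof reduces to citing the three intermediate results in the correct order. Thus the proof sketch is:

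\begin{proof}[Proof plan]
By Proposition \ref{proposition.3}, the axioms (A2) and (A2$^{\op}$) are equivalent to (F2) and (F2$^{\op}$), respectively. By Theorem \ref{theorem.9}, (F2) and (F2$^{\op}$) are in turn equivalent to (F2$_{f}$) and (F2$_{f}^{\op}$), respectively. Finally, by Proposition \ref{proposition.21}, (F2$_{f}$) and (F2$_{f}^{\op}$) are equivalent to (A2$_{f}$) and (A2$_{f}^{\op}$), respectively. Chaining these equivalences gives the result.
\end{proof}
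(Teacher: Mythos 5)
Your proposal is correct and matches the paper's proof exactly: the paper also derives Theorem \ref{theorem.10} by combining Proposition \ref{proposition.3}, Theorem \ref{theorem.9} and Proposition \ref{proposition.21}. Nothing is missing.
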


\begin{proof}
Follows from Theorem \ref{theorem.9} and Propositions \ref{proposition.3} and \ref{proposition.21}.
\end{proof}

Observe that, as it was the case for the axioms (A2$_{c}$) and (A2$_{c}^{\op}$), the axioms (A2$_{f}$) and (A2$_{f}^{\op}$) also generalize the statements ``every monomorphism is a kernel'' and ``every epimorphism is a cokernel'', which are recovered when $n = 1$. For this reason, in addition to Theorem \ref{theorem.8}, we present another alternative definition \mbox{of an $n$-abelian category:}

\begin{theorem}\label{theorem.11}
An additive and idempotent complete category $\C$ is $n$-abelian if and only if $\C$ satisfies the following axioms: \begin{enumerate}
    \item[(A1)] $\C$ has $n$-kernels.
    \item[(A1$^{\op}$)] $\C$ has $n$-cokernels.
    \item[(A2$_{f}$)] Every pre-$n$-segment in $\C$ is an $n$-kernel in $\C$.
    \item[(A2$_{f}^{\op}$)] Every pre-$n$-cosegment in $\C$ is an $n$-cokernel in $\C$.
\end{enumerate}
\end{theorem}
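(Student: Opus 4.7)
The plan is to observe that this theorem is an almost immediate corollary of Theorem \ref{theorem.10}, together with the definition of a pre-$n$-abelian category. Specifically, recall from Section \ref{section.3} that $\C$ is called pre-$n$-abelian precisely when it satisfies axioms (A1) and (A1$^{\op}$), and by definition $\C$ is $n$-abelian when it satisfies (A1), (A1$^{\op}$), (A2), and (A2$^{\op}$). So the content of the present theorem is exactly the assertion that, once (A1) and (A1$^{\op}$) are assumed, the axioms (A2) and (A2$^{\op}$) may be replaced by (A2$_{f}$) and (A2$_{f}^{\op}$), respectively.

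First I would handle the forward direction. Assume that $\C$ is $n$-abelian. Then $\C$ is in particular pre-$n$-abelian, so (A1) and (A1$^{\op}$) hold trivially. Moreover, $\C$ satisfies (A2) and (A2$^{\op}$), and Theorem \ref{theorem.10} then yields (A2$_{f}$) and (A2$_{f}^{\op}$).

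Conversely, suppose $\C$ is an additive and idempotent complete category satisfying (A1), (A1$^{\op}$), (A2$_{f}$), and (A2$_{f}^{\op}$). The first two axioms say exactly that $\C$ is pre-$n$-abelian. Hence Theorem \ref{theorem.10} applies and tells us that (A2$_{f}$) implies (A2), and (A2$_{f}^{\op}$) implies (A2$^{\op}$). Combining these with (A1) and (A1$^{\op}$) gives all four defining axioms of an $n$-abelian category, and we are done.

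There is essentially no obstacle here: all the nontrivial work has already been done in building up Theorems \ref{theorem.9}, \ref{theorem.10}, and Propositions \ref{proposition.3} and \ref{proposition.21}. The only thing to be careful about is the logical dependency: Theorem \ref{theorem.10} is stated under the hypothesis that $\C$ is pre-$n$-abelian, so both directions must begin by checking that (A1) and (A1$^{\op}$) are in force before invoking it. The proof should therefore be just a couple of lines citing Theorem \ref{theorem.10}.
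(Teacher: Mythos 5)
Your proposal is correct and follows exactly the paper's route: the paper's proof of Theorem \ref{theorem.11} is simply the citation ``Follows from Theorem \ref{theorem.10}'', and your write-up just makes explicit the (correct) observation that (A1) and (A1$^{\op}$) supply the pre-$n$-abelian hypothesis needed to invoke that theorem in both directions.
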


\begin{proof}
Follows from Theorem \ref{theorem.10}.
\end{proof}

The axioms (A2$_{f}$) and (A2$_{f}^{\op}$) suggest that, perhaps, ``pre-$m$-segments'' and ``pre-$m$-cosegments'' should be called ``$m$-monomorphisms'' and ``$m$-epimorphisms'', respectively. However, at the moment, it is not clear what higher (that is, longer) analogues of monomorphisms and epimorphisms should be in an $n$-abelian category when $n \geqslant 2$. For example, due to the axioms (A2$_{c}$) and (A2$_{c}^{\op}$), other candidates for such analogues are ``$m$-segments'' and ``$m$-cosegments''. The question of what these analogues should be requires, therefore, further investigation.

\section{Categories with enough injectives or projectives}\label{section.7}

In this section, we describe when an additive and idempotent complete category $\C$ is $n$-abelian and has enough injectives in terms of $\mod \C$. This is done by expressing the axiom (F2$_{f}$) for $\C$ in terms of the ``dominant dimension'' of $\mod \C$, provided that $\C$ is right coherent, ``right comprehensive'' and has enough injectives. By duality, we also obtain a similar description for when $\C$ is $n$-abelian and has enough projectives, which is given in terms of $\mod \C^{\op}$.

Since the ``dominant dimension'' of an abelian category is defined via projective injective objects, we first need to understand what these objects are in the categories $\mod \C$ and $\mod \C^{\op}$. In this direction, we have the following general result, which motivates our subsequent definition:

\begin{proposition}\label{proposition.24}
Let $\C$ be an additive and idempotent complete category. Every object in $\proj \C$ that is injective in $\mod \C$ is injective in $\proj \C$.
\end{proposition}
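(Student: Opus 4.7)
The plan is to fix some $P \in \proj \C$ that is injective in $\mod \C$ and verify the injectivity condition in $\proj \C$ directly. So I would take a monomorphism $g \in \proj \C(X,Y)$ and a morphism $f \in \proj \C(X,P)$, and aim to produce $h \in \proj \C(Y,P)$ with $f = hg$.

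The crucial step is to upgrade $g$ from a monomorphism in $\proj \C$ to a monomorphism in the larger category $\mod \C$. To this end, suppose $\alpha_{1}, \alpha_{2} \in \Hom(F,X)$ are morphisms in $\mod \C$ satisfying $g\alpha_{1} = g\alpha_{2}$. Since $F \in \mod \C$, there is an epimorphism $p \colon Q \to F$ in $\mod \C$ with $Q \in \proj \C$. Then $g\alpha_{1}p = g\alpha_{2}p$, and because $g$ is a monomorphism in $\proj \C$ and $\alpha_{1}p, \alpha_{2}p$ are morphisms between objects of $\proj \C$, we obtain $\alpha_{1}p = \alpha_{2}p$. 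As $p$ is an epimorphism in $\mod \C$, this forces $\alpha_{1} = \alpha_{2}$, so $g$ is indeed a monomorphism in $\mod \C$.

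With this in hand, the hypothesis that $P$ is injective in $\mod \C$ immediately yields some $h \in \Hom(Y,P)$ with $f = hg$. Since $\proj \C$ is a full subcategory of $\mod \C$ and both $Y$ and $P$ belong to $\proj \C$, the morphism $h$ lies in $\proj \C(Y,P)$, which completes the verification. The only genuinely subtle step is the first one, where one needs to exploit that every object of $\mod \C$ is covered by a projective in order to detect monomorphisms against arbitrary finitely presented modules (and not just against projective ones); after that, everything follows formally from the definitions and from the full embedding $\proj \C \hookrightarrow \mod \C$.
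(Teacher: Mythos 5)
Your proof is correct, and its overall skeleton matches the paper's: both arguments reduce everything to showing that a monomorphism in $\proj \C$ remains a monomorphism in $\mod \C$, after which injectivity of $P$ in $\mod \C$ and fullness of $\proj \C \hookrightarrow \mod \C$ finish the job. The one place where you diverge is in how that key step is justified. The paper passes through $\C$ itself: by the Yoneda embedding, a monomorphism $\C(-,f)$ in $\proj \C$ corresponds to a monomorphism $f$ in $\C$, and then $\C(-,f)$ is checked to be a componentwise (hence genuine) monomorphism in $\Mod \C$, so in particular in $\mod \C$. You instead stay entirely inside $\mod \C$ and use the fact that it has enough projectives: any test pair $\alpha_1,\alpha_2 \colon F \to X$ can be precomposed with an epimorphism $Q \to F$ from a projective, which lands the problem back in $\proj \C$ where the hypothesis applies. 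Your version is slightly more abstract and would apply verbatim to any full subcategory of projective generators in a category with enough projectives, without invoking the Yoneda lemma or componentwise exactness; the paper's version is more concrete and keeps the connection to monomorphisms in $\C$ explicit, which it reuses in the surrounding discussion. Both are complete.
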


\begin{proof}
To begin with, recall that the Yoneda embedding induces an equivalence of categories $\C \approx \proj \C$. Thus, all objects in $\proj \C$ are isomorphic to $\C(-,X)$ for some $X \in \C$, and fixed $X,Y \in \C$, a morphism from $\C(-,X)$ to $\C(-,Y)$ is given by $\C(-,f)$, where $f \in \C(X,Y)$. Furthermore, a morphism $f$ in $\C$ is a monomorphism in $\C$ if and only if $\C(-,f)$ is a monomorphism in $\proj \C$.

Let $Z \in \C$ and assume that $\C(-,Z)$ is injective in $\mod \C$. Consider morphisms $\C(-,f)$ and $\C(-,g)$ in $\proj \C$, where $f \in \C(X,Y)$ and $g \in \C(X,Z)$, and suppose that $\C(-,f)$ is a monomorphism in $\proj \C$. In this case, $f$ is a monomorphism in $\C$, which implies that $\C(-,f)$ is a monomorphism in $\mod \C$.\footnote{Actually, since $f$ is a monomorphism, $\C(-,f)$ is a monomorphism in $\Mod \C$, hence it is a monomorphism in every subcategory of $\Mod \C$ containing it.} Now, because $\C(-,Z)$ is injective in $\mod \C$, there is a morphism $h \in \C(Y,Z)$ such that $\C(-,g) = \C(-,h) \C(-,f)$. Therefore, $\C(-,Z)$ is injective in $\proj \C$.
\end{proof}

We say that $\C$ is \textit{right comprehensive} if every object in $\proj \C$ that is injective in $\proj \C$ is injective in $\mod \C$. Dually, $\C$ is called \textit{left comprehensive} if all the objects in $\proj \C^{\op}$ that are injective in $\proj \C^{\op}$ are also injective in $\mod \C^{\op}$. If $\C$ is both right and left comprehensive, then we say that $\C$ is \textit{comprehensive}. These definitions are explained in more detail below.

In summary, Proposition \ref{proposition.24} says that the inclusion functor $\proj \C \to \mod \C$ always reflects injectivity, and when it also preserves injectivity, we say that $\C$ right comprehensive. Dually, the inclusion $\proj \C^{\op} \to \mod \C^{\op}$ reflects injectivity, and when injectivity is also preserved, we call $\C$ left comprehensive. From another perspective, observe from its proof that Proposition \ref{proposition.24} says that if $X \in \C$ is such that $\C(-,X)$ is injective in $\mod \C$, then $X$ is injective in $\C$. Following this point of view, it is easy to conclude that $\C$ is right comprehensive if and only if $\C(-,I)$ is injective in $\mod \C$ for every object $I \in \C$ that is injective in $\C$. By duality, we get that if $X \in \C$ and $\C(X,-)$ is injective in $\mod \C^{\op}$, then $X$ is projective in $\C$. Moreover, $\C$ is left comprehensive if and only if $\C(P,-)$ is injective in $\mod \C^{\op}$ for every object $P \in \C$ that is \mbox{projective in $\C$.}

Trivially, by Proposition \ref{proposition.4}, every von Neumann regular category is comprehensive. Furthermore, we can conclude from the next proposition and a result of Jasso, \cite[Theorem 3.12]{MR3519980}, that every $n$-abelian category is comprehensive.

\begin{proposition}\label{proposition.25}
Assume that $\C$ is right coherent. Then $\C$ is right comprehensive if and only if for every sequence \[ \begin{tikzcd}
X \arrow[r, "f"] & Y \arrow[r, "g"] & Z
\end{tikzcd} \] in $\C$, where $f$ is a weak kernel of $g$, and for every injective object $I \in \C$, the sequence \[ \begin{tikzcd}
{\C(Z,I)} \arrow[r, "{\C(g,I)}"] &[1em] {\C(Y,I)} \arrow[r, "{\C(f,I)}"] &[1em] {\C(X,I)}
\end{tikzcd} \] is exact in $\Ab$.
\end{proposition}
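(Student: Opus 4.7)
The plan is to interpret right comprehensiveness as the vanishing of a certain $\Ext^{1}$, and then to compute that $\Ext^{1}$ using weak kernels. From the discussion immediately following Proposition~\ref{proposition.24}, $\C$ is right comprehensive if and only if $\C(-,I)$ is injective in $\mod \C$ for every $I \in \C$ that is injective in $\C$. Since $\mod \C$ is abelian with enough projectives, an object $M \in \mod \C$ is injective exactly when $\Ext^{1}(F,M) = 0$ for every $F \in \mod \C$. So right comprehensiveness is equivalent to the condition that $\Ext^{1}(F,\C(-,I)) = 0$ for all $F \in \mod \C$ and all $I \in \C$ injective in $\C$.

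Next I would compute this $\Ext^{1}$. Given any $F \in \mod \C$, take a projective presentation $\C(-,Y) \xrightarrow{\C(-,g)} \C(-,Z) \to F \to 0$ with $g \in \C(Y,Z)$. Since $\C$ is right coherent, it has weak kernels by Proposition~\ref{proposition.1}, so let $f \in \C(X,Y)$ be a weak kernel of $g$. Then
\[ \begin{tikzcd}
{\C(-,X)} \arrow[r, "{\C(-,f)}"] &[1.2em] {\C(-,Y)} \arrow[r, "{\C(-,g)}"] &[1.2em] {\C(-,Z)} \arrow[r] & F \arrow[r] & 0
\end{tikzcd} \]
is exact in $\mod \C$, giving the first two terms of a projective resolution of $F$. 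Applying $\Hom(-,\C(-,I))$ and using the Yoneda lemma, the induced cochain complex is isomorphic to
\[ \begin{tikzcd}
{\C(Z,I)} \arrow[r, "{\C(g,I)}"] &[1em] {\C(Y,I)} \arrow[r, "{\C(f,I)}"] &[1em] {\C(X,I)}
\end{tikzcd}, \]
and $\Ext^{1}(F,\C(-,I))$ is its cohomology at $\C(Y,I)$.

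Both implications then follow at once. For the forward direction, given a sequence $X \xrightarrow{f} Y \xrightarrow{g} Z$ with $f$ a weak kernel of $g$ and an injective $I \in \C$, set $F = \Coker(\C(-,g))$; the vanishing of $\Ext^{1}(F,\C(-,I))$ gives exactness of the dualized sequence at $\C(Y,I)$. Conversely, any $F \in \mod \C$ arises from such a pair $(g,f)$ as above, so the exactness hypothesis forces $\Ext^{1}(F,\C(-,I)) = 0$ for every $F$ and every injective $I$, making $\C(-,I)$ injective in $\mod \C$. There is no real obstacle; the argument is essentially bookkeeping, the only substantive ingredients being the standard characterization of injectives via $\Ext^{1}$-vanishing in an abelian category with enough projectives and the use of a weak kernel (guaranteed by right coherence) to continue the projective presentation one step further.
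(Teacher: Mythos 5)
Your proposal is correct and follows essentially the same route as the paper's proof: both reduce right comprehensiveness to the vanishing of $\Ext^{1}(-,\C(-,I))$ for $I$ injective in $\C$, and both compute that $\Ext^{1}$ by extending a projective presentation one step via a weak kernel and applying the Yoneda lemma. The only cosmetic difference is the direction of presentation (you start from an arbitrary $F$ and produce the pair $(g,f)$, while the paper starts from the sequence and identifies $F = \mr(g)$), which changes nothing of substance.
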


\begin{proof}
Let $I \in \C$ be an injective object in $\C$ and consider a sequence \[ \begin{tikzcd}
X \arrow[r, "f"] & Y \arrow[r, "g"] & Z
\end{tikzcd} \] in $\C$. Suppose that $f$ is a weak kernel of $g$, which is equivalent to assuming that \[ \begin{tikzcd}
{\C(-,X)} \arrow[r, "{\C(-,f)}"] &[1.2em] {\C(-,Y)} \arrow[r, "{\C(-,g)}"] &[1.2em] {\C(-,Z)}
\end{tikzcd} \] is an exact sequence in $\mod \C$. Then this sequence can be regarded as the beginning of a projective resolution of $\mr(g)$ in $\mod \C$. By applying $\Hom(-,\C(-,I))$ to this sequence, we get a complex in $\Ab$ which is, by the Yoneda lemma, isomorphic to \[ \begin{tikzcd}
{\C(Z,I)} \arrow[r, "{\C(g,I)}"] &[1em] {\C(Y,I)} \arrow[r, "{\C(f,I)}"] &[1em] {\C(X,I)}
\end{tikzcd}. \] Thus, this complex is an exact sequence in $\Ab$ if and only if $\Ext^{1}(\mr(g), \C(-,I)) = 0$.

We can easily deduce our desired result from the above discussion, given that $\C$ is right comprehensive if and only if $\C(-,I)$ is injective in $\mod \C$ for every injective object $I \in \C$, that is, if and only if $\Ext^{1}(-,\C(-,I)) = 0$ for every injective object $I \in \C$.
\end{proof}

By taking $\C^{\op}$ in place of $\C$ in Proposition \ref{proposition.25}, we deduce that if $\C$ is left coherent, then $\C$ is left comprehensive if and only if for every sequence \[ \begin{tikzcd}
X \arrow[r, "f"] & Y \arrow[r, "g"] & Z
\end{tikzcd} \] in $\C$, where $g$ is a weak cokernel of $f$, and for every projective object $P \in \C$, the sequence \[ \begin{tikzcd}
{\C(P,X)} \arrow[r, "{\C(P,f)}"] &[1.2em] {\C(P,Y)} \arrow[r, "{\C(P,g)}"] &[1.2em] {\C(P,Z)}
\end{tikzcd} \] is exact in $\Ab$.

\begin{proposition}\label{proposition.27}
Every $n$-abelian category is comprehensive.
\end{proposition}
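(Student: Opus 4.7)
The plan is to prove right comprehensiveness; left comprehensiveness then follows by the same argument applied to $\C^{\op}$, which is also $n$-abelian. By Proposition \ref{proposition.25}, right comprehensiveness will follow once we verify that for every weak kernel sequence $W \xrightarrow{w} X \xrightarrow{f} Y$ in $\C$ and every injective $I \in \C$, the sequence $\C(Y, I) \xrightarrow{\C(f, I)} \C(X, I) \xrightarrow{\C(w, I)} \C(W, I)$ is exact in $\Ab$. Equivalently, every $\alpha \colon X \to I$ satisfying $\alpha w = 0$ should factor as $\alpha = \beta f$ for some $\beta \colon Y \to I$.

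The first step is to replace the weak kernel $w$ by the last map of an $n$-kernel. Axiom (A1) supplies an $n$-kernel $X_n \to \cdots \to X_1 \xrightarrow{k} X$ of $f$. Because both $w$ and $k$ are weak kernels of $f$, they factor through one another, so $\alpha w = 0$ if and only if $\alpha k = 0$. This reduces the problem to the setting where $W = X_1$ and $w = k$; in particular we now have at our disposal the longer sequence $X_n \to \cdots \to X_1 \to X \to Y$, whose contravariant Yoneda sequence $0 \to \C(-, X_n) \to \cdots \to \C(-, Y)$ is exact by construction.

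The second step is to upgrade this sequence into an $n$-exact sequence. The morphism $X_n \to X_{n-1}$ is a monomorphism in $\C$ (since $0 \to \C(-, X_n) \to \C(-, X_{n-1})$ is exact), and by axiom (A2$_{a}$) it fits into an $n$-exact sequence. One then shows, using axioms (A2) and (A2$^{\op}$) together with the Schanuel-type flexibility of $n$-cokernels as projective resolutions in $\mod \C^{\op}$, that this $n$-exact sequence can be chosen to coincide with the given $X_n \to X_{n-1} \to \cdots \to X_1 \to X \to Y$. The main obstacle lies in this step, since the tail $X_{n-1} \to \cdots \to X \to Y$ only carries contravariant Yoneda exactness a priori, and producing the additional covariant Yoneda exactness required for $n$-exactness uses the $n$-abelian axioms non-trivially.

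With the $n$-exact sequence in hand, we apply Jasso's \cite[Theorem 3.12]{MR3519980}, which asserts that for any $n$-exact sequence $A_0 \to A_1 \to \cdots \to A_{n+1}$ in $\C$ and any injective $I \in \C$, the induced sequence $\C(A_{n+1}, I) \to \cdots \to \C(A_0, I) \to 0$ is exact in $\Ab$. Applied to our $n$-exact sequence $X_n \to \cdots \to X \to Y$, this yields exactness of $\C(Y, I) \to \C(X, I) \to \C(X_1, I)$ at $\C(X, I)$, which is precisely the factorization we sought. Left comprehensiveness then follows by applying the same argument to $\C^{\op}$, completing the proof.
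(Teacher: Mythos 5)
Your use of Proposition \ref{proposition.25} to reduce the problem, and the replacement of an arbitrary weak kernel $w$ of $f$ by the last morphism $k \colon X_1 \to X$ of an $n$-kernel of $f$, are both fine. The gap is in your second step. A sequence $Z_{n+1} \to \cdots \to Z_1 \to Z_0$ can only be an $n$-exact sequence if its last morphism $h_1$ is an epimorphism: covariant Yoneda exactness at $\C(Z_0,-)$ says precisely that $\C(h_1,-)$ is a monomorphism. Since $f \colon X \to Y$ is an arbitrary morphism, the concatenation $X_n \to \cdots \to X_1 \to X \to Y$ cannot in general be ``chosen to coincide with'' an $n$-exact sequence: take $f = 0$ with $Y \neq 0$, where an $n$-kernel of $f$ is $1_X$ padded with zeros, and the concatenation would force $\C(Y,-) \to \C(X,-)$, the zero map, to be injective. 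So the step you flag as ``the main obstacle'' is not merely nontrivial — as stated it is impossible, and no Schanuel-type adjustment of the $n$-kernel can create the missing covariant exactness. (The desired factorization is still true in that degenerate case, but your route to it collapses.)

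For comparison, the paper does not attempt any such construction. Its proof is a two-citation argument: $n$-abelian categories are coherent by Proposition \ref{proposition.1}, and Jasso's Theorem 3.12 — in the form the paper relies on, namely that for a weak kernel $f$ of $g$ and an injective object $I$ the induced sequence $\C(Z,I) \to \C(Y,I) \to \C(X,I)$ is exact — is exactly the condition that Proposition \ref{proposition.25} identifies with right comprehensiveness; the left-hand case is dual. Note that this is not the statement you attribute to Theorem 3.12 (a long exact sequence obtained by applying $\C(-,I)$ to an $n$-exact sequence); had you invoked the weak-kernel form, your steps two and three would have been unnecessary. If instead you want a self-contained argument, you would have to reprove Jasso's theorem, whose proof is genuinely more delicate than concatenating an $n$-kernel with $f$.
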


\begin{proof}
Follows from Proposition \ref{proposition.25} and \cite[Theorem 3.12]{MR3519980}, given that $n$-abelian categories are coherent, by Proposition \ref{proposition.1}.
\end{proof}

Because we are interested in $n$-abelian categories, and these are comprehensive, for the rest of this section, we will focus on the case where $\C$ is right or left comprehensive. Under such assumptions, we can characterize when $\C$ has enough injectives and enough projectives in terms of $\mod \C$ and $\mod \C^{\op}$, respectively. Indeed, we have the following:

\begin{proposition}\label{proposition.26}
Let $\C$ be an additive and idempotent complete category. Then $\C$ is right comprehensive and has enough injectives if and only if every projective object in $\mod \C$ embeds into a projective injective object in $\mod \C$.
\end{proposition}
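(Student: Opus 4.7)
The plan is to use the Yoneda equivalence $\C \approx \proj \C$ systematically, together with Proposition \ref{proposition.24} (which says projective injectives in $\mod \C$ restrict to injectives in $\proj \C$) and the Yoneda correspondence between (split) monomorphisms in $\C$ and in $\proj \C$, to move back and forth between the two categories.

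For the forward direction, I would take any projective $P \in \mod \C$. Since $\proj \C$ consists of the projectives of $\mod \C$, we have $P \simeq \C(-,X)$ for some $X \in \C$. Using that $\C$ has enough injectives, pick a monomorphism $f : X \to I$ in $\C$ with $I$ injective in $\C$; then $\C(-,f) : \C(-,X) \to \C(-,I)$ is a monomorphism in $\mod \C$ because the Yoneda embedding preserves monomorphisms (it is even exact on $\Mod \C$ componentwise). The module $\C(-,I)$ is obviously projective in $\mod \C$, and it is injective in $\mod \C$ by the hypothesis that $\C$ is right comprehensive, since $I$ is injective in $\C$ and hence $\C(-,I)$ is injective in $\proj \C$. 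So $P$ embeds into the projective injective $\C(-,I)$.

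For the backward direction, I would first show $\C$ has enough injectives. Take $X \in \C$ and let $\C(-,X) \hookrightarrow Q$ be an embedding into a projective injective $Q \in \mod \C$. Since $Q$ is projective in $\mod \C$, it lies in $\proj \C$, so $Q \simeq \C(-,I)$ for some $I \in \C$, and the embedding has the form $\C(-,f)$ for a morphism $f \in \C(X,I)$. Because $\C(-,f)$ is a monomorphism in $\mod \C$ it is one in $\proj \C$, hence $f$ is a monomorphism in $\C$ by Yoneda. Moreover, by Proposition \ref{proposition.24}, $\C(-,I)$ being injective in $\mod \C$ forces it to be injective in $\proj \C$, which via Yoneda means $I$ is injective in $\C$. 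Thus $\C$ has enough injectives.

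Next, to show $\C$ is right comprehensive, I would pick an arbitrary injective $I \in \C$ and show that $\C(-,I)$ is injective in $\mod \C$. The key trick is to apply the hypothesis to $\C(-,I) \in \proj \C$: there is a monomorphism $\C(-,I) \hookrightarrow Q$ with $Q$ projective injective in $\mod \C$, and as above this monomorphism is $\C(-,f)$ for some monomorphism $f : I \to J$ in $\C$ with $Q \simeq \C(-,J)$. Since $I$ is injective in $\C$, the monomorphism $f$ splits in $\C$, so $\C(-,f)$ splits in $\proj \C$ and therefore in $\mod \C$. This exhibits $\C(-,I)$ as a direct summand of the injective $Q$, hence injective in $\mod \C$, proving right comprehensiveness. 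The main subtlety (not really an obstacle, but the crucial observation) is this last step: having each $\C(-,X)$ embed into a projective injective is enough to upgrade injectivity of $I$ in $\C$ to injectivity of $\C(-,I)$ in $\mod \C$ precisely because the splitting of $f$ in $\C$ propagates through the Yoneda embedding.
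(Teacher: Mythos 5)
Your proposal is correct and follows essentially the same route as the paper's proof: the same use of the Yoneda equivalence and Proposition \ref{proposition.24} in both directions, and the same splitting argument (an injective $I$ in $\C$ makes the embedding $\C(-,I) \hookrightarrow Q$ split, exhibiting $\C(-,I)$ as a summand of a projective injective) to establish right comprehensiveness. No gaps.
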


\begin{proof}
Suppose that $\C$ is right comprehensive and has enough injectives. Given $X \in \C$, there is an injective object $I \in \C$ and a monomorphism $h \in \C(X,I)$ in $\C$. In this case, \[ \begin{tikzcd}
{\C(-,X)} \arrow[r, "{\C(-,h)}"] &[1.2em] {\C(-,I)}
\end{tikzcd} \] is a monomorphism in $\mod \C$, and $\C(-,I)$ is injective in $\mod \C$ since $\C$ is right comprehensive. Consequently, every object in $\proj \C$ embeds into a projective injective object in $\mod \C$.

Conversely, assume that every projective object in $\mod \C$ embeds into a projective injective object in $\mod \C$. Then, given $X \in \C$, there is a monomorphism \[ \begin{tikzcd}
{\C(-,X)} \arrow[r, "{\C(-,f)}"] &[1.2em] {\C(-,Y)}
\end{tikzcd} \] in $\mod \C$ with $f \in \C(X,Y)$ and $\C(-,Y)$ injective in $\mod \C$. In particular, $\C(-,f)$ is a monomorphism in $\proj \C$, which implies that $f$ is a monomorphism in $\C$ since the Yoneda embedding induces an equivalence $\C \approx \proj \C$. Moreover, it follows from Proposition \ref{proposition.24} that $Y$ is injective in $\C$. Hence $\C$ has enough injectives.

Now, in the above paragraph, assume that $X$ is injective in $\C$. Then $f$ is a split monomorphism, so that $\C(-,f)$ is a split monomorphism. Thus, given that $\C(-,Y)$ is injective in $\mod \C$, we conclude that $\C(-,X)$ is injective in $\mod \C$. Therefore, $\C$ is right comprehensive.
\end{proof}

Next, we recall a classical definition. Let $\A$ be an abelian category. Given $X \in \A$, the \textit{dominant dimension} of $X$, denoted by $\domdim X$, is the supremum of the set of positive integers $m$ for which there is an exact sequence \[ \begin{tikzcd}
0 \arrow[r] & X \arrow[r] & Y_{1} \arrow[r] & \cdots \arrow[r] & Y_{m}
\end{tikzcd} \] in $\A$ with each $Y_{i}$ being projective and injective in $\A$. If there is no such positive integer $m$, then we say that $\domdim X = 0$. The \textit{dominant dimension} of $\A$, denoted by $\domdim \A$, is the supremum of the set of nonnegative integers $m$ for which $\domdim P \geqslant m$ for every projective object $P \in \A$. Equivalently, the dominant dimension of $\A$ is the infimum of the dominant dimensions of projective objects in $\A$.

\begin{corollary}\label{corollary.6}
Assume that $\C$ is right coherent. Then $\C$ is right comprehensive and has enough injectives if and only if $\domdim (\mod \C) \geqslant 1$.
\end{corollary}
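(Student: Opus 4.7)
The plan is to derive the corollary as an immediate consequence of Proposition \ref{proposition.26} together with the definition of dominant dimension. Since $\C$ is assumed right coherent, $\mod \C$ is an abelian category (by Proposition \ref{proposition.1}), so the notion of dominant dimension of $\mod \C$ is well defined.

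First I would unfold what $\domdim (\mod \C) \geqslant 1$ means. By definition, it says that every projective object $P \in \mod \C$ satisfies $\domdim P \geqslant 1$, i.e.\ there exists an exact sequence
\[ \begin{tikzcd}
0 \arrow[r] & P \arrow[r] & Y_{1}
\end{tikzcd} \]
in $\mod \C$ with $Y_{1}$ projective and injective in $\mod \C$. This is precisely the statement that every projective object in $\mod \C$ embeds into a projective injective object in $\mod \C$.

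Then I would invoke Proposition \ref{proposition.26}, which asserts that the latter condition is equivalent to $\C$ being right comprehensive and having enough injectives. Combining these two observations yields the desired equivalence, and the proof is essentially a one-line appeal: ``Follows from Proposition \ref{proposition.26} and the definition of dominant dimension, since $\C$ being right coherent ensures $\mod \C$ is abelian.'' I do not anticipate any genuine obstacle here; the only thing worth being careful about is matching the definitions precisely (in particular, that ``every projective embeds into a projective injective'' is literally the case $m = 1$ of the dominant dimension definition), so the argument is a direct translation rather than a substantive computation.
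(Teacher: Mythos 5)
Your proposal is correct and is exactly the paper's argument: the paper's proof of Corollary \ref{corollary.6} is the one-line appeal ``Follows from Proposition \ref{proposition.26}'', with the same implicit unfolding of the $m=1$ case of the dominant dimension definition and the same use of right coherence to make $\mod \C$ abelian. No differences to report.
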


\begin{proof}
Follows from Proposition \ref{proposition.26}.
\end{proof}

By duality, we can deduce from Corollary \ref{corollary.6} that if $\C$ is left coherent, then $\C$ is left comprehensive and has enough projectives if and only if $\domdim (\mod \C^{\op}) \geqslant 1$.

\begin{proposition}\label{proposition.28}
Assume that $\C$ is an $n$-abelian category. Then the following hold:
\begin{enumerate}
    \item[(a)] $\C$ has enough injectives if and only if $\domdim (\mod \C) \geqslant 1$.
    \item[(b)] $\C$ has enough projectives if and only if $\domdim (\mod \C^{\op}) \geqslant 1$.
\end{enumerate}
\end{proposition}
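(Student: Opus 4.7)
The plan is to derive this as a direct consequence of the material already established. Since $\C$ is $n$-abelian, Proposition \ref{proposition.1} gives that $\C$ has weak kernels and weak cokernels (because it has $n$-kernels and $n$-cokernels), so $\C$ is both right and left coherent. Moreover, Proposition \ref{proposition.27} tells us that $\C$ is comprehensive, hence both right and left comprehensive.

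For item (a), I would simply invoke Corollary \ref{corollary.6}: since $\C$ is right coherent, the corollary says that $\C$ is right comprehensive and has enough injectives if and only if $\domdim (\mod \C) \geqslant 1$. As $\C$ is already known to be right comprehensive by Proposition \ref{proposition.27}, this reduces to the stated equivalence between ``$\C$ has enough injectives'' and ``$\domdim (\mod \C) \geqslant 1$''.

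For item (b), I would apply the dual statement, which is the observation recorded immediately after Corollary \ref{corollary.6}: since $\C$ is left coherent, $\C$ is left comprehensive and has enough projectives if and only if $\domdim (\mod \C^{\op}) \geqslant 1$. Again, left comprehensiveness is automatic from Proposition \ref{proposition.27}, and the claim follows.

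There is no real obstacle here; the content of the proposition is entirely packaged in Propositions \ref{proposition.26} and \ref{proposition.27} together with Corollary \ref{corollary.6}. The only thing worth being careful about is explicitly noting that the comprehensiveness hypothesis needed in Corollary \ref{corollary.6} is supplied for free by the $n$-abelian assumption, so that the biconditional simplifies from ``$\C$ is right comprehensive and has enough injectives'' to just ``$\C$ has enough injectives'' (and analogously for the dual side).
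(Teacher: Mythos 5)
Your proposal is correct and follows exactly the same route as the paper: the paper's proof also reduces the statement to Corollary \ref{corollary.6} (and its dual), using Propositions \ref{proposition.1} and \ref{proposition.27} to supply coherence and comprehensiveness of an $n$-abelian category. Nothing is missing.
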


\begin{proof}
Follows from Corollary \ref{corollary.6}, given that $n$-abelian categories are coherent and comprehensive, by Propositions \ref{proposition.1} and \ref{proposition.27}.
\end{proof}

Observe that if $\C$ is coherent, then it is not always the case that the dominant dimensions of $\mod \C$ and $\mod \C^{\op}$ coincide. Indeed, by Proposition \ref{proposition.28}, any abelian category that has enough injectives but not enough projectives (or vice versa) serves as a counterexample. Nonetheless, we will conclude in Corollary \ref{corollary.7} that if the dominant dimensions of $\mod \C$ and $\mod \C^{\op}$ are both nonzero, then they coincide. This will follow from the next few results, which deal with the relation between these dimensions and the axioms (F2) and (F2$^{\op}$) for $\C$.

We begin by proving that, under suitable conditions, the axioms (F2$_{f}$) and (F2$_{f}^{\op}$) for $\C$ are equivalent to $\domdim (\mod \C) \geqslant n + 1$ and $\domdim (\mod \C^{\op}) \geqslant n + 1$, respectively. In order to prove this result, let us first state some general facts.

\begin{lemma}\label{lemma.7}
Let $\A$ be an abelian category, let $m$ be a positive integer, and let \[ \begin{tikzcd}
0 \arrow[r] & X \arrow[r] & Y \arrow[r] & Z \arrow[r] & 0
\end{tikzcd} \] be a short exact sequence in $\A$. If $\domdim X \geqslant m$ and $\domdim Y \geqslant m - 1$, then $\domdim Z \geqslant m - 1$.
\end{lemma}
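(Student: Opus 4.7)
The plan is to prove the lemma by induction on $m$. The base case $m = 1$ reduces to $\domdim Z \geq 0$, which holds vacuously by the convention in the definition of dominant dimension.

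For the inductive step, assume $m \geq 2$. I would first use the hypotheses to extract two short exact sequences
\[ 0 \to X \to I \to X' \to 0 \quad \text{and} \quad 0 \to Y \to J \to Y' \to 0 \]
in $\A$ in which $I$ and $J$ are projective-injective, $\domdim X' \geq m - 1$, and $\domdim Y' \geq m - 2$; these arise by breaking off the first term of the copresentations witnessing the dominant dimensions of $X$ and $Y$.

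Next I would perform two pushouts in sequence. The first is the pushout of the inclusion $X \hookrightarrow Y$ along $X \hookrightarrow I$; because $I$ is injective, the bottom row $0 \to I \to P \to Z \to 0$ of the pushout square splits, so $P \cong I \oplus Z$, and the right-hand column becomes a short exact sequence $0 \to Y \to I \oplus Z \to X' \to 0$. The second is the pushout of this new monomorphism $Y \hookrightarrow I \oplus Z$ along $Y \hookrightarrow J$; again by injectivity, now of $J$, one obtains
\[ 0 \to I \oplus Z \to J \oplus X' \to Y' \to 0. \]

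Finally I would invoke the dimension-shifting inequality that for any short exact sequence $0 \to A \to B \to C \to 0$ in $\A$ one has $\domdim A \geq \min\{\domdim B, \domdim C + 1\}$. This is itself a short auxiliary lemma, proved by embedding $B$ into a projective-injective $K$ (so that $A \hookrightarrow K$ as well), forming the cokernel $K/A$, identifying the induced short exact sequence $0 \to C \to K/A \to K/B \to 0$, and combining it with the elementary horseshoe-style inequality that $\domdim$ is subadditive along short exact sequences. Applied to our constructed sequence with $\domdim(J \oplus X') \geq m - 1$ and $\domdim Y' \geq m - 2$, it yields $\domdim(I \oplus Z) \geq m - 1$. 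Since $I$ is projective-injective, $\domdim I = \infty$, so $\domdim(I \oplus Z) = \domdim Z$, which completes the induction. The main obstacle is setting up the two pushout squares so that the advertised splittings occur and the final three-term sequence emerges in the claimed form; the remaining diagram chases and the auxiliary dimension-shifting inequality are standard but need to be executed carefully.
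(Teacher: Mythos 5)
Your argument is correct, but note that the paper does not actually prove this lemma itself: its ``proof'' is the single line that the proof of \cite[Lemma 4.19]{MR4392222} carries over, so you are supplying a genuine argument where the paper defers to a reference. Your construction checks out: both cobase changes are along monomorphisms, so the pushout squares produce the two advertised short exact sequences, the splittings follow from injectivity of $I$ and $J$, and the final inequality $\domdim (I \oplus Z) \geqslant \min\{\domdim(J \oplus X'),\, \domdim Y' + 1\} \geqslant m-1$ gives the claim. Two remarks on economy and on what still needs to be written out. First, your induction on $m$ is vacuous --- the inductive hypothesis is never invoked, since the dimension-shifting inequality does all the work --- and the second pushout is redundant: after the first pushout you already have $0 \to Y \to I \oplus Z \to X' \to 0$ with $\domdim Y \geqslant m-1$ and $\domdim X' \geqslant m-1$, so the horseshoe-style inequality $\domdim B \geqslant \min\{\domdim A, \domdim C\}$ applied to that sequence immediately yields $\domdim(I \oplus Z) \geqslant m-1$, with no need for the subobject-version inequality at all. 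Second, two ``standard'' facts are doing real work and should be proved: the identity $\domdim(U \oplus V) = \min\{\domdim U, \domdim V\}$, whose nontrivial direction (a direct summand has dominant dimension at least that of the sum) is what lets you pass from $\domdim(I \oplus Z)$ to $\domdim Z$; and, inside your auxiliary inequality, the step $\domdim(K/B) \geqslant \domdim B - 1$ for a monomorphism $B \to K$ into a projective injective, which requires the dual Schanuel lemma to compare $K$ with the first term of a copresentation witnessing $\domdim B$. Neither is hard, but both are exactly the kind of step that can be glossed over and should appear explicitly if this is to replace the citation.
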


\begin{proof}
The proof of \cite[Lemma 4.19]{MR4392222} carries over.
\end{proof}

\begin{proposition}\label{proposition.43}
Let $\A$ be an abelian category with enough projectives. The following are equivalent:
\begin{enumerate}
    \item[(a)] $\domdim \A \geqslant n + 1$.
    \item[(b)] Every $X \in \A$ with $\pdim X \leqslant m$ satisfies $\domdim X \geqslant n + 1 - m$, for all $0 \leqslant m \leqslant n$.
    \item[(c)] Every $X \in \A$ with $\pdim X \leqslant n$ satisfies $\domdim X \geqslant 1$.
\end{enumerate}
\end{proposition}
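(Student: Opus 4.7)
The implications (b) $\Rightarrow$ (a) and (b) $\Rightarrow$ (c) are immediate by specializing $m$ to $0$ and to $n$, respectively. So my plan is to close the cycle via (a) $\Rightarrow$ (b) $\Rightarrow$ (c) $\Rightarrow$ (a) by proving (a) $\Rightarrow$ (b) and (c) $\Rightarrow$ (a).

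For (a) $\Rightarrow$ (b), I would induct on $m$. The base case $m=0$ is exactly the statement (a). For the inductive step, suppose (b) holds for $m-1$, where $1 \leqslant m \leqslant n$, and let $X \in \A$ satisfy $\pdim X \leqslant m$. Choose a short exact sequence $0 \to K \to P \to X \to 0$ in $\A$ with $P$ projective; then $\pdim K \leqslant m - 1$, so by the inductive hypothesis $\domdim K \geqslant n+2-m$, and by (a) we have $\domdim P \geqslant n+1 \geqslant n+1-m$. Applying Lemma \ref{lemma.7} with its integer parameter taken to be $n+2-m$, I conclude $\domdim X \geqslant n+1-m$, which completes the induction.

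For (c) $\Rightarrow$ (a), let $P \in \A$ be projective. I would construct inductively an exact sequence
\[ \begin{tikzcd}
0 \arrow[r] & P \arrow[r] & Y_{1} \arrow[r] & Y_{2} \arrow[r] & \cdots \arrow[r] & Y_{n+1}
\end{tikzcd} \]
with each $Y_{i}$ projective and injective in $\A$. Set $X_{0} = P$, so $\pdim X_{0} = 0$. Assuming inductively that $X_{i}$ has been constructed with $\pdim X_{i} \leqslant i$, the bound $i \leqslant n$ together with (c) yields $\domdim X_{i} \geqslant 1$, hence a monomorphism $X_{i} \to Y_{i+1}$ with $Y_{i+1}$ projective and injective. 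Letting $X_{i+1} = \Coker(X_{i} \to Y_{i+1})$, the short exact sequence $0 \to X_{i} \to Y_{i+1} \to X_{i+1} \to 0$ combined with the projectivity of $Y_{i+1}$ gives $\pdim X_{i+1} \leqslant \pdim X_{i} + 1 \leqslant i+1$, so the induction can proceed up to $i = n$. Splicing the resulting short exact sequences produces the desired exact sequence, witnessing $\domdim P \geqslant n+1$.

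I do not anticipate a genuine obstacle here. The only thing to watch is the bookkeeping: in (a) $\Rightarrow$ (b), Lemma \ref{lemma.7} loses exactly one unit of dominant dimension across a short exact sequence, which balances exactly against the drop of one unit in projective dimension from $K$ to $X$; and in (c) $\Rightarrow$ (a), the projective dimension of the successive cokernels grows by at most one per step, so after $n$ iterations it is still at most $n$, which is precisely what (c) needs in order to extend the resolution one more step.
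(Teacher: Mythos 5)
Your proposal is correct and follows essentially the same route as the paper: the implication (a) $\Rightarrow$ (b) is obtained by splicing short exact sequences from a projective resolution and applying Lemma \ref{lemma.7} step by step (you organize this as an induction on $m$, the paper as a successive splice, but the bookkeeping is identical), and (b) $\Rightarrow$ (c) is trivial. Your explicit inductive construction for (c) $\Rightarrow$ (a) is exactly the argument the paper leaves to the reader with ``it is easy to see,'' and it is carried out correctly.
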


\begin{proof}
Let us prove that (a) implies (b). Suppose that $\domdim \A \geqslant n + 1$. Let $m$ be an integer with $0 \leqslant m \leqslant n$, and let $X \in \A$ be such that $\pdim X \leqslant m$. If $m = 0$, then $\domdim X \geqslant n + 1$, hence assume that $m \geqslant 1$. Given that $\A$ has enough projectives, there is an exact sequence \[ \begin{tikzcd}
0 \arrow[r] & P_{m} \arrow[r] & \cdots \arrow[r] & P_{1} \arrow[r] & P_{0} \arrow[r] & X \arrow[r] & 0
\end{tikzcd} \] in $\A$ with $P_{i}$ projective in $\A$ for each $0 \leqslant i \leqslant m$. By writing this sequence as the splice of short exact sequences and by applying Lemma \ref{lemma.7} successively, we conclude that $\domdim X \geqslant n + 1 - m$.

Now, trivially, (b) implies (c), and it is easy to see that (c) implies (a).
\end{proof}

\begin{lemma}\label{lemma.8}
Let $\A$ be an abelian category such that $\domdim \A \geqslant 1$, and let $X \in \A$. Then $X$ is a syzygy if and only if $\domdim X \geqslant 1$.
\end{lemma}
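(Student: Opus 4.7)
The proof is immediate from unwinding the two definitions, so the plan is simply to verify both implications directly using that $\domdim \A \geqslant 1$ means every projective embeds into a projective injective.

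For the forward direction, suppose $X$ is a syzygy. Then there is a monomorphism $X \to P$ in $\A$ with $P$ projective. Since $\domdim \A \geqslant 1$, there is a monomorphism $P \to Y$ with $Y$ projective and injective in $\A$. Composing these two monomorphisms yields a monomorphism $X \to Y$ with $Y$ projective and injective, so $\domdim X \geqslant 1$.

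For the backward direction, suppose $\domdim X \geqslant 1$. Then by definition there is a monomorphism $X \to Y$ in $\A$ with $Y$ projective and injective. In particular, $Y$ is projective, so $X$ embeds into a projective object, i.e., $X$ is a syzygy.

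Since both implications are direct applications of the definitions, there is no real obstacle; the only subtlety worth noting is that $\domdim \A \geqslant 1$ is exactly what is needed to promote an embedding into an arbitrary projective to an embedding into a projective injective, which is why the hypothesis on $\A$ appears only in the forward direction.
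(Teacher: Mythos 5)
Your proof is correct and follows exactly the same argument as the paper: compose the embedding into a projective with an embedding of that projective into a projective injective for the forward direction, and note that the converse is immediate from the definitions. No issues.
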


\begin{proof}
Suppose that $X$ is a syzygy. Then there is a monomorphism $X \to P$ in $\A$ with $P$ projective in $\A$. But as $\domdim \A \geqslant 1$, there is also a monomorphism $P \to Y$ in $\A$ with $Y$ projective and injective in $\A$. Since the composition of these morphisms gives a monomorphism $X \to Y$, we get that $\domdim X \geqslant 1$. The converse is trivial.
\end{proof}

We can now deduce the following:

\begin{proposition}\label{proposition.29}
Assume that $\C$ is right coherent and $\domdim (\mod \C) \geqslant 1$. Then $\C$ satisfies the axiom \textup{(F2$_{f}$)} if and only if $\domdim (\mod \C) \geqslant n + 1$.
\end{proposition}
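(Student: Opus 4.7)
The plan is to reduce the statement to the combination of Proposition \ref{proposition.43} (applied to the abelian category $\mod \C$, which has enough projectives since $\C$ is right coherent) and Lemma \ref{lemma.8} (applicable because the hypothesis $\domdim (\mod \C) \geqslant 1$ is assumed throughout).

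For the ``only if'' direction, I would assume (F2$_f$), so every $F \in \mod \C$ with $\pdim F \leqslant n$ is a syzygy. Since $\domdim(\mod \C) \geqslant 1$, Lemma \ref{lemma.8} converts ``$F$ is a syzygy'' into ``$\domdim F \geqslant 1$''. Thus every $F \in \mod \C$ with $\pdim F \leqslant n$ satisfies $\domdim F \geqslant 1$, which is item (c) of Proposition \ref{proposition.43} for the abelian category $\mod \C$. By the equivalence (c) $\Leftrightarrow$ (a) of that proposition, we conclude $\domdim (\mod \C) \geqslant n+1$.

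For the ``if'' direction, assume $\domdim (\mod \C) \geqslant n+1$. Again by Proposition \ref{proposition.43}, now using (a) $\Rightarrow$ (c), every $F \in \mod \C$ with $\pdim F \leqslant n$ satisfies $\domdim F \geqslant 1$, so there is a monomorphism $F \to Y$ in $\mod \C$ with $Y$ projective (and injective). In particular $F$ is a syzygy, which is precisely (F2$_f$). (Alternatively, one can invoke Lemma \ref{lemma.8} again, but the implication ``$\domdim F \geqslant 1$ implies $F$ is a syzygy'' is immediate from the definitions.)

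No step here is a serious obstacle, since the bookkeeping is entirely handled by the two results just cited; the only care needed is to verify that $\mod \C$ qualifies for Proposition \ref{proposition.43} (it is abelian with enough projectives since $\C$ is right coherent) and that Lemma \ref{lemma.8} can be applied to $\mod \C$ (using the standing hypothesis $\domdim (\mod \C) \geqslant 1$). The proof should therefore be short, essentially one line in each direction.
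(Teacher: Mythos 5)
Your proof is correct and is exactly the paper's argument: the paper's own proof reads ``Follows from Proposition \ref{proposition.43} and Lemma \ref{lemma.8},'' and your write-up simply spells out that reduction. The checks you flag (that $\mod \C$ is abelian with enough projectives, and that the standing hypothesis $\domdim(\mod \C) \geqslant 1$ licenses Lemma \ref{lemma.8}) are the right ones.
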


\begin{proof}
Follows from Proposition \ref{proposition.43} and Lemma \ref{lemma.8}.
\end{proof}

By taking $\C^{\op}$ in place of $\C$ in Proposition \ref{proposition.29}, we conclude that if $\C$ is left coherent and $\domdim (\mod \C^{\op}) \geqslant 1$, then $\C$ satisfies the axiom (F2$_{f}^{\op}$) if and only if $\domdim (\mod \C^{\op}) \geqslant n + 1$. What is somewhat surprising is that the following also holds:

\begin{proposition}\label{proposition.30}
Assume that $\C$ is coherent. If $\domdim (\mod \C) \geqslant n + 1$, then $\C$ satisfies the axiom \textup{(F2$^{\op}$)}.\footnote{Actually, the proof we present shows that if $\C$ is right coherent and $\domdim (\mod \C) \geqslant n + 1$, then $\C$ satisfies the axiom (F2$_{h}^{\op}$), see Appendix \ref{section.10}.}
\end{proposition}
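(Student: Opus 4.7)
The plan is to show that any $F \in \mod \C^{\op}$ with $\pdim F \leqslant 1$ satisfies $\Ext^{i}(\Tr F, \C(-,W)) = 0$ for all $W \in \C$ and all $1 \leqslant i \leqslant n$, which is precisely the axiom \textup{(F2$^{\op}$)}. The key observation is that such an $F$ forces $(\Tr F)^{\ast} = 0$, after which the vanishing propagates to higher $\Ext$ groups by dimension shifts along the long partial injective coresolution of $\C(-,W)$ supplied by the hypothesis $\domdim(\mod \C) \geqslant n + 1$.

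First I would choose a projective resolution $0 \to \C(Y,-) \xrightarrow{\C(f,-)} \C(X,-) \to F \to 0$ of $F$ in $\mod \C^{\op}$ with $f \in \C(X,Y)$. The injectivity of $\C(f,-)$ forces $f$ to be an epimorphism in $\C$, and $\Tr F \simeq \mr(f)$. Applying $(-)^{\ast}$ to this resolution and using the exact sequence
\[ 0 \to \mr(f)^{\ast} \to \C(Y,-) \xrightarrow{\C(f,-)} \C(X,-) \to \ml(f) \to 0 \]
recorded in Section \ref{section.3} gives $(\Tr F)^{\ast} = \mr(f)^{\ast} = 0$, and hence $\Hom(\Tr F, \C(-,V)) = (\Tr F)^{\ast}(V) = 0$ for every $V \in \C$.

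Now fix $W \in \C$. By the hypothesis there is an exact sequence $0 \to \C(-,W) \to I_{1} \to \cdots \to I_{n+1}$ in $\mod \C$ with each $I_{j}$ projective and injective. Since each $I_{j}$ lies in $\proj \C$, the Yoneda embedding gives $I_{j} \simeq \C(-,V_{j})$ for some $V_{j} \in \C$, so $\Hom(\Tr F, I_{j}) = 0$ by the previous paragraph. Setting $K_{0} = \C(-,W)$ and $K_{j} = \Coker(I_{j-1} \to I_{j})$ for $1 \leqslant j \leqslant n$ (with $I_{0} = \C(-,W)$), the sequence splits into short exact sequences $0 \to K_{j-1} \to I_{j} \to K_{j} \to 0$ for $1 \leqslant j \leqslant n$, and each $K_{j}$ embeds into $I_{j+1}$ for $0 \leqslant j \leqslant n$. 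Left exactness of $\Hom(\Tr F, -)$ then yields $\Hom(\Tr F, K_{j}) = 0$ for all $0 \leqslant j \leqslant n$.

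Finally, since each $I_{j}$ is injective in $\mod \C$, the long exact sequence associated to $0 \to K_{j-1} \to I_{j} \to K_{j} \to 0$ provides the dimension shifts $\Ext^{i}(\Tr F, K_{j-1}) \simeq \Ext^{i-1}(\Tr F, K_{j})$ for $i \geqslant 2$ and $\Ext^{1}(\Tr F, K_{j-1}) \simeq \Hom(\Tr F, K_{j}) = 0$. Iterating these isomorphisms gives $\Ext^{i}(\Tr F, \C(-,W)) \simeq \Ext^{1}(\Tr F, K_{i-1}) = 0$ for every $1 \leqslant i \leqslant n$, which is the desired conclusion. No step poses a genuine obstacle: the whole argument hinges on the identification $(\Tr F)^{\ast} = 0$, after which the result follows from a standard injective dimension-shift calculation.
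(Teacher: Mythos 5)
Your proposal is correct and follows essentially the same route as the paper's proof: identify $(\Tr F)^{\ast}=0$ from the length-one projective resolution, then use the partial coresolution by projective-injectives supplied by $\domdim(\mod\C)\geqslant n+1$ to kill $\Ext^{i}(\Tr F,\C(-,W))$ for $1\leqslant i\leqslant n$ by dimension shifting. The paper's argument with the cokernels $G_{i}$ is the same computation as yours with the $K_{j}$.
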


\begin{proof}
Suppose that $\domdim (\mod \C) \geqslant n + 1$. Let $F \in \mod \C^{\op}$ be such that $\pdim F \leqslant 1$, and take a projective resolution \[ \begin{tikzcd}
0 \arrow[r] & {\C(Y,-)} \arrow[r, "{\C(f,-)}"] &[1.2em] {\C(X,-)} \arrow[r] & F \arrow[r] & 0
\end{tikzcd} \] of $F$ in $\mod \C^{\op}$ with $f \in \C(X,Y)$. Then $\mr(f) \in \mod \C$ is a transpose of $F$ with the property that $\mr(f)^{\ast} = 0$. Hence $\Hom (\mr(f) , \C(-,W)) = 0$ for all $W \in \C$. Now, let $Z \in \C$ be arbitrary. Because $\domdim (\mod \C) \geqslant n + 1$, there is an exact sequence \[ \begin{tikzcd}
0 \arrow[r] & {\C(-,Z)} \arrow[r] & {\C(-,W_{1})} \arrow[r] & {\C(-,W_{2})} \arrow[r] & \cdots \arrow[r] & {\C(-,W_{n+1})}
\end{tikzcd} \] in $\mod \C$ with $W_{i} \in \C$ such that $\C(-,W_{i})$ is injective in $\mod \C$ for all $1 \leqslant i \leqslant n + 1$. This exact sequence induces a short exact sequence \[ \begin{tikzcd}
0 \arrow[r] & G_{i-1} \arrow[r] & {\C(-,W_{i})} \arrow[r] & G_{i} \arrow[r] & 0
\end{tikzcd} \] in $\mod \C$ for each $1 \leqslant i \leqslant n + 1$, where $G_{0} = \C(-,Z)$. Thus, there is a long exact sequence \[ \hspace{-3em} \begin{tikzcd}
0 \arrow[r] & \Hom(\mr(f),G_{i-1}) \arrow[r] & \Hom(\mr(f),\C(-,W_{i})) \arrow[r] & \Hom(\mr(f),G_{i}) \arrow[out=0, in=180, looseness=1.6, overlay, lld]    \\[0.4em]
            & \Ext^{1}(\mr(f),G_{i-1}) \arrow[r]             & \Ext^{1}(\mr(f),\C(-,W_{i})) \arrow[r]      & \Ext^{1}(\mr(f),G_{i}) \arrow[out=0, in=180, looseness=1.6, overlay, lld] \\[0.4em]
            & \Ext^{2}(\mr(f),G_{i-1}) \arrow[r, shorten >= 3.99em,shorten <= 0em]                      & \cdots               & 
\end{tikzcd} \] in $\Ab$ for each $1 \leqslant i \leqslant n + 1$. Therefore, given that $\Hom(\mr(f),\C(-,W_{i})) = 0$ and $\Ext^{j}(\mr(f),\C(-,W_{i})) = 0$ for all $j \geqslant 1$ and all $1 \leqslant i \leqslant n + 1$, we conclude that \[ \Ext^{i}(\mr(f), \C(-,Z)) \simeq \Ext^{1}(\mr(f), G_{i-1}) \simeq \Hom(\mr(f), G_{i}) = 0 \] for each $1 \leqslant i \leqslant n$. Hence $F$ is $n$-torsion free, so that $\C$ satisfies the axiom (F2$^{\op}$).
\end{proof}

Dually, by Proposition \ref{proposition.30}, if $\C$ is coherent and $\domdim (\mod \C^{\op}) \geqslant n + 1$, then $\C$ satisfies the axiom (F2). Also, by combining the last two propositions, we obtain the following result:

\begin{proposition}\label{proposition.31}
Assume that $\C$ is coherent. If $\domdim (\mod \C) \geqslant 1$ and \linebreak $\domdim (\mod \C^{\op}) \geqslant 1$, then the following are equivalent:
\begin{enumerate}
    \item[(a)] $\C$ satisfies the axiom \textup{(F2)}.
    \item[(b)] $\domdim (\mod \C) \geqslant n + 1$.
    \item[(c)] $\C$ satisfies the axiom \textup{(F2$^{\op}$)}.
    \item[(d)] $\domdim (\mod \C^{\op}) \geqslant n + 1$.
\end{enumerate}
\end{proposition}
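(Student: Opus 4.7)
The plan is to chain the two preceding propositions (Propositions \ref{proposition.29} and \ref{proposition.30}) together with Theorem \ref{theorem.9}, and then invoke duality. Specifically, I would prove the implications in the cycle (a) $\Leftrightarrow$ (b) $\Rightarrow$ (c) $\Leftrightarrow$ (d) $\Rightarrow$ (a).

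For (a) $\Leftrightarrow$ (b), observe that, since $\C$ is coherent, Theorem \ref{theorem.9} asserts that the axiom (F2) is equivalent to (F2$_{f}$) (the statement that every $F \in \mod \C$ with $\pdim F \leqslant n$ is a syzygy). Since we are assuming $\domdim (\mod \C) \geqslant 1$, Proposition \ref{proposition.29} then gives that (F2$_{f}$) is equivalent to $\domdim (\mod \C) \geqslant n+1$. Combining these yields (a) $\Leftrightarrow$ (b). By the same reasoning applied to $\C^{\op}$ in place of $\C$, and using the hypothesis $\domdim (\mod \C^{\op}) \geqslant 1$, we get (c) $\Leftrightarrow$ (d).

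For (b) $\Rightarrow$ (c), we quote Proposition \ref{proposition.30} directly: coherence of $\C$ together with $\domdim (\mod \C) \geqslant n+1$ implies that $\C$ satisfies (F2$^{\op}$). For (d) $\Rightarrow$ (a), we apply Proposition \ref{proposition.30} with the roles of $\C$ and $\C^{\op}$ interchanged (which is legitimate since coherence is self-dual), giving that $\domdim (\mod \C^{\op}) \geqslant n+1$ implies that $\C$ satisfies (F2). This closes the cycle.

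There is no real obstacle here: the proof is essentially bookkeeping, since the hard work has already been carried out in Propositions \ref{proposition.29} and \ref{proposition.30}. The only point that deserves a comment is that the hypotheses $\domdim (\mod \C) \geqslant 1$ and $\domdim (\mod \C^{\op}) \geqslant 1$ are used in the ``only if'' direction of Proposition \ref{proposition.29} (and its dual) to pass from (F2$_{f}$) back to the global bound $\domdim (\mod \C) \geqslant n+1$; the reverse implications (b) $\Rightarrow$ (c) and (d) $\Rightarrow$ (a) require only coherence. Thus the proof reduces to a one-line citation of the previous two propositions together with Theorem \ref{theorem.9}.
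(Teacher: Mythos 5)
Your proposal is correct and matches the paper's proof, which simply cites Theorem \ref{theorem.9} together with Propositions \ref{proposition.29} and \ref{proposition.30}; your cycle (a) $\Leftrightarrow$ (b) $\Rightarrow$ (c) $\Leftrightarrow$ (d) $\Rightarrow$ (a) is exactly the intended bookkeeping. Your remark about where the hypotheses $\domdim(\mod \C) \geqslant 1$ and $\domdim(\mod \C^{\op}) \geqslant 1$ are actually needed is accurate and a useful clarification of the paper's terse proof.
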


\begin{proof}
Follows from Theorem \ref{theorem.9} and Propositions \ref{proposition.29} and \ref{proposition.30}.
\end{proof}

\begin{corollary}\label{corollary.7}
Assume that $\C$ is coherent. If $\domdim (\mod \C) \geqslant 1$ and \linebreak $\domdim (\mod \C^{\op}) \geqslant 1$, then $\domdim (\mod \C) = \domdim (\mod \C^{\op})$.
\end{corollary}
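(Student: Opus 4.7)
The proof is essentially a direct consequence of Proposition~\ref{proposition.31}, once one notices that it can be applied with varying values of the positive integer parameter. The plan is to show that $\domdim (\mod \C) \geqslant m$ if and only if $\domdim (\mod \C^{\op}) \geqslant m$ for every positive integer $m$, which immediately gives the equality of the two dominant dimensions (and handles the infinite case uniformly).

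For $m = 1$ both inequalities hold by the standing hypothesis of the corollary. For $m \geqslant 2$, I would invoke the equivalence of items (b) and (d) in Proposition~\ref{proposition.31} applied with the positive integer there instantiated as $m - 1$; the running hypotheses (that $\C$ is coherent and both dominant dimensions are at least $1$) are exactly those required, and the conclusion is precisely that $\domdim (\mod \C) \geqslant m$ is equivalent to $\domdim (\mod \C^{\op}) \geqslant m$.

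The only point that merits verification is that Proposition~\ref{proposition.31} remains valid for every positive integer substituted for the globally fixed $n$. This is clear from its proof, which goes through Theorem~\ref{theorem.9} and Propositions~\ref{proposition.29} and~\ref{proposition.30}: in none of these arguments is any feature of $n$ used beyond its being a positive integer, so the equivalence is legitimate for every such integer. Once this is noted, the corollary follows without further work: if $\domdim (\mod \C)$ is finite, one reads off its value from the greatest $m$ for which $\domdim (\mod \C) \geqslant m$, and this set coincides with the corresponding set for $\mod \C^{\op}$; if $\domdim (\mod \C) = \infty$, then the equivalence yields $\domdim (\mod \C^{\op}) \geqslant m$ for every $m$, forcing $\domdim (\mod \C^{\op}) = \infty$ as well. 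I do not anticipate a real obstacle; the work lies entirely in Proposition~\ref{proposition.31}, and the corollary is a clean bookkeeping consequence.
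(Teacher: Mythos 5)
Your proposal is correct and is exactly the argument the paper intends: the paper's proof of this corollary is simply ``Follows from Proposition \ref{proposition.31}'', and the intended reading is precisely your observation that the equivalence of items (b) and (d) there, applied for every positive integer value of the parameter, shows that $\domdim (\mod \C) \geqslant m$ if and only if $\domdim (\mod \C^{\op}) \geqslant m$ for all $m \geqslant 1$ (the case $m=1$ being the standing hypothesis). Your explicit check that the supporting results hold for every positive integer, not just the globally fixed $n$, is a worthwhile clarification but does not change the route.
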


\begin{proof}
Follows from Proposition \ref{proposition.31}.
\end{proof}

We remark that Corollary \ref{corollary.7} for the case of modules over a right and left artinian ring was already known, see the paragraph proceeding Corollary \ref{corollary.2}.

We can now state and prove the main result of this section.

\begin{theorem}\label{theorem.12}
Let $\C$ be an additive and idempotent complete category. Then $\C$ is $n$-abelian and has enough injectives if and only if $\C$ is coherent and $\gldim (\mod \C) \leqslant n + 1 \leqslant \domdim (\mod \C)$. Moreover, in this case, $\mod \C$ has enough injectives.
\end{theorem}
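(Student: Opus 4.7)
The plan is to establish the biconditional by peeling off the functorial axioms via the results of this section, and then handle the moreover statement separately. For the ``only if'' direction, I assume $\C$ is $n$-abelian with enough injectives. Theorem \ref{theorem.2} gives that $\C$ is coherent with $\gldim(\mod \C) \leqslant n+1$. Since every $n$-abelian category is comprehensive by Proposition \ref{proposition.27}, Corollary \ref{corollary.6} converts the hypothesis of enough injectives into $\domdim(\mod \C) \geqslant 1$. Theorem \ref{theorem.9} then furnishes the axiom \textup{(F2$_{f}$)}, namely that every $F \in \mod \C$ with $\pdim F \leqslant n$ is a syzygy, and combining this with $\domdim(\mod \C) \geqslant 1$, Proposition \ref{proposition.29} upgrades the dominant dimension to $\domdim(\mod \C) \geqslant n+1$.

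For the ``if'' direction, assume $\C$ is coherent with $\gldim(\mod \C) \leqslant n+1 \leqslant \domdim(\mod \C)$. Theorem \ref{theorem.5} yields $\gldim(\mod \C^{\op}) = \gldim(\mod \C) \leqslant n+1$, so axioms \textup{(F1)} and \textup{(F1$^{\op}$)} both hold. Proposition \ref{proposition.30} transforms $\domdim(\mod \C) \geqslant n+1$ directly into \textup{(F2$^{\op}$)}. Since $n \geqslant 1$ forces $\domdim(\mod \C) \geqslant 1$, Proposition \ref{proposition.29} then yields \textup{(F2$_{f}$)}, equivalent to \textup{(F2)} by Theorem \ref{theorem.9}. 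Theorem \ref{theorem.2} now concludes that $\C$ is $n$-abelian, and Corollary \ref{corollary.6} once more delivers enough injectives in $\C$.

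For the moreover claim, the aim is to show every $F \in \mod \C$ embeds into a projective-injective of $\mod \C$. When $\pdim F \leqslant n$, axiom \textup{(F2$_{f}$)} combined with Lemma \ref{lemma.8} places $F$ as a syzygy, hence $F$ embeds in a projective, which in turn embeds in a projective-injective by $\domdim(\mod \C) \geqslant 1$. The delicate case is $\pdim F = n+1$: from a projective presentation $\C(-, X) \xrightarrow{\C(-, f)} \C(-, Y) \to F \to 0$, I would embed $Y$ in an injective $I$ of $\C$, so that $\C(-, I)$ is injective in $\mod \C$ by comprehensiveness, and a short diagram chase using that $\C(-, Y) \to \C(-, I)$ is a monomorphism shows that the induced morphism $F \to F' := \Coker(\C(-, X) \to \C(-, I))$ is a monomorphism. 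The main obstacle is that $F'$ need not itself be injective and may still satisfy $\pdim F' = n+1$; closing the argument requires iterating this construction, or equivalently assembling the $n$-injective coresolutions of $\C$ (available from its $n$-abelian structure with enough injectives), while invoking Proposition \ref{proposition.43} to control the dominant dimension of each intermediate cokernel so that the process eventually terminates inside a projective-injective.
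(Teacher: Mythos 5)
The biconditional is proved correctly and by essentially the same route as the paper: Theorem \ref{theorem.2} plus Proposition \ref{proposition.27} and Corollary \ref{corollary.6} give coherence, the global dimension bound and $\domdim(\mod\C)\geqslant 1$, which Proposition \ref{proposition.29} upgrades to $\domdim(\mod\C)\geqslant n+1$ via \textup{(F2$_f$)}; conversely Theorem \ref{theorem.5}, Proposition \ref{proposition.29}, Theorem \ref{theorem.9} and Proposition \ref{proposition.30} recover the four functorial axioms, so Theorem \ref{theorem.2} and Corollary \ref{corollary.6} apply. No issues there.

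The ``moreover'' clause, however, is where your argument has a genuine gap, and you essentially say so yourself. Your diagram chase showing that $F\to F':=\Coker(\C(-,X)\to\C(-,I))$ is a monomorphism is fine, but the proposed way of finishing --- ``iterating this construction \ldots so that the process eventually terminates inside a projective-injective'' --- cannot work as stated. If every $F\in\mod\C$ embedded into a projective injective object, then every $F$ would be a syzygy, hence $1$-torsion free by Corollary \ref{corollary.3}; but when $\C$ is not von Neumann regular there exist objects of projective dimension exactly $n+1$ in $\mod\C$, and these need not be syzygies (the axioms only control objects of projective dimension at most $n$). So the target of the iteration is too strong: one must embed $F$ into an \emph{injective} object of $\mod\C$, which will in general not be projective. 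Moreover, even granting the weaker target, you give no reason why the iteration terminates, and Proposition \ref{proposition.43} only controls dominant dimensions of objects of projective dimension at most $n$, so it does not apply to the intermediate cokernels in the problematic case $\pdim F=n+1$. The standard way to close this (and what the paper's citation of \cite[Proposition 4.20]{MR4392222} amounts to) is to replace \emph{both} ends of the presentation: choose embeddings $X\to I^{0}$ and $Y\to I^{1}$ into injectives of $\C$ fitting into a commutative square with $f$, show that the induced map $F\to\Coker(\C(-,I^{0})\to\C(-,I^{1}))$ is a monomorphism, and then prove that cokernels of morphisms between the $\C(-,I)$'s are injective in $\mod\C$ --- this last point is the real content, and it uses $\domdim(\mod\C)\geqslant n+1$ in an essential way that your sketch does not supply.
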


\begin{proof}
Suppose that $\C$ is $n$-abelian and has enough injectives. Then it follows from Theorem \ref{theorem.2} that $\C$ is coherent, $\gldim (\mod \C) \leqslant n + 1$ and $\C$ satisfies the axiom (F2), so that it also satisfies the axiom (F2$_{f}$), by Theorem \ref{theorem.9}. Furthermore, by Proposition \ref{proposition.28}, $\domdim (\mod \C) \geqslant 1$. Thus, we conclude from Proposition \ref{proposition.29} that $\domdim (\mod \C) \geqslant n + 1$.

Conversely, assume that $\C$ is coherent and $\gldim (\mod \C) \leqslant n + 1 \leqslant \domdim (\mod \C)$. From Theorem \ref{theorem.5}, we see that $\gldim (\mod \C^{\op}) \leqslant n + 1$. Hence $\C$ satisfies the axioms (F1) and (F1$^{\op}$). Moreover, it follows from Proposition \ref{proposition.29} and Theorem \ref{theorem.9} that $\C$ satisfies the axiom (F2), and Proposition \ref{proposition.30} says that $\C$ satisfies the axiom (F2$^{\op}$). Consequently, $\C$ is $n$-abelian, by Theorem \ref{theorem.2}, and it follows from Proposition \ref{proposition.28} that $\C$ has enough injectives.

If $\C$ satisfies the above assumptions, then a similar argument as the one in the proof of \cite[Proposition 4.20]{MR4392222} shows that $\mod \C$ has enough injectives.
\end{proof}

By duality, we deduce from Theorem \ref{theorem.12} that $\C$ is $n$-abelian and has enough projectives if and only if $\C$ is coherent and $\gldim (\mod \C^{\op}) \leqslant n + 1 \leqslant \domdim (\mod \C^{\op})$. Furthermore, in this case, $\mod \C^{\op}$ has enough injectives. Also, although there are inequalities for the global and dominant dimensions of $\mod \C$ and $\mod \C^{\op}$ when $\C$ is $n$-abelian and has enough injectives and enough projectives, respectively, there are only two possible cases. In fact, assume that $\C$ is $n$-abelian and has enough injectives, so that it satisfies the conditions of Theorem \ref{theorem.12}. If $\C$ is von Neumann regular, then it follows from Proposition \ref{proposition.4} that $\gldim (\mod \C) = 0$, which implies that $\domdim (\mod \C) = \infty$. On the other hand, if $\C$ is not von Neumann regular, then $\gldim (\mod \C) = n + 1$, by Corollary \ref{corollary.1}, and we can also verify that $\domdim (\mod \C) = n + 1$.\footnote{Indeed, we already know that $\domdim (\mod \C) \geqslant n + 1$, and if this inequality would be strict, then it would follow from Proposition \ref{proposition.30} that every $F \in \mod \C^{\op}$ with $\pdim F \leqslant 1$ is $(n+1)$-torsion free. But then, as $\gldim (\mod \C) = n + 1$, it would follow from Lemma \ref{lemma.1} that $\Tr F$ is projective for every $F \in \mod \C^{\op}$ with $\pdim F \leqslant 1$, which, by Lemma \ref{lemma.2}, would imply that $F$ is projective. Hence we would conclude that there is no $F \in \mod \C^{\op}$ with $\pdim F = 1$, which is a contradiction since we know from Theorem \ref{theorem.5} that $\gldim (\mod \C^{\op}) = n + 1$.} Dually, if $\C$ is $n$-abelian and has enough projectives, then either $\gldim (\mod \C^{\op}) = 0$ and $\domdim (\mod \C^{\op}) = \infty$ or both these dimensions are equal to $n + 1$.

Finally, we remark that Theorem \ref{theorem.12} can be seen as a general case of a result due to Iyama and Jasso, which says that if $\C$ is a ``dualizing $R$-variety'', then $\C$ is $n$-abelian if and only if $\gldim (\mod \C) \leqslant n + 1 \leqslant \domdim (\mod \C)$, see \cite[Theorem 1.2]{MR3638352}. The reader is also encouraged to compare Theorem \ref{theorem.12} with \cite[Theorems 8.23 and 9.6]{MR3406183}, \cite[Theorem 1.5]{MR4392222} and \cite[Theorem 5.2]{EbrahimiNasr-Isfahani}.

\section{Categories with additive generators}\label{section.8}

In this section, we specialize previous results to rings and modules over rings. This is done by considering additive and idempotent complete categories with additive generators. By doing so, we obtain a description of when the category of finitely generated projective modules over a ring is $n$-abelian. We also show that there is a correspondence between $n$-abelian categories with additive generators and rings under such a description, which extends the higher Auslander correspondence to rings that are not necessarily Artin algebras.

Throughout this paper, we have been following the philosophy that a module over an additive and idempotent complete category is analogous to a module over a ring. Moreover, we could have even considered the more general case of a module over a preadditive category, so that the case of a module over a ring would be recovered by considering a preadditive category with a single object, which is nothing but a ring. Thus, a ring is a particular case of the much more general notion of a preadditive category, which can be thought of as a ``ring with several objects'', as supported by Mitchell in \cite{MR0294454}. In view of these remarks, while working with modules over an additive and idempotent complete category, it is natural to ask when it is the case that we are actually dealing with modules over a ring. Here is an answer to this question:

\begin{proposition}\label{proposition.15}
Let $\C$ be an additive and idempotent complete category. The category $\mod \C$ is equivalent to $\mod \Lambda$ for some ring $\Lambda$ if and only if $\C$ has an additive generator.
\end{proposition}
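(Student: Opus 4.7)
The plan is to reduce the statement to Proposition \ref{proposition.16} via the dictionary between a ring $\Lambda$ and its category $\proj \Lambda$ of finitely generated projective modules. The organizing auxiliary fact is: for every ring $\Lambda$, there is an equivalence of categories $\mod \Lambda \approx \mod (\proj \Lambda)$. I would establish this via the evaluation-at-$\Lambda$ functor, sending $F \in \mod(\proj \Lambda)$ to $F(\Lambda)$, with the right $\Lambda$-module structure coming from the contravariance of $F$ together with the ring isomorphism $\End_\Lambda(\Lambda) \simeq \Lambda$. Its inverse sends $M \in \mod \Lambda$ to the functor $\Hom_\Lambda(-, M)|_{\proj \Lambda}$. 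Both are additive, are mutually inverse up to natural isomorphism by the Yoneda lemma, and preserve finite presentability since a projective presentation $\Lambda^{m} \to \Lambda^{n} \to M \to 0$ in $\mod \Lambda$ corresponds, under this equivalence, to a projective presentation of $\Hom_\Lambda(-,M)$ in $\mod(\proj \Lambda)$.

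For the forward direction, suppose $X$ is an additive generator of $\C$, and set $\Lambda = \End_\C(X)$. The additive functor $\C(X,-) : \C \to \Mod \Lambda$ (with the right $\Lambda$-action given by precomposition on $X$) sends $X$ to $\Lambda$ and lands in $\proj \Lambda$ when restricted to $\add X = \C$. Since both $\C = \add X$ and $\proj \Lambda = \add \Lambda$ are the idempotent completions of the additive hulls of their single-object full subcategories on $X$ and $\Lambda$, which have isomorphic endomorphism rings, the standard argument yields an equivalence $\C \approx \proj \Lambda$. Proposition \ref{proposition.16} then gives $\mod \C \approx \mod (\proj \Lambda)$, and composing with the auxiliary equivalence yields $\mod \C \approx \mod \Lambda$.

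For the converse, suppose $\mod \C \approx \mod \Lambda$ for some ring $\Lambda$. Combined with $\mod \Lambda \approx \mod (\proj \Lambda)$, this gives $\mod \C \approx \mod (\proj \Lambda)$. Since both $\C$ and $\proj \Lambda$ are additive and idempotent complete, Proposition \ref{proposition.16} supplies an equivalence $\C \approx \proj \Lambda$. As $\proj \Lambda = \add \Lambda$ has $\Lambda$ as an additive generator, the image of $\Lambda$ under this equivalence is an additive generator of $\C$.

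The main obstacle is the auxiliary equivalence $\mod \Lambda \approx \mod (\proj \Lambda)$, since this is the only step not immediately supplied by Proposition \ref{proposition.16} together with the identity $\proj \Lambda = \add \Lambda$. The construction is standard but deserves care, especially in verifying that the right $\Lambda$-module structure on $F(\Lambda)$ is the one correctly induced by the ring isomorphism $\End_\Lambda(\Lambda) \simeq \Lambda$ together with the contravariance of $F$, and that finite presentability is preserved in both directions.
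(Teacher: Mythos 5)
Your proof is correct. It uses the same essential ingredients as the paper -- the evaluation/projectivization equivalence and the fact that an equivalence of $\mod$ categories restricts to the subcategories of projectives -- but organizes them differently. The paper treats the two directions asymmetrically: for the ``only if'' direction it restricts the given equivalence $\mod \C \approx \mod \Lambda$ to projectives and applies Yoneda directly, and for the ``if'' direction it invokes (with external citations) the equivalence $\eval_{X} : \Mod \C \to \Mod \Lambda$ of the \emph{large} module categories, which requires observing that $\C$ is skeletally small, and then restricts it to finitely presented modules. You instead isolate the single auxiliary fact $\mod \Lambda \approx \mod (\proj \Lambda)$ (evaluation at $\Lambda$, which is the special case $\C = \proj \Lambda$ of the paper's evaluation functor) and then reduce both directions uniformly to Proposition \ref{proposition.16} together with the classical equivalence $\C \approx \proj \End(X)$ for an additive generator $X$. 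What your route buys is symmetry between the two implications and the avoidance of any statement about the large categories $\Mod \C$ and $\Mod \Lambda$; what the paper's route buys is brevity, at the cost of outsourcing the evaluation equivalence to references. Your checks on the auxiliary equivalence (the module structure on $F(\Lambda)$, exactness of $\Hom_{\Lambda}(P,-)$ for projective $P$ to transport presentations, and preservation of finite presentability in both directions) are the right points to verify, and they all go through.
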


\begin{proof}
If there is a ring $\Lambda$ and an equivalence of categories $\mod \C \approx \mod \Lambda$, then it induces an equivalence on the subcategories of projective objects. Thus, there is an equivalence $\proj \C \approx \proj \Lambda$. But we know that the Yoneda embedding induces an equivalence $\C \approx \proj \C$, hence we obtain that $\C \approx \proj \Lambda$. Since $\proj \Lambda = \add \Lambda$, that is, $\Lambda$ is an additive generator of $\proj \Lambda$, we deduce that $\C$ has an additive generator.

Conversely, suppose that $\C$ has an additive generator $X \in \C$, and let $\Lambda = \End(X)$ be the endomorphism ring of $X$. Then $\C$ is skeletally small, and there is a functor $\eval_{X} : \Mod \C \to \Mod \Lambda$ given by $\eval_{X}(F) = F(X)$ for each $F \in \Mod \C$ and $\eval_{X}(\alpha) = \alpha_{X}$ for each morphism $\alpha$ in $\Mod \C$, which is an equivalence of categories, see \mbox{\cite[Proposition 2.8.2]{VitorGulisz}.} Therefore, $\eval_{X}$ induces an equivalence $\mod \C \to \mod \Lambda$, see \cite[Proposition 3.1.1]{VitorGulisz}.
\end{proof}

The functor $\eval_{X} : \Mod \C \to \Mod \Lambda$ that was mentioned in the proof of Proposition \ref{proposition.15} is called the \textit{evaluation functor at $X$}, and it can be defined for any object $X \in \C$. As we remarked above, when $X$ is an additive generator of $\C$, the functor $\eval_{X}$ is an equivalence of categories, which induces an equivalence $\mod \C \to \mod \Lambda$. In this case, it also induces an equivalence $\proj \C \to \proj \Lambda$, and by composing it with the equivalence $\C \to \proj \C$ induced by the Yoneda embedding, we obtain an equivalence of categories $\prov_{X} : \C \to \proj \Lambda$. Clearly, the functor $\prov_{X}$ is given by $\prov_{X}(Y) = \C(X,Y)$ for each $Y \in \C$ and $\prov_{X}(f) = \C(X,f)$ for each morphism $f$ in $\C$. We call $\prov_{X}$ the \textit{projectivization functor at $X$}. Moreover, observe that if $X$ is an additive generator of $\C$, then it is also an additive generator of $\C^{\op}$, and the endomorphism ring of $X$ in $\C^{\op}$ is given by $\Lambda^{\op}$. Therefore, we also get equivalences of categories $\Mod \C^{\op} \to \Mod \Lambda^{\op}$, $\mod \C^{\op} \to \mod \Lambda^{\op}$ and $\C^{\op} \to \proj \Lambda^{\op}$. To summarize, if $\C$ has an additive generator $X$, then we can assign the ring $\Lambda = \End(X)$ to $\C$, modules over $\C$ are essentially modules over $\Lambda$, and there are equivalences of categories $\C \approx \proj \Lambda$ and $\C^{\op} \approx \proj \Lambda^{\op}$.

Now, going in the opposite direction of the above paragraph, note that if $\Lambda$ is an arbitrary ring, then $\proj \Lambda$ is an additive and idempotent complete category which has $\Lambda$ as an additive generator. Therefore, since $\End(\Lambda) \simeq \Lambda$, the evaluation functor at $\Lambda$ gives equivalences of categories $\Mod (\proj \Lambda) \to \Mod \Lambda$ and $\mod (\proj \Lambda) \to \mod \Lambda$ as well as equivalences $\Mod (\proj \Lambda)^{\op} \to \Mod \Lambda^{\op}$ and $\mod (\proj \Lambda)^{\op} \to \mod \Lambda^{\op}$. Thus, given any ring $\Lambda$, we can assign the category $\proj \Lambda$ to it, and modules over $\Lambda$ are essentially the same as modules over $\proj \Lambda$.

The moral is that every additive and idempotent complete category with an additive generator is (equivalent to) the category of finitely generated projective modules over a ring, and conversely. Moreover, the assignments of a category to a ring and of a ring to a category that were presented above are consistent in the sense that they preserve the corresponding categories of modules. Consequently, these assignments are inverses of each other, up to equivalences. Let us make this statement precise.

\begin{lemma}\label{lemma.3}
There is a bijective correspondence between the equivalence classes of additive and idempotent complete categories with additive generators and the Morita equivalence classes of rings.\footnote{Here, the equivalence class of a category is the class of categories that are equivalent to it.} The correspondence is given as follows:
\begin{enumerate}
    \item[(a)] If $\C$ is an additive and idempotent complete category with an additive generator, then send it to $\End(X)$, where $X$ is an additive generator of $\C$.
    \item[(b)] If $\Lambda$ is a ring, then send it to $\proj \Lambda$.
\end{enumerate}
\end{lemma}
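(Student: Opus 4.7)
The plan is to verify that the two assignments are well-defined on the relevant equivalence classes and that they are mutually inverse. The key inputs are the projectivization and evaluation functors recalled after Proposition \ref{proposition.15}, together with Proposition \ref{proposition.16} and the classical fact that Morita equivalence of rings $\Lambda, \Gamma$ means $\Mod \Lambda \approx \Mod \Gamma$ (equivalently, by Proposition \ref{proposition.16} applied to the skeletally small categories $\proj \Lambda$ and $\proj \Gamma$ together with the equivalences $\Mod (\proj \Lambda) \approx \Mod \Lambda$ and $\mod (\proj \Lambda) \approx \mod \Lambda$, that $\proj \Lambda \approx \proj \Gamma$, and equivalently $\mod \Lambda \approx \mod \Gamma$).

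For well-definedness of the assignment in (b), if $\Lambda$ and $\Gamma$ are Morita equivalent, then $\proj \Lambda \approx \proj \Gamma$ by the observation above, so the equivalence class of $\proj \Lambda$ depends only on the Morita class of $\Lambda$. For well-definedness of the assignment in (a), recall from the proof of Proposition \ref{proposition.15} that for an additive generator $X$ of $\C$ with $\Lambda = \End(X)$, the evaluation functor at $X$ induces an equivalence $\mod \C \approx \mod \Lambda$. Hence, if $\D \approx \C$ is another additive and idempotent complete category with additive generator $Y$ and $\Gamma = \End(Y)$, then Proposition \ref{proposition.16} gives $\mod \Lambda \approx \mod \C \approx \mod \D \approx \mod \Gamma$, so $\Lambda$ and $\Gamma$ are Morita equivalent. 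In particular, this handles the case $\D = \C$ with two different choices of additive generator.

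To check that the two assignments are mutually inverse, I would use, in one direction, the projectivization functor $\prov_{X} : \C \to \proj \Lambda$ introduced just before this lemma; it is an equivalence of categories, so the composite $[\C] \mapsto [\End(X)] \mapsto [\proj \End(X)]$ recovers $[\C]$. In the other direction, for a ring $\Lambda$, the object $\Lambda$ is an additive generator of $\proj \Lambda$ and $\End_{\proj \Lambda}(\Lambda) \simeq \Lambda$ as rings, so the composite $[\Lambda] \mapsto [\proj \Lambda] \mapsto [\End_{\proj \Lambda}(\Lambda)]$ returns the Morita class of $\Lambda$.

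No serious obstacle is expected; the argument is essentially bookkeeping built entirely from material already developed in the paper. The only delicate point is to be careful that each round trip descends to the identity on equivalence classes rather than merely to a natural isomorphism at the level of chosen representatives, which is why Proposition \ref{proposition.16} (rather than a direct comparison of specific functors) is the right tool to appeal to.
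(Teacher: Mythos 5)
Your proposal is correct and follows essentially the same route as the paper: check well-definedness of both assignments and mutual inverseness using the evaluation/projectivization equivalences together with Proposition \ref{proposition.16}. The only cosmetic differences are that you invoke the projectivization functor $\prov_{X}$ directly and take $\Lambda$ itself as the additive generator of $\proj \Lambda$ (shortcuts the paper itself points out in its footnotes), and you phrase one step via $\mod$ rather than $\Mod$, which is equivalent by Proposition \ref{proposition.16}.
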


\begin{proof}
Follows easily from the previous paragraphs and Proposition \ref{proposition.16}. But, for the sake of clarity, let us fill in the details below.

To begin with, let us show that the assignments are well defined. Let $\B$ and $\C$ be additive and idempotent complete categories with additive generators, say, $Y \in \B$ and $X \in \C$, and suppose that $\B$ and $\C$ are equivalent. Then there are equivalences \[ \Mod \End(Y) \approx \Mod \B \approx \Mod \C \approx \Mod \End(X), \] so that $\End(Y)$ and $\End(X)$ are Morita equivalent.\footnote{This argument also shows the independence of choice of additive generators.} Next, let $\Lambda$ and $\Gamma$ be rings that are Morita equivalent. Because there are equivalences \[ \Mod (\proj \Lambda) \approx \Mod \Lambda \approx \Mod \Gamma \approx \Mod (\proj \Gamma), \] it follows from Proposition \ref{proposition.16} that $\proj \Lambda$ and $\proj \Gamma$ are equivalent.

Now, we verify that the assignments are inverses of each other. Given an additive and idempotent complete category $\C$ with an additive generator, say $X \in \C$, we have \[ \Mod \C \approx \Mod \End(X) \approx \Mod (\proj \End(X)), \] and by Proposition \ref{proposition.16}, we get that $\C$ and $\proj \End(X)$ are equivalent. Finally, if $\Lambda$ is a ring and $P$ is an additive generator of $\proj \Lambda$, then \[ \Mod \Lambda \approx \Mod (\proj \Lambda) \approx \Mod \End(P), \] hence $\Lambda$ and $\End(P)$ are Morita equivalent.\footnote{Of course, we could have just taken $P = \Lambda$ and argued that $\End(\Lambda) \simeq \Lambda$.}
\end{proof}

For the rest of this section, let $\Lambda$ be a ring.

As it was previously mentioned, the evaluation functor at $\Lambda$ induces equivalences of categories $\Mod (\proj \Lambda) \to \Mod \Lambda$ and $\mod (\proj \Lambda) \to \mod \Lambda$ as well as equivalences $\Mod (\proj \Lambda)^{\op} \to \Mod \Lambda^{\op}$ and $\mod (\proj \Lambda)^{\op} \to \mod \Lambda^{\op}$. Therefore, we can use these equivalences to specialize previous definitions and results for $\C$ and for modules over $\C$ to $\Lambda$ and modules over $\Lambda$, respectively, by taking $\C = \proj \Lambda$. By doing so, we recover classical definitions and results for rings and modules over rings. To help the reader feel more comfortable with this idea, let us mention a few instances of this procedure.

We can define a ring $\Lambda$ to be \textit{right coherent}, \textit{left coherent} and \textit{coherent} if $\proj \Lambda$ is right coherent, left coherent and coherent, respectively. Due to the equivalences $\mod (\proj \Lambda) \approx \mod \Lambda$ and $\mod (\proj \Lambda)^{\op} \approx \mod \Lambda^{\op}$, we see that $\Lambda$ is right coherent and left coherent precisely when $\mod \Lambda$ is abelian and when $\mod \Lambda^{\op}$ is abelian, respectively. Hence our definitions coincide with the classical ones in ring theory. Moreover, because of the previous equivalences, it follows from Theorem \ref{theorem.5} that if $\Lambda$ is a coherent ring, then $\gldim (\mod \Lambda) = \gldim (\mod \Lambda^{\op})$. Also, we can use Corollary \ref{corollary.4} to conclude that, when $\Lambda$ is coherent, the global dimensions of $\mod \Lambda$ and $\mod \Lambda^{\op}$ coincide with the \textit{weak dimension} of $\Lambda$. These are, of course, well known results, see \cite[Theorem 5.63]{MR1653294} and \cite[Proposition 1.1]{MR306265}.

Similarly, we can define a ring $\Lambda$ to be \textit{von Neumann regular} if $\proj \Lambda$ is von Neumann regular. From Proposition \ref{proposition.4} and the equivalence $\mod (\proj \Lambda) \approx \mod \Lambda$, we deduce that $\Lambda$ is von Neumann regular if and only if every finitely presented $\Lambda$-module is projective. As it was already mentioned in Section \ref{section.4}, this latter condition is equivalent to saying that $\Lambda$ is von Neumann regular in the usual sense.\footnote{Sometimes, however, the nomenclature for rings and categories may not match. For example, $\Lambda$ is a \textit{semiperfect} ring if and only if $\proj \Lambda$ is a \textit{Krull--Schmidt} category, see \cite[Proposition 4.1]{MR3431480}.}

We also say that a ring $\Lambda$ is \textit{right comprehensive}, \textit{left comprehensive} and \textit{comprehensive} when $\proj \Lambda$ is right comprehensive, left comprehensive and comprehensive, respectively. Note that, because of the equivalence $\mod (\proj \Lambda) \approx \mod \Lambda$, we can conclude from Proposition \ref{proposition.24} that the inclusion functor $\proj \Lambda \to \mod \Lambda$ always reflects injectivity, and $\Lambda$ is right comprehensive precisely when it also preserves injectivity. Dually, due to the equivalence $\mod (\proj \Lambda)^{\op} \approx \mod \Lambda^{\op}$, the inclusion $\proj \Lambda^{\op} \to \mod \Lambda^{\op}$ reflects injectivity, and $\Lambda$ is left comprehensive precisely when injectivity is also preserved.\footnote{The author is not aware of any alternative characterization of such rings, though.}

Now, by considering the equivalence $\mod (\proj \Lambda) \approx \mod \Lambda$, given $M \in \mod \Lambda$, let $F \in \mod (\proj \Lambda)$ be such that $F(\Lambda) \simeq M$. We define a \textit{transpose} of $M$, denoted by $\Tr M$, to be a $\Lambda^{\op}$-module which is obtained by evaluating $\Tr F$ at $\Lambda$, where $\Tr F$ is a transpose of $F$. It is not hard to see that this definition indeed coincides with the classical definition of a transpose of a module over a ring, introduced in \cite{MR0269685}. Furthermore, if $\Lambda$ is left coherent and $k$ is a positive integer, then we can define $M$ to be \textit{$k$-torsion free} if $F$ is $k$-torsion free. Observe that if $\Lambda$ is left coherent, then, given a positive integer $i$, the $(\proj \Lambda)$-module $\E^{i}(F)$ corresponds to the $\Lambda$-module $\E^{i}(F)(\Lambda) = \Ext^{i}(\Tr F, \Hom_{\Lambda}(\Lambda,-))$ via the equivalence $\Mod (\proj \Lambda) \approx \Mod \Lambda$. But the latter is isomorphic to $\Ext^{i}(\Tr F(\Lambda), \Lambda)$, due to the equivalence $\mod (\proj \Lambda)^{\op} \approx \mod \Lambda^{\op}$. Therefore, $M$ is $k$-torsion free if and only if $\Ext^{i}(\Tr M, \Lambda) = 0$ for all $1 \leqslant i \leqslant k$. Hence we recover the classical concept of a $k$-torsion free module, as given in \cite{MR0269685}. By following similar arguments, we can also deduce that if $\Lambda$ is coherent, then the double dual sequence of $F$ induces an exact sequence \[ \begin{tikzcd}
0 \arrow[r] & {\Ext^{1}(\Tr M, \Lambda)} \arrow[r] & M \arrow[r] & M^{\ast \ast} \arrow[r] & {\Ext^{2}(\Tr M, \Lambda)} \arrow[r] & 0
\end{tikzcd} \] in $\mod \Lambda$, where $M \to M^{\ast \ast}$ is the canonical morphism from a module to its double dual. We call the above sequence the \textit{double dual sequence} of $M$.

Hopefully, the reader is now comfortable with the idea that we can specialize previous definitions and results to rings and modules over rings. Let us explore some consequences of this procedure, which leads to results concerning the category $\proj \Lambda$ of finitely generated projective $\Lambda$-modules, for a given ring $\Lambda$.

For example, the next result is Proposition \ref{proposition.1} specialized to rings.

\begin{proposition}\label{proposition.38}
Let $\Lambda$ be a ring.
\begin{enumerate}
    \item[(a)] The category $\proj \Lambda$ has weak kernels if and only if $\Lambda$ is right coherent.
    \item[(b)] The category $\proj \Lambda$ has weak cokernels if and only if $\Lambda$ is left coherent.
\end{enumerate}
\end{proposition}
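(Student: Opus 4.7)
The proposition is a direct specialization of Proposition \ref{proposition.1} (and its dual) to the case $\C = \proj \Lambda$, combined with the defining characterization of right/left coherent rings that the paper has just set up. My plan is therefore to verify the relevant hypotheses and then invoke Proposition \ref{proposition.1}.

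First I would note that $\proj \Lambda$ is automatically additive and idempotent complete (as a subcategory of $\Mod \Lambda$ closed under finite direct sums and direct summands), and that it has $\Lambda$ as an additive generator. Thus Proposition \ref{proposition.1} applies, giving that $\proj \Lambda$ has weak kernels if and only if $\mod (\proj \Lambda)$ is an abelian category. Next I would recall the equivalence of categories $\mod (\proj \Lambda) \approx \mod \Lambda$ induced by the evaluation functor at $\Lambda$, which was established in the discussion preceding the proposition. Under this equivalence, $\mod (\proj \Lambda)$ is abelian if and only if $\mod \Lambda$ is abelian, and the latter is precisely the definition of $\Lambda$ being right coherent given in the text (matching the classical ring-theoretic notion). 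Chaining these equivalences yields part (a).

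For part (b), I would proceed by duality: by replacing $\C$ by $\C^{\op}$ in Proposition \ref{proposition.1}, the category $\proj \Lambda$ has weak cokernels if and only if $\mod (\proj \Lambda)^{\op}$ is abelian. Again using the equivalence $\mod (\proj \Lambda)^{\op} \approx \mod \Lambda^{\op}$ and the definition of $\Lambda$ being left coherent ($\mod \Lambda^{\op}$ being abelian), the claim follows.

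There is no real obstacle here; the statement is a restatement of Proposition \ref{proposition.1} in the specific case where the additive and idempotent complete category has an additive generator. The only point worth being careful about is confirming that the two possible meanings of "right coherent ring"—namely, $\proj \Lambda$ being right coherent in the categorical sense, versus $\mod \Lambda$ being abelian in the classical sense—really do coincide, which is exactly what the equivalence $\mod (\proj \Lambda) \approx \mod \Lambda$ ensures.
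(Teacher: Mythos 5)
Your proposal matches the paper's proof exactly: the paper also deduces the statement from Proposition \ref{proposition.1} together with the equivalences $\mod (\proj \Lambda) \approx \mod \Lambda$ and $\mod (\proj \Lambda)^{\op} \approx \mod \Lambda^{\op}$. Your additional care in checking that the categorical and classical notions of coherence agree is exactly the point the paper makes in the surrounding discussion, so there is nothing to correct.
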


\begin{proof}
Follows from Proposition \ref{proposition.1} and the equivalences of categories $\mod (\proj \Lambda) \approx \mod \Lambda$ and $\mod (\proj \Lambda)^{\op} \approx \mod \Lambda^{\op}$.
\end{proof}

We can also specialize the functorial axioms of an $n$-abelian category to axioms of a ring. Indeed, for a ring $\Lambda$, the axioms (F1) and (F1$^{\op}$) become:
\begin{enumerate}
    \item[(R1)] $\Lambda$ is right coherent and $\gldim (\mod \Lambda) \leqslant n+1$.
    \item[(R1$^{\op}$)] $\Lambda$ is left coherent and $\gldim (\mod \Lambda^{\op}) \leqslant n+1$.
\end{enumerate}
Alternatively, by \cite[Proposition 1.1]{MR306265}, these axioms could be written as:
\begin{enumerate}
    \item[(R1)] $\Lambda$ is right coherent and has weak dimension at most $n + 1$.
    \item[(R1$^{\op}$)] $\Lambda$ is left coherent and has weak dimension at most $n + 1$.
\end{enumerate}
And, under the assumption that $\Lambda$ is coherent, the axioms (F2) and (F2$^{\op}$) for rings become:
\begin{enumerate}
    \item[(R2)] Every $M \in \mod \Lambda$ with $\pdim M \leqslant 1$ is $n$-torsion free.
    \item[(R2$^{\op}$)] Every $M \in \mod \Lambda^{\op}$ with $\pdim M \leqslant 1$ is $n$-torsion free.
\end{enumerate}

Precisely, we have that $\proj \Lambda$ satisfies (F1) if and only if $\Lambda$ satisfies (R1), and the same statement also holds for the dual axioms. Moreover, if $\Lambda$ is coherent, then $\proj \Lambda$ satisfies (F2) if and only if $\Lambda$ satisfies (R2), and similarly for the dual axioms.

We can then conclude that the above axioms for $\Lambda$ are equivalent to the axioms (A1), (A1$^{\op}$), (A2) and (A2$^{\op}$) for $\proj \Lambda$. In fact, we have the following:

\begin{proposition}\label{proposition.8}
Let $\Lambda$ be a ring.
\begin{enumerate}
    \item[(a)] $\proj \Lambda$ satisfies the axiom \textup{(A1)} if and only if $\Lambda$ satisfies the axiom \textup{(R1)}.
    \item[(b)] $\proj \Lambda$ satisfies the axiom \textup{(A1$^{\op}$)} if and only if $\Lambda$ satisfies the axiom \textup{(R1$^{\op}$)}.
\end{enumerate}
Moreover, if $\Lambda$ satisfies both the axioms \textup{(R1)} and \textup{(R1$^{\op}$)}, then the following statements also hold:
\begin{enumerate}
    \item[(c)] $\proj \Lambda$ satisfies the axiom \textup{(A2)} if and only if $\Lambda$ satisfies the axiom \textup{(R2)}.
    \item[(d)] $\proj \Lambda$ satisfies the axiom \textup{(A2$^{\op}$)} if and only if $\Lambda$ satisfies the axiom \textup{(R2$^{\op}$)}.
\end{enumerate}
\end{proposition}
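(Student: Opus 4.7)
The plan is to reduce everything to Propositions \ref{proposition.2} and \ref{proposition.3} via the equivalences of categories $\mod(\proj \Lambda) \approx \mod \Lambda$ and $\mod(\proj \Lambda)^{\op} \approx \mod \Lambda^{\op}$ induced by the evaluation functor at $\Lambda$. The key observation is that each of the axioms (R1), (R1$^{\op}$), (R2), (R2$^{\op}$) for $\Lambda$ is literally the translation, under these equivalences, of the corresponding functorial axiom (F1), (F1$^{\op}$), (F2), (F2$^{\op}$) for $\C = \proj \Lambda$.

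For parts (a) and (b), I would argue as follows. By Proposition \ref{proposition.2}, $\proj \Lambda$ satisfies (A1) if and only if it satisfies (F1), that is, it is right coherent with $\gldim(\mod(\proj \Lambda)) \leqslant n+1$. Under the equivalence $\mod(\proj \Lambda) \approx \mod \Lambda$, being right coherent translates to $\mod \Lambda$ being abelian (i.e., $\Lambda$ being right coherent), and the global dimension is preserved. Hence (F1) for $\proj \Lambda$ is the same as (R1) for $\Lambda$, which proves (a). Part (b) is entirely analogous using the dual equivalence $\mod(\proj \Lambda)^{\op} \approx \mod \Lambda^{\op}$.

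For parts (c) and (d), first note that if $\Lambda$ satisfies both (R1) and (R1$^{\op}$), then by parts (a) and (b), $\proj \Lambda$ satisfies (A1) and (A1$^{\op}$), so $\proj \Lambda$ is pre-$n$-abelian. Proposition \ref{proposition.3} then applies, giving that $\proj \Lambda$ satisfies (A2) if and only if it satisfies (F2), and similarly for the dual axioms. It remains to match (F2) for $\proj \Lambda$ with (R2) for $\Lambda$. Under the equivalence $\mod(\proj \Lambda) \approx \mod \Lambda$, projective dimension is preserved, and as discussed in the paragraph preceding the proposition, a finitely presented $\Lambda$-module $M$ satisfies $\Ext^{i}(\Tr M, \Lambda) = 0$ for $1 \leqslant i \leqslant n$ precisely when the corresponding $(\proj \Lambda)$-module is $n$-torsion free (since $\E^{i}(F)(\Lambda) \simeq \Ext^{i}(\Tr M, \Lambda)$ via the dual equivalence). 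Hence (F2) for $\proj \Lambda$ becomes (R2) for $\Lambda$, yielding (c). Part (d) follows by the same argument with $\Lambda$ and $\Lambda^{\op}$ interchanged.

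The only step that requires care is the identification of $n$-torsion-freeness across the equivalence, but this has essentially been worked out in the discussion immediately preceding Proposition \ref{proposition.8}, where the classical notion of transpose and $k$-torsion-free module from \cite{MR0269685} was reconciled with the categorical one. With that identification in hand, the proof is a direct application of Propositions \ref{proposition.2} and \ref{proposition.3}, and no further obstacles arise.
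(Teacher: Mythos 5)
Your proposal is correct and follows exactly the paper's own route: the paper's proof is a one-line reduction to Propositions \ref{proposition.2} and \ref{proposition.3} via the equivalences $\mod(\proj \Lambda) \approx \mod \Lambda$ and $\mod(\proj \Lambda)^{\op} \approx \mod \Lambda^{\op}$, and you have simply spelled out the translation of each axiom (including the identification of $n$-torsion-freeness, which the paper likewise defers to the discussion preceding the proposition).
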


\begin{proof}
Follows from Propositions \ref{proposition.2} and \ref{proposition.3}, and from the equivalences of categories $\mod (\proj \Lambda) \approx \mod \Lambda$ and $\mod (\proj \Lambda)^{\op} \approx \mod \Lambda^{\op}$.
\end{proof}

As a consequence, we can describe when the category of finitely generated projective modules over a ring is $n$-abelian. The next result is Theorem \ref{theorem.2} specialized to rings.

\begin{theorem}\label{theorem.4}
Let $\Lambda$ be a ring, and let $n$ be a positive integer. The category $\proj \Lambda$ is $n$-abelian if and only if $\Lambda$ satisfies the following axioms: \begin{enumerate}
    \item[(R1)] $\Lambda$ is right coherent and $\gldim (\mod \Lambda) \leqslant n+1$.
    \item[(R1$^{\op}$)] $\Lambda$ is left coherent and $\gldim (\mod \Lambda^{\op}) \leqslant n+1$.
    \item[(R2)] Every $M \in \mod \Lambda$ with $\pdim M \leqslant 1$ is $n$-torsion free.
    \item[(R2$^{\op}$)] Every $M \in \mod \Lambda^{\op}$ with $\pdim M \leqslant 1$ is $n$-torsion free.
\end{enumerate}
\end{theorem}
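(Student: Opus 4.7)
The plan is to invoke Theorem \ref{theorem.2} with $\C = \proj \Lambda$ and then transport each of the four functorial axioms across the Morita-type equivalences $\mod(\proj \Lambda) \approx \mod \Lambda$ and $\mod(\proj \Lambda)^{\op} \approx \mod \Lambda^{\op}$ induced by the evaluation functor $\eval_{\Lambda}$ at the additive generator $\Lambda \in \proj \Lambda$. The categories $\proj \Lambda$ is additive and idempotent complete, so Theorem \ref{theorem.2} applies verbatim.

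First, I would observe that the equivalences above send $(\proj \Lambda)$-modules of the form $(\proj \Lambda)(-,P)$ to $\Hom_{\Lambda}(\Lambda,P) \simeq P$, so they restrict to equivalences $\proj(\proj \Lambda) \approx \proj \Lambda$ and $\proj(\proj \Lambda)^{\op} \approx \proj \Lambda^{\op}$. In particular, projective presentations, projective dimensions and global dimensions are preserved: $\gldim(\mod(\proj \Lambda)) = \gldim(\mod \Lambda)$ and $\gldim(\mod(\proj \Lambda)^{\op}) = \gldim(\mod \Lambda^{\op})$, and $\proj \Lambda$ is right (respectively left) coherent if and only if $\Lambda$ is. This identifies (F1) with (R1) and (F1$^{\op}$) with (R1$^{\op}$), which is exactly part (a) and (b) of Proposition \ref{proposition.8}.

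Next, I would verify that the transpose and the $\E^{i}$ functors are also transported correctly. If $M \in \mod \Lambda$ corresponds to $F \in \mod(\proj \Lambda)$ under the equivalence, and $\Tr F$ is a transpose of $F$, then evaluating at $\Lambda$ gives a $\Lambda^{\op}$-module which coincides (up to the canonical isomorphism $\Hom_{\Lambda}(\Lambda,-) \simeq \mathrm{id}$) with the classical transpose of $M$; the discussion just before Proposition \ref{proposition.38} shows that $\E^{i}(F)$ corresponds to $\Ext^{i}(\Tr M, \Lambda)$, so the notion of being $n$-torsion free for $F$ over $\proj \Lambda$ coincides with the ring-theoretic notion of being $n$-torsion free for $M$ over $\Lambda$. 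Consequently, (F2) is equivalent to (R2) and (F2$^{\op}$) is equivalent to (R2$^{\op}$), assuming coherence is in force, which is precisely parts (c) and (d) of Proposition \ref{proposition.8}.

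Combining these, I would conclude by the following chain: $\proj \Lambda$ is $n$-abelian if and only if it satisfies (F1), (F1$^{\op}$), (F2), (F2$^{\op}$) by Theorem \ref{theorem.2}, which by Proposition \ref{proposition.8} is equivalent to $\Lambda$ satisfying (R1), (R1$^{\op}$), (R2), (R2$^{\op}$). There is no genuine obstacle here; the work has already been done in Theorem \ref{theorem.2} and Proposition \ref{proposition.8}, and the only thing to double-check is the cleanness of the translation of ``$k$-torsion free'' across the equivalence, which is already recorded in the paragraph preceding Proposition \ref{proposition.38}. So the proof reduces to a one-line citation of Theorem \ref{theorem.2} and Proposition \ref{proposition.8}.
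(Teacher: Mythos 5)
Your proposal is correct and matches the paper's own argument: the paper proves Theorem \ref{theorem.4} by citing Proposition \ref{proposition.8}, which is exactly the transport of the functorial axioms of Theorem \ref{theorem.2} across the equivalences $\mod(\proj\Lambda) \approx \mod\Lambda$ and $\mod(\proj\Lambda)^{\op} \approx \mod\Lambda^{\op}$ that you describe. Your additional care in checking that the transpose and the torsion-free condition translate correctly is precisely the content the paper records in the discussion preceding Proposition \ref{proposition.38}.
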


\begin{proof}
Follows from Proposition \ref{proposition.8}.
\end{proof}

It is worth mentioning that, by Theorem \ref{theorem.4} and Corollary \ref{corollary.1}, if $\Lambda$ satisfies the axioms \textup{(R1)}, \textup{(R1$^{\op}$)}, \textup{(R2)} and \textup{(R2$^{\op}$)}, and if $\Lambda$ is not von Neumann regular, then \[ \gldim (\mod \Lambda) = \gldim (\mod \Lambda^{\op}) = n + 1, \] and this number coincides with the weak dimension of $\Lambda$, as we remarked earlier.

We can also specialize the axioms (F2$_{a}$), (F2$_{b}$), (F2$_{c}$), (F2$_{d_{k}}$), (F2$_{e}$), (F2$_{f}$), (F2$_{g_{k}}$) and their duals to axioms for rings, and specialize other previous results to rings and modules over rings. For example, consider the following axioms for a coherent ring $\Lambda$:

\begin{enumerate}
    \item[(R2$_{c}$)] Every $m$-spherical object in $\mod \Lambda$ is a syzygy, for all $1 \leqslant m \leqslant n$.
    \item[(R2$_{f}$)] Every $M \in \mod \Lambda$ with $\pdim M \leqslant n$ is a syzygy.
\end{enumerate}

And their duals:

\begin{enumerate}
    \item[(R2$_{c}^{\op}$)] Every $m$-spherical object in $\mod \Lambda^{\op}$ is a syzygy, for all $1 \leqslant m \leqslant n$.
    \item[(R2$_{f}^{\op}$)] Every $M \in \mod \Lambda^{\op}$ with $\pdim M \leqslant n$ is a syzygy.
\end{enumerate}

Then we can conclude the following:

\begin{proposition}\label{proposition.36}
Let $\Lambda$ be a coherent ring. The following statement holds for $\square \in \{c,f\}$:
\begin{enumerate}
    \item[] \hspace{-2em} The axioms \textup{(R2)} and \textup{(R2$^{\op}$)} are equivalent to \textup{(R2$_{\square}$)} and \textup{(R2$_{\square}^{\op}$)}, respectively.
\end{enumerate}
\end{proposition}

\begin{proof}
Follows from Theorems \ref{theorem.7} and \ref{theorem.9}, and from the equivalences of categories $\mod (\proj \Lambda) \approx \mod \Lambda$ and $\mod (\proj \Lambda)^{\op} \approx \mod \Lambda^{\op}$.
\end{proof}

We leave it to the reader the task of formulating the other axioms for rings, and also of specializing the results of Section \ref{section.6} to rings. Next, we will consider some of the results of Section \ref{section.7} in the ring case. Since these involve dominant dimensions, it is worth stating a few remarks first.

To begin with, recall from \cite{MR258888} that a $\Lambda$-module $M$ is called \textit{FP-injective} when $\Ext_{\Lambda}^{1}(N,M) = 0$ for every finitely presented $\Lambda$-module $N$. We note that FP-injective $\Lambda$-modules are also called ``absolutely pure'' by some authors, see \cite{MR224649} and \cite{MR294409}. It is easy to see that if $\Lambda$ is right coherent, then an object in $\mod \Lambda$ is injective in $\mod \Lambda$ if and only if it is FP-injective. Consequently, if $\Lambda$ is right coherent and $m$ is a positive integer, then $\domdim (\mod \Lambda) \geqslant m$ if and only if for every finitely generated projective right $\Lambda$-module $P$, there is an exact sequence of right $\Lambda$-modules \[ \begin{tikzcd}
0 \arrow[r] & P \arrow[r] & V_{1} \arrow[r] & \cdots \arrow[r] & V_{m}
\end{tikzcd} \] with each $V_{i}$ being finitely generated, projective and FP-injective. The same statement for left $\Lambda$-modules characterizes $\domdim (\mod \Lambda^{\op}) \geqslant m$ when $\Lambda$ is left coherent.

Now, as in \cite{MR233854}, we let the \textit{right dominant dimension} of $\Lambda$ be the dominant dimension of $\Lambda$ viewed as an object in the category $\Mod \Lambda$, and we denote it by $\rdomdim \Lambda$. By \cite[Lemma 1]{MR233854}, for a positive integer $m$, we have $\rdomdim \Lambda \geqslant m$ if and only if there is an exact sequence of right $\Lambda$-modules \[ \begin{tikzcd}
0 \arrow[r] & \Lambda \arrow[r] & Q_{1} \arrow[r] & \cdots \arrow[r] & Q_{m}
\end{tikzcd} \] with each $Q_{i}$ being finitely generated, projective and injective. Similarly, we let the \textit{left dominant dimension} of $\Lambda$ be the dominant dimension of $\Lambda$ viewed as an object of $\Mod \Lambda^{\op}$, and we denote it by $\ldomdim \Lambda$. The above statement for left $\Lambda$-modules characterizes when $\ldomdim \Lambda \geqslant m$.

Observe that every injective $\Lambda$-module is FP-injective. Moreover, we can use Baer's criterion to conclude that if $\Lambda$ is right noetherian, then every FP-injective right $\Lambda$-module is injective. The converse also holds, that is, if every FP-injective right $\Lambda$-module is injective, then $\Lambda$ is right noetherian, see \cite[page 157]{MR224649} or \cite[Theorem 3]{MR294409}. Thus, $\Lambda$ is right noetherian precisely when the classes of injective and of FP-injective right $\Lambda$-modules coincide. Therefore, we can conclude from this and the previous paragraphs and \cite[Lemma 1]{MR224656} that if $\Lambda$ is right noetherian, then $\domdim (\mod \Lambda) = \rdomdim \Lambda$. Of course, similar statements hold for left modules, so that if $\Lambda$ is left noetherian, then $\domdim (\mod \Lambda^{\op}) = \ldomdim \Lambda$.

We now consider some of the results of Section \ref{section.7} in the ring case. We start with the following:

\begin{proposition}\label{proposition.42}
Let $\Lambda$ be a ring.
\begin{enumerate}
    \item[(a)] If $\Lambda$ is right coherent, then $\domdim (\mod \Lambda) \geqslant 1$ if and only if $\Lambda$ is right comprehensive and $\proj \Lambda$ has enough injectives.
    \item[(b)] If $\Lambda$ is left coherent, then $\domdim (\mod \Lambda^{\op}) \geqslant 1$ if and only if $\Lambda$ is left comprehensive and $\proj \Lambda$ has enough projectives.
\end{enumerate}
\end{proposition}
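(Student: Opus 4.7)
The plan is to reduce directly to Corollary \ref{corollary.6} (and its dual) by taking $\C = \proj \Lambda$, translating through the correspondences established in the preceding discussion of this section.

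For part (a), I would first recall that by the definitions introduced in this section, $\Lambda$ is right coherent (resp.\ right comprehensive) precisely when $\proj \Lambda$ is right coherent (resp.\ right comprehensive), and that $\proj \Lambda$ has enough injectives by the exact same formulation on both sides of the statement (this is already a statement about the category $\proj \Lambda$). Next, the evaluation functor at $\Lambda$ induces an equivalence of categories $\mod (\proj \Lambda) \to \mod \Lambda$ sending $\Lambda$-modules to $(\proj \Lambda)$-modules. Since equivalences of abelian categories preserve both projectivity and injectivity (and monomorphisms), they preserve the dominant dimension of an object and hence of the whole category. Therefore $\domdim (\mod \Lambda) \geqslant 1$ if and only if $\domdim (\mod (\proj \Lambda)) \geqslant 1$. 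Applying Corollary \ref{corollary.6} to $\C = \proj \Lambda$ then yields the desired equivalence.

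Part (b) is obtained by the same argument applied to the dual of Corollary \ref{corollary.6}, using the equivalence $\mod (\proj \Lambda)^{\op} \approx \mod \Lambda^{\op}$ induced by the evaluation at $\Lambda$, together with the fact that $\Lambda$ is left coherent (resp.\ left comprehensive) precisely when $\proj \Lambda$ is left coherent (resp.\ left comprehensive), and that $\proj \Lambda$ having enough projectives is $\C$ having enough projectives for $\C = \proj \Lambda$.

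There is no genuine obstacle in this proof: it is a routine specialization. The only point meriting a brief verification is that the dominant dimension is preserved under equivalences of abelian categories, but this is immediate from its definition, since the property of being a projective injective object and the property of admitting a monomorphism into such an object are both preserved and reflected by any equivalence.
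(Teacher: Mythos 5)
Your proposal is correct and follows exactly the paper's own argument: the paper's proof is a one-line reduction to Corollary \ref{corollary.6} via the equivalences $\mod (\proj \Lambda) \approx \mod \Lambda$ and $\mod (\proj \Lambda)^{\op} \approx \mod \Lambda^{\op}$. Your additional remark that dominant dimension is preserved under equivalences of abelian categories is a correct and harmless elaboration of a point the paper leaves implicit.
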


\begin{proof}
Follows from Corollary \ref{corollary.6} and the equivalences of categories $\mod (\proj \Lambda) \approx \mod \Lambda$ and $\mod (\proj \Lambda)^{\op} \approx \mod \Lambda^{\op}$.
\end{proof}

Note that, by our previous remarks, when $\Lambda$ is right noetherian, the condition given in item (a) of Proposition \ref{proposition.42} is equivalent to $\rdomdim \Lambda \geqslant 1$. Also, when $\Lambda$ is right artinian, this condition is equivalent to $\Lambda$ being ``right QF-3'', see \cite[Theorems 3.1 and 3.2]{MR112904} or \mbox{\cite[Theorem 1.3]{MR269686}.} Moreover, the condition in item (b) of Proposition \ref{proposition.42} is equivalent to $\ldomdim \Lambda \geqslant 1$ when $\Lambda$ is left noetherian, and it is also equivalent to $\Lambda$ being ``left QF-3'' when $\Lambda$ is left artinian. Similar remarks apply to the next proposition.

\begin{proposition}\label{proposition.39}
Let $\Lambda$ be a ring, and assume that $\proj \Lambda$ is an $n$-abelian category.
\begin{enumerate}
    \item[(a)] $\domdim (\mod \Lambda) \geqslant 1$ if and only if $\proj \Lambda$ has enough injectives.
    \item[(b)] $\domdim (\mod \Lambda^{\op}) \geqslant 1$ if and only if $\proj \Lambda$ has enough projectives.
\end{enumerate}
\end{proposition}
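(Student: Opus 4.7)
The plan is to derive this proposition directly from Proposition \ref{proposition.42}, by observing that the hypothesis ``$\proj \Lambda$ is $n$-abelian'' already guarantees the two side conditions appearing in that earlier result, namely that $\Lambda$ is coherent and comprehensive. The argument is therefore essentially a matter of discarding redundant hypotheses once the $n$-abelian assumption is in place.

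First I would invoke the remark made right before the statement: if $\proj \Lambda$ is $n$-abelian for some positive integer $n$, then by Proposition \ref{proposition.38} both $\mod \Lambda$ and $\mod \Lambda^{\op}$ are abelian, so $\Lambda$ is coherent, and by Proposition \ref{proposition.27} the category $\proj \Lambda$ is comprehensive, hence $\Lambda$ is both right and left comprehensive in the sense defined earlier in the section. In particular, $\Lambda$ is right coherent and right comprehensive, and also left coherent and left comprehensive.

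Next I would apply Proposition \ref{proposition.42}(a), which says that for a right coherent ring $\Lambda$ the inequality $\domdim (\mod \Lambda) \geqslant 1$ is equivalent to the conjunction ``$\Lambda$ is right comprehensive and $\proj \Lambda$ has enough injectives''. Since right comprehensiveness is automatic under our assumption, this conjunction collapses to ``$\proj \Lambda$ has enough injectives'', which yields (a). For (b), I would apply Proposition \ref{proposition.42}(b) in exactly the same way, using that $\Lambda$ is left coherent and left comprehensive.

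There is no real obstacle here; the substantive content was already worked out in Corollary \ref{corollary.6} (from which Proposition \ref{proposition.42} was derived) and in Proposition \ref{proposition.27}. The only thing to be careful about is to explicitly cite the remark preceding the statement, so that the reader sees why the comprehensiveness hypotheses from Proposition \ref{proposition.42} can be dropped in the presence of the $n$-abelian assumption.
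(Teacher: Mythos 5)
Your proposal is correct and matches the paper's proof exactly: the paper also derives the result from Proposition \ref{proposition.42} together with the observation (via Propositions \ref{proposition.38} and \ref{proposition.27}) that the $n$-abelian hypothesis forces $\Lambda$ to be coherent and comprehensive, so the extra conditions in Proposition \ref{proposition.42} are automatic.
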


\begin{proof}
Since $\proj \Lambda$ is $n$-abelian, $\Lambda$ is coherent and comprehensive, by Propositions \ref{proposition.38} and \ref{proposition.27}. Hence the result follows from Proposition \ref{proposition.42}.
\end{proof}

\begin{proposition}\label{proposition.34}
Let $\Lambda$ be a coherent ring. If $\domdim (\mod \Lambda) \geqslant 1$ and \linebreak $\domdim (\mod \Lambda^{\op}) \geqslant 1$, then the following are equivalent:
\begin{enumerate}
    \item[(a)] $\Lambda$ satisfies the axiom \textup{(R2)}.
    \item[(b)] $\domdim (\mod \Lambda) \geqslant n + 1$.
    \item[(c)] $\Lambda$ satisfies the axiom \textup{(R2$^{\op}$)}.
    \item[(d)] $\domdim (\mod \Lambda^{\op}) \geqslant n + 1$.
\end{enumerate}
\end{proposition}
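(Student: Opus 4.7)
The plan is to deduce the result directly from its category-theoretic counterpart, Proposition \ref{proposition.31}, applied to $\C = \proj \Lambda$. First, since $\Lambda$ is coherent, the category $\proj \Lambda$ is coherent, by the equivalences $\mod (\proj \Lambda) \approx \mod \Lambda$ and $\mod (\proj \Lambda)^{\op} \approx \mod \Lambda^{\op}$ induced by the evaluation functor $\eval_{\Lambda}$. These equivalences are the core translation device throughout the argument.

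Next, I would verify that all four items in the statement correspond, under these equivalences, to the four items in Proposition \ref{proposition.31} for $\C = \proj \Lambda$. For the dominant dimension conditions, this is immediate, as an equivalence of abelian categories preserves projective objects, injective objects, and exact sequences, hence preserves dominant dimension of objects and of the whole category. For the torsion-freeness conditions, the discussion following Proposition \ref{proposition.15} (where the notions of transpose and $k$-torsion freeness for $\Lambda$-modules were defined to coincide with the corresponding notions for $(\proj \Lambda)$-modules via $\eval_{\Lambda}$) tells us that a finitely presented $\Lambda$-module $M$ is $n$-torsion free exactly when the $(\proj \Lambda)$-module corresponding to it is $n$-torsion free; also, projective dimension is preserved by the equivalence. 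Consequently, axiom (F2) for $\proj \Lambda$ translates verbatim to axiom (R2) for $\Lambda$, and similarly (F2$^{\op}$) for $\proj \Lambda$ translates to (R2$^{\op}$) for $\Lambda$.

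With these translations in hand, the hypotheses $\domdim (\mod \Lambda) \geqslant 1$ and $\domdim (\mod \Lambda^{\op}) \geqslant 1$ become exactly the hypotheses of Proposition \ref{proposition.31} applied to $\C = \proj \Lambda$, and the four equivalent conditions (a)--(d) of Proposition \ref{proposition.31} become precisely the four conditions (a)--(d) of the present statement. Invoking Proposition \ref{proposition.31} then closes the proof. There is no real obstacle; the entire argument is a routine specialization of the functorial result to the ring setting via Lemma \ref{lemma.3} and the equivalences $\mod (\proj \Lambda) \approx \mod \Lambda$ and $\mod (\proj \Lambda)^{\op} \approx \mod \Lambda^{\op}$.
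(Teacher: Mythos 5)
Your proposal is correct and follows exactly the paper's own route: specialize Proposition \ref{proposition.31} to $\C = \proj \Lambda$ via the equivalences $\mod (\proj \Lambda) \approx \mod \Lambda$ and $\mod (\proj \Lambda)^{\op} \approx \mod \Lambda^{\op}$. The paper's proof is a one-line citation of these same ingredients; you simply spell out the routine translation in more detail.
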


\begin{proof}
Follows from Proposition \ref{proposition.31} and the equivalences of categories $\mod (\proj \Lambda) \approx \mod \Lambda$ and $\mod (\proj \Lambda)^{\op} \approx \mod \Lambda^{\op}$.
\end{proof}

\begin{corollary}\label{corollary.2}
Let $\Lambda$ be a coherent ring. If $\domdim (\mod \Lambda) \geqslant 1$ and \linebreak $\domdim (\mod \Lambda^{\op}) \geqslant 1$, then $\domdim (\mod \Lambda) = \domdim (\mod \Lambda^{\op})$.
\end{corollary}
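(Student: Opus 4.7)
The plan is to deduce this as an immediate specialization of Corollary \ref{corollary.7} to the category $\C = \proj \Lambda$. Since $\Lambda$ is coherent, by definition $\proj \Lambda$ is a coherent (additive and idempotent complete) category. The evaluation functor at $\Lambda$ provides equivalences of categories $\mod (\proj \Lambda) \approx \mod \Lambda$ and $\mod (\proj \Lambda)^{\op} \approx \mod \Lambda^{\op}$, and equivalences of abelian categories preserve both global and dominant dimensions (the latter because projective objects, injective objects, and exact sequences are all preserved). Hence the hypotheses $\domdim (\mod \Lambda) \geqslant 1$ and $\domdim (\mod \Lambda^{\op}) \geqslant 1$ translate directly to $\domdim (\mod (\proj \Lambda)) \geqslant 1$ and $\domdim (\mod (\proj \Lambda)^{\op}) \geqslant 1$.

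First I would invoke Corollary \ref{corollary.7} applied to $\proj \Lambda$ to obtain the equality $\domdim (\mod (\proj \Lambda)) = \domdim (\mod (\proj \Lambda)^{\op})$. Then I would transport this equality back along the above equivalences to conclude $\domdim (\mod \Lambda) = \domdim (\mod \Lambda^{\op})$.

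Alternatively, and perhaps more transparently at the level of this section, one can argue directly from Proposition \ref{proposition.34}: for every positive integer $n$, that proposition gives the equivalence $\domdim (\mod \Lambda) \geqslant n+1 \iff \domdim (\mod \Lambda^{\op}) \geqslant n+1$ under the standing hypothesis that both dimensions are at least $1$. Since for $n = 0$ both inequalities $\domdim (\mod \Lambda) \geqslant 1$ and $\domdim (\mod \Lambda^{\op}) \geqslant 1$ hold by assumption, the set of positive integers $m$ for which $\domdim (\mod \Lambda) \geqslant m$ coincides with the set of positive integers $m$ for which $\domdim (\mod \Lambda^{\op}) \geqslant m$, so taking suprema yields the desired equality.

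There is no real obstacle here; the substantive content has been carried out in Proposition \ref{proposition.31} (whose ring version is Proposition \ref{proposition.34}). The only thing to be careful about is that the passage from $\proj \Lambda$ to $\Lambda$ preserves dominant dimension; this is routine once one notes that $\eval_{\Lambda}$ is an equivalence of abelian categories when $\Lambda$ is coherent.
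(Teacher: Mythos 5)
Your proposal is correct and matches the paper's proof, which simply cites Proposition \ref{proposition.34}: your second argument (the thresholds $\domdim(\mod \Lambda) \geqslant m$ and $\domdim(\mod \Lambda^{\op}) \geqslant m$ agree for $m = 1$ by hypothesis and for $m \geqslant 2$ by that proposition, so the suprema coincide) is exactly the intended reasoning. Your first route via Corollary \ref{corollary.7} and the equivalences $\mod(\proj \Lambda) \approx \mod \Lambda$, $\mod(\proj \Lambda)^{\op} \approx \mod \Lambda^{\op}$ is not genuinely different, since both Corollary \ref{corollary.7} and Proposition \ref{proposition.34} are specializations of Proposition \ref{proposition.31}.
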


\begin{proof}
Follows from Proposition \ref{proposition.34}.
\end{proof}

Observe that, by the discussion preceding Proposition \ref{proposition.42}, when $\Lambda$ is right and left noetherian, Corollary \ref{corollary.2} states that if $\rdomdim \Lambda \geqslant 1$ and $\ldomdim \Lambda \geqslant 1$, then $\rdomdim \Lambda = \ldomdim \Lambda$. A similar result was proved by M{\"u}ller in \cite[Theorem 10]{MR233854}, namely, that if $\Lambda$ is semiprimary, $\rdomdim \Lambda \geqslant 1$ and $\ldomdim \Lambda \geqslant 1$, then $\rdomdim \Lambda = \ldomdim \Lambda$, see also \cite[Theorem 7.7]{MR349740}. The intersection of these two results states that if $\Lambda$ is right and left artinian,\footnote{Recall that a ring is right artinian if and only if it is right noetherian and semiprimary, see \cite[Theorem 4.15]{MR1838439}.} $\rdomdim \Lambda \geqslant 1$ and $\ldomdim \Lambda \geqslant 1$, then $\rdomdim \Lambda = \ldomdim \Lambda$, which appears in \cite[Proposition 2.3]{MR340327}. However, it is worth mentioning that Harada proved in \cite[Corollary 1]{MR206049} that if $\Lambda$ is right and left artinian, then $\rdomdim \Lambda \geqslant 1$ if and only if $\ldomdim \Lambda \geqslant 1$. Therefore, we conclude that if $\Lambda$ is right and left artinian, then $\rdomdim \Lambda = \ldomdim \Lambda$. For the record, let us remark that this result was also proved for finite dimensional algebras in \cite[Theorem 4]{MR224656}, in which case it also follows from \cite[Corollary 2.2]{MR314906} or \mbox{\cite[Lemma 2.1]{MR340327}.}

\begin{theorem}\label{theorem.13}
Let $\Lambda$ be a ring, and let $n$ be a positive integer. The category $\proj \Lambda$ is $n$-abelian and has enough injectives if and only if $\Lambda$ is coherent and $\gldim (\mod \Lambda) \leqslant n + 1 \leqslant \domdim (\mod \Lambda)$. Moreover, in this case, $\mod \Lambda$ has enough injectives.
\end{theorem}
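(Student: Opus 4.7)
The plan is to obtain Theorem \ref{theorem.13} as a direct specialization of Theorem \ref{theorem.12} to the category $\C = \proj \Lambda$. First I would observe that $\proj \Lambda$ is automatically additive and idempotent complete, so the hypotheses of Theorem \ref{theorem.12} apply without extra work. The task then reduces to translating each condition appearing in Theorem \ref{theorem.12} for $\C = \proj \Lambda$ into the corresponding condition for $\Lambda$.

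Next I would invoke the equivalences of categories $\mod (\proj \Lambda) \approx \mod \Lambda$ and $\mod (\proj \Lambda)^{\op} \approx \mod \Lambda^{\op}$ induced by the evaluation functor at $\Lambda$, which were established right after Proposition \ref{proposition.15}. Under these equivalences, the coherence of $\proj \Lambda$ corresponds exactly to the coherence of $\Lambda$ (as noted in the discussion following Lemma \ref{lemma.3}), and the global and dominant dimensions are preserved, so $\gldim(\mod(\proj \Lambda)) = \gldim(\mod \Lambda)$ and $\domdim(\mod(\proj \Lambda)) = \domdim(\mod \Lambda)$. Putting these identifications into Theorem \ref{theorem.12} gives precisely the equivalence in the statement: $\proj \Lambda$ is $n$-abelian and has enough injectives if and only if $\Lambda$ is coherent and $\gldim(\mod \Lambda) \leqslant n+1 \leqslant \domdim(\mod \Lambda)$.

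Finally, for the additional claim that $\mod \Lambda$ has enough injectives in this case, I would simply transport the corresponding conclusion of Theorem \ref{theorem.12} (which says $\mod \C$ has enough injectives) across the equivalence $\mod(\proj \Lambda) \approx \mod \Lambda$; having enough injectives is preserved by equivalences of categories. Since every step is just a translation via the evaluation equivalence, there is no genuine obstacle here; the only thing one has to be careful about is checking that the equivalence really does preserve both dimensions and the property of having enough injectives, but this is standard and handled componentwise.
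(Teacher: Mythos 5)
Your proposal is correct and coincides with the paper's own argument: the paper also derives Theorem \ref{theorem.13} by specializing Theorem \ref{theorem.12} to $\C = \proj \Lambda$ and transporting all conditions across the equivalence $\mod (\proj \Lambda) \approx \mod \Lambda$. The extra care you take in noting that coherence, the two dimensions, and the property of having enough injectives are preserved by this equivalence is exactly the (routine) content the paper leaves implicit.
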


\begin{proof}
Follows from Theorem \ref{theorem.12} and the equivalence $\mod (\proj \Lambda) \approx \mod \Lambda$.
\end{proof}

Similarly, by Theorem \ref{theorem.12} and the equivalence $\mod (\proj \Lambda)^{\op} \approx \mod \Lambda^{\op}$, we get that $\proj \Lambda$ is $n$-abelian and has enough projectives if and only if $\Lambda$ is coherent and $\gldim (\mod \Lambda^{\op}) \leqslant n + 1 \leqslant \domdim (\mod \Lambda^{\op})$. Furthermore, in this case, $\mod \Lambda^{\op}$ has enough injectives.

Recall that an \textit{Artin algebra} is an associative $R$-algebra with identity which is finitely generated as an $R$-module, where $R$ is a fixed commutative artinian ring. The next result, due to Iyama and Jasso, gives a refinement of Theorem \ref{theorem.13} in the case of Artin algebras.

\begin{proposition}\label{proposition.35}
Let $\Lambda$ be an Artin algebra, and let $n$ be a positive integer. The category $\proj \Lambda$ is $n$-abelian if and only if $\gldim (\mod \Lambda) \leqslant n + 1 \leqslant \domdim (\mod \Lambda)$.
\end{proposition}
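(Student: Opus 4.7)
The plan is to derive this refinement of Theorem \ref{theorem.13} by combining the results already proved in this paper with the extra structure supplied by Artin algebra duality. For the converse direction, I would simply observe that every Artin algebra is (left and right) Noetherian, hence coherent, so the implication follows directly from Theorem \ref{theorem.13} applied to $\Lambda$.

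For the forward direction, suppose $\proj \Lambda$ is $n$-abelian. First I would invoke Theorem \ref{theorem.4} to obtain that $\Lambda$ is coherent, $\gldim (\mod \Lambda) \leqslant n + 1$, and both axioms (R2) and (R2$^{\op}$) hold. Proposition \ref{proposition.27} additionally yields that $\Lambda$ is comprehensive. The remaining task is to establish $\domdim (\mod \Lambda) \geqslant n + 1$. The key reduction is Proposition \ref{proposition.34}: since $\Lambda$ is coherent and satisfies (R2), this inequality will follow provided we verify the weaker bounds $\domdim (\mod \Lambda) \geqslant 1$ and $\domdim (\mod \Lambda^{\op}) \geqslant 1$. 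By Proposition \ref{proposition.42}, combined with the fact that $\Lambda$ is comprehensive, these two inequalities are equivalent respectively to $\proj \Lambda$ having enough injectives and enough projectives as a category.

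The main obstacle is establishing these two categorical properties of $\proj \Lambda$ under the Artin hypothesis, and this is precisely where the Artin algebra structure is used. For this step, I would bring in the Artin algebra duality $D : \mod \Lambda \to (\mod \Lambda^{\op})^{\op}$ given by $\Hom_R(-, E)$, where $R$ is the base commutative artinian ring and $E$ its injective cogenerator. This duality exchanges projectives with injectives, and composing it with $\Hom_\Lambda(-, \Lambda)$ produces the Nakayama functor $\nu : \proj \Lambda \to \mod \Lambda$, which is an equivalence onto the subcategory of finitely generated injective $\Lambda$-modules. Using $D$ and $\nu$ to transport the $n$-exact sequences supplied by axioms (A2) and (A2$^{\op}$) from $\proj \Lambda$ to $\mod \Lambda$, one concludes that every finitely generated projective $\Lambda$-module embeds into a projective-injective one, and dually that every injective is a quotient of a projective-injective. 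Once both dominant dimension bounds $\geqslant 1$ are secured, Proposition \ref{proposition.34} then upgrades them to $\domdim (\mod \Lambda) \geqslant n + 1$, which completes the proof.
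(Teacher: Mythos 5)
Your overall skeleton is sound: the converse does follow from Theorem \ref{theorem.13} once one notes that Artin algebras are Noetherian, hence coherent; and for the forward direction the reductions via Theorem \ref{theorem.4}, Proposition \ref{proposition.27}, Proposition \ref{proposition.34} and Proposition \ref{proposition.42} correctly isolate what remains to be shown, namely $\domdim (\mod \Lambda) \geqslant 1$ and $\domdim (\mod \Lambda^{\op}) \geqslant 1$, equivalently that every finitely generated projective $\Lambda$-module embeds into a projective-injective one (and the left-module analogue). The problem is that this remaining step is exactly the hard content of the statement, and your proposal asserts it rather than proves it. The sentence ``using $D$ and $\nu$ to transport the $n$-exact sequences supplied by axioms (A2) and (A2$^{\op}$)'' is not an argument: axioms (A2) and (A2$^{\op}$) only produce $n$-exact sequences once you hand them a monomorphism or epimorphism in $\proj \Lambda$, and you never identify which one; moreover all terms of such a sequence are projective modules, so applying $\nu$ yields a complex of injectives, and it is not explained how this produces a monomorphism from a given projective into a module that is simultaneously projective and injective. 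Note that the condition $\domdim (\mod \Lambda) \geqslant 1$ fails for most Artin algebras of finite global dimension, and the paper itself stresses (in the discussion around Proposition \ref{proposition.39}) that for general rings it is unknown whether $n$-abelianness of $\proj \Lambda$ forces these bounds; so this is precisely the point where the Artin hypothesis must do real work, and it cannot be dispatched in one sentence.

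For comparison, the paper does not attempt this internal derivation at all: it observes that $\proj \Lambda$ is a dualizing $R$-variety when $\Lambda$ is an Artin algebra and then invokes Iyama--Jasso's theorem for dualizing $R$-varieties (\cite[Theorem 1.2]{MR3638352}) together with the equivalence $\mod (\proj \Lambda) \approx \mod \Lambda$. If you want a self-contained proof along your lines, you would need to supply the missing argument that, under (R2) and (R2$^{\op}$) with $\gldim (\mod \Lambda) \leqslant n+1$, the injective envelope of $\Lambda_{\Lambda}$ is projective (and dually); this is essentially the hard direction of the higher Auslander correspondence and requires a genuine computation with the duality $D$, not just the observation that $\nu$ exchanges projectives and injectives.
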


\begin{proof}
Since $\Lambda$ is an Artin algebra, $\proj \Lambda$ is a ``dualizing $R$-variety'', see \cite[Proposition 2.5]{MR342505} or \cite[Proposition 5.1.1]{VitorGulisz}. Therefore, by \cite[Theorem 1.2]{MR3638352}, $\proj \Lambda$ is $n$-abelian if and only if $\gldim (\mod (\proj \Lambda)) \leqslant n + 1 \leqslant \domdim (\mod (\proj \Lambda))$. Thus, the result follows from the equivalence of categories $\mod (\proj \Lambda) \approx \mod \Lambda$.
\end{proof}

Let us point out that if $\Lambda$ is an Artin algebra, then it is both right and left artinian, hence it follows from the paragraph proceeding Corollary \ref{corollary.2} that $\domdim (\mod \Lambda) = \domdim (\mod \Lambda^{\op})$. Thus, by Propositions \ref{proposition.39} and \ref{proposition.35}, if $\Lambda$ is an Artin algebra such that $\proj \Lambda$ is $n$-abelian, then $\proj \Lambda$ has enough injectives and enough projectives. We also recall that an Artin algebra $\Lambda$ is called an \textit{$n$-Auslander algebra} if the inequalities $\gldim (\mod \Lambda) \leqslant n + 1 \leqslant \domdim (\mod \Lambda)$ are satisfied. Therefore, by Proposition \ref{proposition.35}, an Artin algebra $\Lambda$ is an $n$-Auslander algebra if and only if $\proj \Lambda$ is an $n$-abelian category, which is the case if and only if $\Lambda$ satisfies the axioms (R1), (R1$^{\op}$), (R2) and (R2$^{\op}$), by Theorem \ref{theorem.4}. These facts offer an alternative perspective towards $n$-Auslander algebras, and support the idea that the class of rings satisfying the axioms (R1), (R1$^{\op}$), (R2) and (R2$^{\op}$) generalizes the class of $n$-Auslander algebras. Furthermore, observe that the condition on the dominant dimension of an $n$-Auslander algebra can be replaced by the axioms (R2) and (R2$^{\op}$), or by any of the other equivalent axioms covered in this paper, such as the axioms (R2$_{c}$) and (R2$_{c}^{\op}$), or (R2$_{f}$) and (R2$_{f}^{\op}$), as it was pointed out in Proposition \ref{proposition.36}.

Now, let us turn our attention to Lemma \ref{lemma.3}. The reader might have asked why we stated this result as a lemma and not as a proposition or a theorem. The reason is that the correspondence described in Lemma \ref{lemma.3}, in its full generality, is a source for several other correspondences which are obtained by restriction. For instance, by restricting it to $n$-abelian categories, we get the following correspondence:

\begin{theorem}\label{theorem.6}
There is a bijective correspondence between the equivalence classes of $n$-abelian categories with additive generators and the Morita equivalence classes of rings satisfying the axioms \textup{(R1)}, \textup{(R1$^{\op}$)}, \textup{(R2)} and \textup{(R2$^{\op}$)}. The correspondence is given as follows:
\begin{enumerate}
    \item[(a)] If $\C$ is an $n$-abelian category with an additive generator, then send it to $\End(X)$, where $X$ is an additive generator of $\C$.
    \item[(b)] If $\Lambda$ is a ring satisfying \textup{(R1)}, \textup{(R1$^{\op}$)}, \textup{(R2)} and \textup{(R2$^{\op}$)}, then send it to $\proj \Lambda$.
\end{enumerate}
\end{theorem}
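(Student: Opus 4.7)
The plan is to obtain this correspondence by restricting the bijection of Lemma \ref{lemma.3} to the appropriate subclasses on both sides. Since Lemma \ref{lemma.3} already gives a bijection between equivalence classes of additive and idempotent complete categories with additive generators and Morita equivalence classes of rings, it suffices to verify that the two assignments match $n$-abelian categories on the left with rings satisfying \textup{(R1)}, \textup{(R1$^{\op}$)}, \textup{(R2)} and \textup{(R2$^{\op}$)} on the right.

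First I would show that the assignment $\Lambda \mapsto \proj \Lambda$ sends rings in the prescribed class to $n$-abelian categories. This is immediate from Theorem \ref{theorem.4}, which characterizes $n$-abelianness of $\proj \Lambda$ precisely in terms of the four axioms \textup{(R1)}, \textup{(R1$^{\op}$)}, \textup{(R2)} and \textup{(R2$^{\op}$)}. Conversely, if $\C$ is an $n$-abelian category with additive generator $X$, then setting $\Lambda = \End(X)$, the projectivization functor gives an equivalence $\C \approx \proj \Lambda$; hence $\proj \Lambda$ is $n$-abelian, and applying Theorem \ref{theorem.4} again forces $\Lambda$ to satisfy the four axioms. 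This shows the restriction of the bijection is well-defined in both directions on representatives.

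Next I would verify that this restriction is well-defined on equivalence classes. On the left, $n$-abelianness is an invariant of equivalence of categories, since it is defined purely in terms of the categorical structure (or equivalently, by Theorem \ref{theorem.2}, in terms of $\mod \C$, which is preserved under equivalence by Proposition \ref{proposition.16}). On the right, if $\Lambda$ and $\Gamma$ are Morita equivalent rings, then there is an induced equivalence $\mod \Lambda \approx \mod \Gamma$ (and similarly for $\mod \Lambda^{\op} \approx \mod \Gamma^{\op}$) which preserves projective dimension, the transpose, the dual, and hence the property of being $k$-torsion free; thus $\Lambda$ satisfies the four axioms if and only if $\Gamma$ does. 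Alternatively, one can argue more cleanly that $\proj \Lambda \approx \proj \Gamma$ as categories, so one is $n$-abelian precisely when the other is, and Theorem \ref{theorem.4} transfers the axioms accordingly.

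Finally, the proof concludes by observing that since the two assignments in Lemma \ref{lemma.3} are mutually inverse bijections between the full classes, and we have just shown that each restricts to (and reflects) the respective subclass, the restrictions remain mutually inverse bijections. The main (minor) obstacle is simply being careful to check that the axioms \textup{(R1)}, \textup{(R1$^{\op}$)}, \textup{(R2)} and \textup{(R2$^{\op}$)} are genuinely Morita invariant; but this is immediate from the fact that they are phrased entirely in terms of $\mod \Lambda$ and $\mod \Lambda^{\op}$ through the equivalences $\mod (\proj \Lambda) \approx \mod \Lambda$ and $\mod (\proj \Lambda)^{\op} \approx \mod \Lambda^{\op}$, so no real calculation is needed beyond the observations already made throughout Section \ref{section.8}.
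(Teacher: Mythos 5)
Your proposal is correct and follows exactly the paper's route: the paper's proof of this theorem is simply ``Follows from Lemma \ref{lemma.3} and Theorem \ref{theorem.4}'', i.e.\ restrict the general correspondence of Lemma \ref{lemma.3} using the characterization in Theorem \ref{theorem.4}. The additional checks you carry out (invariance of $n$-abelianness under equivalence, Morita invariance of the ring axioms) are exactly the details the paper leaves implicit, and they are handled correctly.
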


\begin{proof}
Follows from Lemma \ref{lemma.3} and Theorem \ref{theorem.4}.
\end{proof}

Similarly, we can conclude from Lemma \ref{lemma.3} and Proposition \ref{proposition.8} that there is a correspondence between pre-$n$-abelian categories with additive generators and rings satisfying the axioms (R1) and (R1$^{\op}$), which are precisely the coherent rings with weak dimension at most $n + 1$. As another (ludic) example, by Lemma \ref{lemma.3}, Proposition \ref{proposition.38} and \cite[Proposition 4.1]{MR3431480}, there is a correspondence between categories with additive generators that have weak kernels and are Krull--Schmidt and rings that are right coherent and semiperfect. Some other examples are also given by Iyama in \cite[Corollary 3.5]{MR2095628} for rings that are Artin algebras. In general, any collection of properties for a category that are invariant under equivalence of categories leads to a correspondence, which is obtained by restricting the correspondence given in Lemma \ref{lemma.3}. Likewise, any collection of Morita invariant properties for a ring leads to a correspondence.

The celebrated Auslander correspondence and Morita--Tachikawa correspondence (and their higher versions) are also restrictions of the correspondence presented in Lemma \ref{lemma.3}. We refer the reader to \cite[Sections 5.2 and 5.3]{VitorGulisz} for details. Even more, the higher Auslander correspondence is actually a restriction of the correspondence described in Theorem \ref{theorem.6}. Indeed, the former is obtained by restricting the latter to ``$n$-cluster tilting subcategories'' (with additive generators) of categories of finitely presented modules over Artin algebras.\footnote{By \cite[Theorem 3.16]{MR3519980}, ``$n$-cluster tilting subcategories'' of abelian categories are $n$-abelian.} Or, from a different point of view, we can say that the higher Auslander correspondence is obtained by restricting the correspondence given in Theorem \ref{theorem.6} to Artin algebras.\footnote{In fact, as it was observed in the paragraph proceeding Proposition \ref{proposition.35}, an Artin algebra is an $n$-Auslander algebra if and only if it satisfies the axioms \textup{(R1)}, \textup{(R1$^{\op}$)}, \textup{(R2)} and \textup{(R2$^{\op}$)}.}

Observe that Theorem \ref{theorem.6} also indicates how to find examples of $n$-abelian categories with additive generators: it suffices to find rings satisfying the axioms (R1), (R1$^{\op}$), (R2) and (R2$^{\op}$). More than that, Theorem \ref{theorem.6} guarantees that all such examples of $n$-abelian categories can be obtained this way. Hence the problem of finding a ring $\Lambda$ satisfying the axioms (R1), (R1$^{\op}$), (R2) and (R2$^{\op}$) is of particular interest since it corresponds to an $n$-abelian category, namely, the category $\proj \Lambda$.

Clearly, von Neumann regular rings are examples of rings with the properties mentioned above, but their corresponding $n$-abelian categories are again von Neumann regular, and hence not interesting. On the other hand, $n$-Auslander algebras (which are not semisimple)\footnote{Recall that an Artin algebra is semisimple if and only if it is von Neumann regular. More generally, a ring is semisimple if and only if it is von Neumann regular and (left or right) noetherian.} give interesting examples of $n$-abelian categories, which have both enough injectives and enough projectives, as remarked in the paragraph subsequent to Proposition \ref{proposition.35}. However, to the best knowledge of the author, there are no known examples of rings satisfying the axioms (R1), (R1$^{\op}$), (R2) and (R2$^{\op}$) which are not von Neumann regular and not Artin algebras. The existence of such rings could possibly result in exotic examples of $n$-abelian categories, and bring new insights to the theory. Also, it would be even more interesting to have an example of such a ring $\Lambda$ with $\domdim (\mod \Lambda) = 0$ and $\domdim (\mod \Lambda^{\op}) = 0$, because then $\proj \Lambda$ would be an $n$-abelian category that does not have enough injectives nor enough projectives, by Proposition \ref{proposition.39}.

As a motivation for the use of Theorem \ref{theorem.6} as a source of possibly interesting examples of $n$-abelian categories, let us recall how Rump has used similar ideas to find a counterexample to Raikov’s conjecture. In \cite{MR2471947}, Rump characterized when a certain type of category $\C$ is semi-abelian or quasi-abelian in terms of $\mod \C$ and $\mod \C^{\op}$. Then, by specializing his results to modules over rings, he was able to find an example of a ring $\Lambda$ for which the category $\proj \Lambda$ is semi-abelian but not quasi-abelian, thereby providing a negative answer to Raikov’s conjecture. This achievement brings hope that the same approach could be used to answer questions concerning $n$-abelian categories.

Nevertheless, let us end this section by showing that commutative rings do not produce interesting examples of $n$-abelian categories with additive generators via Theorem \ref{theorem.6}.

\begin{proposition}\label{proposition.17}
Let $\Lambda$ be a commutative ring, and let $n$ be a positive integer. The category $\proj \Lambda$ is $n$-abelian if and only if $\Lambda$ is von Neumann regular.
\end{proposition}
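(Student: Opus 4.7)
The forward direction is immediate: if $\Lambda$ is von Neumann regular, then so is $\proj\Lambda$ by Proposition~\ref{proposition.4} (applied via the equivalence $\mod(\proj\Lambda)\approx\mod\Lambda$), and hence $\proj\Lambda$ is $n$-abelian for every positive integer $n$ by Proposition~\ref{proposition.6}.

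For the converse, I would argue by contradiction, assuming that $\proj\Lambda$ is $n$-abelian while $\Lambda$ is not von Neumann regular, and deriving a contradiction from Theorem~\ref{theorem.4}. The decisive computation for commutative $\Lambda$ is the identification $\Tr M\simeq\Ext^1(M,\Lambda)$ for every $M\in\mod\Lambda$ with $\pdim M\le 1$: applying $\Hom_{\Lambda}(-,\Lambda)$ to a projective resolution $0\to P_1\xrightarrow{f}P_0\to M\to 0$ produces the four-term exact sequence $0\to M^{\ast}\to P_0^{\ast}\to P_1^{\ast}\to\Ext^1(M,\Lambda)\to 0$, which matches the defining formula $\Tr M=\Coker(f^{\ast})$. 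Under this identification, axiom (R2) from Theorem~\ref{theorem.4} reads: for every $M\in\mod\Lambda$ with $\pdim M\le 1$, $\Ext^i(\Ext^1(M,\Lambda),\Lambda)=0$ for every $1\le i\le n$. Note also that, by Proposition~\ref{proposition.4} and Corollary~\ref{corollary.1}, the non-regularity assumption gives $\gldim(\mod\Lambda)=n+1\ge 2$, so genuine modules of projective dimension exactly one are available.

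To produce a module violating the above vanishing, note that $\Lambda$ not being von Neumann regular furnishes some $a\in\Lambda$ with $a\notin a^2\Lambda$; any such $a$ is nonzero and a non-unit. In the main case where $a$ is moreover a non-zero-divisor, the module $M=\Lambda/a\Lambda$ admits the resolution $0\to\Lambda\xrightarrow{\cdot a}\Lambda\to M\to 0$, so $\pdim M=1$; and a direct computation yields $M^{\ast}=\{x\in\Lambda: ax=0\}=0$ together with $\Ext^1(M,\Lambda)=\Lambda/a\Lambda=M$. Hence $\Tr M\simeq M$, and consequently $\Ext^1(\Tr M,\Lambda)\simeq M\neq 0$, directly contradicting (R2). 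The main obstacle I anticipate is the residual pathological case in which every non-zero-divisor of $\Lambda$ happens to be a unit: in this situation $M=\Lambda/a\Lambda$ has projective dimension greater than one for the witness $a$, so the clean counterexample above is unavailable, and one must instead argue structurally -- e.g., by reducing through localization to the classical fact that a regular commutative Noetherian ring equal to its own total quotient ring is a finite product of fields -- to show that coherence, finite global dimension, and (R2) together already force $\Lambda$ to be von Neumann regular.
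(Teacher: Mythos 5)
Your forward direction and your ``main case'' of the converse are correct, and the self-transpose computation $\Tr(\Lambda/a\Lambda)\simeq\Lambda/a\Lambda$ is exactly the engine the paper uses. But the residual case you flag is a genuine gap, and the fix you sketch does not work: $\Lambda$ is only assumed coherent, not Noetherian, so you cannot reduce to the classical statement about regular commutative \emph{Noetherian} rings equal to their total quotient rings. Rings of the kind at issue here (coherent, finite weak dimension, every non-zero-divisor a unit) need not be Noetherian -- infinite products of fields already show this -- so the localization argument has nothing to latch onto. As it stands, your proof establishes the converse only for rings possessing a non-zero-divisor non-unit.

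The missing ingredient, which is how the paper closes the case of an arbitrary (possibly zero-dividing) witness $x$, is the commutative-algebra fact that a commutative coherent ring of finite weak dimension has all principal ideals projective (Glaz, \emph{Commutative coherent rings}, Theorem 4.2.2 and Corollary 4.2.4). Granting this, $\pdim\Lambda/x\Lambda\leqslant 1$ for \emph{every} $x\in\Lambda$, and the isomorphism $\Tr(\Lambda/x\Lambda)\simeq\Lambda/x\Lambda$ still holds because the transpose is computed from the projective \emph{presentation} $\Lambda\xrightarrow{\cdot x}\Lambda\to\Lambda/x\Lambda\to 0$, which need not be exact on the left; your identification $\Tr M\simeq\Ext^{1}(M,\Lambda)$, by contrast, requires the left-exactness you only have for non-zero-divisors. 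The paper then argues positively rather than by contradiction: (R2) gives $\Ext^{1}(\Lambda/x\Lambda,\langle x\rangle)\simeq\Ext^{1}(\Tr(\Lambda/x\Lambda),\langle x\rangle)=0$ since $\langle x\rangle$ is projective, so $0\to\langle x\rangle\to\Lambda\to\Lambda/x\Lambda\to 0$ splits, a retraction is multiplication by some $xy$, and $xyx=x$ follows for every $x$ at once. (One can check that your residual case is in fact forced to be von Neumann regular once principal ideals are known to be projective, but that is precisely the input you are missing.) I recommend either importing the Glaz result or restructuring along the paper's lines.
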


\begin{proof}
Suppose that $\proj \Lambda$ is $n$-abelian. By Theorem \ref{theorem.4}, $\Lambda$ satisfies the axioms (R1), (R1$^{\op}$), (R2) and (R2$^{\op}$). In particular, $\Lambda$ is coherent and $\gldim (\mod \Lambda) < \infty$. Thus, it follows from \cite[Theorem 4.2.2 and Corollary 4.2.4]{MR0999133} that every principal ideal of $\Lambda$ is projective.

Take $x \in \Lambda$, and let $\langle x \rangle$ be the ideal generated by $x$, which is projective, by our previous remark. Let $f : \Lambda \to \Lambda$ be the multiplication by $x$ and consider $M = \Lambda / \langle x \rangle$, so that we have an exact sequence \[ \begin{tikzcd}
\Lambda \arrow[r, "f"] & \Lambda \arrow[r] & M \arrow[r] & 0
\end{tikzcd} \] in $\mod \Lambda$. By applying $(-)^{\ast}$ to it, we get an exact sequence \[ \begin{tikzcd}
0 \arrow[r] & M^{\ast} \arrow[r] & \Lambda^{\ast} \arrow[r, "f^{\ast}"] & \Lambda^{\ast} \arrow[r] & \Tr M \arrow[r] & 0
\end{tikzcd} \] in $\mod \Lambda$, where $\Tr M$ is a transpose of $M$. Since there is a commutative diagram \[ \begin{tikzcd}
\Lambda^{\ast} \arrow[r, "f^{\ast}"] \arrow[d, "\simeq"'] & \Lambda^{\ast} \arrow[d, "\simeq"] \\
\Lambda \arrow[r, "f"']                                   & \Lambda                           
\end{tikzcd} \] in $\mod \Lambda$ whose vertical arrows are isomorphisms, we deduce that $\Tr M \simeq M$.

Now, consider the short exact sequence \[ \begin{tikzcd}
0 \arrow[r] & \langle x \rangle \arrow[r] & \Lambda \arrow[r] & M \arrow[r] & 0
\end{tikzcd} \] in $\mod \Lambda$. Because $\langle x \rangle$ is projective, we have $\pdim M \leqslant 1$, hence $M$ is $n$-torsion free. In particular, it follows that $\Ext^{1}(M, \langle x \rangle) \simeq \Ext^{1}(\Tr M, \langle x \rangle) = 0$. Therefore, the above short exact sequence splits, so that the inclusion $\langle x \rangle \to \Lambda$ is a split monomorphism. As a left inverse $\Lambda \to \langle x \rangle$ of $\langle x \rangle \to \Lambda$ in $\mod \Lambda$ is given by multiplication by $xy$ for some $y \in \Lambda$, we conclude that $xyx = x$. Thus, $\Lambda$ is von Neumann regular.

The converse follows from Proposition \ref{proposition.6} since $\Lambda$ is von Neumann regular if and only if $\proj \Lambda$ is von Neumann regular.
\end{proof}

As a final observation, note that, in the proof of Proposition \ref{proposition.17}, we only had to use the fact that $M$ was $1$-torsion free, rather than $n$-torsion free. Moreover, as $\Lambda$ was assumed to be commutative, it would have been sufficient to assume that $\Lambda$ satisfies either the axiom (R1) or (R1$^{\op}$). Thus, we can improve Proposition \ref{proposition.17} as follows:

\begin{proposition}
Let $\Lambda$ be a commutative ring, and let $n$ be a positive integer. The category $\proj \Lambda$ has $n$-kernels and satisfies that every monomorphism is a kernel if and only if $\Lambda$ is von Neumann regular.
\end{proposition}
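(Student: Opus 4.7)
The backward direction is immediate from Proposition \ref{proposition.6} applied to $\proj \Lambda$: if $\Lambda$ is von Neumann regular, so is $\proj \Lambda$, hence $\proj \Lambda$ is $n$-abelian for every positive integer $n$, and in particular has $n$-kernels and (by the $m = 1$ case of axiom (A2$_{c}$) in Theorem \ref{theorem.8}) satisfies that every monomorphism is a kernel.

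For the forward direction, the plan is to follow the proof of Proposition \ref{proposition.17} closely, using the new hypothesis to produce the $1$-torsion freeness of a certain quotient module, which previously came from axiom (R2). By Proposition \ref{proposition.8}, the assumption ``$\proj \Lambda$ has $n$-kernels'' translates into axiom (R1), so $\Lambda$ is coherent (using commutativity) with $\gldim(\mod \Lambda) \leqslant n+1$, and then by \cite[Theorem 4.2.2 and Corollary 4.2.4]{MR0999133} every principal ideal of $\Lambda$ is projective. Fix $x \in \Lambda$ and set $M = \Lambda / \langle x \rangle$; as observed in the last paragraph of the proof of Proposition \ref{proposition.17}, the identification $\Tr M \simeq M$ uses only commutativity of $\Lambda$ and therefore still holds.

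The key new step is to apply the hypothesis ``every monomorphism is a kernel'' to the inclusion $\iota : \langle x \rangle \hookrightarrow \Lambda$, which is a monomorphism in $\proj \Lambda$ (since its domain and codomain are projective). By hypothesis, $\iota$ is a kernel in $\proj \Lambda$ of some $g : \Lambda \to P$ with $P \in \proj \Lambda$. I will upgrade this to a kernel in $\mod \Lambda$ by a one-line elementwise argument: if $k \in \ker_{\mod \Lambda}(g)$, then left-multiplication $m_{k} : \Lambda \to \Lambda$ lies in $\proj \Lambda$ with $g m_{k} = 0$, so by the kernel property it factors through $\iota$, forcing $k = m_{k}(1) \in \langle x \rangle$. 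Consequently $M$ embeds into the projective $P$, and Corollary \ref{corollary.3}(a) gives that $M$ is $1$-torsion free.

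From this point the argument is exactly as in Proposition \ref{proposition.17}: $1$-torsion freeness combined with $\Tr M \simeq M$ yields $\Ext^{1}(M, \Lambda) \simeq \Ext^{1}(\Tr M, \Lambda) = 0$, and since $\langle x \rangle$ is a direct summand of a free $\Lambda$-module this promotes to $\Ext^{1}(M, \langle x \rangle) = 0$; the short exact sequence $0 \to \langle x \rangle \to \Lambda \to M \to 0$ therefore splits, and reading off a retraction $\Lambda \to \langle x \rangle$ of $\iota$ produces $y \in \Lambda$ with $xyx = x$. The one point that requires care is the transfer of the kernel property from $\proj \Lambda$ to $\mod \Lambda$, but as explained this reduces to the multiplication trick above, so no real obstacle arises; essentially every ingredient has already been packaged in the excerpt.
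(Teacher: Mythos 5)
Your proof is correct, and its backbone coincides with the paper's: the backward direction via Proposition \ref{proposition.6} and Theorem \ref{theorem.8}, and, in the forward direction, coherence and finite global dimension from (R1), projectivity of principal ideals via Glaz, the identification $\Tr M \simeq M$ for $M = \Lambda/\langle x \rangle$, and the final splitting argument producing $xyx = x$. Where you genuinely diverge is in how the $1$-torsion freeness of $M$ is extracted from the hypothesis that every monomorphism in $\proj \Lambda$ is a kernel. The paper stays inside its general machinery: it recognizes this hypothesis as axiom (A2$_{c}$) for $n = 1$, invokes Proposition \ref{proposition.12} to pass to (F2$_{c}$), transfers along $\mod(\proj \Lambda) \approx \mod \Lambda$, and uses Proposition \ref{proposition.36} to conclude that \emph{every} finitely presented module of projective dimension at most $1$ is $1$-torsion free. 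You instead apply the hypothesis only to the single monomorphism $\iota \colon \langle x \rangle \hookrightarrow \Lambda$, upgrade ``kernel in $\proj \Lambda$'' to ``kernel in $\mod \Lambda$'' by the multiplication trick (legitimate precisely because $\Lambda$ is commutative, so that $m_{k}$ is a module homomorphism), and read off that $M$ embeds into a projective, hence is $1$-torsion free by Corollary \ref{corollary.3}(a). Your route is more elementary and self-contained --- it bypasses Propositions \ref{proposition.12} and \ref{proposition.36} entirely --- at the cost of establishing torsion freeness only for the particular modules needed rather than the general statement; the paper's route is longer to unwind but simply reuses results it has already proved. Both arguments are valid.
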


\begin{proof}
Suppose that $\proj \Lambda$ has $n$-kernels and that every monomorphism in $\proj \Lambda$ is a kernel. By Proposition \ref{proposition.8}, $\Lambda$ satisfies the axiom (R1), so that $\Lambda$ is coherent and $\gldim (\mod \Lambda) < \infty$. Furthermore, it follows from Proposition \ref{proposition.12} that $\proj \Lambda$ satisfies the axiom (F2$_{c}$) for $n = 1$. Due to the equivalence of categories $\mod (\proj \Lambda) \approx \mod \Lambda$, we deduce that $\Lambda$ satisfies the axiom (R2$_{c}$) for $n = 1$. Therefore, by Proposition \ref{proposition.36}, every $M \in \mod \Lambda$ with $\pdim M \leqslant 1$ is $1$-torsion free. Thus, the proof of Proposition \ref{proposition.17} applies, and we conclude that $\Lambda$ is von Neumann regular.

Since $\Lambda$ is von Neumann regular if and only if $\proj \Lambda$ is von Neumann regular, the converse follows from Proposition \ref{proposition.6} and Theorem \ref{theorem.8}.
\end{proof}

\appendix

\section{The global dimensions}\label{section.9}

The purpose of this appendix is to prove that if $\C$ is coherent, then the global dimensions of $\mod \C$ and $\mod \C^{\op}$ coincide. We remark that this is a well known result, at least for the case of modules over a coherent ring $\Lambda$, in which case the global dimensions of $\mod \Lambda$ and $\mod \Lambda^{\op}$ coincide with the weak dimension of $\Lambda$, by \cite[Proposition 1.1]{MR306265}. Also, the proof of the general result for modules over a coherent category $\C$ is not very different from the proof for the ring case. The content of this appendix is mainly based on \cite[Section 5]{MR2027559}.

Given an additive and idempotent complete category $\C$, let $\Yoneda : \C \to \mod \C$ be the Yoneda embedding, which is a covariant additive functor. Recall that $\mod \C$ is an additive category that has cokernels. It turns out that this category is universal with respect to these properties, in the following sense:

\begin{proposition}\label{proposition.18}
If $\D$ is an additive category that has cokernels and $\Psi : \C \to \D$ is an additive functor, then there is a unique (up to isomorphism) additive and cokernel preserving functor $\widetilde{\Psi} : \mod \C \to \D$ such that $\widetilde{\Psi} \circ \Yoneda \simeq \Psi$.
\end{proposition}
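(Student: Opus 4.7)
The plan is to construct $\widetilde{\Psi}$ explicitly by sending each finitely presented $\C$-module to the cokernel (in $\D$) of the $\Psi$-image of a chosen projective presentation, and then to verify that this construction is forced on us by the universal property, giving both existence and uniqueness.

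First I would define $\widetilde{\Psi}$ on objects. Given $F \in \mod \C$, fix a projective presentation
\[ \begin{tikzcd} {\C(-,X)} \arrow[r, "{\C(-,f)}"] &[1.2em] {\C(-,Y)} \arrow[r] & F \arrow[r] & 0 \end{tikzcd} \]
with $f \in \C(X,Y)$, and set $\widetilde{\Psi}(F) = \Coker \Psi(f)$, which exists in $\D$ by assumption. For the action on a morphism $\alpha : F \to G$, pick projective presentations of $F$ and $G$, and use that $\C(-,Y_F)$ is projective in $\Mod \C$ together with the Yoneda lemma to lift the composite $\C(-,Y_F) \to F \to G$ to some $\C(-,y) : \C(-,Y_F) \to \C(-,Y_G)$, and then use the Yoneda lemma again to realize the induced morphism of kernels at the level of representables as $\C(-,x)$ for some $x \in \C(X_F,X_G)$. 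Applying $\Psi$ to the resulting square in $\C$ and passing to cokernels defines $\widetilde{\Psi}(\alpha)$.

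The main obstacle is showing that $\widetilde{\Psi}(\alpha)$ is independent of the choice of lift and of the two projective presentations. For the lift: if $(x,y)$ and $(x',y')$ are two lifts, then $\C(-,y-y')$ composes to zero into $G$, so by the Yoneda lemma and the exactness of the presentation of $G$, there is $h \in \C(Y_F,X_G)$ with $y - y' = gh$; applying $\Psi$ shows that the induced morphism on cokernels is the same. For independence of presentations: using the just-established action on morphisms, the identity of $F$ gives mutually inverse comparison morphisms between any two choices of $\widetilde{\Psi}(F)$. Once these checks are done, functoriality, additivity and cokernel preservation are straightforward (cokernels in $\mod \C$ are computed via mapping cones of presentations, and $\Psi$ sends this to the analogous construction in $\D$). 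The relation $\widetilde{\Psi} \circ \Yoneda \simeq \Psi$ is immediate from the presentation $0 \to \C(-,X) \xrightarrow{1} \C(-,X) \to 0$, whose $\Psi$-cokernel is $\Psi(X)$.

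Finally, for uniqueness, suppose $\Phi : \mod \C \to \D$ is additive and cokernel preserving with $\Phi \circ \Yoneda \simeq \Psi$. Applying $\Phi$ to the projective presentation of $F$ and using that $\Phi$ preserves cokernels yields a canonical isomorphism $\Phi(F) \simeq \Coker \Psi(f) = \widetilde{\Psi}(F)$, natural in $F$ because morphisms in $\mod \C$ are determined (up to the same lifting procedure) by their liftings to projective presentations. Thus $\Phi \simeq \widetilde{\Psi}$ as additive functors, which concludes the proof.
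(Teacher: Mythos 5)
The paper does not actually prove this proposition: it delegates to the literature, citing Beligiannis, Krause and Auslander for the universal property of $\mod \C$ as the free cokernel completion of $\C$. Your explicit construction --- defining $\widetilde{\Psi}(F) = \Coker \Psi(f)$ on a chosen presentation, lifting a morphism $F \to G$ to a commutative square of representables via projectivity and the Yoneda lemma, checking independence of the lift by factoring the difference through $\C(-,f_G)$, and deducing uniqueness from cokernel preservation --- is precisely the argument those references carry out, and it is correct. The only imprecision is the phrase ``the induced morphism of kernels'': the kernel of $\C(-,Y_G) \to G$ need not be representable, so one should instead say that $\C(-,y)\circ\C(-,f_F)$ composes to zero into $G$, whence by Yoneda the element $y f_F$ dies in $G(X_F) = \Coker\bigl(\C(X_F,X_G) \to \C(X_F,Y_G)\bigr)$ and therefore equals $f_G x$ for some $x \in \C(X_F,X_G)$; this yields the commutative square directly.
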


\begin{proof}
See \cite[Corollary 3.2]{MR2027559} or \cite[Universal Property 2.1]{MR1487973} or \cite[Proposition 2.1]{MR0212070}.
\end{proof}

Note that, by letting $\Yoneda^{\op} : \C^{\op} \to \mod \C^{\op}$ be the Yoneda embedding of $\C^{\op}$, which is a covariant additive functor,\footnote{Although, usually, $\Yoneda^{\op}$ is regarded as a contravariant functor from $\C$ to $\mod \C^{\op}$.} we deduce that the statement of Proposition \ref{proposition.18} also holds for the category $\mod \C^{\op}$, when considering $\Yoneda^{\op}$ instead of $\Yoneda$.

Now, if $G \in \mod \C^{\op}$, then $G : \C \to \Ab$ is an additive functor and it follows from Proposition \ref{proposition.18} that there is a unique (up to isomorphism) additive and cokernel preserving functor $\widetilde{G} : \mod \C \to \Ab$ such that $\widetilde{G} \circ \Yoneda \simeq G$. Similarly, if $F \in \mod \C$, then $F : \C^{\op} \to \Ab$ is an additive functor, so that there is a unique (up to isomorphism) additive and cokernel preserving functor $\widetilde{F} : \mod \C^{\op} \to \Ab$ such that $\widetilde{F} \circ \Yoneda^{\op} \simeq F$.  We denote $\widetilde{G} = - \otimes G$ and $\widetilde{F} = F \otimes -$. These functors define a bifunctor \[ - \otimes - : \mod \C \times \mod \C^{\op} \to \Ab, \] which we call the \textit{tensor product}. In particular, the tensor product of $F \in \mod \C$ and $G \in \mod \C^{\op}$ is given by the abelian group \[ F \otimes G \simeq (- \otimes G)(F) \simeq (F \otimes -)(G), \] which is unique up to isomorphism.

Next, suppose that $\C$ is coherent, so that $\mod \C$ and $\mod \C^{\op}$ are abelian categories (with enough projectives). In this case, given $F \in \mod \C$ and $G \in \mod \C^{\op}$, for each $i \geqslant 0$, we can consider the $i$th left derived functors of $F \otimes -$ and $- \otimes G$, which we denote by $\Tor_{i}(F,-)$ and $\Tor_{i}(-,G)$, respectively. These functors define a bifunctor \[ \Tor_{i}(-,-) : \mod \C \times \mod \C^{\op} \to \Ab, \] for each $i \geqslant 0$, since the tensor product $- \otimes -$ is a left balanced bifunctor.

The next two results can be found in \cite[Lemma 5.2 and Theorem 5.3]{MR2027559}, which are well known for modules over rings, see \cite[Proposition 3.2]{MR480688} and \cite[Proposition 2.2]{MR379599}.

\begin{lemma}\label{lemma.6}
Assume that $\C$ is coherent.\footnote{Actually, the condition that $\C$ is coherent is not needed, but we keep it for expository reasons.} For each $F,H \in \mod \C$, there is a canonical morphism $\varphi^{F}_{H} : H \otimes F^{\ast} \to \Hom(F,H)$ with $\Coker \varphi^{F}_{H} = \underHom (F,H)$. Moreover, $\varphi^{F}_{H}$ is natural both in $F$ and in $H$, and it is an isomorphism if $F$ or $H$ is projective.
\end{lemma}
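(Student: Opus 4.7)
The plan is to construct $\varphi^F_H$ by reducing to the representable case via a projective presentation of $H$ and invoking the right-exactness of $-\otimes F^*$, then to identify its image with the ideal $\langle \proj \C \rangle(F,H)$ (so that its cokernel is $\underHom(F,H)$), and finally to verify the two isomorphism cases directly. First I would fix a projective presentation $\C(-,X_1) \xrightarrow{\C(-,f)} \C(-,X_0) \xrightarrow{\pi} H \to 0$ in $\mod \C$. Using the defining property $\C(-,X) \otimes F^* \simeq F^*(X)$ (from the uniqueness in Proposition \ref{proposition.18}), right-exactness of $-\otimes F^*$ yields an exact sequence $F^*(X_1) \xrightarrow{F^*(f)} F^*(X_0) \to H \otimes F^* \to 0$. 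Applying $\Hom(F,-)$ to the same presentation and using the Yoneda identifications $\Hom(F,\C(-,X_i)) = F^*(X_i)$, I obtain a complex $F^*(X_1) \xrightarrow{F^*(f)} F^*(X_0) \xrightarrow{\pi \circ -} \Hom(F,H)$ of zero composition; the universal property of the cokernel then furnishes a unique $\varphi^F_H : H \otimes F^* \to \Hom(F,H)$ sending the class of $\alpha : F \to \C(-,X_0)$ to $\pi \circ \alpha$. Naturality in $F$ is immediate from this description ($\alpha$ precomposes with a morphism $F' \to F$ on both sides), while naturality in $H$ and independence of the chosen presentation both follow by lifting morphisms between presentations using projectivity of the representables.

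To identify $\Coker \varphi^F_H$ with $\underHom(F,H)$ I would show that $\Image \varphi^F_H = \langle \proj \C \rangle(F,H)$. One inclusion is clear, since $\pi \circ \alpha$ factors through the projective $\C(-,X_0)$. For the other, if $\beta : F \to H$ factors as $F \xrightarrow{r} P \xrightarrow{q} H$ with $P \in \proj \C$, then, since $P \simeq \C(-,Z)$ is projective in $\mod \C$ and $\pi$ is an epimorphism, $q$ lifts along $\pi$ to some $\tilde q : P \to \C(-,X_0)$, giving $\beta = \pi \circ (\tilde q \circ r) \in \Image \varphi^F_H$. Passing to the quotient yields $\Coker \varphi^F_H = \Hom(F,H)/\langle \proj \C \rangle(F,H) = \underHom(F,H)$.

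For the two isomorphism statements: if $H \simeq \C(-,Y)$, the trivial presentation $0 \to \C(-,Y) \to H \to 0$ exhibits $\varphi^F_H$ as the tautological identification $F^*(Y) = \Hom(F,\C(-,Y))$. If $F \simeq \C(-,X)$, then $F^* \simeq \C(X,-)$ gives $F^*(X_i) = \C(X,X_i)$, and the sequence computing $H \otimes F^*$ coincides with the sequence obtained by evaluating the projective presentation of $H$ at $X$ (which is exact since evaluation of modules is componentwise), so $H \otimes F^* \simeq H(X)$; unwinding shows that under Yoneda's isomorphism $H(X) \simeq \Hom(\C(-,X),H)$ the map $\varphi^F_H$ is precisely this identification. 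I do not anticipate a serious obstacle: every step reduces to a routine use of Yoneda's lemma, projectivity of representables, or the universal property of cokernels, and the only slightly delicate verification is independence of the chosen projective presentation (equivalently, naturality in $H$).
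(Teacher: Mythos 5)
Your proposal is correct and follows essentially the same route as the paper: construct $\varphi^F_H$ from a projective presentation of $H$ via right-exactness of $-\otimes F^{\ast}$ and the universal property of the cokernel, identify $\Image \varphi^F_H$ with $\langle \proj \C\rangle(F,H)$, and check the two projective cases by direct Yoneda identifications. The only cosmetic difference is that you verify the image identification elementwise, whereas the paper routes it through the naturality square involving $\varphi^F_{\C(-,V)}$ and the epimorphism $\beta\otimes F^{\ast}$; both arguments are equivalent.
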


\begin{proof}
Let \[ \begin{tikzcd}
{\C(-,U)} \arrow[r, "{\C(-,h)}"] &[1.2em] {\C(-,V)} \arrow[r] & H \arrow[r] & 0
\end{tikzcd} \] be a projective presentation of $H$ in $\mod \C$ with $h \in \C(U,V)$. Given that $- \otimes F^{\ast}$ preserves cokernels and $(- \otimes F^{\ast}) \circ \Yoneda \simeq F^{\ast}$, when applying $- \otimes F^{\ast}$ to the above projective presentation of $H$, we obtain an exact sequence \[ \begin{tikzcd}
F^{\ast}(U) \arrow[r, "F^{\ast}(h)"] &[1em] F^{\ast}(V) \arrow[r] & H \otimes F^{\ast} \arrow[r] & 0
\end{tikzcd} \] in $\Ab$. On the other hand, by applying $\Hom(F,-)$ to the same projective presentation of $H$, we get morphisms \[ \begin{tikzcd}
F^{\ast}(U) \arrow[r, "F^{\ast}(h)"] &[1em] F^{\ast}(V) \arrow[r] & {\Hom(F,H)}
\end{tikzcd} \] in $\Ab$ whose composition is zero. Consequently, there is a unique morphism $H \otimes F^{\ast} \to \Hom(F,H)$ in $\Ab$ which makes the diagram \[ \begin{tikzcd}
F^{\ast}(U) \arrow[r, "F^{\ast}(h)"] &[1em] F^{\ast}(V) \arrow[r] \arrow[rd] & H \otimes F^{\ast} \arrow[r] \arrow[d] & 0 \\
                                     &                                  & {\Hom(F,H)}                                               &  
\end{tikzcd} \] commute. It is not difficult to verify that the morphism $H \otimes F^{\ast} \to \Hom(F,H)$ obtained above does not depend on the choice of projective presentation of $H$, and we denote it by $\varphi^{F}_{H}$.

We leave it to the reader to check that $\varphi^{F}_{H}$ is natural both in $F$ and in $H$. Moreover, it is easy to conclude from the previous paragraph that if $F$ or $H$ is projective, then $\varphi^{F}_{H}$ is an isomorphism.

Finally, we show that $\Coker \varphi^{F}_{H} = \underHom (F,H)$. Since $\Coker \varphi^{F}_{H}$ is given by the quotient of abelian groups $\Hom(F,H) / \Image \varphi^{F}_{H}$, it suffices to verify that $\Image \varphi^{F}_{H} = \langle \proj \C \rangle (F,H)$. Well, let $\beta$ be the morphism $\C(-,V) \to H$ from the previous projective presentation of $H$ in $\mod \C$. By naturality, the diagram \[ \begin{tikzcd}
{\C(-,V) \otimes F^{\ast}} \arrow[d, "\beta \otimes F^{\ast}"'] \arrow[r, "{\varphi^{F}_{\C(-,V)}}"] &[1.6em] {\Hom(F,\C(-,V))} \arrow[d, "{\Hom(F,\beta)}"] \\
H \otimes F^{\ast} \arrow[r, "\varphi^{F}_{H}"']                                                      & {\Hom(F,H)}                                    
\end{tikzcd} \] in $\Ab$ is commutative. Furthermore, as $\C(-,V)$ is projective, $\varphi^{F}_{\C(-,V)}$ is an isomorphism, and because $- \otimes F^{\ast}$ preserves cokernels, $\beta \otimes F^{\ast}$ is an epimorphism. Consequently, $\Image \varphi^{F}_{H} = \Image \Hom(F,\beta)$, and it is easy to see that $\Image \Hom(F,\beta) = \langle \proj \C \rangle (F,H)$.
\end{proof}

Observe that if we had considered $\C^{\op}$-modules in Lemma \ref{lemma.6} instead of $\C$-modules, we would conclude that, for each $G,H \in \mod \C^{\op}$, there is a canonical morphism $G^{\ast} \otimes H \to \Hom(G,H)$ with the same properties that were claimed in Lemma \ref{lemma.6}.

\begin{proposition}\label{proposition.22}
Assume that $\C$ is coherent. For each $F,H \in \mod \C$, there is an isomorphism $\Tor_{1}(H, \Tr F) \simeq \underHom (F,H)$, which is natural both in $F$ and in $H$.
\end{proposition}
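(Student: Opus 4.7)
The plan is to construct the isomorphism explicitly using a projective presentation of $F$, dualize to produce a four-term exact sequence whose ``middle'' piece encodes both $F^\ast$ and $\Tr F$, and then read off both $\Hom(F,H)$ and $\Tor_{1}(H,\Tr F)$ from the resulting Tor long exact sequences.

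More precisely, I would fix a projective presentation
\[ \begin{tikzcd}
{\C(-,X)} \arrow[r,"{\C(-,f)}"] &[1em] {\C(-,Y)} \arrow[r] & F \arrow[r] & 0
\end{tikzcd} \]
of $F$ in $\mod \C$ and apply $(-)^{\ast}$ to obtain the four-term exact sequence
\[ \begin{tikzcd}
0 \arrow[r] & F^{\ast} \arrow[r] & {\C(Y,-)} \arrow[r,"{\C(f,-)}"] &[1em] {\C(X,-)} \arrow[r] & \Tr F \arrow[r] & 0
\end{tikzcd} \]
in $\mod \C^{\op}$. Let $K$ denote the image of $\C(f,-)$ and split this into short exact sequences $0 \to F^{\ast} \to \C(Y,-) \to K \to 0$ and $0 \to K \to \C(X,-) \to \Tr F \to 0$. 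Applying $H \otimes -$ and using $(H\otimes -)\circ \Yoneda^{\op} \simeq H$ together with $\Tor_{i}(H,\C(V,-)) = 0$ for $i \geqslant 1$, I get the exact sequences
\[ \begin{tikzcd}
0 \arrow[r] & \Tor_{1}(H,K) \arrow[r] & H\otimes F^{\ast} \arrow[r,"\alpha"] & H(Y) \arrow[r,"\beta"] & H\otimes K \arrow[r] & 0
\end{tikzcd} \]
and
\[ \begin{tikzcd}
0 \arrow[r] & \Tor_{1}(H,\Tr F) \arrow[r] & H\otimes K \arrow[r,"\gamma"] & H(X) \arrow[r] & H\otimes \Tr F \arrow[r] & 0.
\end{tikzcd} \]
Since the composition $\gamma\beta\colon H(Y)\to H(X)$ is $H(f)$, applying $\Hom(-,H)$ to the projective presentation of $F$ identifies $\Hom(F,H) = \ker H(f) = \beta^{-1}(\ker\gamma) = \beta^{-1}(\Tor_{1}(H,\Tr F))$.

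Because $\beta$ is surjective with $\ker\beta = \Image\alpha$, restricting $\beta$ yields a surjection $\Hom(F,H) \twoheadrightarrow \Tor_{1}(H,\Tr F)$ whose kernel is exactly $\Image\alpha$. The crucial step, and the one requiring the most care, is the identification $\Image\alpha = \Image(\varphi^{F}_{H}) = \langle \proj \C\rangle(F,H)$: the map $\alpha$, obtained by applying $H\otimes -$ to the inclusion $F^{\ast}\hookrightarrow \C(Y,-)$, factors through $\Hom(F,H)\hookrightarrow H(Y)$, and one must verify that the resulting morphism $H\otimes F^{\ast}\to \Hom(F,H)$ coincides with the canonical $\varphi^{F}_{H}$ of Lemma \ref{lemma.6}. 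This follows by comparing the two constructions via the universal property underlying Proposition \ref{proposition.18} together with the naturality of $\varphi^{F}_{H}$ in both variables; once established, Lemma \ref{lemma.6} supplies $\Image(\varphi^{F}_{H}) = \langle\proj\C\rangle(F,H)$ and we obtain
\[ \Tor_{1}(H,\Tr F) \simeq \Hom(F,H)/\langle\proj\C\rangle(F,H) = \underHom(F,H). \]

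Finally, well-definedness of $\Tor_{1}(H,\Tr F)$ despite the ambiguity of $\Tr F$ up to projective summands in $\mod\C^{\op}$ is automatic, since $\Tor_{1}(H,-)$ vanishes on $\proj\C^{\op}$. Naturality in $H$ is immediate from the functoriality of the construction, and naturality in $F$ is obtained by choosing compatible projective presentations of a morphism $F\to F'$ (equivalently, by transporting the construction through the contravariant functor $\Tr\colon\undermod\C\to\undermod\C^{\op}$).
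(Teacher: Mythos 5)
Your proof is correct and follows essentially the same route as the paper's: dualize a projective presentation of $F$, compute $\Tor_{1}(H,\Tr F)$ from the resulting projective resolution of $\Tr F$, and use the naturality of $\varphi^{F}_{H}$ together with Lemma \ref{lemma.6} to identify the result with $\Coker \varphi^{F}_{H} = \underHom(F,H)$. The only difference is organizational: you splice the four-term sequence and run two Tor long exact sequences, whereas the paper reads off $\Tor_{1}(H,\Tr F)$ directly as $\Ker (H\otimes\C(-,f)^{\ast})/\Image (H\otimes\alpha^{\ast})$; your ``crucial step'' (that $\alpha$ factors as $\varphi^{F}_{H}$ followed by the inclusion $\Hom(F,H)\hookrightarrow H(Y)$) is exactly the naturality square the paper invokes.
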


\begin{proof}
Let \[ \begin{tikzcd}
{\C(-,X)} \arrow[r, "{\C(-,f)}"] &[1.2em] {\C(-,Y)} \arrow[r, "\alpha"] & F \arrow[r] & 0
\end{tikzcd} \] be a projective presentation of $F$ in $\mod \C$ with $f \in \C(X,Y)$. By applying $(-)^{\ast}$ to it, we obtain an exact sequence \[ \begin{tikzcd}
0 \arrow[r] & F^{\ast} \arrow[r, "\alpha^{\ast}"] & {\C(-,Y)^{\ast}} \arrow[r, "{\C(-,f)^{\ast}}"] &[1.6em] {\C(-,X)^{\ast}} \arrow[r] & \Tr F \arrow[r] & 0
\end{tikzcd} \] in $\mod \C^{\op}$, where $\Tr F$ is a transpose of $F$ given by the cokernel of $\C(-,f)^{\ast}$. Since both $\C(-,Y)^{\ast}$ and $\C(-,X)^{\ast}$ are projective\footnote{As we have seen in Section \ref{section.3}, they are isomorphic to $\C(Y,-)$ and $\C(X,-)$, respectively.}, we can extend the above exact sequence to a projective resolution of $\Tr F$ in $\mod \C^{\op}$, and use it to compute $\Tor_{1}(H,\Tr F)$. However, because $H \otimes -$ preserves cokernels, we can easily see that \[ \Tor_{1}(H,\Tr F) = \frac{\Ker H \otimes \C(-,f)^{\ast}}{\Image H \otimes \alpha^{\ast}}. \]

Now, by Lemma \ref{lemma.6}, there is a commutative diagram \[ \begin{tikzcd}
            & H \otimes F^{\ast} \arrow[r, "H \otimes \alpha^{\ast}"] \arrow[d, "\varphi^{F}_{H}"] &[2.5em] {H \otimes \C(-,Y)^{\ast}} \arrow[r, "{H \otimes \C(-,f)^{\ast}}"] \arrow[d, "{\varphi^{\C(-,Y)}_{H}}"] &[4.2em] {H \otimes \C(-,X)^{\ast}} \arrow[d, "{\varphi^{\C(-,X)}_{H}}"] \\
0 \arrow[r] & {\Hom(F,H)} \arrow[r, "{\Hom(\alpha,H)}"']                                           & {\Hom(\C(-,Y),H)} \arrow[r, "{\Hom(\C(-,f),H)}"']                                                       & {\Hom(\C(-,X),H)}                                              
\end{tikzcd} \] in $\Ab$, where both $\varphi^{\C(-,Y)}_{H}$ and $\varphi^{\C(-,X)}_{H}$ are isomorphisms. Moreover, note that the bottom row of this diagram is exact. In this case, it is not difficult to deduce that \[ \frac{\Ker H \otimes \C(-,f)^{\ast}}{\Image H \otimes \alpha^{\ast}} \simeq \frac{\Hom(F,H)}{\Image \varphi^{F}_{H}} = \Coker \varphi^{F}_{H}. \] Therefore, it follows from Lemma \ref{lemma.6} that $\Tor_{1}(H, \Tr F) \simeq \underHom (F,H)$, and we leave it to the reader to verify that this isomorphism is natural both in $F$ and in $H$.
\end{proof}

We remark that if we had chosen to work with $\C^{\op}$-modules in Proposition \ref{proposition.22} instead of $\C$-modules, we would conclude that, for each $G,H \in \mod \C^{\op}$, there is an isomorphism $\Tor_{1}(\Tr G, H) \simeq \underHom (G,H)$, which is natural both in $G$ and in $H$.

\begin{corollary}\label{corollary.4}
Assume that $\C$ is coherent, let $F \in \mod \C$, and let $d$ be a nonnegative integer. Then $\Tor_{d+1}(F,-) = 0$ if and only if $\pdim F \leqslant d$.
\end{corollary}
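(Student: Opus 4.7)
\medskip

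The plan is to prove the two directions separately, with the backward direction being the substantive one that makes use of Proposition \ref{proposition.22}.

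For the direction $\pdim F \leqslant d \Rightarrow \Tor_{d+1}(F,-) = 0$, I would just invoke the standard fact that $\Tor_i(F,G)$ may be computed from any projective resolution of $F$ in $\mod \C$: if $\pdim F \leqslant d$, then $F$ admits a projective resolution of length at most $d$ in $\mod \C$, and tensoring it with any $G \in \mod \C^{\op}$ produces a complex whose $(d+1)$-th homology is automatically zero. This uses only that $- \otimes -$ is left balanced, which was noted when $\Tor$ was introduced.

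For the converse, I would argue via syzygies and dimension shifting. Since $\mod \C$ is abelian and has enough projectives, take a partial projective resolution
\[ \begin{tikzcd}
0 \arrow[r] & K \arrow[r] & P_{d-1} \arrow[r] & \cdots \arrow[r] & P_{0} \arrow[r] & F \arrow[r] & 0
\end{tikzcd} \]
in $\mod \C$ (with $K = F$ in the degenerate case $d = 0$). Then $K \in \mod \C$ since $\C$ is coherent, and the usual dimension shifting argument, realized by splicing short exact sequences and applying the long exact sequences for $\Tor_i(-,G)$ with $G \in \mod \C^{\op}$, yields an isomorphism of functors $\Tor_{1}(K,-) \simeq \Tor_{d+1}(F,-)$. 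Hence $\Tor_{1}(K,-) = 0$. It therefore suffices to prove that any $K \in \mod \C$ with $\Tor_{1}(K,-) = 0$ must be projective, since in that case the short exact sequence $0 \to K \to P_{d-1} \to \cdots$ splits off at the left and produces a projective resolution of $F$ of length at most $d$.

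The crux is this last implication, and here I would invoke Proposition \ref{proposition.22}. For every $H \in \mod \C$, the proposition gives a natural isomorphism $\Tor_{1}(K,\Tr H) \simeq \underHom(H,K)$. Since $\Tor_{1}(K,-) = 0$, it follows that $\underHom(H,K) = 0$ for every $H \in \mod \C$. Specializing to $H = K$, the identity $1_{K}$ lies in $\langle \proj \C \rangle (K,K)$, so it factors through some object of $\proj \C$. Consequently $K$ is a direct summand of a finitely generated projective, and as $\C$ (hence $\mod \C$) is idempotent complete, we conclude $K \in \proj \C$, as required.

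The only real obstacle is the last step, and it is handled cleanly by Proposition \ref{proposition.22}: the naturality of the isomorphism $\Tor_{1}(K,\Tr H) \simeq \underHom(H,K)$ in $H$, combined with the fact that every $\C^{\op}$-module of the form $\Tr H$ is enough to detect vanishing of $\Tor_{1}(K,-)$ on the relevant test objects, is what makes the argument work. The forward direction and the dimension shift are purely formal.
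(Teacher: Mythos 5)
Your proof is correct and follows essentially the same route as the paper: the forward direction by computing $\Tor$ from a length-$d$ projective resolution (using left-balancedness), and the converse by dimension-shifting to the $d$\textsuperscript{th} syzygy $K = \Omega^{d}F$, applying Proposition \ref{proposition.22} with $H = K$ to get $\underHom(K,K) = 0$, and concluding that $1_{K}$ factors through a projective, so $K \in \proj \C$. The only cosmetic difference is that you spell out the direct-summand argument where the paper cites Proposition \ref{proposition.23}, and your closing remark about $\Tr H$ ``detecting'' vanishing is unnecessary (you only use the trivial direction that $\Tor_{1}(K,-)=0$ forces $\Tor_{1}(K,\Tr H)=0$), but neither affects correctness.
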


\begin{proof}
If $\Tor_{d+1}(F,-) = 0$, then $\Tor_{1}(\Omega^{d} F,-) = 0$, where $\Omega^{d} F$ is $d$\textsuperscript{th} syzygy of $F$. Thus, it follows from Proposition \ref{proposition.22} that $\underHom(\Omega^{d} F, \Omega^{d} F) = 0$, hence $1_{\Omega^{d} F} \in \langle \proj \C \rangle$. By Proposition \ref{proposition.23}, we obtain that $\Omega^{d} F \in \proj \C$, which implies that $\pdim F \leqslant d$. The converse is straightforward.
\end{proof}

Note that the corresponding result for $\C^{\op}$-modules in Corollary \ref{corollary.4} is that if $G \in \mod \C^{\op}$, then $\Tor_{d+1}(-,G) = 0$ if and only if $\pdim G \leqslant d$.

We can now prove the main result of this appendix, which appears in \cite[Corollary 5.6]{MR2027559} and is well known, at least for the case of modules over a coherent ring, as we mentioned earlier.

\begin{theorem}\label{theorem.5}
Let $\C$ be an additive and idempotent complete category. If $\C$ is coherent, then $\gldim (\mod \C) = \gldim (\mod \C^{\op})$.
\end{theorem}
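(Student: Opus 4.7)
The plan is to exploit the symmetric nature of the bifunctor $\Tor_{i}(-,-) : \mod \C \times \mod \C^{\op} \to \Ab$, together with Corollary \ref{corollary.4} and its dual formulation for $\C^{\op}$-modules, to show that both global dimensions coincide with a common ``weak global dimension'' defined by the vanishing of $\Tor$.

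More precisely, I would first observe that for a nonnegative integer $d$, the condition $\gldim(\mod \C) \leqslant d$ is equivalent to $\pdim F \leqslant d$ for every $F \in \mod \C$, which by Corollary \ref{corollary.4} is equivalent to $\Tor_{d+1}(F,-) = 0$ for every $F \in \mod \C$. Unwinding the definition of the zero functor, this is the same as saying that the bifunctor $\Tor_{d+1}(-,-)$ vanishes identically on $\mod \C \times \mod \C^{\op}$. Now I would run the same argument on the other side: by the dual of Corollary \ref{corollary.4} (as explicitly noted in the paragraph following its statement), $\gldim(\mod \C^{\op}) \leqslant d$ holds if and only if $\pdim G \leqslant d$ for every $G \in \mod \C^{\op}$, which is equivalent to $\Tor_{d+1}(-,G) = 0$ for every such $G$, and hence again to the identical vanishing of $\Tor_{d+1}(-,-)$ as a bifunctor.

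Since the two conditions $\gldim(\mod \C) \leqslant d$ and $\gldim(\mod \C^{\op}) \leqslant d$ are thus characterized by the same property of the bifunctor $\Tor_{d+1}$, I conclude that the smallest $d$ for which each holds is the same, so $\gldim(\mod \C) = \gldim(\mod \C^{\op})$ (with the convention that both sides are $\infty$ if no such $d$ exists).

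There is no real obstacle here, since all the work has already been done in the preceding results; the only thing that needs care is the coherence hypothesis, which is used implicitly through Corollary \ref{corollary.4} to guarantee that $\mod \C$ and $\mod \C^{\op}$ are abelian categories with enough projectives so that the derived functors $\Tor_{i}$ are defined and compute projective dimension. The ``hard part'' of the theorem was really encapsulated in Proposition \ref{proposition.22}, which gives the symmetric interpretation of $\Tor_{1}$ via $\underHom$ and the transpose; once that is in hand, the deduction of Theorem \ref{theorem.5} is immediate.
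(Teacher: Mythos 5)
Your argument is correct and is essentially identical to the paper's own proof: both reduce the statement to the observation that the identical vanishing of the bifunctor $\Tor_{d+1}(-,-)$ simultaneously characterizes $\gldim(\mod \C) \leqslant d$ and $\gldim(\mod \C^{\op}) \leqslant d$ via Corollary \ref{corollary.4} and its dual. Nothing further is needed.
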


\begin{proof}
Observe that if $d$ is a nonnegative integer, then $\Tor_{d+1}(F,-) = 0$ for every $F \in \mod \C$ if and only if $\Tor_{d+1}(-,G) = 0$ for every $G \in \mod \C^{\op}$. Therefore, by Corollary \ref{corollary.4}, we have $\pdim F \leqslant d$ for every $F \in \mod \C$ if and only if $\pdim G \leqslant d$ for every $G \in \mod \C^{\op}$. Consequently, $\gldim (\mod \C) = \gldim (\mod \C^{\op})$.
\end{proof}

\section{More on the second axioms}\label{section.10}

In this short appendix, we point out some other equivalent ways of stating the functorial axioms (F2) and (F2$^{\op}$) of an $n$-abelian category, which could be interesting for further investigation.

For a left and right coherent category $\C$, consider the following axioms, respectively:

\begin{enumerate}
    \item[(F2$_{h}$)] Every $F \in \mod \C^{\op}$ with $F^{\ast} = 0$ satisfies $\Ext^{i}(F,\C(X,-)) = 0$ for all $1 \leqslant i \leqslant n$ and all $X \in \C$.
    \item[(F2$_{h}^{\op}$)] Every $F \in \mod \C$ with $F^{\ast} = 0$ satisfies $\Ext^{i}(F,\C(-,X)) = 0$ for all $1 \leqslant i \leqslant n$ and all $X \in \C$.
\end{enumerate}

\begin{proposition}\label{proposition.37}
If $\C$ is coherent, then the axioms \textup{(F2)} and \textup{(F2$^{\op}$)} are equivalent to \textup{(F2$_{h}$)} and \textup{(F2$_{h}^{\op}$)}, respectively.
\end{proposition}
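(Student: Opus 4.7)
The plan is to prove only the equivalence (F2) $\iff$ (F2$_{h}$); the dual equivalence (F2$^{\op}$) $\iff$ (F2$_{h}^{\op}$) then follows by replacing $\C$ with $\C^{\op}$ throughout. The key mechanism is the duality between finitely presented $\C$-modules of projective dimension at most one and finitely presented $\C^{\op}$-modules whose dual vanishes, implemented by the transpose.

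First I would carry out the implication (F2$_{h}$) $\Rightarrow$ (F2). Let $F \in \mod \C$ with $\pdim F \leqslant 1$, and fix a projective resolution
\[ \begin{tikzcd} 0 \arrow[r] & {\C(-,X)} \arrow[r, "{\C(-,f)}"] & {\C(-,Y)} \arrow[r] & F \arrow[r] & 0 \end{tikzcd} \]
so that $f \in \C(X,Y)$ is a monomorphism and $\Tr F = \ml(f)$. Applying $(-)^{\ast}$ to the projective presentation $\C(Y,-) \to \C(X,-) \to \Tr F \to 0$ of $\Tr F$, and using that $\C(-,f)$ is a monomorphism (because $f$ is a monomorphism in $\C$), I obtain $(\Tr F)^{\ast} = 0$. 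Thus (F2$_{h}$) applies to $G := \Tr F$, yielding $\Ext^{i}(\Tr F, \C(X,-)) = 0$ for all $1 \leqslant i \leqslant n$ and all $X \in \C$. By the very definition of $\E^{i}(F)$, this says $F$ is $n$-torsion free.

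Next I would prove (F2) $\Rightarrow$ (F2$_{h}$). Let $G \in \mod \C^{\op}$ with $G^{\ast} = 0$. By Lemma \ref{lemma.4}, $\pdim \Tr G \leqslant 1$, hence by (F2) the module $\Tr G \in \mod \C$ is $n$-torsion free, that is, $\Ext^{i}(\Tr \Tr G, \C(-,X)) = 0$ for $1 \leqslant i \leqslant n$ and all $X \in \C$. Since $\Tr \Tr G \simeq G$ in $\undermod \C^{\op}$, Theorem \ref{theorem.1} produces $P, Q \in \proj \C^{\op}$ with $\Tr \Tr G \oplus P \simeq G \oplus Q$. As $\Ext^{i}(P, \C(-,X)) = \Ext^{i}(Q, \C(-,X)) = 0$ for $i \geqslant 1$, one obtains $\Ext^{i}(G, \C(-,X)) \simeq \Ext^{i}(\Tr \Tr G, \C(-,X)) = 0$ for $1 \leqslant i \leqslant n$ and all $X$, which is exactly (F2$_{h}$) for $G$.

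There is no real obstacle here: all the ingredients are already present. The only mildly subtle points are remembering that the transpose is only defined up to direct summands by projectives (handled via Theorem \ref{theorem.1}) and that the vanishing $(\Tr F)^{\ast} = 0$ comes essentially for free from the fact that, when $\pdim F \leqslant 1$, a projective presentation of $\Tr F$ can be chosen to begin with a monomorphism. With these two observations in place, the equivalence is immediate and the dual case follows by symmetry.
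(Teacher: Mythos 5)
Your proof is correct and fills in exactly the details the paper leaves to the reader: the key observations (a monomorphic presentation forces $(\Tr F)^{\ast}=0$, and Lemma \ref{lemma.4} in the converse direction) are precisely what the paper's one-line proof invokes. One cosmetic slip: in the second paragraph, $n$-torsion freeness of $\Tr G\in\mod\C$ reads $\Ext^{i}(\Tr\Tr G,\C(X,-))=0$ rather than $\Ext^{i}(\Tr\Tr G,\C(-,X))=0$, since $\Tr\Tr G$ is a $\C^{\op}$-module; with that corrected the conclusion matches (F2$_{h}$) as you claim.
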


\begin{proof}
Follows easily from Lemma \ref{lemma.4} and the fact that if $F \in \mod \C$ satisfies $\pdim F \leqslant 1$, then there is a transpose $\Tr F$ of $F$ such that $(\Tr F)^{\ast} = 0$.
\end{proof}

The axioms (F2$_{h}$) and (F2$_{h}^{\op}$) are considered, for example, in \cite[Theorem 1.2]{MR3638352} and \cite[Remark 2.25]{Klapproth}, while analogous conditions also appear in \cite[Definition 4.1]{MR4392222} and \cite[Definition 4.1]{EbrahimiNasr-Isfahani}. Although we have not explored these axioms in this paper, they were actually used indirectly in the proofs of Propositions \ref{proposition.20} and \ref{proposition.30}.

Another way of stating the axioms (F2$_{h}$) and (F2$_{h}^{\op}$) is in terms of the ``grade'' of objects in $\mod \C^{\op}$ and $\mod \C$, respectively. Let us recall this classical definition, which can be considered in a general abelian category.

Let $\A$ be an abelian category. Given $X \in \A$, the \textit{grade} of $X$, denoted by $\grade X$, is the supremum of the set of nonnegative integers $j$ for which $\Ext_{\A}^{i}(X,P) = 0$ for every $0 \leqslant i < j$ and every projective object $P \in \A$, where we agree that $\Ext_{\A}^{0}(X,P) = \A(X,P)$. If the supremum does not exist, then we say that $\grade X = \infty$.

Observe that if $\C$ is left coherent, then $F \in \mod \C^{\op}$ satisfies $F^{\ast} = 0$ if and only if $\grade F \geqslant 1$. Furthermore, the axiom (F2$_{h}$) for $\C$ can be rephrased as ``every $F \in \mod \C^{\op}$ with $\grade F \geqslant 1$ satisfies $\grade F \geqslant n + 1$''. Dually, if $\C$ is right coherent, then the axiom (F2$_{h}^{\op}$) for $\C$ can be rephrased as ``every $F \in \mod \C$ with $\grade F \geqslant 1$ satisfies $\grade F \geqslant n + 1$''. Thus, motivated by \cite[Theorem 5.10]{MR4392222}, for a left and right coherent category $\C$, consider the following axioms, respectively:

\begin{enumerate}
    \item[(F2$_{i}$)] The category $\mod \C^{\op}$ has no objects of grade $1, \ldots, n$.
    \item[(F2$_{i}^{\op}$)] The category $\mod \C$ has no objects of grade $1, \ldots, n$.
\end{enumerate}

\begin{proposition}\label{proposition.44}
If $\C$ is left coherent, then the axioms \textup{(F2$_{h}$)} and \textup{(F2$_{i}$)} are equivalent. Dually, if $\C$ is right coherent, then the axioms \textup{(F2$_{h}^{\op}$)} and \textup{(F2$_{i}^{\op}$)} are equivalent.
\end{proposition}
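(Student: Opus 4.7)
The plan is to reduce both axioms to the same implication about grades, via a direct unpacking of the relevant definitions. I will only write out the argument for the equivalence of (F2$_{h}$) and (F2$_{i}$); the dual equivalence then follows by replacing $\C$ with $\C^{\op}$.

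The key preliminary observation is that, under the equivalence $\C^{\op} \approx \proj \C^{\op}$ induced by the Yoneda embedding of $\C^{\op}$, every projective object in $\mod \C^{\op}$ is of the form $\C(X,-)$ for some $X \in \C$, and for any $F \in \mod \C^{\op}$ the dual satisfies $F^{\ast}(X) = \Hom(F, \C(X,-))$ by definition. Consequently, the condition $F^{\ast} = 0$ is equivalent to $\Hom(F,P) = 0$ for every projective $P \in \mod \C^{\op}$, which, in view of the convention $\Ext^{0} = \Hom$, is exactly $\grade F \geqslant 1$.

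With this in hand, I would reformulate each axiom. For (F2$_{h}$), the hypothesis ``$F^{\ast} = 0$'' amounts to $\grade F \geqslant 1$, and its conclusion ``$\Ext^{i}(F, \C(X,-)) = 0$ for all $1 \leqslant i \leqslant n$ and all $X \in \C$'' combines with this hypothesis to give $\Ext^{i}(F,P) = 0$ for all $0 \leqslant i \leqslant n$ and every projective $P$, i.e.\ $\grade F \geqslant n+1$. Thus (F2$_{h}$) asserts exactly the implication $\grade F \geqslant 1 \Rightarrow \grade F \geqslant n+1$ for every $F \in \mod \C^{\op}$. On the other hand, (F2$_{i}$) asserts that no $F \in \mod \C^{\op}$ has grade in $\{1,\ldots,n\}$; since $\grade F$ always takes a value in $\{0, 1, 2, \ldots\} \cup \{\infty\}$, this is equivalent to saying that for every $F$ either $\grade F = 0$ or $\grade F \geqslant n+1$, which is the same implication just stated.

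So the two axioms are literally translations of one another, and no further argument is needed. There is no real obstacle here: the entire content of the proposition is the bookkeeping bridge ``$F^{\ast} = 0 \Longleftrightarrow \grade F \geqslant 1$'', together with the trivial observation that vanishing of $\Ext^{i}(F,P)$ for all projectives $P$ and all $0 \leqslant i \leqslant n$ is the same as $\grade F \geqslant n+1$. The dual equivalence (F2$_{h}^{\op}$) $\Leftrightarrow$ (F2$_{i}^{\op}$) is obtained verbatim from this argument applied to $\C^{\op}$ in place of $\C$, using that right coherence of $\C$ is the same as left coherence of $\C^{\op}$.
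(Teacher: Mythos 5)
Your proposal is correct and follows essentially the same route as the paper, whose proof is precisely the observation that $F^{\ast} = 0$ is equivalent to $\grade F \geqslant 1$ (since every projective in $\mod \C^{\op}$ is isomorphic to some $\C(X,-)$), so that (F2$_{h}$) becomes the implication ``$\grade F \geqslant 1 \Rightarrow \grade F \geqslant n+1$'', which is literally (F2$_{i}$). Nothing is missing.
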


\begin{proof}
Follows from the above discussion.
\end{proof}

While we do not explore the axioms (F2$_{i}$) and (F2$_{i}^{\op}$) in this paper, let us state the following characterization of $n$-abelian categories:

\begin{theorem}\label{theorem.14}
Let $\C$ be an additive and idempotent complete category. Then $\C$ is $n$-abelian if and only if $\C$ is coherent, $\gldim (\mod \C) \leqslant n + 1$, and every nonzero object in $\mod \C$ and in $\mod \C^{\op}$ has grade either $0$ or $n + 1$.
\end{theorem}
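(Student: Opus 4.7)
The plan is to derive this characterization by combining Theorem \ref{theorem.2} with the grade reformulations of the axioms (F2) and (F2$^{\op}$) provided by Propositions \ref{proposition.37} and \ref{proposition.44}, together with the symmetry of global dimensions from Theorem \ref{theorem.5}. The only substantive observation beyond these packaging steps is that $\grade F \leqslant \pdim F$ whenever $F$ is a nonzero object of finite projective dimension in an abelian category with enough projectives, which is an immediate consequence of Lemma \ref{lemma.1}: if $\pdim F = d < \infty$ with $d \geqslant 1$, then $\Ext^{d}(F, P) \neq 0$ for some projective $P$, and if $d = 0$, then $F$ is projective so $\Hom(F,F) \neq 0$ gives $\grade F = 0$.

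For the forward direction, suppose $\C$ is $n$-abelian. By Theorem \ref{theorem.2}, $\C$ is coherent, $\gldim (\mod \C) \leqslant n+1$, and both (F2) and (F2$^{\op}$) are satisfied. By Propositions \ref{proposition.37} and \ref{proposition.44}, these two axioms are equivalent to (F2$_{i}$) and (F2$_{i}^{\op}$), which is precisely the assertion that neither $\mod \C^{\op}$ nor $\mod \C$ contains nonzero objects of grade $1, \ldots, n$. Therefore every nonzero object in either category has grade $0$ or grade at least $n+1$. By Theorem \ref{theorem.5}, $\gldim (\mod \C^{\op}) = \gldim (\mod \C) \leqslant n+1$, so the above observation forces $\grade F \leqslant n+1$ for every nonzero $F$ in $\mod \C$ or $\mod \C^{\op}$, collapsing the grade to exactly $0$ or $n+1$.

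For the converse, assume $\C$ is coherent, $\gldim (\mod \C) \leqslant n+1$, and every nonzero object in $\mod \C$ and in $\mod \C^{\op}$ has grade either $0$ or $n+1$. Theorem \ref{theorem.5} gives $\gldim (\mod \C^{\op}) = \gldim (\mod \C) \leqslant n+1$, so (F1) and (F1$^{\op}$) hold. The grade hypothesis directly says that $\mod \C$ and $\mod \C^{\op}$ have no objects of grade $1, \ldots, n$, which are exactly the axioms (F2$_{i}^{\op}$) and (F2$_{i}$). Applying Propositions \ref{proposition.44} and \ref{proposition.37}, these are respectively equivalent to (F2$^{\op}$) and (F2). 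All four functorial axioms therefore hold, and Theorem \ref{theorem.2} concludes that $\C$ is $n$-abelian.

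As the argument is essentially a repackaging of earlier results, I do not anticipate any real obstacle. The only point warranting care is the upper grade bound in the forward direction; it rests on Lemma \ref{lemma.1} and on the invocation of Theorem \ref{theorem.5} to transfer the bound on $\gldim (\mod \C)$ to $\gldim (\mod \C^{\op})$, which is where the coherence hypothesis is indispensable.
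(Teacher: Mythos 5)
Your proposal is correct and follows essentially the same route as the paper, whose proof simply cites Theorems \ref{theorem.2} and \ref{theorem.5}, Propositions \ref{proposition.37} and \ref{proposition.44}, and Lemma \ref{lemma.1} --- precisely the ingredients you assemble, with the role of Lemma \ref{lemma.1} being exactly the upper bound $\grade F \leqslant \pdim F$ that you isolate.
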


\begin{proof}
Follows directly from Theorems \ref{theorem.2} and \ref{theorem.5}, Propositions \ref{proposition.37} and \ref{proposition.44}, and Lemma \ref{lemma.1}.
\end{proof}

We can, of course, restrict the above results to the ring case, as it was done with our previous results in Section \ref{section.8}. For instance, Theorem \ref{theorem.14} becomes the following:

\begin{theorem}
Let $\Lambda$ be a ring, and let $n$ be a positive integer. The category $\proj \Lambda$ is $n$-abelian if and only if $\Lambda$ is coherent, has weak dimension at most $n + 1$, and every nonzero object in $\mod \Lambda$ and in $\mod \Lambda^{\op}$ has grade either $0$ or $n + 1$.
\end{theorem}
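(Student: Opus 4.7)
The plan is to deduce this directly by specializing Theorem \ref{theorem.14} to $\C = \proj \Lambda$ and then translating each of the resulting conditions on $\proj \Lambda$ into conditions on $\Lambda$ via the equivalences of categories $\mod (\proj \Lambda) \approx \mod \Lambda$ and $\mod (\proj \Lambda)^{\op} \approx \mod \Lambda^{\op}$ established in Section \ref{section.8}. Since the theorem is essentially a ``dictionary translation'' of a result already proved for additive idempotent complete categories, no new homological input is required, and the task reduces to verifying that each invariant appearing in Theorem \ref{theorem.14} is preserved under these equivalences.

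First, by definition (given in Section \ref{section.8}), $\Lambda$ is coherent if and only if $\proj \Lambda$ is coherent, which takes care of the coherence hypothesis. Next, the two equivalences above are equivalences of abelian categories (under the coherence assumption), and they send projective objects to projective objects; in particular the evaluation equivalence $\mod(\proj \Lambda) \to \mod \Lambda$ restricts to an equivalence $\proj(\proj \Lambda) \to \proj \Lambda$. Consequently, it preserves $\Ext^{i}$ groups for every $i \geqslant 0$, and thus preserves both projective dimension and the global dimension: $\gldim(\mod(\proj \Lambda)) = \gldim(\mod \Lambda)$. As recalled in Section \ref{section.8} (and as follows from Corollary \ref{corollary.4} applied to $\C = \proj \Lambda$), when $\Lambda$ is coherent this common global dimension coincides with the weak dimension of $\Lambda$. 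Hence the condition $\gldim(\mod(\proj \Lambda)) \leqslant n+1$ in Theorem \ref{theorem.14} translates precisely to ``$\Lambda$ has weak dimension at most $n+1$''. The analogous statements for $\mod \C^{\op}$ and $\mod \Lambda^{\op}$ follow identically.

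Since $\Ext^{i}(-, P)$ for $P$ projective is preserved by the equivalences, so is the grade of every object. Therefore the condition ``every nonzero object in $\mod (\proj \Lambda)$ and in $\mod (\proj \Lambda)^{\op}$ has grade either $0$ or $n+1$'' is equivalent to the corresponding condition for $\mod \Lambda$ and $\mod \Lambda^{\op}$. Applying Theorem \ref{theorem.14} to $\C = \proj \Lambda$ and substituting the translated conditions yields exactly the statement of the theorem.

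The only potential obstacle is a careful check that the equivalences preserve the grade; but this is immediate once one notes that the evaluation functor at $\Lambda$ takes projectives in $\mod(\proj \Lambda)$ to projectives in $\mod \Lambda$, so there is nothing delicate here. Thus the proof is essentially a one-line appeal to Theorem \ref{theorem.14} together with the dictionary of Section \ref{section.8}.
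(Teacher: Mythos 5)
Your proposal is correct and follows exactly the paper's own route: the paper proves this by citing Theorem \ref{theorem.14} together with the equivalences $\mod(\proj \Lambda) \approx \mod \Lambda$ and $\mod(\proj \Lambda)^{\op} \approx \mod \Lambda^{\op}$, and your write-up simply fills in the routine verification that coherence, global/weak dimension, and grade are preserved under these equivalences (the weak-dimension identification being the remark in Section \ref{section.8} via Corollary \ref{corollary.4}). Nothing to add.
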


\begin{proof}
Follows from Theorem \ref{theorem.14}, and from the equivalences $\mod (\proj \Lambda) \approx \mod \Lambda$ and $\mod (\proj \Lambda)^{\op} \approx \mod \Lambda^{\op}$.
\end{proof}

\section{Some applications}\label{section.11}

In this appendix, we use the functorial perspective to prove a few known results on $n$-abelian categories. Its purpose is to further expose the reader to the functorial approach, and also to offer an alternative proof of Proposition \ref{proposition.6}.

First, recall that a category is called \textit{balanced} when all of its morphisms which are both monomorphisms and epimorphisms are isomorphisms.

\begin{proposition}\label{proposition.41}
Every $n$-abelian category is balanced.
\end{proposition}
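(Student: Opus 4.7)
The plan is to leverage the alternative axiomatization of $n$-abelian categories provided in Theorem \ref{theorem.8}. That theorem asserts, via axiom (A2$_{c}$), that for every $1 \leqslant m \leqslant n$ every $m$-segment in $\C$ is an $m$-kernel; specializing to $m = 1$ and recalling that a $1$-segment is a monomorphism and a $1$-kernel is an ordinary kernel, this says that in any $n$-abelian category every monomorphism is a kernel.

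With this in hand I would take $f \in \C(X,Y)$ to be simultaneously a monomorphism and an epimorphism, and invoke Theorem \ref{theorem.8} to produce some $g \in \C(Y,Z)$ of which $f$ is a kernel. Then $g f = 0$, and since $f$ is an epimorphism, right-cancellation forces $g = 0$. Hence $f$ is a kernel of the zero morphism $0 \colon Y \to Z$. Translating this back to the functorial definition of a $1$-kernel, the sequence
\[
0 \longrightarrow \C(-,X) \xrightarrow{\ \C(-,f)\ } \C(-,Y) \xrightarrow{\ 0\ } \C(-,Z)
\]
is exact in $\mod \C$, which forces $\C(-,f)$ to be an isomorphism; by the Yoneda lemma, $f$ itself is an isomorphism.

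The only substantive step is the appeal to Theorem \ref{theorem.8}; the rest is formal manipulation, so there is no real obstacle. For completeness, I note an equally short alternative purely in the spirit of Section \ref{section.3}: $f$ being mono and epi gives respectively $\pdim \mr(f) \leqslant 1$ and $\pdim \ml(f) \leqslant 1$; axiom (F2$^{\op}$) of Theorem \ref{theorem.2} makes $\ml(f)$ at least $1$-torsion free, which is $\E^{1}(\ml(f)) = 0$ and hence $\Ext^{1}(\mr(f),\C(-,X')) = 0$ for all $X' \in \C$; specializing $X' = X$ splits the short exact sequence $0 \to \C(-,X) \to \C(-,Y) \to \mr(f) \to 0$, so $\mr(f)$ is projective and $f$ is a split monomorphism. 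A split monomorphism which is also an epimorphism is automatically an isomorphism (if $rf = 1_{X}$, then $(fr)f = f = 1_{Y}f$, and right-cancelling the epimorphism $f$ gives $fr = 1_{Y}$). I would present the first route as the main proof, since it is the most direct realization of the slogan ``$n$-abelian categories generalize abelian categories'' at the level of this basic property.
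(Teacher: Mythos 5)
Your proposal is correct, and in fact you give two valid arguments. Your secondary route is essentially the paper's own proof up to duality: the paper uses the monomorphism hypothesis to get $\pdim \mr(f) \leqslant 1$, invokes (F2) to make $\mr(f)$ $1$-torsion free, and then feeds the exactness of $0 \to \C(Y,-) \to \C(X,-)$ (coming from $f$ being an epimorphism) into Proposition \ref{proposition.14} to conclude that $f$ is a split epimorphism, finishing by duality; you run the mirror image of this, applying (F2$^{\op}$) to $\ml(f)$ and extracting the splitting directly from the vanishing of $\Ext^{1}(\mr(f),\C(-,X))$ rather than through Proposition \ref{proposition.14}. Your primary route is genuinely different: it treats Theorem \ref{theorem.8} as a black box, specializes axiom (A2$_{c}$) to $m=1$ to get that every monomorphism is a kernel, and then runs the classical abelian-category argument (the epimorphism forces the cokernel datum to vanish, so $f$ is a kernel of zero, hence an isomorphism by Yoneda). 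There is no circularity, since Theorem \ref{theorem.8} is established in Section \ref{section.6} independently of this proposition. What each approach buys: the paper's argument is self-contained relative to Section \ref{section.3} and only needs Theorem \ref{theorem.2} plus Proposition \ref{proposition.14}, whereas your main route is shorter and makes the conceptual point that balancedness follows from the generalized ``every monomorphism is a kernel'' axiom exactly as in the $n=1$ case, at the cost of leaning on the heavier machinery of Section \ref{section.6}.
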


\begin{proof}
Let $\C$ be an $n$-abelian category, so that $\C$ satisfies the axioms (F1), (F1$^{\op}$), (F2) and (F2$^{\op}$), by Theorem \ref{theorem.2}. Suppose that $f \in \C(X,Y)$ is a morphism in $\C$ which is both a monomorphism and an epimorphism. Since $f$ is a monomorphism, the sequence \[ \begin{tikzcd}
0 \arrow[r] & {\C(-,X)} \arrow[r, "{\C(-,f)}"] &[1.2em] {\C(-,Y)} \arrow[r] & \mr(f) \arrow[r] & 0
\end{tikzcd} \] is exact in $\mod \C$, and then $\pdim \mr(f) \leqslant 1$. Therefore, $\mr(f)$ is $n$-torsion free, and hence $1$-torsion free. Now, as $f$ is an epimorphism, the sequence \[ \begin{tikzcd}
0 \arrow[r] & {\C(Y,-)} \arrow[r, "{\C(f,-)}"] &[1.2em] {\C(X,-)}
\end{tikzcd} \] is exact in $\mod \C^{\op}$. Thus, it follows from Proposition \ref{proposition.14} that \[ \begin{tikzcd}
{\C(-,X)} \arrow[r, "{\C(-,f)}"] &[1.2em] {\C(-,Y)} \arrow[r] & 0
\end{tikzcd} \] is exact in $\mod \C$, which implies that $f$ is a split epimorphism. By duality, we deduce that $f$ is a split monomorphism. Consequently, $f$ is an isomorphism.
\end{proof}

Let $m$ be a positive integer. We say that an $m$-exact sequence \[ \begin{tikzcd}
Z_{m+1} \arrow[r, "h_{m+1}"] &[0.85em] Z_{m} \arrow[r, "h_{m}"] &[0.2em] \cdots \arrow[r, "h_{2}"] & Z_{1} \arrow[r, "h_{1}"] & Z_{0}
\end{tikzcd} \] in $\C$ \textit{splits} if $h_{m+1}$ is a split monomorphism and $h_{1}$ is a split epimorphism.

The next result is from \cite[Proposition 2.6]{MR3519980}.

\begin{proposition}\label{proposition.40}
Let $\C$ be an additive and idempotent complete category,\footnote{Actually, the proof we present only requires $\C$ to be a preadditive category.} let $m$ be a positive integer, and let \[ \begin{tikzcd}
Z_{m+1} \arrow[r, "h_{m+1}"] &[0.85em] Z_{m} \arrow[r, "h_{m}"] &[0.2em] \cdots \arrow[r, "h_{2}"] & Z_{1} \arrow[r, "h_{1}"] & Z_{0}
\end{tikzcd} \] be an $m$-exact sequence in $\C$. The following are equivalent:
\begin{enumerate}
    \item[(a)] The above sequence splits.
    \item[(b)] $h_{1}$ is a split epimorphism.
    \item[(c)] $h_{m+1}$ is a split monomorphism.
\end{enumerate}
\end{proposition}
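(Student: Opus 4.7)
The plan is to exploit the functorial perspective together with the observation that an exact sequence of projective modules that starts and ends with $0$ must split. Since $(a)$ trivially implies both $(b)$ and $(c)$, and since the two directions $(b) \Rightarrow (a)$ and $(c) \Rightarrow (a)$ are dual under the substitution $\C \leftrightarrow \C^{\op}$, it suffices to prove $(b) \Rightarrow (c)$.

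So suppose $h_{1}$ is a split epimorphism, and apply the covariant Yoneda embedding to the $m$-exact sequence. Because $\C(-,h_{1})$ is then a split epimorphism in $\Mod \C$, we obtain an exact sequence
\[ \begin{tikzcd}
0 \arrow[r] & {\C(-,Z_{m+1})} \arrow[r] & {\C(-,Z_{m})} \arrow[r] & \cdots \arrow[r] & {\C(-,Z_{1})} \arrow[r] & {\C(-,Z_{0})} \arrow[r] & 0
\end{tikzcd} \]
in $\Mod \C$ whose every term lies in $\proj \C$. Splice this into short exact sequences. The rightmost short exact sequence ends in the projective $\C(-,Z_{0})$, hence splits, so its kernel is a direct summand of $\C(-,Z_{1})$ and is therefore projective. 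Propagating this splitting argument leftward one step at a time, every intermediate ``syzygy'' is projective, and every short exact sequence in the splice splits. In particular, the leftmost one
\[ \begin{tikzcd}
0 \arrow[r] & {\C(-,Z_{m+1})} \arrow[r, "{\C(-,h_{m+1})}"] &[1.6em] {\C(-,Z_{m})} \arrow[r] & L \arrow[r] & 0
\end{tikzcd} \]
splits, so $\C(-,h_{m+1})$ is a split monomorphism in $\Mod \C$.

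To conclude, a retraction $\C(-,Z_{m}) \to \C(-,Z_{m+1})$ is necessarily of the form $\C(-,s)$ for some $s \in \C(Z_{m}, Z_{m+1})$ by the Yoneda lemma applied between representables, and the identity $\C(-,s) \circ \C(-,h_{m+1}) = 1_{\C(-,Z_{m+1})}$ translates to $s h_{m+1} = 1_{Z_{m+1}}$. Hence $h_{m+1}$ is a split monomorphism in $\C$, proving $(c)$.

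There is no real obstacle here: this is a clean application of the functorial approach, and the only subtlety worth double-checking is that ``split mono/epi'' genuinely transfers in both directions between $\C$ and $\proj \C$ under the Yoneda embedding (which it does, since $\Hom(\C(-,X),\C(-,Y)) \simeq \C(X,Y)$ naturally). Note that the argument never used axioms (A1), (A1$^{\op}$), (A2), or (A2$^{\op}$); it only used that kernels, cokernels and direct sums in $\Mod \C$ are computed componentwise, which explains the footnote remarking that preadditivity of $\C$ already suffices.
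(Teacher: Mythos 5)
Your proof is correct and follows essentially the same route as the paper: apply the Yoneda embedding, observe that a split epimorphism $h_{1}$ yields a bounded exact sequence of projectives in $\Mod \C$ which therefore splits into split short exact sequences, reflect the resulting split monomorphism back to $\C$ via full faithfulness of Yoneda, and dispose of the remaining implication by duality. The only difference is that you spell out the splicing argument that the paper compresses into "it is easy to conclude," and your closing remark about preadditivity sufficing matches the paper's footnote.
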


\begin{proof}
Suppose that $h_{1}$ is a split epimorphism. Then $\C(-,h_{1})$ is a split epimorphism and the sequence \[ \begin{tikzcd}
0 \arrow[r] & {\C(-,Z_{m+1})} \arrow[r, "{\C(-,h_{m+1})}"] &[2.65em] \cdots \arrow[r, "{\C(-,h_{2})}"] &[1.5em] {\C(-,Z_{1})} \arrow[r, "{\C(-,h_{1})}"] &[1.5em] {\C(-,Z_{0})} \arrow[r] & 0
\end{tikzcd} \] is exact in $\Mod \C$. Since each $\C(-,Z_{i})$ is projective in $\Mod \C$, it is easy to conclude that $\C(-,h_{m+1})$ is a split monomorphism, which implies that $h_{m+1}$ is a split monomorphism.

Thus, (b) implies (c). By duality, (c) implies (b). Hence (a) is equivalent to both (b) and (c).
\end{proof}

Observe that if we extend the notion of an $m$-exact sequence to include the case $m = 0$, then we obtain from Proposition \ref{proposition.41} that $0$-exact sequences split in $n$-abelian categories. In addition to this fact, we have the following result, which is well known:

\begin{proposition}\label{proposition.32}
Let $\C$ be an $n$-abelian category, and let $m$ be a positive integer. If $m \neq n$, then every $m$-exact sequence in $\C$ splits.
\end{proposition}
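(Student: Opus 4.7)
The plan is to invoke Proposition \ref{proposition.40}, which reduces the statement to showing that the leftmost morphism $h_{m+1}$ in the $m$-exact sequence is a split monomorphism. Since the definition of an $m$-exact sequence forces $\C(-, h_{m+1})$ to be a monomorphism in $\mod \C$, this in turn reduces to proving that $\mr(h_{m+1})$ is projective in $\mod \C$. I will handle the two cases $m > n$ and $m < n$ by separate functorial arguments, the first controlled by axiom (F1) and the second by axiom (F2).

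For $m > n$, I would set $F = \mr(h_1)$. The $m$-exact sequence extends to a projective resolution
\[ 0 \to \C(-, Z_{m+1}) \to \C(-, Z_m) \to \cdots \to \C(-, Z_0) \to F \to 0 \]
in $\mod \C$ of length $m+1$. Since Theorem \ref{theorem.2} gives $\pdim F \leqslant n+1$, a standard Schanuel-type argument combined with idempotent completeness of $\mod \C$ shows that the syzygy $\Omega^k F$ in this resolution is projective for every $k \geqslant n+1$. In particular, because $m \geqslant n+1$, the module $\Omega^m F$ is projective, so the short exact sequence $0 \to \C(-, Z_{m+1}) \to \C(-, Z_m) \to \Omega^m F \to 0$ splits, which is exactly what is needed.

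For $m < n$, the monomorphism $\C(-, h_{m+1})$ immediately gives $\pdim \mr(h_{m+1}) \leqslant 1$, while the dual computation in $\Mod \C^{\op}$ (extending the defining exact sequence by $\ml(h_{m+1})$ on the right) produces a projective resolution of $\ml(h_{m+1})$ of length $m+1$, so $\pdim \ml(h_{m+1}) \leqslant m+1$. Assuming for contradiction that $\mr(h_{m+1})$ is not projective forces $\pdim \mr(h_{m+1}) = 1$, and Proposition \ref{proposition.5} then yields $\pdim \Tr \mr(h_{m+1}) = n+1$. Since $\ml(h_{m+1})$ is a transpose of $\mr(h_{m+1})$ and transposes differ only by direct summands of projectives, this gives $\pdim \ml(h_{m+1}) = n+1$, contradicting $m+1 < n+1$. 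Hence $\mr(h_{m+1})$ is projective.

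The main subtlety is the asymmetry between the two cases: (F1) handles $m > n$ because a projective resolution longer than the global dimension bound is automatically split on its tail, while (F2) handles $m < n$ because the sharp statement in Proposition \ref{proposition.5} makes the transpose of a nonprojective module of projective dimension one ``too long'' to fit inside a resolution of length $m+1$. I do not expect any genuine obstacle beyond correctly aligning the indices in the two resolutions and verifying that the projective dimension of a transpose is invariant under the choice of representative, both of which are routine.
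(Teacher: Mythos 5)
Your proof is correct, but it takes a genuinely different route from the paper's, most notably in the case $m < n$. The paper argues at the \emph{right} end of the sequence: it observes via Proposition \ref{proposition.11} that $\mr(h_{1})$ is $(m+1)$-spherical, invokes Theorem \ref{theorem.7} (whose proof rests on the rather involved Proposition \ref{proposition.13}) to conclude that $\mr(h_{1})$ is $(n-m)$-torsion free, hence $1$-torsion free, and then uses Proposition \ref{proposition.14} to see that $h_{1}$ is a split epimorphism. You instead argue at the \emph{left} end: $\pdim \mr(h_{m+1}) \leqslant 1$, the explicit covariant resolution gives $\pdim \ml(h_{m+1}) \leqslant m+1$, and Proposition \ref{proposition.5} forces $\pdim$ of any transpose of a nonprojective module of projective dimension $1$ to equal $n+1 > m+1$, a contradiction. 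This bypasses the spherical/torsion-free machinery of Section \ref{section.6} entirely and relies only on material available by the end of Section \ref{section.4}, which is a real economy. For $m > n$ the two arguments are essentially the same in substance — both reduce to $\mr(h_{1})$ (or its $m$\textsuperscript{th} syzygy) being projective because the resolution outlives the global dimension bound — though you phrase it via the standard dimension-shifting fact on syzygies rather than via sphericity and Lemma \ref{lemma.1}, which again is slightly more elementary. The only points worth being careful about, and which you flag correctly yourself, are that projective dimension is invariant under the stable isomorphism relating different transposes (Theorem \ref{theorem.1}), and that the resolution of $\ml(h_{m+1})$ indeed has length $m+1$; both check out.
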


\begin{proof}
To begin with, recall that $\C$ satisfies the axioms (F1), (F1$^{\op}$), (F2) and (F2$^{\op}$), by Theorem \ref{theorem.2}. Next, let \[ \begin{tikzcd}
Z_{m+1} \arrow[r, "h_{m+1}"] &[0.85em] Z_{m} \arrow[r, "h_{m}"] &[0.2em] \cdots \arrow[r, "h_{2}"] & Z_{1} \arrow[r, "h_{1}"] & Z_{0}
\end{tikzcd} \] be an $m$-exact sequence in $\C$. By Proposition \ref{proposition.11}, $\mr(h_{1})$ is $(m+1)$-spherical.

If $m < n$, then we deduce from Theorem \ref{theorem.7} that $\mr(h_{1})$ is $(n-m)$-torsion free, and hence $1$-torsion free. Thus, as \[ \begin{tikzcd}
0 \arrow[r] & {\C(Z_{0},-)} \arrow[r, "{\C(h_{1},-)}"] &[1.5em] {\C(Z_{1},-)}
\end{tikzcd} \] is exact in $\mod \C^{\op}$, we conclude from Proposition \ref{proposition.14} that \[ \begin{tikzcd}
{\C(-,Z_{1})} \arrow[r, "{\C(-,h_{1})}"] &[1.5em] {\C(-,Z_{0})} \arrow[r] & 0
\end{tikzcd} \] is exact in $\mod \C$. Hence $h_{1}$ is a split epimorphism, which implies that the above $m$-exact sequence splits, by Proposition \ref{proposition.40}.

Now, assume that $n < m$. Since $\mr(h_{1})$ is $(m+1)$-spherical, it follows from Lemma \ref{lemma.1} that either $\mr(h_{1})$ is projective or $\pdim \mr(h_{1}) = m + 1$. But as $n + 1 < m + 1$ and $\gldim (\mod \C) \leqslant n + 1$, it is not possible that $\pdim \mr(h_{1}) = m + 1$, hence $\mr(h_{1})$ is projective. In this case, given that {\relsize{-0.5} \[ \begin{tikzcd}
0 \arrow[r] &[-0.1em] {\C(-,Z_{m+1})} \arrow[r, "{\C(-,h_{m+1})}"] &[2.85em] {\C(-,Z_{m})} \arrow[r, "{\C(-,h_{m})}"] &[2em] \cdots \arrow[r, "{\C(-,h_{1})}"] &[1.8em] {\C(-,Z_{0})} \arrow[r] &[-0.1em] \mr(h_{1}) \arrow[r] &[-0.1em] 0
\end{tikzcd} \]} \hspace{-0.667em} is a projective resolution of $\mr(h_{1})$ in $\mod \C$, we can easily conclude that $\C(-,h_{m+1})$ is a split monomorphism, which implies that $h_{m+1}$ is a split monomorphism. Therefore, the previous $m$-exact sequence splits, by Proposition \ref{proposition.40}.
\end{proof}

The next result, which is essentially \cite[Theorem 3.9]{MR3519980}, finalizes the discussion concerning the splitting of $m$-exact sequences in $n$-abelian categories.

\begin{proposition}\label{proposition.33}
Let $\C$ be an additive and idempotent complete category. Then $\C$ is $n$-abelian and every $n$-exact sequence in $\C$ splits if and only if $\C$ is von Neumann regular.
\end{proposition}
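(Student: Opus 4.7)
The plan is to exploit the functorial dictionary between splitting of $n$-exact sequences in $\C$ and splitting of the associated projective resolutions in $\mod \C$ (or $\mod \C^{\op}$), together with the characterization that $\C$ is von Neumann regular if and only if $\mod \C = \proj \C$ from Proposition \ref{proposition.4}, and the fact from Proposition \ref{proposition.7} that every epimorphism in a von Neumann regular category is split.

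For the reverse implication, I would first note that a von Neumann regular category is $n$-abelian by Proposition \ref{proposition.6}. Given an $n$-exact sequence $Z_{n+1} \to Z_{n} \to \cdots \to Z_{1} \stackrel{h_{1}}{\to} Z_{0}$, the defining exactness of $0 \to \C(Z_{0},-) \to \C(Z_{1},-)$ in $\Mod \C^{\op}$ makes $\C(h_{1},-)$ a monomorphism, which by the Yoneda lemma is equivalent to $h_{1}$ being an epimorphism in $\C$. Proposition \ref{proposition.7}(c) then provides that $h_{1}$ is a split epimorphism, and Proposition \ref{proposition.40} concludes that the entire $n$-exact sequence splits.

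For the forward implication, assume $\C$ is $n$-abelian and every $n$-exact sequence in $\C$ splits. By Proposition \ref{proposition.4} it suffices to show $\mod \C = \proj \C$. Suppose, for contradiction, that some $F \in \mod \C$ is not projective. Since $\gldim (\mod \C) \leqslant n+1$ by Theorem \ref{theorem.2}, the projective dimension of $F$ is a finite positive integer, and by replacing $F$ with an appropriate iterated syzygy I may arrange $\pdim F = 1$. The resulting projective resolution $0 \to \C(-,X_{1}) \to \C(-,X_{0}) \to F \to 0$ exhibits a monomorphism $f_{1} \in \C(X_{1},X_{0})$. Taking any $n$-cokernel $X_{0} \to Y_{1} \to \cdots \to Y_{n}$ of $f_{1}$ and invoking axiom (A2), the concatenation $X_{1} \to X_{0} \to Y_{1} \to \cdots \to Y_{n}$ is an $n$-exact sequence. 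By hypothesis it splits, so $f_{1}$ is a split monomorphism; hence $\C(-,f_{1})$ is a split monomorphism in $\mod \C$, making $F$ a direct summand of $\C(-,X_{0})$, and therefore projective --- contradicting the choice of $F$.

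I do not anticipate any substantial obstacle in executing this plan. The only bookkeeping to verify is that axioms (A1$^{\op}$) and (A2) of an $n$-abelian category really do convert the $n$-cokernel of a monomorphism $f_1$ into a genuine $n$-exact sequence (they do, essentially by the definition of (A2), which says the concatenation is an $n$-kernel of the final morphism $g_n$), and that Proposition \ref{proposition.40} applies in the form required. The forward direction also uses the routine observation that whenever $\gldim (\mod \C)$ is finite and positive, there exists some module of projective dimension exactly one, obtained as a suitable syzygy of a non-projective module.
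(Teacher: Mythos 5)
Your proposal is correct and follows essentially the same route as the paper: both directions hinge on Proposition \ref{proposition.40} together with the fact that a monomorphism in an $n$-abelian category fits into an $n$-exact sequence, and on the characterizations of von Neumann regularity in Propositions \ref{proposition.4} and \ref{proposition.7}. The only cosmetic differences are that the paper produces the $n$-exact sequence through Proposition \ref{proposition.14} and torsion-freeness rather than by citing (A2) directly, and phrases the forward direction as ``every monomorphism splits, hence $\gldim(\mod\C)=0$'' instead of your contradiction with a module of projective dimension one; these are equivalent in substance.
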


\begin{proof}
Suppose that $\C$ is $n$-abelian and every $n$-exact sequence in $\C$ splits. Let $f \in \C(X,Y)$ be a monomorphism in $\C$. Since $\C$ satisfies the axiom (A2), by taking an $n$-cokernel of $f$, we get an $n$-exact sequence \[ \begin{tikzcd}
X \arrow[r, "f"] & Y \arrow[r, "g_{1}"] & Y_{1} \arrow[r, "g_{2}"] & \cdots \arrow[r, "g_{n}"] & Y_{n}
\end{tikzcd} \] in $\C$, which splits. Therefore, every monomorphism in $\C$ is a split monomorphism. Consequently, every monomorphism in $\mod \C$ whose domain and codomain are projective is a split monomorphism. Since we know from Proposition \ref{proposition.2} that $\mod \C$ is an abelian category (with enough projectives) and $\gldim (\mod \C) < \infty$, we can then easily deduce that $\gldim (\mod \C) = 0$. Hence $\C$ is von Neumann regular, by Proposition \ref{proposition.4}.

Conversely, assume that $\C$ is von Neumann regular. By Proposition \ref{proposition.4}, $\C$ is coherent with $\gldim (\mod \C) = \gldim (\mod \C^{\op}) = 0$. Thus, it follows from Theorem \ref{theorem.2} that $\C$ is $n$-abelian for every positive integer $n$. In particular, $\C$ is $(n+1)$-abelian, which implies that every $n$-exact sequence in $\C$ splits, by Proposition \ref{proposition.32}.
\end{proof}

Finally, note that Propositions \ref{proposition.32} and \ref{proposition.33} give another proof of Proposition \ref{proposition.6}.

\section*{Acknowledgments}

The author would like to thank his Ph.D. advisor Alex Martsinkovsky for his constant encouragement and discussions on the content of this paper. The author would also like to thank Sondre Kvamme and Carlo Klapproth for making him aware of the papers \cite{EbrahimiNasr-Isfahani}, \cite{MR4392222} and \cite{MR3659323}, \cite{MR3836680}, \cite{Klapproth}, respectively. The author is also grateful to the referee for the thoughtful comments, which resulted in a significant improvement of this text.

\end{document}